\title{Vietoris-Rips Complexes of Split-Decomposable Spaces}
\author{Mario G\'{o}mez}
\affil{Department of Mathematics, 
	Florida State University\\
	\texttt{mrg23b@fsu.edu}}
\DeclareMathOperator{\Cl}{Cl}
\DeclareMathOperator{\wf}{wf}
\DeclareMathOperator{\cut}{\operatorname{cut}}
\DeclareMathOperator{\blocks}{\cB}
\DeclareMathOperator{\bctree}{BC}
\newcommand{\osum}{\modtwosum}
\def\keywords{\xdef\@thefnmark{}\@footnotetext}
\begin{document}
	\maketitle
	
	\begin{abstract}
	Split-metric decompositions are an important tool in the theory of phylogenetics, particularly because of the link between the tight span and the class of totally decomposable spaces, a generalization of metric trees whose decomposition does not have a ``prime'' component. Their close relationship with trees makes totally decomposable spaces attractive in the search for spaces whose persistent homology can be computed efficiently. We study the subclass of circular decomposable spaces, finite metrics that resemble subsets of $\Sp^1$ and can be recognized in quadratic time. We give an $O(n^2)$ characterization of the circular decomposable spaces whose Vietoris-Rips complexes are cyclic for all distance parameters, and compute their homotopy type using well-known results on $\Sp^1$. We extend this result to a recursive formula that computes the homology of certain circular decomposable spaces that fail the previous characterization. Going beyond totally decomposable spaces, we identify an $O(n^3)$ decomposition of $\vr_r(X)$ in terms of the blocks of the tight span of $X$, and use it to induce a direct-sum decomposition of the homology of $\vr_r(X)$.
\end{abstract} 	
	\keywords{\emph{MSC2020} 55N31, 51F99, 52B99, 92D99}
	\keywords{{\emph Keywords:} Block decomposition, circular decomposable space, tight span, totally split-decomposable space, Vietoris-Rips complex.}
	
	\tableofcontents
	\section{Introduction}
\indent A phylogenetic network is a graph that models the evolutionary history of a set of species. Although we usually think of evolutionary history as a tree, reticulate events (such as when a cell infected with two strands of the same virus produces a hybrid of the parents) can create cycles in this network. A \define{split-metric decomposition} is one way to produce a phylogenetic network that models a given set of species, using only the distances between them. Formally, a split-metric decomposition is a canonical expression of a finite metric as a linear combination of certain pseudo-metrics called split-metrics \cite{metric-decomposition}. In general, the decomposition of a finite metric may leave a ``split-prime'' residue, and a space where the residue is 0 is called \define{split decomposable}. Tree metrics, for example, are split decomposable.\\
\indent There are several reasons why split decomposable spaces, as a theoretical model for genetic data, are attractive from the point of view of Topological Data Analysis (TDA). From a practical standpoint, \cite{viral-evolution} used persistent homology to detect and quantify reticulate events in viral evolution, while \cite{tda-covid-recombination} developed a topological pipeline to detect variants of interest among SARS-CoV-2 strands. Computationally, \cite{vr-hyp-geodesic-defect} noted that computing the persistent homology of a tree metric with millions of points may last up to a week, even though the result should have been 0 \cite[Appendix Theorem 2.1]{viral-evolution}. Their key insight was that a particular reordering of the points reduced the computation time to a few minutes \cite[Remark 6]{vr-hyp-geodesic-defect}. In other words, finding the persistent homology of a tree is an extremely fast computation under the right setup.\\
\indent From a theoretical point of view, there are key results in TDA and the theory of split decompositions that are ripe for interaction. The Vietoris-Rips (VR) complex of a subset of the circle was characterized by \cite{aa17}. Any finite subset of $\Sp^1$ is split decomposable by \cite[Theorem 5]{metric-decomposition}, and any space whose split decomposition is equivalent to that of a circular set is called \define{circular decomposable}\footnote{This terminology is used in e.g. \cite{CDM-structure, CDM-recognition, CDM-note}.}. A natural question is to what extent do the results of \cite{aa17} apply to circular decomposable spaces. Furthermore, \cite[Proposition 2.3]{osman-memoli} proved that the VR complex of $X$ is homotopy equivalent to the metric thickening of $X$ inside of its tight span\footnote{Tight spans are also called \define{injective envelopes}.} (a metric space with many desirable properties that contains $X$). This result has increased the range of distances on which the VR complexes of higher dimensional spheres are known (see e.g. Theorem 10 and Corollary 7.2 of \cite{osman-memoli}). On the other hand, the tight span of a split decomposable space has been studied at least since \cite{metric-decomposition} and, recently, the polytopal structure of such a tight span was completely characterized by Huber, Koolen, and Moulton in \cite{TDM-polytopal-structure-2}.\\
\indent Regarding our work, in Section \ref{sec:circular-metrics}, we use the results of \cite{aa17} to characterize the homotopy type of circular decomposable spaces whose metric is also monotone in the sense of \cite{CDM-recognition} (see Corollary \ref{cor:VR_circular_monotone}). We also give conditions on the isolation indices of a circular decomposable space that guarantee the metric to be monotone (Theorem \ref{thm:monotone_equals_star}). However, not every circular decomposable metric is also monotone. In Section \ref{sec:circular-metrics-non-monotone}, we compute the homology of circular decomposable spaces $X$ in terms of a strict subset $Y \subset X$ (Theorems \ref{thm:MV_non_critical} and \ref{thm:MV_critical}). The results on the homotopy type of clique complexes of cyclic graphs \cite{aa17} are instrumental in this section. Lastly, in Section \ref{sec:blocks_of_X}, we study a decomposition of the VR complex of $X$ induced by the block decomposition (or more generally, a \define{block cover}) of an injective space $E$ that contains an isometric copy of $X$. Our main result in this section is that the VR complex of $X$ decomposes as a disjoint union or wedge of the VR complexes of certain subsets $\overline{X}_K \subset X$, each corresponding to a connected union $K$ of blocks of $E$ (Theorem \ref{thm:VR_wedge}). A direct consequence is that the homology of $X$ is a direct sum of the homology groups of all $\overline{X}_K$, with only a small modification required for $H_0$ (Theorem \ref{thm:VR_wedge_homology}).\\
\indent We believe these results can help speed up the computation of persistent homology, especially in phylogenetic settings where TDA has already made important contributions \cite{viral-evolution, vr-hyp-geodesic-defect, tda-covid-recombination}. Circular decomposable spaces in particular are very efficient to work with. For example, the path that follows the circular ordering in such a space is an exact solution for the Traveling Salesman Problem \cite{CDM-structure, CDM-note}. Crucially, identifying that an $n$ point metric space is circular decomposable and finding the decomposition can be done in $O(n^2)$ time \cite{CDM-structure, CDM-recognition}. Our Corollary \ref{cor:VR_circular_monotone} depends on a function $\sigma:X \to X$ that can also be computed in quadratic time. Hence, identifying a circular decomposable metric that satisfies the hypothesis of Corollary \ref{cor:VR_circular_monotone} and computing its persistent homology can be done in $O(n^2(k + \log(n)))$ time thanks to \cite{ph-circle}.\\
\indent The immediate benefit of a block-induced decomposition of $\vr_r(X)$ is breaking up the computation of persistent homology into smaller spaces. We make use of a cubic-time algorithm from \cite{cut-points-algorithm} that computes the block decomposition of the tight span of any $X$ and implement Corollary \ref{cor:VR_wedge} in Python. Our experiments show that the block decomposition is faster than the Python implementation of persistent homology \cite{ripser-python} starting from homological dimension 3. However, if we have a priori knowledge of the block decomposition, we may avoid using the cubic-time algorithm to obtain even larger speedups. See Corollary \ref{cor:VR_wedge_subset} and the discussion in Section \ref{sec:discussion}. These results are explained by the fact that a tight span that has multiple blocks intuitively resembles a tree, except that nodes are replaced with blocks,  so finding the persistent homology of $X$ reduces to computing on each ``node'' and pasting the results.\\
\indent All in all, split decomposable spaces are a more general model for phylogenetic evolution that allows for the existence of cycles without deviating too much from a tree. We hope our results improve the computational speed of persistent homology beyond metric trees.

\paragraph{Related work.} Proposition 3.7 of \cite{vr_metric_gluings} says that $\vr_r(X \vee Y) \simeq \vr_r(X) \vee \vr_r(Y)$. Corollary \ref{cor:VR_wedge_subset} generalizes this proposition in that $X$ and $Y$ don't have to be glued themselves, they only have to be a subset of a metric gluing $Z_1 \vee Z_2$. Taking this idea further, in Theorem \ref{thm:VR_wedge} we prove that an injective space $E$ that decomposes as a wedge sum of other injective spaces induces a decomposition of $\vr_r(X)$ for any finite $X \hookrightarrow E$. In other words, instead of gluing VR complexes, we use the decomposition of the ambient injective space $E$ to realize $\vr_r(X)$ as a gluing of VR complexes of subsets of $X$ that is more complicated than a wedge union.

\paragraph{Acknowledgments.}
The author thanks Facundo M\'{e}moli for introducing him to the paper \cite{metric-decomposition} for useful discussions and suggestions during the writing of this paper, along with Tom Needham and Henry Adams for further discussions. The author also acknowledges funding from BSF Grant 2020124 received during the time he was finishing his PhD. 	\section{Preliminaries}
In this section, we set the notation and collect the most important theorems and definitions from our sources.

\subsection{Notation}
Let $(X,d_X)$ be a metric space. The diameter of $\sigma \subset X$ is $\diam(\sigma) = \sup_{x,y \in X} d_X(x,y)$, and its radius is $\rad(\sigma) = \inf_{x \in X} \sup_{p \in \sigma} d_X(x,p)$. The (open) Vietoris-Rips complex (or VR complex for short) is the simplicial complex
\begin{equation*}
	\vr_r(X) := \{ \sigma \text{ finite} : \diam(\sigma) < r \}.
\end{equation*}

We model the circle $\Sp^1$ as the quotient $\R/\Z$ and equip it with the geodesic distance $d_{\Sp^1}$ scaled so that $\Sp^1$ has circumference 1. We say that a path $\gamma:[0,1] \to \Sp^1$ is clockwise if any lift $\widetilde{\gamma}:[0,1] \to \R$ is increasing -- this defines the clockwise direction on $\Sp^1$.

\begin{defn}[Cyclic order]
	\label{def:circular-order}
	Given three points $a,b,c \in \Sp^1$, we define the \define{cyclic order} by setting $a \prec b \prec c$ if $a,b,c$ are pairwise distinct and the clockwise path from $a$ to $c$ contains $b$. We write $a \preceq b \prec c$ or $a \prec b \preceq c$ to allow $a=b$ or $b=c$, respectively.
\end{defn}

\begin{defn}[Circular sum]
	\label{def:circular-sum}
	Fix $n > 0$ and consider a function $f:\{1, \dots, n\} \to \R$. Given $a, b \in \{1, \dots, n\}$, we define the \define{circular sum} as
	\begin{equation*}
		\osum_{i=a}^b f(i) := \sum_{i=a} ^b f(i)
	\end{equation*}
	when $a \leq b$ and
	\begin{equation*}
		\osum_{i=a}^b f(i) := \sum_{i=a} ^n f(i) + \sum_{i=1}^b f(i)
	\end{equation*}
	when $a > b$. The value of $n$ will be clear from context.
\end{defn}

\begin{remark}
	The expression $\sum_{i=a}^b f(i)$ when $a > b$ usually represents an empty sum and its value is 0 by convention, while the notation $\osum_{i=a}^b f(i)$ is usually not 0. However, the degenerate cases in some results (like those in Section \ref{subsec:circular-metrics-monotone}) could involve empty circular sums. Since we are deviating from the above convention, we never write empty circular sums, and always state degenerate cases separately if they would require an empty circular sum.
\end{remark}

\indent Two equivalent formulations of the circular sum are:
\begin{equation*}
	\osum_{i=a}^b f(i) = \sum_{i=0}^{b-a} f( a+i \hspace{-0.5em} \mod(n) ) = \sum_{a \preceq i \preceq b} f(i)
\end{equation*}

\noindent In fact, we think of $\osum_{i=a}^{b} f(i)$ as a summation of $f$ over the vertices of a regular $n$-gon inscribed on the circle. Indeed, if we label the vertices clockwise from $1$ to $n$, the sum $\osum_{i=a}^b f(i)$ adds the value of $f$ at all the vertices in the clockwise path between the $a$-th and the $b$-th vertices (inclusive). If $a > b$, the clockwise path starting at the $a$-th vertex has to go through the $n$-th vertex before getting to the $b$-th vertex.

\subsection{Background}
\paragraph{Split systems.}
Our main object of study is the split-metric decomposition of a finite metric. Given a finite set $X$, a \define{split} is a partition $S = \{A, B\}$ of $X$. We denote splits by $A | B$, $B | A$, or $A | \overline{A}$ where $\overline{A}$ is the complement of $A$ in $X$. Given $x \in X$, we denote by $S(x)$ the element of $S$ that contains $x$. A collection $\cS = \{S_1, \dots, S_m\}$ of splits is called a \define{split system}. A weighted split system is a pair $(\cS, \alpha)$ where $\alpha:\cS \to \R_{>0}$. We write the value of $\alpha$ at $S \in \cS$ as $\alpha_S$.

\paragraph{Weakly compatible split systems.}
A split system is \define{weakly compatible} if there are no three splits $S_1, S_2, S_3$ and four elements $x_0, x_1, x_2, x_3 \in X$ such that
\begin{equation*}
	S_j(x_i) = S_j(x_0) \text{ if and only if } i=j.
\end{equation*}
Any finite metric space $(X, d_X)$ has an associated weakly compatible split system (which could be empty) with weights induced by the metric. For any $a_1, a_2, b_1, b_2 \in X$ (not necessarily distinct), define
\begin{equation*}
	\beta_{\{a_1,a_2\}, \{b_1, b_2\}} := \max \left\{
	\begin{array}{c}
		d_X(a_1, b_1) + d_X(a_2, b_2)\\
		d_X(a_1, b_2) + d_X(a_2, b_1)\\
		d_X(a_1, a_2) + d_X(b_1, b_2)
	\end{array}
	\right\}
	-\big[ d_X(a_1, a_2) + d_X(b_1, b_2) \big].
\end{equation*}
If $A|B$ is a split of $X$, the \define{isolation index} of $A|B$ is the number
\begin{equation*}
	\alpha_{A|B} := \min_{\substack{a_1, a_2 \in A\\b_1, b_2 \in B}} \beta_{\{a_1,a_2\}, \{b_1, b_2\}}.
\end{equation*}
Note that $\beta_{\{a_1,a_2\}, \{b_1, b_2\}} \geq 0$ and, as a consequence, $\alpha_{A|B} \geq 0$. If $\alpha_{A|B} \neq 0$, the split $A|B$ is called a \define{$d_X$-split}. The set
\begin{equation*}
	\cS(X,d_X) := \{ A|B \text{ split of } X  \text{ such that } \alpha_{A|B} \neq 0 \}
\end{equation*}
is called the \define{system of $d_X$-splits} of $(X, d_X)$ and it is weighted by the isolation indices $\alpha_{A|B}$. Bandelt and Dress proved that $\cS(X,d_X)$ is always weakly compatible \cite[Theorem 3]{metric-decomposition}. However, not every metric space $(X,d_X)$ has $d_X$-splits. We call such a space \define{split-prime} \cite{metric-decomposition}.

\begin{example}
	\label{ex:prime_metrics}
	The complete bipartite graph $K_{2,3}$ with the shortest path metric $d$ has no $d$-splits. It is not hard to prove (see also the introduction of Section 2 of \cite{metric-decomposition}) that any $d$-split $A|\overline{A}$ must be $d$-convex in the sense that $x,y \in A$ and $d(x,y) = d(x,z) + d(z,y)$ implies $z \in A$ (likewise for $\overline{A}$). It can be checked that $K_{2,3}$ cannot be split into two $d$-convex sets, so it is prime. The hypercube graph $H_n$ with the shortest path metric is also split-prime for $n \geq 3$ (see the description after Proposition 3 in \cite{metric-decomposition}).
\end{example}
\begin{figure}[ht]
	\centering
\hfill
\begin{minipage}{0.40\linewidth}
	\centering
	
	\begin{tikzpicture}
		\def \radius {2.5pt};
		
\coordinate (P1) at (0,0);
		\coordinate (P2) at (0,-1);
		\coordinate (P3) at (0,1);
		
		\coordinate (Q1) at (-1.5,0);
		\coordinate (Q2) at (1.5,0);
		
\fill (P1) circle (\radius);
		\fill (P2) circle (\radius);
		\fill (P3) circle (\radius);
		\fill (Q1) circle (\radius);
		\fill (Q2) circle (\radius);
		
\draw (Q1) -- (P1);
		\draw (Q1) -- (P2);
		\draw (Q1) -- (P3);
		
		\draw (Q2) -- (P1);
		\draw (Q2) -- (P2);
		\draw (Q2) -- (P3);
	\end{tikzpicture}
\end{minipage}
\begin{minipage}{0.40\linewidth}
	\centering
	
	\begin{tikzpicture}[scale = 1,
		z={(-.3cm,-.2cm)}, line join=round, line cap=round ]
		
		\def \radius {2.5pt};
		\def \a {2};
		
\draw ( 0, 0, 0) -- (\a, 0, 0) -- (\a,\a, 0) -- ( 0,\a, 0) -- ( 0, 0, 0);
		\draw ( 0, 0,\a) -- (\a, 0,\a) -- (\a,\a,\a) -- ( 0,\a,\a) -- ( 0, 0,\a);
		\draw ( 0, 0, 0) -- ( 0, 0,\a);
		\draw (\a, 0, 0) -- (\a, 0,\a);
		\draw (\a,\a, 0) -- (\a,\a,\a);
		\draw ( 0,\a, 0) -- ( 0,\a,\a);
		
\fill ( 0, 0, 0) circle (\radius);
		\draw[fill=white] ( 0, 0,\a) circle(\radius);
		\draw[fill=white] ( 0,\a, 0) circle(\radius);
		\fill ( 0,\a,\a) circle(\radius);
		\draw[fill=white] (\a, 0, 0) circle(\radius);
		\fill (\a, 0,\a) circle(\radius);
		\fill (\a,\a, 0) circle(\radius);
		\draw[fill=white] (\a,\a,\a) circle(\radius);
	\end{tikzpicture} 
\end{minipage}
\hfill 	\caption{\textbf{Left:} The complete bipartite graph $K_{2,3}$. There are no splits of $K_{2,3}$ into $d$-convex sets, so $K_{2,3}$ has no $d$-splits. \textbf{Right:} The hypercube graph $H_3$. The distance between any pair of white vertices is 2, so their $\beta_{\{a_1,a_2\}, \{b_1, b_2\}}$ coefficient is 0. This prevents $H_3$ from having any $d$-split.}
	\label{fig:prime_metrics}
\end{figure}

\paragraph{Split-metric decompositions.}
Given a finite set $X$ and a split $A|B$ of $X$, a \define{split-metric} is the pseudometric defined by
\begin{equation*}
	\delta_{A|B}(x,y) :=
	\begin{cases}
		0 & x,y \in A \text{ or } x,y \in B\\
		1 & \text{otherwise}.
	\end{cases}
\end{equation*}
\indent One of the main results of Bandelt and Dress is the decomposition of any finite metric as a linear combination of split-metrics and a split-prime residue. 
\begin{theorem}[{\cite[Theorem 2]{metric-decomposition}}]
	\label{thm:split-decomposition}
	Any metric $d_X:X \times X \to \R$ on a finite set $X$ decomposes as
	\begin{equation}
		\label{eq:split-decomposition}
		d_X = d_0 + \sum_{A|B \in \cS(X,d_X)} \alpha_{A|B} \cdot \delta_{A|B}
	\end{equation}
	where $d_0$ is split-prime and the sum runs over all $d_X$-splits $A|B$. Moreover, the decomposition is unique in the following sense. Let $\cS'$ be a weakly compatible split system on $X$ with weights $\lambda_S>0$ for $S \in \cS'$. If $d_X = d_0 + \sum_{S \in \cS'} \lambda_S \cdot \delta_S$, then there is a bijection $f:\cS' \to \cS(X,d_X)$ such that $\alpha_{f(S)} = \lambda_{S}$.
\end{theorem}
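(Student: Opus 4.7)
The plan is to prove existence and uniqueness separately. For existence, define $d_0 := d_X - \sum_{A|B \in \cS(X,d_X)} \alpha_{A|B} \cdot \delta_{A|B}$ and verify that $d_0$ is nonnegative, satisfies the triangle inequality, and is split-prime; symmetry is automatic. The delicate point is nonnegativity. For a fixed pair $(x,y)$ only splits that separate $x$ from $y$ contribute to the subtracted sum, and the bound $\sum \alpha_{A|B} \leq d_X(x,y)$ (summed over splits separating $x$ and $y$) reduces, by the definition of $\alpha$ as a minimum over $\beta$-quadruples, to a family of inequalities that I would prove inductively: remove a split of maximum isolation index from the system, check that the reduced metric $d_X - \alpha_{A|B}\cdot\delta_{A|B}$ still satisfies the triangle inequality, and iterate. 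Weak compatibility of $\cS(X,d_X)$, which is the Bandelt-Dress structural theorem cited just before, is exactly what makes successive subtractions non-conflicting.

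Split-primeness of the residue $d_0$ I would then argue by contradiction: if $A|B$ were a $d_0$-split with positive isolation index, then $\cS(X,d_X) \cup \{A|B\}$ would remain weakly compatible and yield a strictly larger split decomposition of $d_X$, contradicting the maximality of $\cS(X,d_X)$ as the system of \emph{all} $d_X$-splits. For uniqueness, suppose $d_X = d_0 + \sum_{S \in \cS'} \lambda_S\cdot\delta_S$ with $\cS'$ weakly compatible and each $\lambda_S > 0$. For each $S = A|B \in \cS'$, I would use the weak compatibility of $\cS'$ to produce a quadruple $a_1, a_2 \in A$, $b_1, b_2 \in B$ at which $S$ is the unique split of $\cS'$ whose sides separate both pairs in a manner that contributes to $\beta_{\{a_1,a_2\},\{b_1,b_2\}}$. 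Evaluating at such a quadruple gives $\beta_{\{a_1,a_2\},\{b_1,b_2\}} = \lambda_S$ plus a contribution from $d_0$ that must vanish in the minimum defining $\alpha_S$ because $d_0$ is split-prime. Hence $\alpha_S = \lambda_S > 0$ and $S \in \cS(X,d_X)$; the reverse inclusion and the bijection $f$ then follow by comparing the two split-metric sums and invoking the linear independence of distinct split-metrics $\delta_S$ modulo the common residue $d_0$.

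The main obstacle is the quadruple-existence step in the uniqueness argument: weak compatibility of $\cS'$ is precisely the structural input that guarantees, for each $S \in \cS'$, four points isolating the contribution of $S$ from all other splits in $\cS'$, but the combinatorial bookkeeping to locate them is nontrivial. On the existence side, the analogous hard step is the inductive triangle-inequality check after each greedy subtraction, which is effectively the heart of Theorem~2 of Bandelt and Dress and is where the $\beta$-formula must interact correctly with linear combinations of split-metrics.
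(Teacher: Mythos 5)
This statement is not proved in the paper at all: it is Theorem 2 of Bandelt--Dress, imported verbatim as background (the paper's ``proof'' is the citation \cite{metric-decomposition}), so there is no internal argument to compare yours against. Judged on its own merits, your proposal is a reasonable map of the standard architecture (define the residue $d_0$, show it is a split-prime pseudometric, then use weak compatibility for uniqueness), but as written it defers exactly the steps that constitute the theorem and, in two places, relies on inferences that do not hold as stated. First, the iterated-subtraction argument for existence needs the key coherence lemma of Bandelt--Dress: after subtracting $\alpha_{A|B}\,\delta_{A|B}$, the remaining splits must still be splits of the reduced function \emph{with unchanged isolation indices}, otherwise the greedy iteration does not terminate at the sum with the original coefficients $\alpha_{A|B}$. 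You attribute this to weak compatibility, but weak compatibility is the input for linear independence and uniqueness; the coherence of successive subtractions is a separate computation with the $\beta$-quadruples, and since $\beta$ is only subadditive (the maximum of a sum is at most the sum of the maxima), isolation indices are not additive in any obvious way --- this is precisely the content you would have to supply. Your argument for split-primeness of $d_0$ has the same circularity: that a $d_0$-split with positive index would ``yield a strictly larger split decomposition of $d_X$'' presupposes that its $d_0$-isolation index adds to the already-subtracted amounts to give a larger $d_X$-isolation index, which is again the unproved additivity.

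Second, in the uniqueness step the inference ``the contribution from $d_0$ must vanish in the minimum defining $\alpha_S$ because $d_0$ is split-prime'' is not valid. Split-primeness of $d_0$ says that for each split the \emph{minimum} over quadruples of $\beta^{d_0}$ is zero, i.e.\ some quadruple annihilates it; it does not say that $\beta^{d_0}$ vanishes at the particular quadruple you chose to isolate $S$ from the other members of $\cS'$. You would need a single quadruple that simultaneously isolates $S$ within $\cS'$ and kills the $d_0$-term, and producing it (or circumventing it, as Bandelt--Dress do via a chain of lemmas about isolation indices of weighted weakly compatible systems and the resulting linear independence of the $\delta_S$) is the genuinely hard part, which your proposal acknowledges but does not close. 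So the proposal is an outline of the right strategy with the two load-bearing steps missing, one of them propped up by an incorrect use of split-primeness.
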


\noindent Equation (\ref{eq:split-decomposition}) is called the \define{split-metric decomposition} of $(X,d_X)$, or \define{split decomposition} for brevity.

\begin{remark}
	\label{rmk:totally-decomposable-metric}
	Theorem 2 is stated in more generality in \cite{metric-decomposition}. The version therein holds for any symmetric function $d_X:X \times X \to \R$ with 0 diagonal if we allow negative coefficients in (\ref{eq:split-decomposition}). We won't need the full result, but note that if $d_0$ is a (pseudo)metric, then $d_X$ is also a (pseudo)metric.
\end{remark}

\begin{defn}
	\label{def:totally-decomposable-sum}
	Given a weighted weakly compatible split system $(\cS, \alpha)$ on $X$, we define the pseudometric
	\begin{equation*}
		d_{\cS,\alpha} := \sum_{A|B \in \cS(X,d_X)} \alpha_{A|B} \cdot \delta_{A|B}
	\end{equation*}
\end{defn}

\begin{defn}[Totally decomposable spaces]
	\label{def:totally-decomposable-metric}
	A metric space $(X,d_X)$ is called \define{totally decomposable} if $d_0=0$ in Equation (\ref{eq:split-decomposition}) or, equivalently, if $d_X = d_{\cS, \alpha}$ where $\cS = \cS(X,d_X)$ and $\alpha_S$ is the isolation index of $S \in \cS$.
\end{defn}

\paragraph{Circular collections of splits.}
Suppose $X$ is a set with $n$ points. Then a weakly compatible split system $\cS$ on $X$ can have at most $\binom{n}{2}$ splits. The reason is that the vector space of symmetric functions $f:X \times X \to \R$ with $0$ diagonal has dimension $\binom{n}{2}$ and the uniqueness in Theorem \ref{thm:split-decomposition} implies that the set of split-metrics $\{ \delta_S : S \in \cS \}$ is linearly independent \cite[Corollary 4]{metric-decomposition}.\\
\indent There are examples of split systems that achieve this bound. Let $V_n$ be the vertices of a regular $n$-gon. Any line that passes through two distinct edges $(v_{i-1}, v_{i})$ and $(v_j, v_{j+1})$ separates $V_n$ into the sets $\{v_i, \dots, v_j\}$ and $\{v_{j+1}, \dots, v_{i-1}\}$. Since there are $n$ edges, this construction gives a split system $\cS(V_n)$ with $\binom{n}{2}$ different splits, and it can be verified that $\cS(V_n)$ is weakly compatible. It turns out that, up to a bijection of the underlying sets, this is the only example.

\begin{theorem}[{\cite[Theorem 5]{metric-decomposition}}]
	\label{thm:circular-splits}
	The following conditions are equivalent for a weakly compatible split system $\cS$ on a set $X$ with $|X|=n$:
	\begin{enumerate}
		\item $|\cS| = \binom{n}{2}$ ;
		\item There exists a bijection $f:X \to V_n$ such that $f(\cS) = \cS(V_n)$;
		\item The set $\{\delta_S : S \in \cS\}$ is a basis of the vector space of all symmetric functions $d_X:X \times X \to \R$ with 0 diagonal.
	\end{enumerate}
\end{theorem}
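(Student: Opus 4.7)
The plan is to prove the chain $(2) \Rightarrow (1) \Leftrightarrow (3)$, which is essentially formal from the setup, and then attack the substantive implication $(1) \Rightarrow (2)$ by induction on $n$. For $(2) \Rightarrow (1)$, the construction of $\cS(V_n)$ preceding the theorem gives $\binom{n}{2}$ distinct splits, and any bijection $f:X \to V_n$ with $f(\cS) = \cS(V_n)$ induces a bijection of split systems, so $|\cS| = \binom{n}{2}$. For $(1) \Leftrightarrow (3)$, the vector space $V$ of symmetric functions $X \times X \to \R$ with zero diagonal has dimension $\binom{n}{2}$, and by the linear independence statement \cite[Corollary 4]{metric-decomposition} invoked earlier, the family $\{\delta_S : S \in \cS\}$ is linearly independent in $V$ whenever $\cS$ is weakly compatible. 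Hence it is a basis of $V$ precisely when $|\cS| = \dim V = \binom{n}{2}$.

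The main content is $(1) \Rightarrow (2)$, for which I would induct on $n$. The cases $n \leq 3$ are immediate because every weakly compatible split system on such $X$ trivially embeds into $\cS(V_n)$. For the inductive step, pick $x \in X$ and consider the restriction $\cS|_x$ consisting of all $(A \setminus \{x\}) | (B \setminus \{x\})$ for $A | B \in \cS$, discarding those that become trivial. Weak compatibility of $\cS|_x$ is inherited: any forbidden triple of splits and four witnesses in $X \setminus \{x\}$ pulls back unchanged to $X$. Two distinct splits of $\cS$ can only restrict to the same split of $X \setminus \{x\}$ if they differ exactly by whether $x$ sits on the left or the right, i.e.\ they are of the form $A | B$ and $(A \cup \{x\}) | (B \setminus \{x\})$. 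The key claim is that some $x \in X$ can be chosen so that $|\cS|_x| = \binom{n-1}{2}$ as a set of distinct splits. Assuming this, the inductive hypothesis supplies a cyclic ordering of $X \setminus \{x\}$ that realizes $\cS|_x$ as $\cS(V_{n-1})$, and the remaining splits of $\cS$ (those that collapse under restriction together with any split of the form $\{x\} | (X \setminus \{x\})$) pin down a unique arc of $V_{n-1}$ into which $x$ must be inserted to recover $\cS \cong \cS(V_n)$.

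The hard part is the key claim, because the naive two-to-one bound on the restriction map only gives $|\cS|_x| \geq \binom{n}{2}/2$, which is far from what is needed. To sharpen it one has to use weak compatibility: if for a given $x$ many pairs $A|B,\ (A \cup \{x\})|(B \setminus \{x\})$ both sit in $\cS$, an averaging argument over $x \in X$ combined with the quadruple-exclusion in the definition of weak compatibility should produce a forbidden triple of splits together with four witnesses, contradicting that $\cS$ is weakly compatible. Making this double count precise, and then verifying that the insertion of $x$ into the $(n-1)$-gon produced by induction does recover $\cS(V_n)$ on the nose, is where the real work lies.
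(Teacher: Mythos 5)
This statement is not proved in the paper at all: it is imported verbatim as \cite[Theorem 5]{metric-decomposition}, so there is no internal argument to compare yours against, and your proposal has to stand or fall on its own. The formal part of your chain is fine: $(2)\Rightarrow(1)$ is the count $|\cS(V_n)|=\binom{n}{2}$, and $(1)\Leftrightarrow(3)$ follows from the dimension count together with the linear independence of $\{\delta_S : S \in \cS\}$ for weakly compatible $\cS$ (Corollary 4 of Bandelt--Dress), exactly as you say.

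The problem is that the entire substance of the theorem sits in $(1)\Rightarrow(2)$, and there your argument has an acknowledged hole rather than a proof. The ``key claim'' --- that some $x\in X$ can be chosen with $|\cS|_x| = \binom{n-1}{2}$ --- is exactly where weak compatibility has to do real work: you need that at most one split of $\cS$ has $\{x\}$ as a part and that at most $n-2$ pairs of the form $A|B$, $(A\cup\{x\})|(B\setminus\{x\})$ both lie in $\cS$, and the ``averaging argument \dots should produce a forbidden triple'' is a hope, not an argument; as you note yourself, the naive two-to-one bound is nowhere near sufficient. Moreover, even granting the key claim, the reinsertion step is also asserted rather than proved: knowing $\cS|_x = \cS(V_{n-1})$ and $|\cS|=\binom{n}{2}$ does not by itself show that every split of $\cS$ in which $x$ occurs non-trivially is an interval split for one and the same insertion arc of the $(n-1)$-gon --- that containment $\cS \subset \cS(V_n)$ is the actual content of $(1)\Rightarrow(2)$, and establishing it again requires weak compatibility, not just cardinality. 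So the proposal is a reasonable plan with the two hardest steps left open; as it stands it is not a proof, and if you need this result you should simply cite Bandelt--Dress, as the paper does.
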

\noindent Note that such an $X$ inherits the cyclic order from $V_n$. Explicitly, if $f(x_i) = v_i$, then $x_i \prec x_j \prec x_k$ if and only if $v_i \prec v_j \prec v_k$.\\

\indent We later study not only $\cS(V_n)$ but its subsets as well.
\begin{defn}
	\label{def:circular-splits}
	Let $\cS$ be a split system on a set $X$. If there exists a bijection $f:X \to V_n$ such that $f(\cS) \subset \cS(V_n)$, we say that $\cS$ is a \define{circular} split system. Likewise, a finite metric space $(X, d_X)$ is \define{circular decomposable} if $\cS(X,d_X)$ is circular. If in addition $|\cS|=\binom{n}{2}$, we say that $\cS$ is a \define{full} circular system. We define \define{full} circular decomposable metrics analogously.
\end{defn}

\paragraph{Clique complexes of cyclic graphs.}
The main tool used in \cite{aa17} to compute the homotopy type of $\vr_r(\Sp^1)$ are directed cyclic graphs.

\begin{defn}
	\label{def:cyclic_graph_0}
	A directed graph $\vec{G} = (V,E)$ is called \define{cyclic} if $V$ has a cyclic order $x_1 \prec x_2 \prec \cdots \prec x_n$ and for every $(x_i, x_j) \in E$, either $j=i+1$ or $(x_{i+1}, x_j), (x_{i}, x_{j-1}) \in E$.
\end{defn}

For example, the 1-skeleton of $\vr_r(X)$ for any finite $X \subset \Sp^1$ and $r < 1/2$ can be given a natural orientation that makes it a cyclic graph. The clockwise orientation of $\Sp^1$ endows $X$ with a cyclic order, and a 1-simplex of $\vr_r(X)$ is oriented as $(x_i, x_j)$ if the shortest path from $x_i$ to $x_j$ is clockwise. We are interested in studying VR complexes that have cyclic 1-skeleton. Given that general metric spaces don't have a natural direction, we use the next definition to emulate the clockwise direction of $\Sp^1$.

\begin{defn}
	\label{def:monotone_metrics}
	Let $X = \{1, \dots, n\}$. A metric $d_X$ on $X$ is \define{monotone}\footnote{We adopted the term ``monotone'' from \cite{CDM-recognition}.} if $X$ has a cyclic order $1 \prec 2 \prec \cdots \prec n$ and there exists a function $M:X \to X$  such that $M(a) \neq a$ for all $a \in X$ and:
	\begin{enumerate}
		\item\label{it:mon_1} If $a \preceq b \prec c \preceq M(a)$, then $d_{b,c-1} < d_{bc}$ and $d_{b+1,c} \leq d_{bc}$.
		\item\label{it:mon_2} If $M(a) \prec b \prec c \preceq a$, then $d_{b,c-1} < b_{bc}$ and $d_{b+1,c} \leq d_{bc}$.
		\item\label{it:mon_3} $a \prec b \preceq M(a) \Rightarrow M(b) \preceq a \prec b$ and $M(b) \prec a \prec b \Rightarrow a \prec b \preceq M(a)$.
	\end{enumerate}
\end{defn}

The most natural example of monotone metrics are antipodal subsets of $\Sp^1$. In Section \ref{sec:circular-metrics}, we will show that the metric of any finite $X \subset \Sp^1$ is monotone.
\begin{example}
	\label{ex:circle_monotone}
	Let $n = 2k$, and let $p_1, p_2, p_3, \cdots, p_n$ be points of $\Sp^1$ in clockwise order so that $p_{i+k}$ is the antipodal point of $p_i$ (index sums are done modulo $n$). Let $d_{ab} := d_{\Sp^1}(p_a, p_b)$. Note that for any $a \preceq b \prec c \preceq a+k$, the points $p_a, p_b, p_c, p_{a+k}$ lie in that order in a semicircle, so $d_{b,c-1} < d_{bc}$ and $d_{b+1,c} < d_{bc}$. The same conclusion holds if $a+k \preceq b \prec c \preceq a$, so $M(a) = a+k$ for all $a$. Furthermore, for any $a \prec b \preceq M(a) = a+k$, we have $M(b) = b+k \preceq a +2k = a \prec b$. If $M(b) = b+k \prec a \preceq b$, then $b+2k \prec a+k \preceq b+k$. In other words, $b \prec M(a) \preceq M(b)$, which combined with $M(b) \prec a \prec b$, implies $a \prec b \preceq M(a)$.
\end{example}

\indent The function $M$ from Definition \ref{def:monotone_metrics} endows $G_r$, the 1-skeleton of $\vr_r(X)$, with an orientation. If $\{a, b\}$ is an edge of $G_r$, we orient it from $a$ to $b$ if $a \prec b \preceq M(a)$ and from $b$ to $a$ if $M(a) \prec b \prec a$. The last condition of Definition \ref{def:monotone_metrics} ensures that this orientation is not ambiguous whenever $a \neq M(b)$ because it prevents both $a \prec b \preceq M(a)$ and $b \prec a \preceq M(b)$ from holding simultaneously. Indeed, the first inequality along with $a \neq M(b)$ forces $M(b) \prec a \prec b$. However, the choice of orientation is not clear when $a = M(b)$ because both $a \prec b \preceq M(a)$ and $b \prec a \preceq M(b)$ hold. Despite this ambiguity, this orientation makes $G_r$ into a cyclic graph for all values of $r$ that we are interested in.

\begin{lemma}
	\label{lemma:monotone_implies_cyclic}
	Let $X = \{1, \dots, n\}$ and fix $0 < r < \rad(X)$. Let $G_r$ be the 1-skeleton of $\vr_r(X)$. If $d_X$ is monotone and $\vr_r(X)$ is not contractible, then $G_r$ is cyclic.
\end{lemma}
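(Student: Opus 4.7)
The plan is to verify Definition \ref{def:cyclic_graph_0} directly: for every directed edge $(a, b)$ of $G_r$ with $b \neq a+1$ in the cyclic order, show that both $(a, b-1)$ and $(a+1, b)$ are also directed edges of $G_r$ with the required orientations. Fix such an edge $(a,b)$. By the orientation rule stated after Definition \ref{def:monotone_metrics}, the direction $a \to b$ means $a \prec b \preceq M(a)$, and the edge carries $d_{a,b} < r$. Specializing Condition \ref{it:mon_1} with its variables $(a, b, c)$ set to $(a, a, b)$, so that its hypothesis $a \preceq a \prec b \preceq M(a)$ is automatic, yields
\begin{equation*}
    d_{a, b-1} < d_{a,b} \quad \text{and} \quad d_{a+1, b} \leq d_{a,b}.
\end{equation*}
Since $d_{a,b} < r$, both $\{a, b-1\}$ and $\{a+1, b\}$ are edges of $G_r$.

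Next I would check the orientations. The chain $a \prec b-1 \prec b \preceq M(a)$ shows that $\{a, b-1\}$ is oriented $a \to b-1$. For $\{a+1, b\}$, the desired orientation $a+1 \to b$ amounts to $a+1 \prec b \preceq M(a+1)$; the first inequality is immediate from $b \neq a+1$. To prove the second, suppose for contradiction that $M(a+1) \prec b \prec a+1$. The first implication of Condition \ref{it:mon_3} applied to $a \prec b \preceq M(a)$ yields $M(b) \preceq a \prec b$, which places $M(b)$ in the clockwise arc from $b+1$ to $a$ and in particular forces $M(b) \notin \{a+1, a+2, \ldots, b-1\}$. The second implication of Condition \ref{it:mon_3} applied with variables $(a, b)$ set to $(b, a+1)$ yields $b \prec a+1 \preceq M(b)$, which instead forces $M(b) \in \{a+1, a+2, \ldots, b-1\}$. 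These two requirements are incompatible, so $b \preceq M(a+1)$ must hold and the edge $\{a+1, b\}$ is oriented $a+1 \to b$.

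The remaining subtlety I would address is the ambiguous case where both $a = M(b)$ and $b = M(a)$ hold, so the orientation of $\{a,b\}$ is not determined by the monotone structure. The ambiguity does not propagate to the derived edges: for instance, $\{a+1, b\}$ would only be ambiguous if $a+1 = M(b) = a$, which is impossible. Hence any choice of orientation for each ambiguous edge preserves the argument above, and $G_r$ is cyclic. The hypotheses $r < \rad(X)$ and $\vr_r(X)$ non-contractible exclude the degenerate regime where a single vertex dominates the complex, in which the cyclic-graph formalism imported from \cite{aa17} would be vacuous. The main obstacle is getting the cyclic-order bookkeeping right in the contradiction step, which I handled above by locating $M(b)$ in two disjoint arcs using the two implications of Condition \ref{it:mon_3}.
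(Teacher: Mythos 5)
Your core propagation step coincides with the paper's: apply item \ref{it:mon_1} of Definition \ref{def:monotone_metrics} with the triple $(a,a,b)$ to get $d_{a,b-1}<d_{ab}$ and $d_{a+1,b}\le d_{ab}$, hence the two derived edges of $\vr_r(X)$. Where you genuinely diverge is in the treatment of ambiguous edges and of the non-contractibility hypothesis. The paper spends the first half of its proof showing that an ambiguously orientable edge (one with $a\prec b\preceq M(a)$ and $a=M(b)$) forces, via items \ref{it:mon_1} and \ref{it:mon_2}, $d_{ac}\le d_{a,M(a)}\le r$ for every $c$, so $\vr_r(X)$ is a cone over $a$ (indeed $r\ge\rad(X)$); non-contractibility therefore rules such edges out, the orientation of $G_r$ is canonical, and the propagation step finishes with the orientations of the derived edges left implicit. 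You never use the non-contractibility hypothesis: you orient each ambiguous edge arbitrarily and argue that derived edges can never themselves be ambiguous, so the required directed edges are forced; and you supply the orientation check for $(a+1,b)$ (the two applications of item \ref{it:mon_3} locating $M(b)$ in two disjoint arcs), a verification the paper glosses over. This is correct, and it shows the cyclicity conclusion is robust even without the hypothesis, at the price of a non-canonical orientation on ambiguous edges; the paper's route buys a canonical orientation of $G_r$, which is what the later use of the winding fraction of $G_r$ tacitly relies on.

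A few small repairs. The phrase ``$a+1=M(b)=a$'' is garbled: the correct statement is that ambiguity of $\{a+1,b\}$ would force $M(b)=a+1$, contradicting $M(b)\preceq a\prec b$. You should also state the symmetric check for $\{a,b-1\}$ (ambiguity would force $M(a)=b-1$, contradicting $b\preceq M(a)$), since under your arbitrary-orientation convention a wrongly oriented ambiguous derived edge is exactly the failure mode. You should note that every edge of $G_r$ receives at least one orientation (if neither $a\prec b\preceq M(a)$ nor $b\prec a\preceq M(b)$ held, the second implication of item \ref{it:mon_3} would be violated), so the directed graph you build really has $G_r$ as its underlying graph. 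Finally, your closing sentence about $r<\rad(X)$ and non-contractibility ``excluding a degenerate regime'' is not a step of your argument: in the paper those hypotheses do concrete work (the cone argument above), whereas in yours they are unused, so either drop the remark or replace it with that argument.
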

\begin{proof}
	\indent We claim that $\vr_r(X)$ is contractible if it has any edge $\{a, b\}$ such that $a \prec b \preceq M(a)$ and $a = M(b)$. Indeed, if $\{a, M(a)\}$ belongs to $\vr_r(X)$, then $r \geq d_{a, M(a)}$. By item \ref{it:mon_1} of Definition \ref{def:monotone_metrics}, we have $0 < d_{a,a+1} < d_{a,a+2} < \cdots < d_{a, M(a)} \leq r$. Similarly, starting with $b = M(a)+1$ in item \ref{it:mon_2}, we have $d_{M(a)+1, a} \geq d_{M(a)+2,a} \geq \cdots \geq d_{a-1,a}$. Lastly, note that $M(a)$ can never be $a$, so $a \prec b \preceq M(a)$ and $a = M(b)$ imply $b \preceq M(a) \prec a = M(b)$ and, thus, $b \prec M(a)+1 \preceq M(b)$. Then item \ref{it:mon_1} yields $d_{M(a)+1,a} \leq d_{M(a), a}$. In other words, $d_{ac} \leq d_{a,M(a)} \leq r$ for every $c \in X$, so $\vr_r(X)$ is a cone over $a$ and, thus, contractible\footnote{In fact, $r \geq \sup_{b \in X} d_{ab} \geq \rad(X)$.}.\\
	\indent If $\vr_r(X)$ is not contractible, every edge is oriented without ambiguity. Then for any edge $(a,b)$ in $\vec{G}_r$, we verify that $(a+1,b), (a,b-1) \in G_r$ whenever $b \neq a+1$. Since $(a,b) \in G_r$, we have $d_{ab} \leq r$ and either $a \prec b \preceq M(a)$ or $M(b) \prec a \prec b$. We assume $a \prec b \preceq M(a)$ because this inequality is implied by $M(b) \prec a \prec b$ and condition \ref{it:mon_3} of Definition \ref{def:monotone_metrics}. Since $a \prec b \preceq M(a)$ and $b \neq a-1$, we also have $a \prec a+1 \prec b \preceq M(a)$ and $a \prec b-1 \prec b \preceq M(a)$. Then by Definition \ref{def:monotone_metrics}, $d_{a+1,b} \leq d_{ab} \leq r$ and $d_{a,b-1} \leq d_{ab} \leq r$. Hence, $(a+1,b)$ and $(a,b-1)$ are edges of $\vec{G}_r$, verifying that $\vec{G}_r$ is cyclic.
\end{proof}

Thanks to the Lemma above, we can extend the definition of cyclic graphs to undirected graphs using an analogous function $M$.
\begin{defn}
	\label{def:cyclic_graph}
	Let $G$ be a graph with vertex set $X = \{1, \dots, n\}$. We say that $G$ is \define{cyclic} if $X$ has a cyclic order $1 \prec 2 \prec \cdots \prec n$ and there exists a function $M:X \to X$ such that:
	\begin{itemize}
		\item $a \prec b \preceq M(a) \Rightarrow M(b) \preceq a \prec b$ and $M(b) \prec a \prec b \Rightarrow a \prec b \preceq M(a)$.
		\item If $\{a, b\}$ is an edge of $G$ and $a \prec b \preceq M(a)$ then $b = a+1$ or $\{a+1, b\}$ and $\{a, b-1\}$ are edges of $G$,
	\end{itemize}
\end{defn}

Unoriented cyclic graphs can be oriented the same way as monotone metrics: an edge $\{a, b\}$ is oriented from $a$ to $b$ if $a \prec b \preceq M(a)$. Thanks to this, the results and constructions of \cite{aa17} also hold for undirected cyclic graphs. In particular, we extend the definitions of cyclicity and winding fraction to clique complexes.
\begin{defn}
	\label{def:cyclic_complex}
	A clique complex $\Cl(G)$ is called \define{cyclic} if its 1-skeleton $G$ is cyclic. Similarly, we define $\wf(\Cl(G)) := \wf(G)$ when $G$ is cyclic.
\end{defn}

The following is the main theorem of interest from \cite{aa17}.
\begin{theorem}[{\cite[Theorem 4.4]{aa17}}]
	\label{thm:aa17}
	If $G$ is a cyclic graph, then
	\begin{equation*}
		\Cl(G) \simeq
		\begin{cases}
			\Sp^{2l+1} & \text{if } \frac{l}{2l+1} < \wf(G) < \frac{l+1}{2l+3} \text{ for some } l=0, 1, \dots\\
			\bigvee^{n-2k-1} \Sp^{2l} & \text{if } \wf(G) = \frac{l}{2l+1} \text{ and } G \text{ dismantles to } C_{n}^{k}.
		\end{cases}
	\end{equation*}
\end{theorem}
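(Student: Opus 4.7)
The plan is to follow the Adamaszek--Adams strategy of reduction to a canonical graph followed by a direct computation. First I would establish a \emph{dismantling lemma}: if a vertex $v$ of a cyclic graph $G$ is \emph{dominated} by another vertex $w$ (meaning the closed neighborhood of $v$ is contained in that of $w$), then $\Cl(G)$ deformation retracts onto $\Cl(G \setminus \{v\})$, and $G \setminus \{v\}$ remains cyclic with the same winding fraction. Repeated dismantling terminates at either a clique (whose clique complex is a simplex, hence contractible and outside the scope of the theorem) or at a \emph{dismantling-minimal} cyclic graph.

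The second step is to identify dismantling-minimal cyclic graphs. The rigidity of Definition \ref{def:cyclic_graph} is key: once no proper domination remains, the edge set is determined by a single ``radius'' $k$ measured along the cyclic order, which forces $G$ to be isomorphic to the standard cyclic graph $C_n^k$ with vertex set $\{1, \ldots, n\}$ in cyclic order and edges exactly between pairs of cyclic distance at most $k$, for some $n \geq 2k+2$. Since dismantling preserves $\wf$, one has $\wf(G) = k/n$ in the reduced form, so it suffices to establish the theorem for $G = C_n^k$.

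The heart of the argument is computing $\Cl(C_n^k)$ directly. I would use discrete Morse theory: construct an acyclic matching on the face poset of $\Cl(C_n^k)$ whose pairing rule, indexed by a chosen base vertex, matches each non-critical simplex with the one obtained by toggling a distinguished ``next'' element in its cyclic listing. The design is such that a simplex is critical precisely when its pattern of gaps along the cyclic order satisfies an arithmetic condition determined by $k/n$; a case analysis then shows that for $l/(2l+1) < k/n < (l+1)/(2l+3)$ exactly one critical cell survives in each of dimensions $0$ and $2l+1$, yielding $\Sp^{2l+1}$, while at the boundary $k/n = l/(2l+1)$ the number of top-dimensional critical cells jumps to $n - 2k - 1$ in dimension $2l$, matching $\bigvee^{n-2k-1} \Sp^{2l}$.

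The main obstacle is this final computation: verifying acyclicity of the Morse matching and correctly counting critical cells, particularly at the phase transitions $\wf = l/(2l+1)$ where the matching rule requires carefully chosen exceptions. An alternative route is induction on $n - 2k$ via an explicit join decomposition of $\Cl(C_n^k)$ glued by Mayer--Vietoris, but this merely redistributes rather than removes the bookkeeping. The delicate point in either approach is showing that the homotopy type changes discontinuously at precisely the rational values $l/(2l+1)$ and nowhere else.
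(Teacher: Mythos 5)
A point of context first: the paper does not prove Theorem \ref{thm:aa17} at all; it is imported verbatim from \cite{aa17} (Theorem 4.4 there) and used as a black box, so your attempt can only be measured against the original argument. At the structural level your outline does reproduce its skeleton: \cite{aa17} likewise shows that deleting a dominated vertex preserves both the homotopy type of the clique complex and the winding fraction, that every cyclic graph dismantles to some $C_n^k$, and then reads off the homotopy type of $\Cl(C_n^k)$. Two smaller inaccuracies: dismantling to a clique is not ``outside the scope'' of the statement (for instance $C_{2k+1}^k$ is complete, and the second case then gives an empty wedge, i.e.\ a point); and since Definition \ref{def:cyclic_graph} here is an undirected reformulation via the function $M$, a complete argument in this paper's setting would also have to check that this notion transfers to the directed one of \cite{aa17}, which the paper only asserts.

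The genuine gap is your third step, which is where essentially all of the difficulty lives. Even \cite{aa17} does not prove the homotopy type of $\Cl(C_n^k)$ from scratch; it is quoted from Adamaszek's earlier work on clique complexes of powers of cycles, where it is the main theorem and requires a substantial inductive argument. Your proposal replaces this with an unspecified discrete Morse matching (``toggle a distinguished next element''), with acyclicity and the critical-cell count asserted rather than derived: the claims that exactly one critical cell survives in dimensions $0$ and $2l+1$ when $\frac{l}{2l+1} < k/n < \frac{l+1}{2l+3}$, and exactly $n-2k-1$ critical cells in dimension $2l$ when $k/n = \frac{l}{2l+1}$, are precisely the content to be proved, and nothing in the sketch pins the matching down enough to verify them. (If such a matching were exhibited, the conclusion would indeed follow, since a CW complex with one $0$-cell and all remaining cells in a single dimension is a wedge of spheres.) Similarly, the assertion that a dismantling-minimal cyclic graph is forced to be isomorphic to some $C_n^k$ is attributed to ``rigidity'' but not argued, and in \cite{aa17} it requires a genuine proof, as does the uniqueness (up to the stated data) of the dismantling target used in the critical case. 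As it stands, the proposal is a correct road map of the known argument rather than a proof.
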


\paragraph{Buneman complex.}
There is a polytopal complex associated to any weakly compatible split system $\cS$ on $X$ with weights $\alpha_S > 0$. Let
\begin{equation*}
	U(\cS) := \{A \subset X : \text{there exists } S \in \cS \text{ with } A \in S\}.
\end{equation*}
Given $\phi \in \R^{U(\cS)}$, let $\supp(\phi) := \{A \in U(\cS): \phi(A) \neq 0\}$. Define
\begin{equation*}
	H(\cS, \alpha) := \{ \phi \in \R^{U(\cS)} : \phi(A) \geq 0 \text{ and } \phi(A)+\phi(\overline{A}) = \tfrac{1}{2}\alpha(A|\overline{A}) \text{ for all } A \in U(\cS) \}.
\end{equation*}
Note that $H(\cS,\alpha)$ is polytope isomorphic to a hypercube of dimension $|\cS|$. The \emph{Buneman complex} of $(\cS,\alpha)$ is defined by
\begin{equation*}
	B(\cS,\alpha) := \{ \phi \in H(\cS,\alpha) : A_1, A_2 \in \supp(\phi) \text{ and } A_1 \cup A_2 = X \Rightarrow A_1 \cap A_2 = \emptyset\}.
\end{equation*}
Both $H(\cS,\alpha)$ and $B(\cS,\alpha)$ are equipped with the $L^1$ metric:
\begin{equation*}
	d_1(\phi,\phi') := \sum_{A \in U(\cS)} |\phi(A)-\phi'(A)|.
\end{equation*}
The space $(B(\cS,\alpha), d_1)$ admits an isometric embedding of $(X, d_{\cS,\alpha})$ via the map $x \mapsto (\phi_x: U(\cS) \to \R_{\geq 0})$ where
\begin{equation*}
	\phi_x(A) =
	\begin{cases}
		\frac{1}{2} \alpha_{A|\overline{A}} & \text{if } x \notin A \\
		0 & \text{otherwise}.
	\end{cases}
\end{equation*}

\paragraph{Tight span.}
\begin{defn}
	\label{def:tight_span}
	Let $(X,d_X)$ be a metric space. The \define{tight span} of $X$ is the set $T(X,d_X) \subset \R^X$ such that for all $f \in T(X,d_X)$:
	\begin{enumerate}
		\item $f(x) + f(y) \geq d_X(x,y)$ for all $x,y \in X$;
		\item $f(x) = \sup_{y \in X} \big[d_X(x,y)-f(y)\big]$ for all $x \in X$.
	\end{enumerate}
\end{defn}

These properties imply, in particular, that $f(x) \geq 0$ for all $f \in T(X,d_X)$ and $x \in X$.The tight span is equipped with the $L^\infty$ metric
\begin{equation*}
	d_\infty(f,g) = \sup_{x \in X} |f(x) - g(x)|,
\end{equation*}
and there exists an isometric embedding $h_\bullet: X \hookrightarrow T(X,d_X)$ defined by
\begin{equation*}
	x \mapsto \big( h_x: y \mapsto d_X(x,y) \big).
\end{equation*}
Tight spans are examples of injective metric spaces. A metric space $E$ is \define{injective} if for every 1-Lipschitz map $f:X \to E$ and for every isometric embedding $X \hookrightarrow \widetilde{X}$, there exists a 1-Lipschitz extension $\widetilde{f}:\widetilde{X} \to E$ that makes the following diagram commute:
\begin{equation*}
	\begin{tikzcd}
		X \arrow[hook]{r} \arrow{dr}[swap]{f}
		& \widetilde{X} \arrow[dashed]{d}{\widetilde{f}}
		\\
		& E
	\end{tikzcd}
\end{equation*}
In particular, $T(X,d_X)$ is the smallest injective space that contains $X$, i.e. there is no closed, injective subset $E \subset T(X, d_X)$ such that $X \hookrightarrow E$.\\
\indent Tight spans are an important tool in topological data analysis thanks to Theorem \ref{thm:VR-tight-span} below. Given a metric space $(E,d_E)$, $X \subset E$ and $r>0$, the \define{metric thickening} of $X$ in $E$ is the set
\begin{equation*}
	B_r(X;E) := \{e \in E | \text{ exists } x \in X \text{ with } d_E(x,e) < r\}.
\end{equation*}
If there is an isometric embedding $\iota:X \hookrightarrow E$, we will write $B_r(X; E)$ instead of $B_r(\iota(X); E)$.
\begin{theorem}[{\cite[Proposition 2.3]{osman-memoli}}]
	\label{thm:VR-tight-span}
	Let $(X,d_X)$ be a metric space, and let $E$ be an injective space such that $X \hookrightarrow E$. Then the Vietoris-Rips complex $\vr_{2r}(X,d_X)$ and the metric thickening $B_r(X; E)$ are homotopy equivalent for every $r>0$.
\end{theorem}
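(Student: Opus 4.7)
The plan is to apply the nerve theorem to the open ball cover $\mathcal{U} = \{B_E(x, r)\}_{x \in X}$ of $B_r(X; E)$ and to verify that $\mathrm{Nrv}(\mathcal{U}) = \vr_{2r}(X)$. The crucial input is the Aronszajn--Panitchpakdi theorem identifying injective metric spaces with \emph{hyperconvex} ones: any family of closed balls $\bar{B}_E(x_i, r_i)$ with $d_E(x_i, x_j) \leq r_i + r_j$ for all pairs has nonempty common intersection.

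First, I would match $\mathrm{Nrv}(\mathcal{U})$ with $\vr_{2r}(X)$. If a finite $\sigma = \{x_0, \ldots, x_k\}$ satisfies $\bigcap_i B_E(x_i, r) \neq \emptyset$, the triangle inequality forces $\diam(\sigma) < 2r$, so $\sigma \in \vr_{2r}(X)$. Conversely, if $\diam(\sigma) < 2r$, the finiteness of $\sigma$ supplies some $\epsilon > 0$ with $d_X(x_i, x_j) \leq 2(r-\epsilon)$ for all $i, j$; hyperconvexity applied to the closed balls $\bar{B}_E(x_i, r-\epsilon)$ then produces a common point lying inside $\bigcap_i B_E(x_i, r)$.

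Second, I would verify that $\mathcal{U}$ is a good open cover. By Lang's theorem, every injective metric space carries a (continuous) conical geodesic bicombing $\gamma$. The conical inequality $d_E(\gamma_{ab}(t), \gamma_{a'b'}(t)) \leq (1-t)\, d_E(a, a') + t\, d_E(b, b')$ implies that open balls—and hence finite intersections of open balls—are convex along $\gamma$, and any nonempty $\gamma$-convex set contracts onto any of its points along $\gamma$. Thus every nonempty finite intersection from $\mathcal{U}$ is contractible, and the nerve theorem yields $B_r(X; E) \simeq \mathrm{Nrv}(\mathcal{U}) = \vr_{2r}(X)$.

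I expect the main obstacle to be the technical formulation of the nerve theorem needed here. For arbitrary (possibly infinite or non-locally-finite) $X$, one must invoke a version of the nerve theorem valid for a numerable good open cover of a metric space; numerability is available since $B_r(X; E)$ is paracompact. Alternatively, one can express both sides as filtered colimits over finite subsets of $X$, apply the finite nerve lemma, and then pass to the colimit, which requires verifying that both $\vr_{2r}(-)$ and $B_r(-; E)$ are compatible with these filtered colimits up to homotopy.
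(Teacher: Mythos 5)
This statement is not proved in the paper at all: it is imported verbatim as \cite[Proposition 2.3]{osman-memoli}, so there is no internal proof to compare against. Your nerve-theorem argument is correct and is essentially the standard proof from that source: hyperconvexity (Aronszajn--Panitchpakdi) gives the Helly property identifying the nerve of the ball cover with $\vr_{2r}(X)$, and Lang's conical bicombing makes finite intersections of open balls contractible, so the nerve theorem applies; the numerability issue you flag is harmless, since $B_r(X;E)$ is a metric space, hence paracompact, and any open cover of a paracompact space is numerable.
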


\noindent Below we list important topological and geometric properties of injective spaces and tight spans.
\begin{prop}
	\label{prop:TS_properties}
	Any injective metric space $E$ is contractible and geodesic.
\end{prop}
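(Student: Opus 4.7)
The plan is to prove the two assertions separately, and in both cases the entire argument reduces to invoking the 1-Lipschitz extension property that defines injectivity.

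For \emph{geodesic}, I would fix $x, y \in E$, set $L := d_E(x,y)$, and apply injectivity with source space $\{x, y\}$, ``target map'' the inclusion $\{x, y\} \hookrightarrow E$, and ambient embedding $\{x, y\} \hookrightarrow [0, L] \subset \R$ sending $x \mapsto 0$ and $y \mapsto L$. This produces a 1-Lipschitz extension $\gamma : [0, L] \to E$ connecting $x$ to $y$. A triangle-inequality sandwich, using that $\gamma$ is 1-Lipschitz while the endpoints realize the distance $L$ (concretely, for each $s \in [0, L]$, $L = d_E(x, y) \leq d_E(x, \gamma(s)) + d_E(\gamma(s), y) \leq s + (L - s) = L$), forces equality throughout and hence $\gamma$ to preserve all distances. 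This makes $\gamma$ a genuine geodesic, and this step contains no real obstacle.

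For \emph{contractibility}, the plan is to realize $E$ as a 1-Lipschitz retract of a Banach space and then pull back the Banach space's straight-line contraction. Specifically, I would fix a basepoint $x_0 \in E$ and use the shifted Kuratowski embedding $\iota : E \to \ell^\infty(E)$ given by $\iota(x)(\cdot) = d_E(x, \cdot) - d_E(x_0, \cdot)$; the reverse triangle inequality both certifies that the image lies in $\ell^\infty(E)$ (even when $E$ is unbounded) and shows that $\iota$ is an isometric embedding. Injectivity applied to $\mathrm{id}_E : E \to E$ and the embedding $\iota$ yields a 1-Lipschitz retraction $r : \ell^\infty(E) \to E$ satisfying $r \circ \iota = \mathrm{id}_E$. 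Then the continuous map $H : E \times [0,1] \to E$ defined by $H(e, t) := r\bigl((1-t)\, \iota(e)\bigr)$ is a homotopy from $\mathrm{id}_E$ at $t=0$ to the constant map $r(0)$ at $t=1$, proving contractibility.

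The main conceptual subtlety is the choice of the ambient Banach space in the contractibility step: the naive assignment $x \mapsto d_E(x, \cdot)$ fails to land in $\ell^\infty(E)$ when $E$ is unbounded, so subtracting the fixed reference function $d_E(x_0, \cdot)$ is essential. Once the correct embedding is in place, both claims follow formally from the defining extension property of injective spaces.
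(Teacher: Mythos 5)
Your proof is correct. The paper does not actually prove this statement; it only cites Isbell's ``Six theorems about injective metric spaces'' and the discussion in Lang's paper, and your argument is essentially the standard one found there: extending the two-point map over $[0,L]$ and squeezing with the triangle inequality gives geodesics, and realizing $E$ as a 1-Lipschitz retract of $\ell^\infty(E)$ via the (basepoint-shifted) Kuratowski embedding and pulling back the linear contraction gives contractibility. The only micro-step left implicit is passing from $d_E(x,\gamma(s))=s$ for all $s$ to $d_E(\gamma(s),\gamma(t))=|t-s|$, which follows in one line from the reverse triangle inequality together with the 1-Lipschitz bound, so there is no gap.
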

\noindent See Theorem 1.1 of \cite{isbell-six-theorems} and the discussion at the start of Section 2 of \cite{lang-injective-hulls}.

\begin{lemma}
	\label{lemma:eval_tight_span}
	Let $(X, d_X)$ be a metric space. For any $f \in L^\infty(X)$, $f \in T(X, d_X)$ if and only if
	\begin{equation*}
		f(x) = d_\infty(f, h_x).
	\end{equation*}
\end{lemma}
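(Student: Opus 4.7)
The plan is to unpack both directions by expanding $d_\infty(f, h_x) = \sup_{y \in X} |f(y) - d_X(x, y)|$ into its two one-sided suprema $\sup_y[d_X(x,y) - f(y)]$ and $\sup_y[f(y) - d_X(x,y)]$, and comparing them against the two clauses of Definition \ref{def:tight_span}.

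For the forward direction, suppose $f \in T(X, d_X)$. Property (1) rearranges to $d_X(x,y) - f(y) \leq f(x)$ for every $y$. Property (2), together with the triangle inequality, makes $f$ $1$-Lipschitz: for any $x, x' \in X$ and every $y$, $d_X(x,y) - f(y) \leq d_X(x, x') + d_X(x',y) - f(y)$, so taking the supremum on $y$ in Property (2) gives $f(x) \leq d_X(x,x') + f(x')$, and by symmetry $|f(x) - f(x')| \leq d_X(x, x')$; setting $x' = y$ gives $f(y) - d_X(x,y) \leq f(x)$. Together these yield $|f(y) - d_X(x,y)| \leq f(x)$ for all $y$, hence $d_\infty(f, h_x) \leq f(x)$. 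The reverse inequality is immediate by evaluating at $y = x$ and using $f(x) \geq 0$ (a consequence of Property (1) with $y = x$): $|f(x) - d_X(x,x)| = f(x)$, so $d_\infty(f, h_x) \geq f(x)$, yielding equality.

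For the backward direction, suppose $f \in L^\infty(X)$ satisfies $f(x) = d_\infty(f, h_x)$ for every $x$. The bound $|f(y) - d_X(x, y)| \leq f(x)$ extracted from the hypothesis immediately gives Property (1) (via $d_X(x,y) - f(y) \leq f(x)$) and the $1$-Lipschitz property of $f$ (via $f(y) - d_X(x,y) \leq f(x)$). The main obstacle is recovering Property (2), the equality $f(x) = \sup_y [d_X(x,y) - f(y)]$: Property (1) already gives ``$\geq$'', and the plan is to leverage the hypothesis (which pins $\sup_y|f(y) - d_X(x,y)|$ to $f(x)$) together with the $1$-Lipschitz property to force the ``$\leq$'' direction. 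I expect this to require an iterative step where, assuming $\sup_y[d_X(x_0,y) - f(y)] \leq f(x_0) - \delta$ at some $x_0$, one selects witnesses on the other branch $f(y) - d_X(x_0, y) \to f(x_0)$, applies the hypothesis again at those witnesses, and combines triangle inequalities to either produce a witness on the forbidden branch (contradicting the $\delta$-gap) or drive the values of $f$ past the essential supremum of $|f|$, contradicting $f \in L^\infty(X)$.
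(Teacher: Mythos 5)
Your forward implication is correct and complete: property (1) of Definition \ref{def:tight_span} bounds $d_X(x,y)-f(y)$ by $f(x)$, property (2) together with the triangle inequality makes $f$ $1$-Lipschitz and so bounds $f(y)-d_X(x,y)$ by $f(x)$, and the term $y=x$ (using $f(x)\geq 0$) gives the reverse inequality. The paper itself offers no written proof, only a citation to Isbell, so there is nothing more to compare there; this is the standard argument.

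The converse, however, is only an announced plan in your write-up, and no completion of that plan is possible, because the converse is false for arbitrary $f\in L^\infty(X)$. As you essentially observe, the hypothesis that $f(x)=d_\infty(f,h_x)$ for all $x$ is exactly equivalent to the conjunction of property (1) and the $1$-Lipschitz bound $f(y)\leq f(x)+d_X(x,y)$: these give $\sup_y|f(y)-d_X(x,y)|\leq f(x)$, while the $y=x$ term gives the opposite inequality for free. But admissibility plus $1$-Lipschitzness does not encode the minimality expressed by property (2). Concretely, on $X=\{p,q\}$ with $d_X(p,q)=1$, the constant function $f\equiv 1$ satisfies $d_\infty(f,h_p)=d_\infty(f,h_q)=1=f(p)=f(q)$, yet $\sup_y\big[d_X(p,y)-f(y)\big]=0\neq f(p)$, so $f\notin T(X,d_X)$; more generally $f\equiv\operatorname{diam}(X)$ works on any bounded space. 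Since this counterexample is a finite space with bounded $f$, neither the $L^\infty$ assumption nor any iteration of triangle inequalities and witness selections can produce the contradiction you anticipate: the obstruction is not technical but a missing hypothesis, namely pointwise minimality of $f$ among functions satisfying property (1), which is precisely what property (2) asserts. Note that the paper only ever uses the forward implication (e.g.\ in the proof of Lemma \ref{lemma:tight_span_edges}), and Isbell's cited property concerns functions already in the injective envelope; so the honest fix is to prove only the forward direction, or to state the equivalence under an additional extremality hypothesis rather than for all of $L^\infty(X)$.
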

\noindent See property (2.4) of \cite{isbell-six-theorems}.

\begin{lemma}[{\cite[Lemma 6.2]{osman-memoli}}]
	\label{lemma:wedge_tight_span}
	If $E$ and $F$ are injective metric spaces, then so is their metric gluing along any two points.
\end{lemma}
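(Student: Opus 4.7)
The plan is to invoke the Aronszajn--Panitchpakdi characterization of injective metric spaces: $(Z, d_Z)$ is injective if and only if it is hyperconvex, meaning that for any family of closed balls $\{\overline{B}(z_i, r_i)\}_{i \in I}$ with $d_Z(z_i, z_j) \leq r_i + r_j$ for all $i, j$, the intersection $\bigcap_i \overline{B}(z_i, r_i)$ is non-empty. Let $Z = E \vee F$ denote the gluing of $p \in E$ with $q \in F$ to a single point $*$, so that $d_Z(x, y) = d_E(x, *) + d_F(*, y)$ whenever $x \in E$ and $y \in F$, and agreeing with $d_E$ or $d_F$ on each side. It therefore suffices to verify hyperconvexity of $Z$, and this is what I would prove directly.

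Fix a compatible family $\{(z_i, r_i)\}_{i \in I}$ in $Z$ and partition $I = I_E \sqcup I_F$ according to which side each $z_i$ lies in (the glue point $*$ may be placed in either set). The key step is the following dichotomy: either $r_i \geq d_E(z_i, *)$ for every $i \in I_E$, or $r_j \geq d_F(z_j, *)$ for every $j \in I_F$. Indeed, if both failed at some $i_0 \in I_E$ and $j_0 \in I_F$, then $d_Z(z_{i_0}, z_{j_0}) = d_E(z_{i_0}, *) + d_F(*, z_{j_0}) > r_{i_0} + r_{j_0}$, contradicting the hypothesis.

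Assume the second alternative holds (the other is symmetric) and set $s_j := r_j - d_F(z_j, *) \geq 0$ for $j \in I_F$. I would then apply hyperconvexity of $E$ to the augmented family $\{(z_i, r_i)\}_{i \in I_E} \cup \{(*, s_j)\}_{j \in I_F}$. The pairwise compatibility conditions reduce to $d_E(z_i, *) \leq r_i + s_j$, which follows from $d_E(z_i, *) + d_F(*, z_j) = d_Z(z_i, z_j) \leq r_i + r_j$, together with the trivial inequality $0 = d_E(*, *) \leq s_j + s_{j'}$. The point $z \in E$ produced by hyperconvexity then satisfies $d_E(z, z_i) \leq r_i$ directly, while for $j \in I_F$ we get $d_Z(z, z_j) = d_E(z, *) + d_F(*, z_j) \leq s_j + d_F(*, z_j) = r_j$, as required.

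The main obstacle is the dichotomy step, which encodes the geometric content of the gluing: every path in $Z$ between $E$ and $F$ must pass through $*$, so if no $E$-ball contained $*$ and no $F$-ball did either, the compatibility of radii would already be violated across the two sides. Once this is established, the rest is a routine reduction to hyperconvexity of a single side, and the result follows.
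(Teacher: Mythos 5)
Your proof is correct: the dichotomy is sound (if some ball on the $E$ side failed to contain $*$ and some ball on the $F$ side did too, their two centers would already violate $d_Z(z_{i_0},z_{j_0})\leq r_{i_0}+r_{j_0}$, since $d_Z$ adds the distances through $*$), and the reduction to hyperconvexity of a single side via the augmented family with balls at $*$ of radii $s_j=r_j-d_F(z_j,*)\geq 0$ verifies all the required inequalities, with the injective-equals-hyperconvex equivalence being the same one the paper itself invokes in Lemma \ref{lemma:block_is_injective}. Note that the paper gives no proof of Lemma \ref{lemma:wedge_tight_span}—it is cited from \cite{osman-memoli}—so there is no internal argument to compare against; your hyperconvexity argument is the standard proof and essentially the route taken in that reference.
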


\paragraph{Tight spans of totally decomposable metrics.}
The tight span of totally decomposable metrics has been studied by several authors throughout the years \cite{metric-decomposition, TDM-dim-2, TDM-polytopal-structure-1, TDM-polytopal-structure-2}. We use the notation and results of \cite{TDM-polytopal-structure-2}, one of the most recent papers. Given a weighted split system $(\cS, \alpha)$ on $X$, define the map
\begin{align*}
	\kappa:\R^{U(\cS)} &\to \R^X\\
	\phi &\mapsto (x \mapsto d_1(\phi,\phi_x)),
\end{align*}
where $d_1$ is the $L^1$ metric on $\R^{U(\cS)}$. The first salient property of $\kappa$ is that it sends each element $\phi_x$ to $h_x$. In other words, $\kappa$ is ``constant'' on the embedded copy of $X$ inside the Buneman complex and the tight span. The map has much stronger properties when $\cS$ is weakly compatible.

\begin{theorem}
	\label{thm:kappa-buneman-tight-span}
	Let $(\cS, \alpha)$ be a weighted split system  on $X$. The map $\kappa$ is 1-Lipschitz. Furthermore, $\kappa(B(\cS,\alpha)) = T(X,d_{\cS,\alpha})$ if and only if $\cS$ is weakly compatible.
\end{theorem}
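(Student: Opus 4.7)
The plan is to handle the 1-Lipschitz bound first, then each direction of the equivalence separately. The 1-Lipschitz bound would follow from the reverse triangle inequality applied directly to the $L^1$ distance: for any $\phi, \psi \in H(\cS, \alpha)$ and $x \in X$,
\[ |\kappa(\phi)(x) - \kappa(\psi)(x)| = |d_1(\phi, \phi_x) - d_1(\psi, \phi_x)| \leq d_1(\phi, \psi), \]
and taking the supremum over $x$ yields $d_\infty(\kappa(\phi), \kappa(\psi)) \leq d_1(\phi, \psi)$. Neither the Buneman condition nor weak compatibility enters here. Before tackling the equivalence, I would also record two identities from direct expansion. First, $d_1(\phi_x, \phi_y) = d_{\cS, \alpha}(x, y)$: for each split $S$, the two sides jointly contribute $\alpha_S$ when $S$ separates $x$ and $y$ and $0$ otherwise, so $\kappa(\phi_x) = h_x$. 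Second, for any $\phi \in H(\cS, \alpha)$,
\[ \kappa(\phi)(x) = 2 \sum_{S \in \cS} \phi(S(x)), \qquad \kappa(\phi)(x) + \kappa(\phi)(y) - d_{\cS,\alpha}(x,y) = 4 \!\! \sum_{\substack{S \in \cS \\ S(x) = S(y)}} \!\! \phi(S(x)). \]

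For the forward direction (weakly compatible implies $\kappa(B(\cS, \alpha)) = T(X, d_{\cS, \alpha})$), I would prove both containments. The inclusion $\kappa(B(\cS, \alpha)) \subseteq T$ amounts to verifying conditions (1) and (2) of Definition \ref{def:tight_span} for $f = \kappa(\phi)$: condition (1) is immediate from the second identity, and condition (2) reduces to producing, for each $x \in X$, a point $y \in X$ lying in $\bigcap_{S:\, \phi(S(x)) > 0} \overline{S(x)}$. The Buneman support condition forces these complement sides to intersect pairwise, since $x$ lies in every $S(x)$, so no two supported sides $S_i(x), S_j(x)$ can cover $X$. Weak compatibility then upgrades pairwise intersection to total intersection via a Helly-type induction: if witnesses $y_i \in \bigcap_{j \neq i} \overline{S_j(x)}$ all failed to lie in $\overline{S_i(x)}$, then any three of them together with $x$ would exhibit exactly the configuration of three splits and four points forbidden by weak compatibility. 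For the reverse inclusion $T \subseteq \kappa(B(\cS, \alpha))$, given $f \in T$, I would build $\phi$ by parametrizing each split $S$ via $c_S := \phi(A_S) - \phi(\overline{A}_S)$; the equation $\kappa(\phi) = f$ then becomes the linear system $f(x) - \tfrac{1}{2} \sum_S \alpha_S = \sum_S \epsilon_S(x)\, c_S$, whose solvability within $|c_S| \leq \alpha_S/2$ is underpinned by the linear independence of split-metrics from Theorem \ref{thm:split-decomposition}. I expect the main obstacle to be showing that the resulting $\phi$ actually lies in $B(\cS, \alpha)$ rather than merely in $H(\cS, \alpha)$; this is where the detailed polytopal description of totally decomposable tight spans from \cite{TDM-polytopal-structure-2} would become essential.

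For the converse, I would argue by contrapositive and exhibit a concrete $\phi \in B(\cS, \alpha)$ with $\kappa(\phi) \notin T$ whenever $\cS$ fails weak compatibility. The prototype obstruction is $X = \{1,2,3,4\}$ with the three ``perfect matching'' splits $\{\{1,2\},\{3,4\}\}$, $\{\{1,3\},\{2,4\}\}$, $\{\{1,4\},\{2,3\}\}$, each of weight $2$; this system fails weak compatibility (take $x_0 = 1$ and $x_i = i+1$), and $d_{\cS, \alpha}$ is the uniform metric $d \equiv 4$. Setting $\phi(A) = 1/2$ for every $A \in U(\cS)$ vacuously satisfies the Buneman condition, yet $\kappa(\phi) = (3,3,3,3)$ violates Definition \ref{def:tight_span}(2) because $\sup_y [d(x, y) - \kappa(\phi)(y)] = 1 \neq 3 = \kappa(\phi)(x)$. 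More generally, any triple of splits witnessing the failure of weak compatibility can be promoted into such a $\phi$ via the same weight-$1/2$ prescription on those three splits, yielding an explicit image point outside the tight span.
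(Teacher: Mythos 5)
First, note the benchmark: the paper does not prove this theorem at all — it is quoted as background, with the $1$-Lipschitz property and $\kappa(\phi_x)=h_x$ referred to Section~4 of \cite{TDM-polytopal-structure-1} and the equivalence referred to \cite{comparison-tight-span-median-clusters}, of which it is the main result. Your attempt to give a self-contained argument is therefore more ambitious than what the paper does, and parts of it are genuinely correct and complete: the reverse-triangle-inequality proof of $1$-Lipschitzness, the identities $\kappa(\phi_x)=h_x$ and $\kappa(\phi)(x)+\kappa(\phi)(y)-d_{\cS,\alpha}(x,y)=4\sum_{S(x)=S(y)}\phi(S(x))$, and the inclusion $\kappa(B(\cS,\alpha))\subseteq T(X,d_{\cS,\alpha})$ under weak compatibility (the Buneman condition gives pairwise intersection of the sets $\overline{S(x)}$ with $\phi(S(x))>0$, and your Helly-type step — pass to a minimal non-intersecting subfamily and observe that $x$ together with any three witnesses realizes the forbidden configuration — does work). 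The $4$-point prototype with the three matching splits of weight $2$ is also checked correctly: $\phi\equiv 1/2$ lies in $B(\cS,\alpha)$ and $\kappa(\phi)=(3,3,3,3)$ violates Definition~\ref{def:tight_span}(2).

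The two substantive implications, however, are not proved. For $T(X,d_{\cS,\alpha})\subseteq\kappa(B(\cS,\alpha))$ you reduce to the linear system $f(x)-\tfrac12\sum_S\alpha_S=\sum_S\epsilon_S(x)\,c_S$ and assert its solvability with $|c_S|\le\alpha_S/2$ is ``underpinned by'' the linear independence of the split pseudometrics; but Theorem~\ref{thm:split-decomposition} concerns independence of the $\delta_S$ in the space of symmetric functions on $X\times X$, not of the sign vectors $\epsilon_S\in\{\pm1\}^X$, and even granting independence of the latter the system has $|X|$ equations in $|\cS|$ unknowns, so solvability for an arbitrary $f\in T$ is exactly the nontrivial content — as is the further step that the solution satisfies the box constraints and the Buneman support condition, which you yourself flag as unresolved and defer to \cite{TDM-polytopal-structure-2}. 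This direction is precisely the main theorem of \cite{comparison-tight-span-median-clusters}, so as written your argument here does not go beyond the citation the paper already gives. For the converse, the general ``promotion'' of a weak-compatibility violation is also a gap: $\phi$ must be defined on all of $U(\cS)$, not just on the three offending splits, and the natural extension (balanced values $\alpha_{S_i}/4$ on $S_1,S_2,S_3$, values $\phi_{x_0}$ on the remaining splits) can fail the Buneman condition — a supported set $S_i(x_0)$ and a supported far side $\overline{S(x_0)}$ of another split may cover $X$ while intersecting — and the verification that $\sup_y[d_{\cS,\alpha}(x,y)-\kappa(\phi)(y)]$ falls strictly below $\kappa(\phi)(x)$ must now account for the contributions of all the other splits and for arbitrary weights; none of this is addressed. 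So the proposal correctly settles the easy half and a concrete example, but both hard implications remain at the level of the literature the paper cites.
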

See the introduction to Section 4 of \cite{TDM-polytopal-structure-1} to see why $\kappa(\phi_x) = h_x$ and why $\kappa$ is 1-Lipschitz. The equivalence between $\kappa(B(\cS,\alpha)) = T(X,d_{\cS,\alpha})$ and weak compatibility is the main result of \cite{comparison-tight-span-median-clusters}.

\begin{remark}
	As natural as the choice of $\kappa$ may seem, it is not an isometry in general. This is the main result of \cite{TDM-dim-2}. Note, however, that \cite{TDM-dim-2} uses the map $\Lambda_d$. It can be checked that $\kappa$ and $\Lambda_d$ differ by a constant using equation (2) in Section 4 of \cite{TDM-polytopal-structure-1}.
\end{remark}

The map $\kappa$ also induces a strong relationship between the polytopal structures of the Buneman complex and the tight span. We need more definitions to state this result.\\

\begin{defn}
	\label{def:blocks_cuts}
	Let $T$ be a connected polytopal complex. A vertex $v \in T$ is a \define{cut-vertex} if $T - \{v\}$ is disconnected. Note that if $T'$ is a connected component of $T - \{v\}$, then $T' \cup \{v\}$ is a connected subcomplex of $T$. We say that $v$ \define{separates} two subsets $A, B \subset T$ if $A \setminus \{v\}$ and $B \setminus \{v\}$ are contained in different connected components of $T \setminus \{v\}$. A maximal subcomplex $B \subset T$ that does not have cut-vertices is called a \define{block}. We denote the set of cut-vertices of $T$ as $\cut(T)$ and the set of blocks as $\blocks(T)$.
\end{defn}

\indent The following Definition and Lemma are a straightforward generalization of the block-cut tree of a graph and Theorem 1 of \cite{bc-tree}, respectively.
\begin{defn}
	\label{def:block_cut_graph}
	We define the \define{block-cut tree} of $T$ as the graph $\bctree(T)$ with vertex set $\cut(T) \cup \blocks(T)$ and edges of the form $(c, B)$ for every pair $c \in \cut(T), B \in \blocks(T)$ such that $c \in B$.
\end{defn}

\begin{lemma}
	\label{lemma:block_cut_graph_is_tree}
	$\bctree(T)$ is a tree for any any connected polytopal complex $T$.
\end{lemma}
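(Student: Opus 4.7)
The plan is to follow the classical argument for graphs from \cite{bc-tree} and adapt it to polytopal complexes, breaking the claim into connectedness and acyclicity of $\bctree(T)$.

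First, I would establish the key intersection lemma: for distinct blocks $B, B' \in \blocks(T)$, one has $|B \cap B'| \leq 1$, and when $B \cap B' = \{v\}$ the vertex $v$ must lie in $\cut(T)$. The first assertion follows by showing that if $v_1, v_2 \in B \cap B'$ then $B \cup B'$ has no cut-vertex: any vertex removed either lies in only one of $B, B'$ and can be routed around via a shared point, or lies in the intersection but can be routed around via the other shared point. This contradicts the maximality of $B$. For the second assertion, a symmetric argument shows that if $v$ were not in $\cut(T)$ we could enlarge $B$ by appending a path through $B'$ without creating a cut-vertex, again contradicting maximality.

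Next, I would prove connectedness of $\bctree(T)$. It suffices to connect any two blocks $B, B'$ by a walk, since cut-vertices plug in naturally at their incident blocks. Using the connectedness of $T$, pick an edge-path in the 1-skeleton from a vertex of $B$ to one of $B'$. Each edge of this path sits in a unique block by the intersection lemma, and when two consecutive edges belong to different blocks their shared endpoint must be a cut-vertex, giving an edge of $\bctree(T)$ between those blocks. Concatenating produces the desired walk.

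For acyclicity, suppose toward a contradiction that $\bctree(T)$ admits a simple cycle $B_1 - c_1 - B_2 - c_2 - \cdots - B_m - c_m - B_1$ with $m \geq 2$. Set $U := B_1 \cup \cdots \cup B_m$. I would verify that $U$ has no cut-vertex: removing any $v \in U$ leaves the individual pieces $B_i \setminus \{v\}$ connected (since blocks have no cut-vertex), and the cyclic arrangement through the remaining $c_j$'s keeps the whole union connected. Since $U$ strictly contains $B_1$, this contradicts the maximality of $B_1$.

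The main obstacle is making the intersection lemma rigorous under the topological definition of cut-vertex: the combinatorial routing arguments need to translate to statements about removing points from $T$ as a topological space. This is manageable since a polytopal complex is topologically connected if and only if its 1-skeleton is, so each combinatorial ``detour'' in a subcomplex yields an actual topological path avoiding the removed point.
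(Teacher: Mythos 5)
The paper never actually proves this lemma---it is asserted as a ``straightforward generalization'' of Theorem~1 of \cite{bc-tree}---and your proposal is precisely that classical block-cutpoint-tree argument (intersection lemma, connectedness of $\bctree(T)$ via edge-paths in the $1$-skeleton, acyclicity by showing the union of blocks around a cycle has no cut-vertex and contradicts maximality), correctly adapted to the topological notion of cut-vertex, so it is the same approach and correct in outline. One spot to phrase carefully: in the second half of your intersection lemma the detour path must be chosen in $T \setminus \{v\}$ (it need not, and in general cannot, run through $B'$), and the cut-vertex-free subcomplex you enlarge to must be $B \cup B' \cup P$ rather than $B \cup P$ alone (interior vertices of a dangling path would be cut-vertices); note also that the bipartite cycle case $m=2$, i.e.\ two blocks sharing two cut-vertices, is already ruled out by that same intersection lemma.
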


\indent The block structure of the Buneman complex $B(\cS,\alpha)$ can be determined from the properties of $\cS$ in a straightforward way. We adopt the definition of compatibility from \cite{metric-decomposition} and the incompatibility graph from \cite{buneman-graph-cuts}.

\begin{defn}
	\label{def:incompatibility-graph}
	Let $\cS$ be a split system on $X$. Two splits $S, S' \in \cS$ are called \define{compatible} if the following equivalent conditions hold:
	\begin{itemize}
		\item There exist $A \in S$, $A' \in S'$ with $A \cap A' = \emptyset$;
		\item There exist $A \in S$, $A' \in S'$ with $A \cup A' = X$;
		\item There exist $A \in S$, $A' \in S'$ with $A \subset A'$ or $A' \subset A$.
	\end{itemize}
	If neither condition holds, we say that $S$ and $S'$ are \define{incompatible}. We define the \define{incompatibility graph} $I(\cS)$ to be the graph with vertex set $\cS$ and edge set consisting of all pairs $\{S,S'\}$ where $S$ and $S'$ are distinct incompatible splits.
\end{defn}

\begin{theorem}
	\label{thm:Buneman_block}
	Let $(\cS, \alpha)$ be a weighted split system on $X$. There is a bijective correspondence between the connected components of the incompatibility graph and the blocks of $B(\cS,\alpha)$ \cite[Theorem 5.1]{buneman-graph-cuts}. Moreover, for every $\cS' \in \pi_0(I(\cS))$, the block corresponding to $\cS'$ is isomorphic as a polytopal complex to $B(\cS', \alpha|_{\cS'})$.
\end{theorem}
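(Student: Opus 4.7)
The plan is to prove this by analyzing how the Buneman constraint couples the coordinates of the hypercube $H(\cS, \alpha) \cong \prod_{S \in \cS}[0, \alpha_S/2]$ (identifying each split $S = A|\overline{A}$ with a one-dimensional coordinate via the relation $\phi(A) + \phi(\overline{A}) = \alpha_S/2$). The guiding intuition is that a compatible pair of splits $\{S, S'\}$ produces an L-shaped (one-dimensional wedge) Buneman subcomplex, while an incompatible pair produces a full two-dimensional square. Compatibilities thus create cut vertices, while incompatibilities produce rigid two-dimensional regions, matching the connected-component structure of $I(\cS)$ exactly.

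\textbf{The wedge step.} The induction engine is the following: if $\cS = \cS_1 \sqcup \cS_2$ with every cross-pair $(S_1, S_2) \in \cS_1 \times \cS_2$ compatible, then $B(\cS, \alpha)$ decomposes as a wedge of $B(\cS_1, \alpha|_{\cS_1})$ and $B(\cS_2, \alpha|_{\cS_2})$ at a single cut vertex $\phi^*$. To construct $\phi^*$, use the fact that for any distinct compatible pair $S_1, S_2$ there exist halves $A_1 \in S_1, A_2 \in S_2$ with $A_1 \cap A_2 = \emptyset$ and $A_1 \cup A_2 \neq X$ (the latter because $A_1 \cup A_2 = X$ combined with empty intersection would force $A_2 = \overline{A_1}$ and hence $S_1 = S_2$). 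Make a coherent global choice of distinguished halves across all cross-pairs, stitch them into $\phi^*$, and verify that removing $\phi^*$ disconnects $B(\cS, \alpha)$ into pieces that project isomorphically, under the coordinate projection $\pi_i: \R^{U(\cS)} \to \R^{U(\cS_i)}$, onto $B(\cS_i, \alpha|_{\cS_i}) \setminus \{\pi_i(\phi^*)\}$.

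\textbf{The rigidity step (main obstacle).} The harder part is showing that $B(\cS', \alpha|_{\cS'})$ has no cut vertex when $\cS'$ is a connected component of $I(\cS)$. For any pair of incompatible splits $S, S' \in \cS'$, all four pairs of halves satisfy $A \cup A' = X$ and $A \cap A' \neq \emptyset$, so the Buneman constraint forces any point $\phi$ to support at most one half from each of $\{S, S'\}$. Consequently, the projection of $B(\cS', \alpha|_{\cS'})$ onto the $(S, S')$-plane is the full filled square $[0, \alpha_S/2] \times [0, \alpha_{S'}/2]$, which has no cut vertex. Promoting this local rigidity to the full complex requires showing that the connectedness of $I(\cS')$ allows any two points of $B(\cS', \alpha|_{\cS'})$ to be connected by a path avoiding any candidate cut vertex, by rerouting through sequences of filled squares indexed by the edges of $I(\cS')$.

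\textbf{Combining and the polytopal isomorphism.} Iterating the wedge step over the connected components of $I(\cS)$, and using the rigidity step to conclude that each sub-Buneman complex is in fact a single block, yields the bijection (with Lemma \ref{lemma:block_cut_graph_is_tree} ensuring the blocks assemble along a tree). The polytopal isomorphism in the moreover clause follows from the wedge step: the coordinate projection $\pi_{\cS'}$ restricts to a polytopal map from the block onto $B(\cS', \alpha|_{\cS'})$, with inverse given by the section that fixes all coordinates outside $U(\cS')$ to match the relevant cut vertex.
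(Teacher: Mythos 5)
Your plan has a genuine gap: the ``wedge step'' that drives the whole induction is false as stated. Take $X=\{1,2,3,4\}$ with the pairwise compatible splits $S_1=\{1\}|\{2,3,4\}$, $S_2=\{1,2\}|\{3,4\}$, $S_3=\{1,2,3\}|\{4\}$ and any positive weights. Writing $a=\phi(\{1\})$, $b=\phi(\{1,2\})$, $c=\phi(\{1,2,3\})$, the Buneman conditions are ($a=\alpha_{S_1}/2$ or $b=0$), ($a=\alpha_{S_1}/2$ or $c=0$), ($b=\alpha_{S_2}/2$ or $c=0$), so $B(\cS,\alpha)$ is a path of three edges $e_1,e_2,e_3$ (one per split, in that order). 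Now put $\cS_1=\{S_2\}$ and $\cS_2=\{S_1,S_3\}$: every cross-pair is compatible, yet $B(\cS,\alpha)$ is not a wedge of $B(\cS_1,\alpha|_{\cS_1})$ and $B(\cS_2,\alpha|_{\cS_2})$ at one point. The block corresponding to $S_2$ is the middle edge $e_2$, which meets the rest in \emph{two} distinct cut vertices, so no single $\phi^*$ exists; moreover the part of the complex carrying the $\cS_2$-coordinates is $e_1\sqcup e_3$, which is disconnected, while $B(\cS_2,\alpha|_{\cS_2})$ is a connected two-edge path (since $S_1,S_3$ are compatible). Hence no subcomplex projects isomorphically onto $B(\cS_2,\alpha|_{\cS_2})$ minus a point, and your proposed inverse ``fix all coordinates outside $U(\cS')$ to match the relevant cut vertex'' cannot work: on $e_1$ the outside coordinates are pinned at $(b,c)=(0,0)$, on $e_3$ at $(a,b)=(\alpha_{S_1}/2,\alpha_{S_2}/2)$, i.e.\ the pinned constants differ from block to block. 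The correct statement is not $B(\cS_1\sqcup\cS_2)\cong B(\cS_1)\vee B(\cS_2)$; rather, for each component $\cS'\in\pi_0(I(\cS))$ one must exhibit an embedded copy of $B(\cS',\alpha|_{\cS'})$ obtained by pinning, for every $S\notin\cS'$, the $S$-coordinate to the value determined by which side of $S$ the component lies on (using that such an $S$ is compatible with all of $\cS'$ and that $I(\cS')$ is connected), and then prove these embedded copies are exactly the blocks, assembling along the block-cut tree; an iteration of single-point wedges only handles leaf blocks and cannot be run over an arbitrary bipartition with cross-compatibility.

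The rigidity step is also only a plan: surjecting onto filled squares for each incompatible pair does not by itself exclude a cut vertex upstairs, and the ``rerouting through filled squares indexed by edges of $I(\cS')$'' is precisely the technical heart that is not supplied. Note also that the paper does not prove this theorem at all: it imports the bijection from \cite[Theorem 5.1]{buneman-graph-cuts} (see also the note at the start of Section 3.3 of \cite{TDM-polytopal-structure-2}), and those arguments proceed via the pinned-coordinate description of the blocks rather than a wedge recursion, so your sketch would need to be rebuilt along those lines to be salvageable.
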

See also the note at the start of Section 3.3 of \cite{TDM-polytopal-structure-2}. We denote the block of $B(\cS,\alpha)$ corresponding to $\cS' \in \pi_0(I(\cS))$ as $B_{\cS'}(\cS,\alpha)$.\\

\indent We need one more definition before stating the link between the polytopal structures of $B(\cS,\alpha)$ and $T(X,d_{\cS,\alpha})$. A split system $\cS$ is called \define{octahedral} if $|\cS|=4$ and there exists a partition $X = X_1 \cup \cdots \cup X_6$ such that $S_i = (X_i \cup X_{i+1} \cup X_{i+2}) | (X_{i+3} \cup X_{i+4} \cup X_{i+5})$ for $1 \leq i \leq 3$ (indices are taken modulo 6) and $S_4 = (X_1  \cup X_3 \cup X_5) | (X_2 \cup X_4 \cup X_6)$.

\begin{theorem}[Theorems 15 and 18 of \cite{TDM-polytopal-structure-2}]
	\label{thm:kappa-block-bijection}
	Let $(\cS,\alpha)$ be a weighted weakly compatible split system on $X$. Then $\kappa$ induces a bijection between $\blocks(B(\cS,\alpha))$ and $\blocks(T(X,d_{\cS,\alpha}))$ such that:
	\begin{itemize}
		\item If $\cS' \in \pi_0(I(\cS))$ is not octahedral, then the block $B_{\cS'}(\cS,\alpha)$ is isomorphic to the block $\kappa(B_{cS'}(\cS,\alpha)) \subset T(X,d_{\cS,\alpha})$ as polytopal complexes.
		\item If $\cS' \in \pi_0(I(\cS))$ is octahedral, then $\kappa(B_{\cS'}(\cS,\alpha))$ is a block isomorphic to a rhombic dodecahedron.
	\end{itemize}
\end{theorem}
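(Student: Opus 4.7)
My plan is to prove the theorem block-by-block, using the explicit formula for $\kappa$ to reduce the statement to a combinatorial question about sign realizability by the points of $X$. First, by Theorem \ref{thm:Buneman_block}, we may decompose $B(\cS,\alpha)$ as a union of blocks $B_{\cS'}(\cS,\alpha)$ indexed by $\cS' \in \pi_0(I(\cS))$, and since distinct blocks meet at a single vertex, the formula for $\kappa$ decomposes additively across the blocks up to a constant shift determined by which block contains $\phi$. One then checks that $\kappa$ sends cut-vertices of $B(\cS,\alpha)$ to cut-vertices of $T(X,d_{\cS,\alpha})$, reducing the theorem to the case where $I(\cS)$ is connected.

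A direct computation using $\phi(A)+\phi(\overline{A}) = \tfrac{1}{2}\alpha_S$ shows that for $\phi, \phi' \in B(\cS, \alpha)$ and $x \in X$,
\begin{equation*}
	\kappa(\phi)(x) - \kappa(\phi')(x) = 2 \sum_{S \in \cS} \varepsilon_S(x) \, D_S,
\end{equation*}
where, after fixing a side $A_S$ of each split $S$, we set $D_S := \phi(A_S) - \phi'(A_S)$ and $\varepsilon_S(x) := +1$ if $x \in A_S$ and $-1$ otherwise. Injectivity of $\kappa$ thus amounts to: for every admissible nonzero $(D_S)_S$, some $x \in X$ satisfies $\sum_S \varepsilon_S(x) D_S \neq 0$. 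The plan is to show that in the non-octahedral case weak compatibility forces the set of realized sign patterns $\{(\varepsilon_S(x))_S : x \in X\}$ to span the space of admissible $(D_S)_S$, so no nonzero admissible $(D_S)$ can be annihilated by every $\varepsilon_S(x)$. The octahedral case is then the minimal configuration where this spanning fails: the four splits partition $X$ into six classes that realize only six of the sixteen possible $\{\pm 1\}^4$-patterns, and the missing patterns leave a one-dimensional kernel identified with the antipodal direction on the $4$-cube sitting at the heart of $B(\cS,\alpha)$.

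With injectivity (or two-to-one behavior on antipodal pairs) in hand, the polytopal isomorphism in the non-octahedral case follows because $\kappa$ is affine on each face of $B_{\cS'}$; affine injectivity promotes to a face-preserving polytopal isomorphism, and a parallel side-choosing argument shows $d_\infty(\kappa(\phi),\kappa(\phi')) = d_1(\phi,\phi')$ by producing, for any pair $\phi, \phi'$ in a common face, an $x \in X$ whose signs $\varepsilon_S(x)$ agree with those of $D_S$. In the octahedral case, identifying antipodal vertices of the $4$-cube yields combinatorially the rhombic dodecahedron, which I would verify by explicit enumeration of faces.

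The main obstacle is the combinatorial claim that the realized sign patterns span the admissible $(D_S)$-space except in the octahedral case. This requires a careful exploitation of the forbidden three-split, four-point configuration appearing in the definition of weak compatibility, which directly constrains how a split system can fail to have injective $\kappa$; identifying the resulting obstruction with the six-class octahedral partition is the technical heart of the proof and the step where all of the structural hypotheses on $\cS$ must be used simultaneously.
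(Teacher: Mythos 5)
First, note that the paper does not prove this statement at all: it is imported verbatim as Theorems 15 and 18 of \cite{TDM-polytopal-structure-2}, so there is no internal proof to compare against, and what you are proposing is to reprove a substantial result of Huber--Koolen--Moulton from scratch. Your local formula for $\kappa$ is correct: since $\phi(A)+\phi(\overline{A})=\tfrac12\alpha_S$ and $\phi\geq 0$, one indeed gets $\kappa(\phi)(x)=2\sum_S\phi(S(x))$ and hence $\kappa(\phi)(x)-\kappa(\phi')(x)=2\sum_S\varepsilon_S(x)D_S$, and the picture of the octahedral block as a $4$-cube projected along a main diagonal onto a rhombic dodecahedron is the right one. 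But the proposal has genuine gaps. The central claim --- that weak compatibility forces the realized sign vectors $(\varepsilon_S(x))_S$ to span the space of admissible differences except exactly in the octahedral case --- is only announced, and it is essentially the entire content of the theorem; nothing in your sketch explains how the forbidden four-point configuration yields this dichotomy, nor why the failure locus is precisely the octahedral systems and the kernel exactly one-dimensional. Moreover, even granting injectivity, your step ``affine injectivity promotes to a face-preserving polytopal isomorphism'' only shows that the image is an abstract polytopal complex isomorphic to $B_{\cS'}(\cS,\alpha)$; it does not show that this image is a \emph{block} of $T(X,d_{\cS,\alpha})$ whose cells agree with the intrinsic polyhedral face structure of the tight span, nor that distinct blocks of $B(\cS,\alpha)$ go to distinct blocks and exhaust $\blocks(T(X,d_{\cS,\alpha}))$ (sending cut-vertices to cut-vertices is not enough for the bijection). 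Matching the image cells with the tight span's own cell structure is where \cite{TDM-polytopal-structure-2} (building on \cite{TDM-polytopal-structure-1}) does most of its work.

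Two of your auxiliary claims are also wrong as stated. The asserted cell-wise isometry $d_\infty(\kappa(\phi),\kappa(\phi'))=d_1(\phi,\phi')$ for $\phi,\phi'$ in a common face requires a point $x\in X$ whose signs $\varepsilon_S(x)$ match the signs of $D_S$ on all free splits of the cell; for a cell whose free splits contain three pairwise incompatible splits, weak compatibility \emph{forbids} some of the eight sign patterns (this is exactly the excluded four-point configuration), so such an $x$ need not exist and $\kappa$ is genuinely non-isometric there --- consistent with the paper's Remark citing \cite{TDM-dim-2}. Fortunately the theorem only asserts a polytopal isomorphism, but your argument should not lean on isometry. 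Similarly, in the octahedral case the collapse is not ``two-to-one on antipodal pairs'': the fibers of the projection of the $4$-cube along its main diagonal are line segments, only the two vertices on that diagonal are identified (and they land at an interior point), and the $14$ remaining vertices give the vertices of the rhombic dodecahedron. As it stands, the proposal is a reasonable high-level outline but does not constitute a proof; the combinatorial heart and the identification of the tight-span cell structure are missing.
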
 	\section{VR complexes of circular decomposable metrics}
\label{sec:circular-metrics}
Given that circular decomposable spaces share many features with finite subsets of $\Sp^1$, we want to explore how similar their VR complexes are. For example, one might expect $\vr_r(X)$ to be cyclic when $(X, d_X)$ is circular decomposable, but the upcoming example shows otherwise. Hence, our objective in this section is to find conditions on the isolation indices that characterize when the VR complexes of a circular decomposable space are cyclic.

\begin{example}
	\label{ex:non-cyclic-5}
	Consider the metric wedge of $\Sp^1$ and an interval as shown in Figure \ref{fig:non-cyclic-5}. Let $Y = \{x_1 \prec x_2 \prec x_3 \prec x_4 \prec y\}$ and $X = \{x_1, \dots, x_5\}$. $Y$ has a circular split system and all its VR complexes are cyclic because $Y \subset \Sp^1$. Since $d_X(x_5, x_i) = d_Y(y, x_i) + (\tfrac{1}{5}+\epsilon)$ for $i \neq 5$, it follows that $\alpha_{\{x_5\}, X \setminus \{x_5\}} = \alpha_{\{y\}, Y \setminus \{y\}} + (\frac{1}{5}+\epsilon)$. Hence, $X$ also has a circular split system and it inherits the cycic order from $Y$ so that $x_1 \prec x_2 \prec x_3 \prec x_4 \prec x_5$. However, $d_X(x_5, x_1), d_X(x_5, x_4) > d_X(x_1, x_4)$, so $\vr_r(X)$ is not cyclic for $\frac{2}{5} \leq r < \frac{2}{5}+\epsilon$. Intuitively, this is because the edge $[x_1, x_4]$ appears in $\vr_r(X)$ before $[x_5, x_1]$ and $[x_5, x_4]$ do, even though $x_5$ is ``between'' $x_1$ and $x_4$.
	\begin{figure}[ht]
		\centering
		\begin{tikzpicture}
	\def \radius {2};
	\def \ptrad {3pt};
	
\coordinate (P1) at (180:\radius);
	\coordinate (P2) at (108:\radius);
	\coordinate (P3) at (36:\radius);
	\coordinate (P4) at (324:\radius);
	\coordinate (P5) at (252:\radius);
	
\fill (P1) circle (\ptrad) node [right] {$y$};
	\fill (P2) circle (\ptrad) node [above left] {$x_1$};
	\fill (P3) circle (\ptrad) node [above right] {$x_2$};
	\fill (P4) circle (\ptrad) node [below right] {$x_3$};
	\fill (P5) circle (\ptrad) node [below] {$x_4$};
	
\draw (P1) arc (180:108:\radius) node[midway, above left] {$1/5$};
	\draw (P2) arc (108:36:\radius) node[midway, above] {$1/5$};
	\draw (P3) arc (36:-36:\radius) node[midway, right] {$1/5$};
	\draw (P4) arc (324:252:\radius) node[midway, below] {$1/5$};
	\draw (P5) arc (252:180:\radius) node[midway, below left] {$1/5$};
	
\coordinate (P6) at (-2.5*\radius, 0);
	
	\draw (P1) -- (P6) node [midway, above] {$1/5+\varepsilon$};
	\fill (P6) circle (\ptrad) node [left] {$x_5$};
\end{tikzpicture} 		
		\caption{The space $Y := \{x_1, y, x_3, x_4, x_5\}$ consists of the vertices of a regular pentagon inscribed on the circle. We attach an edge $e$ of length $\frac{1}{5}+\epsilon$ to the circle at the point $y$ and define $x_2$ as the boundary of $e$ different from $y$. The circular decomposition of $Y$ induces a circular decomposition of $X := \{x_1, x_2, x_3, x_4, x_5\}$ that satisfies $d_X(x_1,x_2), d_X(x_2, x_3) > d_X(x_1,x_3)$. See Example \ref{ex:non-cyclic-5}.}
		\label{fig:non-cyclic-5}
	\end{figure}
\end{example}

For the rest of the section, we fix $X = \{1, \dots, n\}$ and assume that $d_X$ is circular decomposable and that the bijection $f:X \to V_n$ from Theorem \ref{thm:circular-splits} satisfies $f(i) = v_i$. In particular, $X$ inherits the cyclic order from $V_n$ (see Definition \ref{def:circular-order}) so that $i \prec j \prec k$ if the clockwise path from $v_i$ to $v_k$ contains $v_j$. We write $d_{ab} := d_X(a,b)$ for any $1 \leq a, b \leq n$.

\subsection{An expression for $d_X$ in terms of $\alpha_{ij}$}
We begin by simplifying Equation (\ref{eq:split-decomposition}) in order to write $d_X(a,b)$ as a sum of isolation indices rather than a linear combination of split metrics. Thanks to Theorem \ref{thm:circular-splits}, the splits $S$ in a full circular system $\cS(X, d_X)$ have the form $S_{ij} := A_{ij} | \overline{A}_{ij}$ where $A_{ij} := \{i, \dots, j-1\}$ and $1 \leq i < j \leq n$. Note that there are $\binom{n}{2}$ splits of the form $S_{ij}$ and that they are all distinct. Then if $\alpha_{ij} := \alpha_{S_{ij}}$, we have
\begin{equation}
	\label{eq:circular-distance-original}
	d_X = \sum_{1 \leq i < j \leq n} \alpha_{ij} \delta_{S_{ij}}
\end{equation}
by Theorem \ref{thm:split-decomposition}. If $\cS(X,d_X)$ is circular but not full, then $f(\cS(X,d_X)) \subsetneq \cS(V_n)$ (see Definition \ref{def:circular-splits}). In that case, we set $\alpha_{ij}=0$ for any split $S_{ij} \notin \cS(X,d_X)$ and Equation (\ref{eq:circular-distance-original}) still holds.

\indent Currently, $A_{ij}$ is only defined when $i < j$, but we can extend the definition to all pairs $1 \leq i, j \leq n$ by noting that $A_{ij} = \{k : i \preceq k \prec j\}$. This is a well-defined expression regardless of whether $i<j$ or not, and in fact,
\begin{itemize}
	\item If $1 \leq j < i \leq n$, $A_{ij} = \{i, \dots, n\} \cup \{1, \dots, j-1\} = \overline{A}_{ji}$;
	\item In the edge case $i=j$, $A_{ij} = X$.
\end{itemize}
\noindent Hence, we define:
\begin{itemize}
	\item For $1 \leq j < i \leq n$, $S_{ij} := S_{ji}$ and $\alpha_{ij}:= \alpha_{ji}$;
	\item If $i=j$, we set $\alpha_{ij} := 0$ and leave $S_{ij}$ undefined.
\end{itemize}
\noindent The reason to leave $S_{ij}$ undefined when $i=j$ is because because $A_{ij} = X$ forces $\overline{A}_{ij}=\emptyset$, and a pair $\{A, \overline{A}\}$ is only a valid split if both $A$ and $\overline{A}$ are non-empty. However, defining the coefficients $\alpha_{ii}$ as 0 will be convenient for later calculations.

\begin{remark}
	\label{rmk:isolation-indices-reflected}
	We summarize the above discussion for future reference. For any $1 \leq i, j \leq n$, $A_{ij}$ is defined as $\{k : i \preceq k \prec j\}$, and the isolation indices $\alpha_{ij}$ satisfy the relations $\alpha_{ij} = \alpha_{ji}$ and $\alpha_{ii} = 0$. If $i \neq j$, we define the split $S_{ij} := A_{ij} | \overline{A}_{ij}$ and note that it also satisfies the relation $S_{ji} = S_{ij}$.
\end{remark}

With the notation in place, we can evaluate the split metrics in Equation (\ref{eq:circular-distance-original}) and simplify $d_{ab}$.
\begin{lemma}
	\label{lemma:circular-distance}
	For any distinct $a, b \in X$, $\displaystyle d_{ab} = \osum_{i=a+1}^{b} \osum_{j=b+1}^{a} \alpha_{ij}$.
\end{lemma}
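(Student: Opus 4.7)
The plan is to rewrite $d_{ab}$ from the split decomposition in Equation~(\ref{eq:circular-distance-original}) and match the resulting sum to the claimed double circular sum via a bijection on index pairs. Concretely, starting from
\begin{equation*}
d_{ab} \;=\; \sum_{1 \le i < j \le n} \alpha_{ij}\, \delta_{S_{ij}}(a,b),
\end{equation*}
one has $\delta_{S_{ij}}(a,b) = 1$ exactly when $S_{ij}$ separates $a$ and $b$, i.e.\ when exactly one of $a,b$ lies in $A_{ij} = \{k : i \preceq k \prec j\}$. So the lemma reduces to a combinatorial enumeration of the splits separating $a$ from $b$, weighted by $\alpha_{ij}$.

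The key geometric observation is to view each split $S_{ij}$ as two ``cuts'' on the inscribed $n$-gon, one immediately clockwise-before vertex $i$ and the other immediately clockwise-before vertex $j$. The split separates $a$ and $b$ iff each of the two arcs joining $a$ and $b$ on the $n$-gon contains exactly one of these cuts. Now the cut before vertex $k$ lies in the clockwise arc from $a$ to $b$ iff $k \in \{a+1, \dots, b\}$ (in cyclic order), and lies in the clockwise arc from $b$ to $a$ iff $k \in \{b+1, \dots, a\}$. Since these two arcs partition $X$, every split $S$ separating $a$ and $b$ has a canonical labeling $S = S_{ij}$ with $i \in \{a+1,\dots,b\}$ and $j \in \{b+1, \dots, a\}$; invoking the symmetry conventions $S_{ij} = S_{ji}$, $\alpha_{ij}=\alpha_{ji}$ from Remark~\ref{rmk:isolation-indices-reflected}, this sets up a bijection between the separating splits and the index pairs of the double circular sum $\osum_{i=a+1}^{b}\osum_{j=b+1}^{a}$. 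Summing $\alpha_{ij}$ over this parameterization then yields the formula.

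The main obstacle is entirely bookkeeping with the cyclic order. The boundary pairs with $i = b$ or $j = a$, and the degenerate case $b = a+1$ (where the outer arc collapses to the single vertex $b$), each need a quick sanity check that the associated $A_{ij}$ genuinely contains one of $\{a,b\}$ and misses the other, so that no terms are spuriously added or omitted. For instance, when $i = b$ and $j \in \{b+1,\dots,a\}$, the arc $A_{b,j} = \{k : b \preceq k \prec j\}$ contains $b$ but not $a$, confirming that $S_{b,j}$ separates $a$ and $b$; the analogous check handles $j = a$. With these checks in place, the rearrangement in the previous paragraph is legitimate and the identity follows.
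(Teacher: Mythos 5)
Your proposal is correct and follows essentially the same route as the paper: both start from Equation~(\ref{eq:circular-distance-original}), characterize exactly which splits $S_{ij}$ separate $a$ from $b$ in terms of which of the two arcs between $a$ and $b$ contains $i$ and which contains $j$, and then use the conventions $S_{ij}=S_{ji}$, $\alpha_{ij}=\alpha_{ji}$ to re-index the resulting sum as $\osum_{i=a+1}^{b}\osum_{j=b+1}^{a}\alpha_{ij}$. The only difference is presentational: you argue cyclically via the two-cuts bijection in one pass, while the paper reduces to the case $a<b$, writes the two indicator conditions as ordinary double sums, and handles $a>b$ by symmetry.
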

\begin{proof}
	Suppose $a < b$. If $1 \leq i < j \leq n$, then $a \in A_{ij}$ and $b \notin A_{ij}$ if and only if $i \leq a < j \leq b$. Similarly, $a \notin A_{ij}$ and $b \in A_{ij}$ if and only if $a < i \leq b < j$. Let $\chi$ be an indicator function. Then by Equation (\ref{eq:circular-distance-original}) and the symmetry of $\alpha_{ij}$,
	\begin{align*}
		d_{ab}
		&= \sum_{1 \leq i < j \leq n} \alpha_{ij} \delta_{S_{ij}}(a,b) \\
		&= \sum_{1 \leq i < j \leq n} \alpha_{ij} \cdot \chi(i \leq a < j \leq b) + \sum_{1 \leq i < j \leq n} \alpha_{ij} \cdot \chi(a < i \leq b < j)\\
		&= \sum_{i=1}^{a} \sum_{j=a+1}^{b} \alpha_{ij} + \sum_{i=a+1}^{b} \sum_{j=b+1}^{n} \alpha_{ij}
		= \sum_{i=a+1}^{b} \sum_{j=1}^{a} \alpha_{ij} + \sum_{i=a+1}^{b} \sum_{j=b+1}^{n} \alpha_{ij} \\
		&= \osum_{i=a+1}^{b} \osum_{j=b+1}^{a} \alpha_{ij}.
	\end{align*}
	If $a>b$, we obtain the desired formula for $d_{ab} = d_{ba}$ by swapping the roles of $a$ and $b$ in the equation above.
\end{proof}

\subsection{Inequalities in circular decomposable spaces}
\label{sec:inequalities-circular-decomposable}
A nice consequence of Lemma \ref{lemma:circular-distance} is that we can characterize inequalities between distances in $X$ in terms of isolation indices. We will use these expressions to characterize the circular decomposable spaces that have cyclic VR complexes. We begin by rewriting inequalities between distances in terms of isolation indices.

\begin{lemma}
	\label{lemma:dists_smaller}
	If $a \prec b \prec c$, we have
	\begin{itemize}
		\item $d_{ab} \leq d_{ac}$ if and only if $\osum_{i=a+1}^{b} \osum_{j=b+1}^{c} \alpha_{ij} \leq \osum_{i=c+1}^{a} \osum_{j=b+1}^{c} \alpha_{ij}$.
		\item $d_{bc} \leq d_{ac}$ if and only if $\osum_{i=b+1}^{c} \osum_{j=a+1}^{b} \alpha_{ij} \leq \osum_{i=c+1}^{a} \osum_{j=a+1}^{b} \alpha_{ij}$.
	\end{itemize}
\end{lemma}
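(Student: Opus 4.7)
The plan is to compute $d_{ac} - d_{ab}$ and $d_{ac} - d_{bc}$ directly from the formula in Lemma \ref{lemma:circular-distance}, split the circular sums at the intermediate cyclic point, cancel a common double sum, and then rewrite what remains using the symmetry $\alpha_{ij} = \alpha_{ji}$ from Remark \ref{rmk:isolation-indices-reflected}.

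First I would expand
\begin{equation*}
d_{ab} = \osum_{i=a+1}^{b} \osum_{j=b+1}^{a} \alpha_{ij}, \quad d_{ac} = \osum_{i=a+1}^{c} \osum_{j=c+1}^{a} \alpha_{ij}, \quad d_{bc} = \osum_{i=b+1}^{c} \osum_{j=c+1}^{b} \alpha_{ij}.
\end{equation*}
Since $a \prec b \prec c$, each of the three arcs $\{a+1,\dots,b\}$, $\{b+1,\dots,c\}$, $\{c+1,\dots,a\}$ is nonempty, so splitting a circular sum at the intermediate endpoint is unambiguous and involves no empty sums. For the first inequality I would split the outer sum of $d_{ac}$ at $b$ and the inner sum of $d_{ab}$ at $c$, producing the common term $\osum_{i=a+1}^{b}\osum_{j=c+1}^{a}\alpha_{ij}$ in both expressions. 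Subtracting yields
\begin{equation*}
d_{ac} - d_{ab} = \osum_{i=b+1}^{c} \osum_{j=c+1}^{a} \alpha_{ij} - \osum_{i=a+1}^{b} \osum_{j=b+1}^{c} \alpha_{ij}.
\end{equation*}
Exchanging the names of the summation variables and invoking $\alpha_{ij} = \alpha_{ji}$ rewrites the first double sum as $\osum_{i=c+1}^{a}\osum_{j=b+1}^{c}\alpha_{ij}$, which gives precisely the claimed equivalence $d_{ab} \leq d_{ac}$.

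The second claim follows from the analogous manipulation: split the outer sum in $d_{ac}$ at $b$ and the inner sum in $d_{bc}$ at $a$, cancel the common $\osum_{i=b+1}^{c}\osum_{j=c+1}^{a}\alpha_{ij}$, and apply $\alpha_{ij}=\alpha_{ji}$ on the surviving term with outer index range $\{a+1,\dots,b\}$. The main obstacle is purely bookkeeping: tracking which arc each circular sum runs over after the splits, and being careful that the index swap used to invoke $\alpha_{ij}=\alpha_{ji}$ correctly exchanges the two nested circular ranges. The hypothesis $a \prec b \prec c$ rules out all empty-arc edge cases, so no separate degenerate subcase is needed.
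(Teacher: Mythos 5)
Your proposal is correct and follows essentially the same route as the paper: both expand $d_{ab}$, $d_{ac}$, $d_{bc}$ via Lemma \ref{lemma:circular-distance}, split each circular sum at the intermediate cyclic point to expose and cancel the common double sum, and finish with the symmetry $\alpha_{ij}=\alpha_{ji}$. No gaps.
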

\begin{proof}
	The lemma follows from the observations that
	\begin{align*}
		d_{ab}
		= \osum_{i=a+1}^{b} \osum_{j=b+1}^{a} \alpha_{ij}
		&= \osum_{i=a+1}^{b} \osum_{j=b+1}^{c} \alpha_{ij} + \osum_{i=a+1}^{b} \osum_{j=c+1}^{a} \alpha_{ij}\\
d_{ac}
		= \osum_{i=a+1}^{c} \osum_{j=c+1}^{a} \alpha_{ij}
		&= \osum_{i=a+1}^{b} \osum_{j=c+1}^{a} \alpha_{ij} + \osum_{i=b+1}^{c} \osum_{j=c+1}^{a} \alpha_{ij}\\
d_{bc}
		= \osum_{i=b+1}^{c} \osum_{j=c+1}^{b} \alpha_{ij}
		&= \osum_{i=b+1}^{c} \osum_{j=c+1}^{a} \alpha_{ij} + \osum_{i=b+1}^{c} \osum_{j=a+1}^{b} \alpha_{ij}
	\end{align*}
	and the symmetry of $\alpha_{ij}$.
\end{proof}

If $\vr_r(X)$ is cyclic for every $0 < r < \rad(X)$, several notable inequalities between distances in $X$ must be satisfied. For example, suppose that we have a function $M:X \to X$ as in Definition \ref{def:cyclic_graph} that makes $\vr_r(X)$ a cyclic complex for all $0 < r < \rad(X)$. If $a \prec c \preceq M(a)$ and $d_{ac} < r$, then $\vr_r(X)$ contains a directed edge $(a,c)$ and, by cyclicity, $(a,c-1)$ and $(a+1,c)$ as well. This only happens if $d_{a,c-1} < r$ and $d_{a+1,c} < r$, so if we let $r \searrow d_{ac}$, we obtain $d_{a,c-1} \leq d_{ac}$ and $d_{a+1,c} \leq d_{ac}$. Iterating this argument shows that $\vr_r(X)$ being cyclic forces the chain of inequalities
\begin{equation*}
	d_{a,a+1} \leq d_{a,a+2} \leq \cdots \leq d_{a,c-1} \leq d_{ac}.
\end{equation*}
Thanks to Lemma \ref{lemma:dists_smaller}, we can characterize $d_{a,c-1} \leq d_{ac}$ and $d_{a+1,c} \leq d_{ac}$ in terms of isolation indices, with the added benefit that if we rearrange the sums of isolation indices corresponding to either $d_{a,c-1} \leq d_{ac}$ or $d_{a+1,c} \leq d_{ac}$, we can produce several more inequalities. Although it would be desirable that $d_{a,c-1} \leq d_{ac}$ implied $d_{a,c-2} \leq d_{a,c-1}$ in order to obtain the chain above, the rearrangement yields a different conclusion.

\begin{lemma}
	\label{lemma:unimodal_alpha}
	Let $d_X$ be a circular decomposable metric. Then for any $a \neq c$,
	\begin{itemize}
		\item $d_{a,c-1} \leq d_{ac} \Leftrightarrow \osum_{i=a+1}^{c} \alpha_{ic} \leq \osum_{i=c+1}^{a} \alpha_{ic}$,
		\item $d_{a+1,c} \leq d_{ac} \Leftrightarrow \osum_{i=a+1}^{c} \alpha_{i,a+1} \leq \osum_{i=c+1}^{a} \alpha_{i,a+1}$.
	\end{itemize}
	As a consequence, if $a \prec b \prec c$, $d_{a,c-1} \leq d_{ac}$ implies $d_{b,c-1} \leq d_{bc}$ and $d_{a+1,c} \leq d_{ac}$ implies $d_{a+1,b} \leq d_{ab}$.
\end{lemma}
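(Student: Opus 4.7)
The plan is to prove the two equivalences by directly computing the differences $d_{ac}-d_{a,c-1}$ and $d_{ac}-d_{a+1,c}$ via Lemma \ref{lemma:circular-distance}, and then derive the consequence by a partition-of-mass argument.

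For the first equivalence, I start from
\[
d_{ac} = \osum_{i=a+1}^{c} \osum_{j=c+1}^{a} \alpha_{ij}, \qquad d_{a,c-1} = \osum_{i=a+1}^{c-1} \osum_{j=c}^{a} \alpha_{ij},
\]
and peel off the $i=c$ slice of the first double sum together with the $j=c$ slice of the second. Both expressions then share a common inner double sum $\osum_{i=a+1}^{c-1} \osum_{j=c+1}^{a} \alpha_{ij}$, with residual boundary terms $\osum_{j=c+1}^{a} \alpha_{cj}$ and $\osum_{i=a+1}^{c-1} \alpha_{ic}$, respectively. Subtracting and applying the symmetry $\alpha_{cj}=\alpha_{jc}$ from Remark \ref{rmk:isolation-indices-reflected} gives
\[
d_{ac} - d_{a,c-1} = \osum_{i=c+1}^{a} \alpha_{ic} - \osum_{i=a+1}^{c-1} \alpha_{ic},
\]
and the convention $\alpha_{cc}=0$ lets me rewrite the subtracted term as $\osum_{i=a+1}^{c} \alpha_{ic}$, yielding the stated equivalence. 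The second equivalence is proved by the same manipulation, peeling off the $i=a+1$ and $j=a+1$ slices and using $\alpha_{a+1,a+1}=0$.

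For the consequence, I use that the two circular sums $\osum_{i=a+1}^{c} \alpha_{ic}$ and $\osum_{i=c+1}^{a} \alpha_{ic}$ partition $\{1,\dots,n\}$ (the index $i=c$ lies in the first sum but contributes $0$), so if $S := \sum_{i=1}^{n} \alpha_{ic}$, the inequality on the right-hand side of the first equivalence is simply $\osum_{i=a+1}^{c} \alpha_{ic} \leq S/2$. When $a \prec b \prec c$, the circular sum $\osum_{i=b+1}^{c} \alpha_{ic}$ is a subsum of $\osum_{i=a+1}^{c} \alpha_{ic}$ with nonnegative omitted terms, so it is also bounded by $S/2$; applying the first equivalence to the pair $(b,c)$ then translates this back into $d_{b,c-1} \leq d_{bc}$. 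The implication $d_{a+1,c}\leq d_{ac}\Rightarrow d_{a+1,b}\leq d_{ab}$ follows identically from the second equivalence, this time comparing $\osum_{i=a+1}^{b} \alpha_{i,a+1}$ to $\osum_{i=a+1}^{c} \alpha_{i,a+1}$.

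The computation itself is routine; the only delicate point is carefully tracking the boundary slices of the circular sums so that the symmetry $\alpha_{ij}=\alpha_{ji}$ and the convention $\alpha_{ii}=0$ conspire to reproduce exactly the expressions stated in the lemma.
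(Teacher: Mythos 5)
Your proposal is correct and is essentially the paper's argument: the two equivalences come from the same splitting of the double circular sums of Lemma \ref{lemma:circular-distance} (the paper packages this as Lemma \ref{lemma:dists_smaller} and then drops the zero term $\alpha_{cc}$ resp. $\alpha_{a+1,a+1}$), and the consequence is the same rearrangement using nonnegativity of the isolation indices, which your ``at most half the total mass $S/2$'' reformulation restates. No gaps; only cosmetic differences (you recompute the difference directly instead of citing Lemma \ref{lemma:dists_smaller}).
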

\begin{proof}
	By Lemma \ref{lemma:dists_smaller}, $d_{a,c-1} \leq d_{ac}$ if and only if $\osum_{i=a+1}^{c-1} \alpha_{ic} \leq \osum_{i=c+1}^{a} \alpha_{ic}$ and $d_{a+1,c} \leq d_{ac}$ if and only if $\osum_{i=a+2}^{c} \alpha_{i,a+1} \leq \osum_{i=c+1}^{a+1} \alpha_{i,a+1}$. Since $\alpha_{a+1,a+1} = 0$, the latter is equivalent to $\osum_{i=a+1}^{c} \alpha_{i,a+1} \leq \osum_{i=c+1}^{a} \alpha_{i,a+1}$. Then if $a \prec b \prec c$,
	\begin{align*}
		d_{a,c-1} \leq d_{ac}
		&\Leftrightarrow \osum_{i=a+1}^{c} \alpha_{ic} = \osum_{i=a+1}^{b} \alpha_{ic} + \osum_{i=b+1}^{c} \alpha_{ic} \leq \osum_{i=c+1}^{a} \alpha_{ic}\\
		&\Rightarrow \osum_{i=b+1}^{c} \alpha_{ic} \leq \osum_{i=c+1}^{a} \alpha_{ic} + \osum_{i=a+1}^{b} \alpha_{ic} = \osum_{i=c+1}^{b} \alpha_{ic} \\
		&\Leftrightarrow d_{b,c-1} \leq d_{bc}.
	\end{align*}
	The proof of $d_{a+1,c} \leq d_{ac} \Rightarrow d_{a+1,b} \leq d_{ab}$ is analogous.
\end{proof}

Note that instead of $d_{a,c-1} \leq d_{ac} \Rightarrow d_{a,c-2} \leq d_{a,c-1}$, Lemma \ref{lemma:unimodal_alpha} produces $d_{a,c-1} \leq d_{ac} \Rightarrow d_{a+1,c-1} \leq d_{a+1,c}$. This implication is unfortunately not enough to produce the chain
\begin{equation*}
	d_{a,a+1} \leq d_{a,a+2} \leq \cdots \leq d_{a,c-1} \leq d_{ac}.
\end{equation*}
However, if we had a second chain $d_{b,b+1} \leq d_{b,b+2} \leq \cdots$ for some $a \prec b \prec c$, then it must at least reach $d_{b,c-1} \leq d_{bc}$ thanks to Lemma \ref{lemma:unimodal_alpha}. Hence, it will be informative to study the relationship between $\osum_{i=a+1}^{c} \alpha_{ic}$ and $\osum_{i=c+1}^{a} \alpha_{ic}$.\\
\indent The proof of Lemma \ref{lemma:unimodal_alpha} uses the fact that the sum $\osum_{i=c+1}^{b} \alpha_{ic}$ is at its smallest when $b = c+1$ and increases as $b$ cycles through $c+1, c+2, \dots, c-1$. Likewise, the sum $\osum_{i=b+1}^{c} \alpha_{ic}$ achieves its maximum when $b = c$ and steadily decreases in the same range. We focus on the point when $\osum_{i=c+1}^{b} \alpha_{ic}$ becomes larger than $\osum_{i=b+1}^{c} \alpha_{ic}$.

\begin{defn}
	\label{def:unimodal_sum}
	Let $d_X$ be a circular decomposable metric and fix $c \in X$. We define $\sigma(c)$ as the last element $a$ from the sequence $c+1, c+2, \dots, c-1$ such that $\displaystyle \osum_{i=a+1}^{c} \alpha_{ic} \geq \osum_{i=c+1}^{a} \alpha_{ic}$.
\end{defn}

\begin{remark}
	\label{rmk:unimodal_sum}
	It is possible that $\sigma(c)$ is not defined if $\osum_{i=c+2}^c \alpha_{ic} < \alpha_{c+1,c}$ holds, but we will assume this never happens. In Section \ref{sec:blocks_of_TDS}, we will show that if the above inequality holds for some $c \in X$, then the homotopy types of $\vr_r(X)$ and $\vr_r(X \setminus \{c\})$ differ by an isolated point, at most. Since this is a small difference, we will apply the results of this section to $X \setminus C$, where $C$ is the set of $c \in X$ for which $\sigma(c)$ is not defined, and then add an isolated point to $\vr_r(X \setminus C)$ according to Proposition \ref{prop:point_with_long_edge}.
\end{remark}

\noindent For convenience, we record the following consequence of Lemma \ref{lemma:unimodal_alpha}.
\begin{corollary}
	\label{cor:unimodal_alpha}
	Let $(X, d_X)$ be a circular decomposable space. Then for any $a \neq c$,
	\begin{itemize}
		\item $c \prec a \preceq \sigma(c) \Leftrightarrow \osum_{i=a+1}^{c} \alpha_{ic} \geq \osum_{i=c+1}^{a} \alpha_{ic} \Leftrightarrow d_{a,c-1} \geq d_{ac}$.
		\item $\sigma(c) \prec a \prec c \Leftrightarrow \osum_{i=a+1}^{c} \alpha_{ic} < \osum_{i=c+1}^{a} \alpha_{ic} \Leftrightarrow d_{a,c-1} < d_{ac}$.
	\end{itemize}
	The equality $d_{a,c-1} = d_{ac}$ happens if and only if $\osum_{i=c+1}^{a} \alpha_{ic} = \osum_{i=a+1}^{c} \alpha_{ic}$.
\end{corollary}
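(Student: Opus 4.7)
The plan is to derive the corollary directly from Lemma \ref{lemma:unimodal_alpha} combined with the definition of $\sigma(c)$. The equivalence between the distance inequality and the sum inequality within each bullet (together with the equality statement) is essentially a restatement of Lemma \ref{lemma:unimodal_alpha}: although that lemma is phrased with $\leq$, its proof (through Lemma \ref{lemma:dists_smaller}) identifies $d_{ac} - d_{a,c-1}$ with the exact difference $\osum_{i=c+1}^{a} \alpha_{ic} - \osum_{i=a+1}^{c} \alpha_{ic}$, so every ordering relation ($<, \leq, =, \geq, >$) transfers between the two sides simultaneously. In particular, $d_{a,c-1} \geq d_{ac} \Leftrightarrow \osum_{i=a+1}^{c} \alpha_{ic} \geq \osum_{i=c+1}^{a} \alpha_{ic}$ and the analogous strict equivalence both hold.

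The substantive step is the equivalence $c \prec a \preceq \sigma(c) \Leftrightarrow \osum_{i=a+1}^{c} \alpha_{ic} \geq \osum_{i=c+1}^{a} \alpha_{ic}$. Write $P(a)$ for the inequality on the right. The plan is to show that $P$ holds precisely on the initial segment $\{c+1, c+2, \ldots, \sigma(c)\}$ of the cyclic traversal starting at $c+1$. The key fact is monotonicity in $a$ as $a$ advances along this traversal: $\osum_{i=a+1}^{c} \alpha_{ic}$ is non-increasing (each step drops a non-negative term), while $\osum_{i=c+1}^{a} \alpha_{ic}$ is non-decreasing (each step appends a non-negative term). Consequently, if $c \prec a_1 \preceq a_0 \prec c$ and $P(a_0)$ holds, then
\[
    \osum_{i=a_1+1}^{c} \alpha_{ic} \geq \osum_{i=a_0+1}^{c} \alpha_{ic} \geq \osum_{i=c+1}^{a_0} \alpha_{ic} \geq \osum_{i=c+1}^{a_1} \alpha_{ic},
\]
so $P(a_1)$ holds as well. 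Since $P(\sigma(c))$ holds by Definition \ref{def:unimodal_sum}, the predicate $P$ holds throughout $c \prec a \preceq \sigma(c)$, which gives the forward direction. For the reverse direction (and simultaneously for the second bullet's first equivalence), if $\sigma(c) \prec a \prec c$ the maximality clause in the definition of $\sigma(c)$ forces $P(a)$ to fail, yielding the strict inequality $\osum_{i=a+1}^{c} \alpha_{ic} < \osum_{i=c+1}^{a} \alpha_{ic}$.

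No single step is genuinely difficult; the essential content is the monotonicity observation packaged around the definition of $\sigma(c)$. The main care required is to keep the cyclic indices straight and to respect the weak vs.\ strict distinction, which is why the fact that $\sigma(c)$ is the \emph{last} element where $\geq$ holds automatically promotes the failure of $\geq$ past $\sigma(c)$ to a strict $<$.
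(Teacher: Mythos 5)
Your proposal is correct and matches the paper's intended reasoning: the paper states this corollary without proof as a direct consequence of Lemma \ref{lemma:unimodal_alpha}, and the monotonicity observation you make (the left sum non-increasing, the right sum non-decreasing as $a$ advances from $c+1$, so the predicate holds exactly on an initial segment ending at $\sigma(c)$) is precisely the argument sketched in the paragraph preceding Definition \ref{def:unimodal_sum}. The exact-difference identity $d_{ac}-d_{a,c-1}=\osum_{i=c+1}^{a}\alpha_{ic}-\osum_{i=a+1}^{c}\alpha_{ic}$, which you extract from the proof of Lemma \ref{lemma:dists_smaller}, correctly handles the transfer of all ordering relations including the equality case.
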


\begin{example}
	\label{ex:sigma}
	Suppose that $\alpha_{ij} = 1$ for every $i \neq j$. If $1 \leq c < n/2 < a \leq n$, we have $\osum_{i=c+1}^a \alpha_{ic} = a-c$ and $\osum_{i=a+1}^c \alpha_{ic} = (n-1) - \osum_{i=c+1}^a \alpha_{ic} = (n-1) - (a-c)$ (recall $\alpha_{cc}=0$). Note that $a-c$ is smaller than $(n-1) - (a-c)$ whenever $a-c \leq (n-1)/2$, so $\sigma(c) = c + \lfloor \frac{n-1}{2} \rfloor$. Similarly, when $1 \leq a < n/2 \leq c \leq n$, we have $\osum_{i=a+1}^c \alpha_{ic} = (c-1)-a$ (recall $\alpha_{cc}=0$) and $\osum_{i=c+1}^a \alpha_{ic} = (n-1) - (c-1-a)$. Then $(n-1) - (c-1-a) \leq (c-1-a)$ when $c-a \geq \frac{n+1}{2}$, so $\sigma(c) = c - \lceil \frac{n+1}{2} \rceil$. This equals $c + \lfloor \frac{n-1}{2} \rfloor$ because $\lfloor \frac{n-1}{2} \rfloor + \lceil \frac{n+1}{2} \rceil = n$. Hence, $\sigma(c) = c + \lfloor \frac{n-1}{2} \rfloor$ for all $1 \leq c \leq n$.\\
	\indent Table \ref{tab:sigma} shows the resulting distance matrix for $n=6$, with the maximum value of each row in boldface and the entries $(\sigma(c), c)$ underlined for every $1 \leq c \leq n$. By Corollary \ref{cor:unimodal_alpha}, $d_{a,c-1} < d_{ac}$ if $\sigma(c) \prec a \prec c$ and $d_{a,c-1} \geq d_{ac}$ otherwise. We can read this in Table \ref{tab:sigma} as follows. We pick any entry $(a,c)$ with $a \neq c$ and go down in the $c$-th column, wrapping around to the top if we reach the bottom of the matrix. If we reach the diagonal $(c,c)$ before crossing the line in column $c$, then $\sigma(c) \prec a \prec c$. Otherwise, $c \prec a \preceq(c)$. In other words, if the entry $(a,c)$ is ``under'' the line in column $c$ and ``above'' the diagonal, then $d_{a,c-1} < d_{ac}$. Otherwise, $d_{a,c-1} \geq d_{ac}$.\\
	\indent We can also visualize the second conclusion of Lemma \ref{lemma:unimodal_alpha}. If we have a chain $d_{a,a+1} < d_{a,a+2} < \cdots < d_{a,a+m}$, then every entry $(a,c)$ with $a+1 \preceq c \preceq m$ is below the line in column $c$. Consequently, the entries $(a+1,c)$ are below the line in column $c$ for all $a+1 \prec c \preceq m$, so $d_{a+1,a+2} < \cdots < d_{a+1,m}$.
	
	\begin{table}[h]
		\centering
		\begin{tabular}{l|rrrrrr}
			& 1 & 2 & 3 & 4 & 5 & 6 \\
			\hline
			1 & 0 & 5 & 8 & \textbf{9} & 8 & 5 \\ \cline{6-6}
			2 & 5 & 0 & 5 & 8 & \textbf{9} & 8 \\ \cline{7-7}
			3 & 8 & 5 & 0 & 5 & 8 & \textbf{9} \\ \cline{2-2}
			4 & \textbf{9} & 8 & 5 & 0 & 5 & 8 \\ \cline{3-3}
			5 & 8 & \textbf{9} & 8 & 5 & 0 & 5 \\ \cline{4-4}
			6 & 5 & 8 & \textbf{9} & 8 & 5 & 0 \\ \cline{5-5}
		\end{tabular}
		\caption{Distance matrix of a circular decomposable metric on $\{1, \dots, 6\}$ such that $\alpha_{ij}=1$ for all $i \neq j$. The maximum value of each row is boldface, and the entries $(\sigma(a), a)$ underlined for every $1 \leq a \leq n$. As in Example \ref{ex:sigma}, if the entry $(a,c)$ is ``under'' the line in column and ``above'' the diagonal entry $(c,c)$ (wrapping around the bottom of the matrix if necessary), then $\sigma(c) \prec a \prec c$ and $d_{a,c-1} < d_{ac}$. Otherwise, $d_{a,c-1} \geq d_{ac}$.}
		\label{tab:sigma}
	\end{table}
\end{example}

\subsection{Monotone circular decomposable metrics}
\label{subsec:circular-metrics-monotone}

In this section, we use $\sigma$ to produce a candidate function $M:X \to X$ that might satisfy Definition \ref{def:monotone_metrics}. As we argued before, we at least need the chain
\begin{equation}
	\label{eq:increasing_chain}
	d_{a,a+1} < d_{a,a+2} < \cdots < d_{a,m-1} < d_{am}
\end{equation}
for some $m \in X$, which by Corollary \ref{cor:unimodal_alpha}, is equivalent to $\sigma(b) \prec a \prec b$ for all $a \prec b \preceq m$. In fact, if there existed $m$ such that $\sigma(b) \prec a \prec b \Leftrightarrow a \prec b \preceq m$, we would have the second half $d_{am} \geq d_{m+1,a} \geq \cdots \geq d_{a-1,a}$. These inequalities suggest that $M(a) := m$ might satisfy Definition \ref{def:monotone_metrics}. Of course, there are other requirements like $d_{a+1,c} \leq d_{ac}$ when $a \prec c \preceq M(a)$, but we will prove them later using the properties of $\sigma$.

\begin{defn}
	\label{def:M_from_sigma}
	Let $d_X$ be a circular decomposable metric and fix $a \in X$. We define $M(a)$ be the last element $m$ of the sequence $\{a+1, a+2, \dots, a-1\}$ that satisfies $\sigma(m) \prec a \prec m$.
\end{defn}

Note that, a priori, it is not required that $\sigma(b) \prec a \prec b$ for every $a \prec b \prec m$. However, we can set conditions on $\sigma$ to ensure that this happens. Heuristically, if all isolation indices have roughly the same value, $\sigma(a)$ and $M(a)$ must happen halfway through $a+1$ and $a-1$.

\begin{example}
	\label{ex:sigma_and_M}
	In Example \ref{ex:sigma}, we found that $\sigma(a) = a+\lfloor \frac{n-1}{2} \rfloor$ when $\alpha_{ij} = 1$ for every $i \neq j$. Note that $\sigma(c) = c+\lfloor \frac{n-1}{2} \rfloor \prec a \prec c$ if and only if $c \prec a + \lceil \frac{n+1}{2} \rceil \prec c + \lceil \frac{n+1}{2} \rceil$, and combining these inequalities yields $a \prec c \prec a + \lceil \frac{n+1}{2} \rceil$. Hence, $M(a) = a + \lceil \frac{n+1}{2} \rceil - 1 = a + \lfloor \frac{n}{2} \rfloor$.
\end{example}

In general, we don't need such a rigid formula like $\sigma(c) = c+r$ and $M(a) = a+s$ (for $1 \leq r, s \leq n$) like in Examples \ref{ex:sigma} and \ref{ex:sigma_and_M} to ensure that $d_X$ is monotone. At the very least, we need $\sigma(c) \prec a \prec c$ for any $a \prec c \prec M(a)$ in order for the chain in Equation (\ref{eq:increasing_chain}) to hold. Since we want a condition that can be checked using only the isolation indices, we instead study the following implication that we call Property (\ref{eq:star}):
\begin{align}
	\label{eq:star}
	\begin{split}
		a \prec b \preceq \sigma(a) &\Rightarrow \sigma(a) \preceq \sigma(b) \prec a.
\end{split}
	\tag{$\star$}
\end{align}

\noindent The following consequence of Property (\ref{eq:star}) will be useful later.
\begin{lemma}
	\label{lemma:star_2}
	Let $d_X$ be a circular decomposable metric such that $\sigma$ satisfies Property (\ref{eq:star}). Then $a \prec b \preceq \sigma(a) \preceq \sigma(b) \prec a$ for any $a \prec b \preceq \sigma(a)$.
\end{lemma}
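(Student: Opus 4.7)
The plan is to observe that this lemma is essentially just a combination of the hypothesis with Property~(\ref{eq:star}), checking that the two partial cyclic chains concatenate into a single valid cyclic chain around the circle.

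First, I would note that the hypothesis $a \prec b \preceq \sigma(a)$ already constitutes the first half of the desired chain, so nothing needs to be done there. Applying Property~(\ref{eq:star}) directly to the hypothesis yields $\sigma(a) \preceq \sigma(b) \prec a$, which is the second half. Both partial chains share $\sigma(a)$ as a common point, so the main thing to verify is that their concatenation makes sense as a cyclic statement about four (possibly with equalities) points on the circle.

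To formalize this, I would argue via the cyclic ordering inherited from $V_n$ (as set up in the paragraph after Theorem~\ref{thm:circular-splits}). The hypothesis says $b$ lies on the clockwise arc from $a$ to $\sigma(a)$ (inclusive of $\sigma(a)$). Property~(\ref{eq:star}) says $\sigma(b)$ lies on the clockwise arc from $\sigma(a)$ back to $a$ (inclusive of $\sigma(a)$, exclusive of $a$). Since these two arcs are disjoint except for the shared endpoint $\sigma(a)$ and together cover the entire circle minus $\{a\}$, the four points $a, b, \sigma(a), \sigma(b)$ are encountered in exactly this cyclic order when traversing the circle clockwise starting from $a$. This is precisely the meaning of $a \prec b \preceq \sigma(a) \preceq \sigma(b) \prec a$.

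There is no genuine obstacle in this proof; it is essentially a bookkeeping check that Property~(\ref{eq:star}) together with the hypothesis gives the full cyclic configuration. The lemma is recorded as a convenient shorthand for later use, packaging the hypothesis and the conclusion of~(\ref{eq:star}) into a single displayed chain.
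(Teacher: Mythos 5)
Your proposal is correct and follows exactly the paper's argument: the hypothesis supplies the first half of the chain, Property~(\ref{eq:star}) supplies the second half, and the two are concatenated at the shared point $\sigma(a)$. The extra bookkeeping about the two clockwise arcs is a harmless elaboration of the same one-line proof given in the paper.
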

\begin{proof}
	By Property \ref{eq:star}, $a \prec b \preceq \sigma(a)$ implies $\sigma(a) \preceq \sigma(b) \prec a$. Combining the two inequalities yields the conclusion.
\end{proof}

Intuitively, Property (\ref{eq:star}) and Lemma \ref{lemma:star_2} say that $\sigma:X \to X$ is non-decreasing in the cyclic order and that it does not increase so fast that it violates $\sigma(c) \prec a \prec c$ for some $a \prec c \preceq \sigma(c)$.

\begin{example}
	\label{ex:sigma_and_M_star}
	The function $\sigma$ from Example \ref{ex:sigma_and_M} satisfies Property (\ref{eq:star}). Indeed, $\sigma(a) = a+\lfloor \frac{n-1}{2} \rfloor$ for any $a$, so $a \prec b \preceq a+\lfloor \frac{n-1}{2} \rfloor$ implies $a + \lfloor \frac{n-1}{2} \rfloor \prec b + \lfloor \frac{n-1}{2} \rfloor \preceq a + 2\lfloor \frac{n-1}{2} \rfloor$. In particular, $a + 2\lfloor \frac{n-1}{2} \rfloor$ and $b + \lfloor \frac{n-1}{2} \rfloor$ can never reach $a$ because $2 \lfloor \frac{n-1}{2} \rfloor < n$. Hence, we obtain $\sigma(a) \prec \sigma(b) \prec a$, a special case of Property (\ref{eq:star}).
\end{example}

\indent Thanks to Property (\ref{eq:star}), we can show that $\sigma(c) \prec a \prec c$ for every $a \prec c \preceq M(a)$. We then verify that the maximum of $d_{ac}$ for a fixed $a$ happens when $c = M(a)$.
\begin{lemma}
	\label{lemma:star_implies_unimodal}
	Let $d_X$ be a circular decomposable metric such that $\sigma$ satisfies Property (\ref{eq:star}). Let $M$ be the function from Definition \ref{def:M_from_sigma}. Then $\sigma(b) \prec a \prec b$ for every $a \prec b \preceq M(a)$.
\end{lemma}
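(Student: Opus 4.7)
The plan is to prove the claim by downward induction on $b$, starting from $b = M(a)$ and decrementing by one clockwise step at a time until $b = a+1$. The key tool at each step will be Property (\ref{eq:star}) applied to consecutive pairs $(b, b+1)$.

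The base case $b = M(a)$ is immediate from Definition \ref{def:M_from_sigma}, which guarantees $\sigma(M(a)) \prec a \prec M(a)$.

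For the inductive step, I would fix $b$ with $a \prec b \prec M(a)$ and assume the inductive hypothesis $\sigma(b+1) \prec a \prec b+1$. I then apply Property (\ref{eq:star}) with first variable $b$ and second variable $b+1$: the hypothesis $b \prec b+1 \preceq \sigma(b)$ holds automatically, because $b+1$ is the immediate clockwise successor of $b$ and $\sigma(b) \neq b$, so $b+1$ necessarily lies on the clockwise arc from $b$ to $\sigma(b)$ (inclusive of $\sigma(b)$). Property (\ref{eq:star}) then produces $\sigma(b) \preceq \sigma(b+1) \prec b$. Combining this chain with the inductive hypothesis, one sees that moving clockwise from $\sigma(b)$ the points appear in the order $\sigma(b+1)$ (possibly equal to $\sigma(b)$), then $a$, then $b$, then $b+1$. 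In particular $a$ sits strictly between $\sigma(b)$ and $b$ clockwise, giving the desired conclusion $\sigma(b) \prec a \prec b$.

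The main subtlety I expect is the cyclic-order bookkeeping required to ensure the inequalities are strict. A clean way to verify this is to place $a$ at position $0$ on the $n$-cycle and $b$ at position $\beta \geq 1$. The inductive hypothesis then forces $\sigma(b+1)$ into positions $\{\beta+2, \dots, n-1\}$, and the chain $\sigma(b) \preceq \sigma(b+1) \prec b$ confines $\sigma(b)$ to positions $\{\beta+1, \dots, s\}$, where $s$ denotes the position of $\sigma(b+1)$. Since $\beta + 1 \geq 2 > 0$, this pins $\sigma(b)$ away from $a$, so $\sigma(b) \neq a$ and the strict chain $\sigma(b) \prec a \prec b$ follows.
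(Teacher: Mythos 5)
Your proof is correct, but it takes a genuinely different route from the paper's. The paper argues by contradiction with a single application of Property (\ref{eq:star}): assuming $b \prec a \preceq \sigma(b)$ for some $a \prec b \prec M(a)$, it combines this with $a \prec b \prec M(a)$ to get $b \prec M(a) \prec \sigma(b)$, applies (\ref{eq:star}) to the pair $(b, M(a))$ to obtain $\sigma(b) \preceq \sigma(M(a)) \prec b$, and then the defining property $\sigma(M(a)) \prec a \prec M(a)$ of $M(a)$ yields $\sigma(b) \preceq \sigma(M(a)) \prec a \prec b$, contradicting the assumption. You instead run a downward induction from $b = M(a)$, applying (\ref{eq:star}) only to adjacent pairs $(b, b+1)$; the observation that the hypothesis $b \prec b+1 \preceq \sigma(b)$ is automatic (since $\sigma(b) \neq b$, under the standing assumption of Remark \ref{rmk:unimodal_sum} that $\sigma$ is everywhere defined) is what makes the induction run, and your positional bookkeeping correctly confines $\sigma(b)$ to the clockwise arc strictly between $b$ and $a$, so the strict betweenness $\sigma(b) \prec a \prec b$ does follow. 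The trade-off: your argument is direct, with all cyclic-order bookkeeping local to consecutive points, but it requires an induction and up to $n$ applications of (\ref{eq:star}); the paper's is shorter, invoking (\ref{eq:star}) once, at the cost of negating a cyclic betweenness statement and a contradiction setup. Both ultimately rest on the same two ingredients: Property (\ref{eq:star}) and the defining property of $M(a)$ from Definition \ref{def:M_from_sigma}.
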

\begin{proof}
	Let $m := M(a)$ and suppose, for a contradiction, that $b \prec a \preceq \sigma(b)$ for some $a \prec b \prec m$. The previous inequality implies $b \prec m \prec a$, so we also have $b \prec m \prec \sigma(b)$. By Property (\ref{eq:star}), $\sigma(b) \preceq \sigma(m) \prec b$. However, by Definition \ref{def:M_from_sigma}, $\sigma(m) \prec a \prec b \prec m$, so we get $\sigma(b) \preceq \sigma(m) \prec a \prec b$ as well. This contradicts the assumption $b \prec a \preceq \sigma(b)$.
\end{proof}

\begin{corollary}
	\label{cor:monotone_1}
	Let $d_X$ be a circular decomposable metric such that $\sigma$ satisfies Property (\ref{eq:star}). Let $M$ be the function from Definition \ref{def:M_from_sigma}. Then:
	\begin{itemize}
		\item $a \prec b \preceq M(a)$ if and only if $d_{a, b-1} < d_{ab}$.
		\item $M(a) \prec b \preceq a$ if and only if $d_{a, b-1} \geq d_{ab}$.
	\end{itemize}
\end{corollary}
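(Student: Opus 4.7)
The plan is to read each bi-implication as a statement about the cyclic relation between $\sigma(b)$, $a$, and $b$, and then combine Corollary \ref{cor:unimodal_alpha} with Lemma \ref{lemma:star_implies_unimodal} and the defining property of $M(a)$. The pivotal observation is that Corollary \ref{cor:unimodal_alpha}, applied with $c=b$, gives the reformulations
\begin{equation*}
	d_{a,b-1} < d_{ab} \iff \sigma(b) \prec a \prec b, \qquad d_{a,b-1} \geq d_{ab} \iff b \prec a \preceq \sigma(b),
\end{equation*}
so the whole statement is a translation between these cyclic conditions and the position of $b$ relative to $M(a)$.

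For the forward direction of the first bi-implication, I would assume $a \prec b \preceq M(a)$ and apply Lemma \ref{lemma:star_implies_unimodal} directly, which yields $\sigma(b) \prec a \prec b$ and hence $d_{a,b-1} < d_{ab}$ via the reformulation above. For the converse, starting from $d_{a,b-1} < d_{ab}$ I would extract $\sigma(b) \prec a \prec b$, which in particular places $b$ in the sequence $a+1, a+2, \ldots, a-1$ and makes it a valid candidate in the sense of Definition \ref{def:M_from_sigma}. Because $M(a)$ is by definition the \emph{last} such candidate in that cyclic traversal, we must have $b \preceq M(a)$, giving $a \prec b \preceq M(a)$.

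The second bi-implication I would then deduce by trichotomy rather than reproving it from scratch. The two conditions $a \prec b \preceq M(a)$ and $M(a) \prec b \preceq a$ partition $X$ (with $b=a$ absorbed by the second, where $d_{ab}=0 \leq d_{a,b-1}$ holds trivially), and similarly $d_{a,b-1} < d_{ab}$ and $d_{a,b-1} \geq d_{ab}$ partition the possibilities; so once the first bi-implication is established, the second is forced.

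The only step that requires any real care is the bookkeeping around the degenerate configuration $\sigma(b) = a$, where the strict cyclic inequality $\sigma(b) \prec a \prec b$ fails purely for lack of distinctness. In that case $b$ is not a candidate for $M(a)$, yet $b \prec a \preceq \sigma(b)$ still holds via the equality clause, which matches the second reformulation. So this edge case is automatically consistent with both sides of the second equivalence, and I expect no genuine obstacle beyond this one sanity check.
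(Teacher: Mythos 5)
Your proposal is correct and follows essentially the same route as the paper: translate the distance inequalities into the cyclic conditions $\sigma(b) \prec a \prec b$ versus $b \prec a \preceq \sigma(b)$ via Corollary \ref{cor:unimodal_alpha}, use Lemma \ref{lemma:star_implies_unimodal} for one direction and the defining ``last candidate'' property of $M(a)$ in Definition \ref{def:M_from_sigma} for the other. The only cosmetic difference is that you prove the converse of the first bullet directly and obtain the second bullet by dichotomy, whereas the paper proves the forward implication $M(a) \prec b \prec a \Rightarrow b \prec a \preceq \sigma(b)$ directly and lets the same dichotomy close both equivalences; your handling of the edge cases $b=a$ and $\sigma(b)=a$ is consistent with the paper's conventions.
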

\begin{proof}
	If $b=a$, there is nothing to prove. Otherwise, by Definition \ref{def:M_from_sigma}, $M(a) \prec b \prec a \Rightarrow b \prec a \preceq \sigma(b)$, while Lemma \ref{lemma:star_implies_unimodal} yields $a \prec b \preceq M(a) \Rightarrow \sigma(b) \prec a \prec b$. Then, by Corollary \ref{cor:unimodal_alpha},
	\begin{itemize}
		\item $a \prec b \preceq M(a) \Leftrightarrow \sigma(b) \prec a \prec b \Leftrightarrow d_{a,b-1} < d_{ab}$, and
		\item $M(a) \prec b \prec a \Leftrightarrow b \prec a \preceq \sigma(b) \Leftrightarrow d_{a,b-1} \geq d_{ab}$.
	\end{itemize}
\end{proof}

Corollary \ref{cor:monotone_1} concludes the proof of the chain in Equation (\ref{eq:increasing_chain}), but we haven't fully exploited Lemma \ref{lemma:unimodal_alpha} yet. In particular, for any $a \prec c \preceq M(a)$, Equation (\ref{eq:increasing_chain}) and Lemma \ref{lemma:unimodal_alpha} imply that $d_{c,c+1} \leq d_{c, c+2} \leq \cdots \leq d_{c, M(a)}$ and, thus, $c \prec M(a) \preceq M(c)$ by Corollary \ref{cor:monotone_1}. This ``monotonicity'' of $M$ can be generalized further.
\begin{lemma}
	\label{lemma:star_M}
	Let $(X, d_X)$ be a circular decomposable space and let $M:X \to X$ be the function from Definition \ref{def:M_from_sigma}.
	\begin{itemize}
		\item $a \prec b \prec M(a) \Rightarrow b \prec M(a) \preceq M(b)$,
		\item $M(a) \prec b \prec a \Rightarrow M(b) \preceq M(a) \prec b$.
	\end{itemize}
\end{lemma}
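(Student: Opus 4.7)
The plan is to prove both items by a common three-step pattern: convert the cyclic hypothesis on $(a, b, M(a))$ into a metric inequality via Corollary \ref{cor:monotone_1}, propagate that inequality to $b$ using Lemma \ref{lemma:unimodal_alpha}, and convert it back into a cyclic statement on $M(b)$ via Corollary \ref{cor:monotone_1} applied at $b$. The two bullets of Lemma \ref{lemma:unimodal_alpha} match the two items naturally: the first bullet (varying the right endpoint) handles the forward case, and the second bullet (varying the left endpoint) handles the backward case.

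For the first item, suppose $a \prec b \prec M(a)$. Corollary \ref{cor:monotone_1} applied at $a$ (first bullet with parameter $M(a)$) gives $d_{a,M(a)-1} < d_{a,M(a)}$. The first bullet of Lemma \ref{lemma:unimodal_alpha} with $c := M(a)$ then propagates this to $d_{b,M(a)-1} < d_{b,M(a)}$; strictness is preserved because the splitting $\osum_{i=a+1}^{c}\alpha_{ic} = \osum_{i=a+1}^{b}\alpha_{ic} + \osum_{i=b+1}^{c}\alpha_{ic}$ combined with nonnegativity of the $\alpha_{ic}$'s turns a strict outer inequality into a strict inner one. Feeding $d_{b,M(a)-1} < d_{b,M(a)}$ into Corollary \ref{cor:monotone_1} at $b$ (first bullet with $c := M(a)$, using $b \prec M(a)$) yields $b \prec M(a) \preceq M(b)$.

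For the second item, suppose $M(a) \prec b \prec a$; the edge case $M(a)+1 = a$ leaves no room for $b$ and is vacuous, so assume $M(a)+1 \prec a$. Corollary \ref{cor:monotone_1} applied at $a$ (second bullet with parameter $M(a)+1$) gives $d_{a,M(a)} \geq d_{a,M(a)+1}$, i.e.\ $d_{M(a)+1,a} \leq d_{M(a),a}$. The second bullet of Lemma \ref{lemma:unimodal_alpha}, invoked with its $a, b, c$ set to $M(a), b, a$ respectively, propagates this to $d_{M(a)+1,b} \leq d_{M(a),b}$, i.e.\ $d_{b,M(a)} \geq d_{b,M(a)+1}$. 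Feeding this into Corollary \ref{cor:monotone_1} at $b$ (second bullet with $c := M(a)+1$, noting that $M(a)+1 \preceq b$ follows from $M(a) \prec b$) yields $M(b) \prec M(a)+1 \preceq b$, which is exactly $M(b) \preceq M(a) \prec b$. The only real subtlety is pairing each bullet of Lemma \ref{lemma:unimodal_alpha} with the correct forward/backward geometry of $b$ relative to $a$ and $M(a)$, and verifying that the splitting argument preserves strictness in the first item; both are routine once the correspondence is set up.
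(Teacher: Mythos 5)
Your chain of implications is internally consistent, but it does not prove the lemma as stated: every step routed through Corollary \ref{cor:monotone_1} silently imports the hypothesis that $\sigma$ satisfies Property (\ref{eq:star}), which is not assumed in Lemma \ref{lemma:star_M} and is not used in the paper's proof, which argues directly from Definition \ref{def:M_from_sigma}. For the first item this is only a citation issue: the two facts you need, $d_{a,M(a)-1}<d_{a,M(a)}$ and ``$d_{b,M(a)-1}<d_{b,M(a)}$ implies $b\prec M(a)\preceq M(b)$,'' are both (\ref{eq:star})-free, since by Corollary \ref{cor:unimodal_alpha} the first is just the defining property $\sigma(M(a))\prec a\prec M(a)$ of $M(a)$, and the second says that $M(a)$ itself satisfies the condition defining the candidates for $M(b)$, so the \emph{last} such candidate cannot occur earlier. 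For the second item, however, the final step is a genuine gap without (\ref{eq:star}): from the single inequality $d_{b,M(a)}\geq d_{b,M(a)+1}$, Corollary \ref{cor:unimodal_alpha} only tells you that $M(a)+1$ fails the condition $\sigma(m)\prec b\prec m$, i.e.\ that $M(a)+1$ is not a candidate for $M(b)$; since $M(b)$ is the last candidate in the sequence $b+1,\dots,b-1$, this alone does not exclude a candidate strictly between $M(a)+1$ and $b$, so you cannot conclude $M(b)\prec M(a)+1\preceq b$. The implication ``$d_{b,c-1}\geq d_{bc}\Rightarrow M(b)\prec c\preceq b$'' is precisely the content of Lemma \ref{lemma:star_implies_unimodal}/Corollary \ref{cor:monotone_1}, and that is exactly where Property (\ref{eq:star}) does real work.

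The paper's proof instead exploits the maximality in Definition \ref{def:M_from_sigma} for \emph{all} elements beyond $M(a)$: for every $m$ with $M(a)\prec m\prec b$ (hence $M(a)\prec m\prec a$), the definition of $M(a)$ forces $m\prec a\preceq\sigma(m)$, and combining with $m\prec b\prec a$ shows that $b$ also fails $\sigma(m)\prec b\prec m$; hence no candidate for $M(b)$ lies strictly after $M(a)$, giving $M(b)\preceq M(a)\prec b$ with no appeal to (\ref{eq:star}). Since Lemma \ref{lemma:star_M} is only ever invoked inside arguments that already assume Property (\ref{eq:star}), your conditional version would suffice for the paper's downstream use, but as a proof of the stated lemma it either needs that hypothesis added to the statement or the last step of the second item replaced by the definition-based argument above.
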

\begin{proof}
	Assume $a \prec b \prec M(a)$. By definition of $M(a)$, $\sigma(M(a)) \prec a \prec M(a)$. Since $a \prec b \prec M(a)$, we also have $\sigma(M(a)) \prec b \prec M(a)$. By Definition \ref{def:M_from_sigma}, $M(b)$ is the last $m$ of the sequence $b+1, b+2, \dots, b-1$ with $\sigma(m) \prec b \prec m$, so $b \prec M(a) \preceq M(b)$.\\
	\indent Now suppose $M(a) \prec b \prec a$. If $b = M(a)+1$, the fact that $M(b) \neq b$ by definition implies $M(b) \preceq b-1 =M(a) \prec b$. Suppose $b \neq M(a)+1$. By definition of $M(a)$, we have $M(a)+1 \prec a \preceq \sigma(M(a)+1)$ which, together with the hypotheses on $b$, implies $M(a)+1 \prec b \prec \sigma(M(a)+1)$. By definition of $M(b)$, $M(b) \prec M(a)+1 \prec b$, so $M(b) \preceq M(a) \prec b$.
\end{proof}

With the previous properties at our disposal, we prove that the $M$ from Definition \ref{def:M_from_sigma} almost satisfies the first two conditions of Definition \ref{def:monotone_metrics}. Note the the edge cases in the following Lemma and in Definition \ref{def:monotone_metrics} differ.
\begin{lemma}
	\label{lemma:star_implies_monotone_p1}
	Let $d_X$ be a circular decomposable metric such that $\sigma$ satisfies Property \ref{eq:star}. Let $M$ be the function from Definition \ref{def:M_from_sigma}. Then:
	\begin{enumerate}
		\item\label{it:sim_1} $a \preceq b \prec c \preceq M(a)$ implies $d_{b,c-1} < d_{bc}$.
		\item\label{it:sim_2} $a \preceq b \prec c \prec M(a)$ implies $d_{b+1,c} \leq d_{bc}$.
		\item\label{it:sim_3} $M(a) \preceq b \prec c \preceq a$ implies $d_{b+1,c} \leq d_{bc}$.
		\item\label{it:sim_4} $M(a) \prec b \prec c \preceq a$ implies $d_{b,c-1} < b_{bc}$.
	\end{enumerate}
\end{lemma}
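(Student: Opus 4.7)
Each of the four items reduces to a cyclic-order statement involving $M$, after rewriting via Corollary \ref{cor:monotone_1}. Items \ref{it:sim_1} and \ref{it:sim_4} assert $d_{b,c-1} < d_{bc}$, equivalent to $b \prec c \preceq M(b)$; items \ref{it:sim_2} and \ref{it:sim_3} assert $d_{b+1,c} \leq d_{bc}$, equivalent (via $d_{bc}=d_{cb}$ and Corollary \ref{cor:monotone_1} with base $c$) to $M(c) \prec b+1 \preceq c$. The workhorse is Lemma \ref{lemma:star_M}, which propagates information about $M$ across the cyclic order.

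For Item \ref{it:sim_1}: if $b=a$ the conclusion is Corollary \ref{cor:monotone_1} itself; if $a \prec b$, Lemma \ref{lemma:star_M} applied to $a \prec b \prec M(a)$ yields $b \prec M(a) \preceq M(b)$, so $b \prec c \preceq M(a) \preceq M(b)$. For Item \ref{it:sim_4}: Lemma \ref{lemma:star_M} on $M(a) \prec b \prec a$ produces $M(b) \preceq M(a) \prec b$; combined with $b \prec c \preceq a$, tracking the cyclic order clockwise from $b$ shows that $c \preceq a \preceq M(b)$, hence $b \prec c \preceq M(b)$. For Item \ref{it:sim_3}: when $c=a$, the hypothesis $M(a) \preceq b \prec a$ directly yields $M(a) \prec b+1 \preceq a$ via Corollary \ref{cor:monotone_1} with base $a$; when $c \prec a$, Lemma \ref{lemma:star_M} on $M(a) \prec c \prec a$ yields $M(c) \preceq M(a) \preceq b$, so $M(c) \prec b+1 \preceq c$. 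For Item \ref{it:sim_2}: the strict hypothesis $a \prec c \prec M(a)$ lets Lemma \ref{lemma:star_M} produce $c \prec M(a) \preceq M(c)$, and the key additional step is to show $M(c) \preceq a$ in the cyclic order from $c$; once this holds, the chain $M(c) \preceq a \preceq b \prec b+1 \preceq c$ gives $M(c) \prec b+1 \preceq c$, and Corollary \ref{cor:monotone_1} with base $c$ closes the case.

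The nontrivial step is establishing $M(c) \preceq a$ in cyclic from $c$ for Item \ref{it:sim_2}. Ruling out the ``overshoot'' configuration $c \prec a \prec M(c)$ calls for a second application of Lemma \ref{lemma:star_M}, this time to the triple $(c,a,M(c))$, together with $\sigma(c) \prec a$ from Lemma \ref{lemma:star_implies_unimodal} and the full force of Property (\ref{eq:star}); the structural symmetry $\alpha_{ij}=\alpha_{ji}$ encoded in the $\sigma$-values then prevents $\sigma$ from being compatible with the overshoot. Everything else is routine cyclic-order bookkeeping.
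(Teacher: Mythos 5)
Your reductions via Corollary \ref{cor:monotone_1} are correct, and items \ref{it:sim_1} and \ref{it:sim_3} go through as you describe (a mild variant of the paper, which routes these two through Lemma \ref{lemma:unimodal_alpha} instead of Lemma \ref{lemma:star_M}). The problems are in items \ref{it:sim_4} and \ref{it:sim_2}, where the actual content of the lemma lives.

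For item \ref{it:sim_4}, the step ``$M(b) \preceq M(a) \prec b$ combined with $b \prec c \preceq a$ shows $c \preceq a \preceq M(b)$'' is a non sequitur. The relation $M(b) \preceq M(a) \prec b$ only says that, clockwise from $b$, the point $M(b)$ lies somewhere in the arc from $b$ (exclusive) up to $M(a)$ (inclusive); that arc contains every point strictly between $b$ and $a$ as well, so nothing you have cited excludes $b \prec M(b) \prec c$, and $a \preceq M(b)$ is essentially the conclusion itself rather than a bookkeeping consequence. The paper closes this case without ever looking at $M(b)$: by the maximality in Definition \ref{def:M_from_sigma}, $M(a) \prec b \prec a$ forces $b \prec a \preceq \sigma(b)$, hence $b \prec c \preceq \sigma(b)$; Property (\ref{eq:star}) then gives $\sigma(b) \preceq \sigma(c) \prec b$, so $\sigma(c) \prec b \prec c$ and Corollary \ref{cor:unimodal_alpha} yields $d_{b,c-1} < d_{bc}$. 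Some argument of this kind (through $\sigma$ and the definition of $M$) is needed; cyclic-order tracking alone cannot do it.

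For item \ref{it:sim_2}, your reduction pushes all the difficulty into ruling out the ``overshoot'' $c \prec a \prec M(c)$, which is exactly the $b=a$ instance of the item, and the proposed exclusion is not a proof. Applying Lemma \ref{lemma:star_M} to $(c,a,M(c))$ under the overshoot gives $a \prec M(c) \preceq M(a)$, which is perfectly consistent, as cyclic-order statements, with $c \prec M(a) \preceq M(c)$ and with $\sigma(c) \prec a \prec c$ from Lemma \ref{lemma:star_implies_unimodal} (for instance, all three are compatible with $M(c)$ lying strictly between $a$ and $c$); so no contradiction falls out of the ingredients you list, and the closing sentence about the symmetry $\alpha_{ij}=\alpha_{ji}$ ``preventing $\sigma$ from being compatible with the overshoot'' is an assertion, not an argument. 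Compare with the paper: it first gets $b \prec M(a) \preceq M(b)$ and hence $b \prec c \prec M(b)$ from $a \preceq b \prec c \prec M(a)$, then deduces the bound $M(c) \preceq M(b) \prec b$, so that $c \prec M(c) \preceq M(b) \prec b \prec c$ and therefore $M(c) \prec b+1 \preceq c$. Whatever route you prefer, some such bound on $M(c)$ (or a direct $\sigma$-argument) must actually be established; as written, items \ref{it:sim_2} and \ref{it:sim_4} are not proved.
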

\begin{proof}
	Throughout this proof, we assume  $b \prec b+1 \prec c$ as there is nothing to show if $c = b+1$.\\
	\noindent \ref{it:sim_1}. By Corollary \ref{cor:monotone_1}, $a \prec c \preceq M(a)$ implies $d_{a,c-1} < d_{ac}$ and Lemma \ref{lemma:unimodal_alpha} gives $d_{b,c-1} < d_{bc}$.\\
	\noindent \ref{it:sim_2}. Since $a \preceq b \prec M(a)$, Lemma \ref{lemma:star_M} gives $b \prec M(a) \preceq M(b)$, and since $b \prec c \prec M(a)$, we get $b \prec c \prec M(b)$. Lemma \ref{lemma:star_M} then implies $M(c) \preceq M(b) \prec b$. Recall that $c \prec M(c) \preceq c-1$ by definition, so $c \prec M(c) \preceq M(b) \prec b \prec c$. Thus, $M(c) \prec b+1 \preceq c$, so by Corollary \ref{cor:monotone_1}, $d_{b+1,c} \leq d_{bc}$.\\
	\noindent \ref{it:sim_3}. By Corollary \ref{cor:monotone_1}, $M(a) \preceq b \prec b+1 \preceq a$ implies $d_{b+1,a} \leq d_{ba}$. Then Lemma \ref{lemma:unimodal_alpha} gives $d_{b+1,c} \leq d_{bc}$ because $b \prec c \preceq a$.\\
	\noindent \ref{it:sim_4}. By definition of $M(a)$, $b \prec a \preceq \sigma(b)$ and, since $b \prec c \preceq a$, we also have $b \prec c \preceq \sigma(b)$. By Property (\ref{eq:star}), $\sigma(b) \preceq \sigma(c) \prec b$. Combining the last two inequalities gives
	\begin{equation*}
		b \prec c \preceq \sigma(b) \preceq \sigma(c) \prec b,
	\end{equation*}
	so $\sigma(c) \prec b \prec c$. Then Corollary \ref{cor:unimodal_alpha} yields $d_{b,c-1} < d_{bc}$.
\end{proof}

Note that the edge cases of Lemma \ref{lemma:star_implies_monotone_p1} and Definition \ref{def:monotone_metrics} are not the same. For example, fix $a \in X$ and set $m = M(a)$. According to Definition \ref{def:monotone_metrics}, we should have $d_{a+1,m} \leq d_{am}$ because $a \preceq a \prec m = M(a)$, but this is precisely the case that is not covered by item \ref{it:sim_2} of Lemma \ref{lemma:star_implies_monotone_p1}. The next example shows a metric space where the opposite inequality $d_{a+1,m} > d_{am}$ actually holds.

\begin{example}
	\label{ex:star_implies_monotone_p1}
	Consider the points $p_1 = 0$, $p_2 = 0.1$, $p_3 = 0.3$, $p_4 = 0.6$, $p_5 = 0.8$ in $\Sp^1$. The set $p_1 \prec p_2 \prec \cdots \prec p_5$ is circular decomposable, and its distance matrix is in Table \ref{tab:star_implies_monotone}. By Corollary \ref{cor:unimodal_alpha}, $\sigma(c)$ is the last $a$ in the sequence $c+1, c+2, \dots, c-1$ such that $d_{a,c-1} \geq d_{ac}$. Then $\sigma(1) = \sigma(2) = 3$, $\sigma(3)=4$, $\sigma(4)=5$, and $\sigma(5)=2$. We also find $M(1) = 4$, $M(2) = 4$, $M(3) = 5$, $M(4) = 2$ and $M(5) = 3$. Even though $\sigma$ satisfies Property (\ref{eq:star}), $M$ does not satisfy Definition \ref{def:monotone_metrics} because $1 \preceq 1 \prec 4 \preceq M(1)$ and yet $d_{24} = 0.5 > 0.4 = d_{14}$. Instead, if we defined $\overline{M}(1) = 3$ and $\overline{M}(a) = M(a)$ for $a \neq 1$, then $1 \preceq 1 \prec 4 \preceq \overline{M}(1)$ no longer holds and we don't need to have $d_{24} \leq d_{14}$.
	
	\begin{table}[h]
		\centering
		\begin{tabular}{l|ccccc}
			& 1 & 2 & 3 & 4 & 5 \\
			\hline
			1 & 0 & 0.1 & 0.3 & \textbf{0.4} & 0.2 \\
			2 & 0.1 & 0 & 0.2 & \textbf{0.5} & 0.3 \\ \cline{6-6}
			3 & 0.3 & 0.2 & 0 & 0.3 & \textbf{0.5} \\ \cline{2-3}
			4 & 0.4 & \textbf{0.5} & 0.3 & 0 & 0.2 \\ \cline{4-4}
			5 & 0.2 & 0.3 & \textbf{0.5} & 0.2 & 0 \\ \cline{5-5}
		\end{tabular}
		\caption{Distance matrix of $\{0, 0.1, 0.3, 0.6, 0.8\} \subset \Sp^1$. The entries $(a, M(a))$ are in boldface and $(\sigma(a), a)$ are underlined for every $1 \leq a \leq 5$.}
		\label{tab:star_implies_monotone}
	\end{table}
\end{example}

Despite the discrepancies between Lemma \ref{lemma:star_implies_monotone_p1} and Definition \ref{def:monotone_metrics}, our choice of $M$ is not far off and, with a little more information on $M(M(a))$, we can adjust our choices. To simplify notation, fix $a \in X$ and let $m = M(a)$. If $M(m) \preceq a \prec m$, we get $d_{am} \geq d_{m,a+1}$ by Corollary \ref{cor:monotone_1} -- this is the inequality that we want. If $m \prec a \prec M(m)$, we get $d_{a-1,m} < d_{am} < d_{m,a+1}$ instead, so we need to adjust $M(a)$ to avoid this case.

\begin{defn}
	\label{def:M_bar}
	Let $(X, d_X)$ be a circular decomposable space, and let $M$ be the function from Definition \ref{def:M_from_sigma}. Fix $a \in X$ and let $m = M(a)$. Then
	\begin{equation*}
		\overline{M}(a) :=
		\begin{cases}
			M(a) & M(m) \preceq a \prec m \\
			M(a)-1 & m \prec a \prec M(m).
		\end{cases}
	\end{equation*}
\end{defn}

Having modified our original $M$, we can show that $\overline{M}$ satisfies Definition \ref{def:monotone_metrics}.
\begin{prop}
	\label{prop:star_implies_monotone}
	Let $d_X$ be a circular decomposable metric such that $\sigma$ satisfies Property \ref{eq:star}. Let $\overline{M}$ be the function from Definition \ref{def:M_bar}. Then:
	\begin{enumerate}
		\item\label{it:fix_1} $a \preceq b \prec c \preceq \overline{M}(a)$ implies $d_{b,c-1} < d_{bc}$.
		\item\label{it:fix_2} $a \preceq b \prec c \preceq \overline{M}(a)$ implies $d_{b+1,c} \leq d_{bc}$.
		\item\label{it:fix_3} $\overline{M}(a) \prec b \prec c \preceq a$ implies $d_{b+1,c} \leq d_{bc}$.
		\item\label{it:fix_4} $\overline{M}(a) \prec b \prec c \preceq a$ implies $d_{b,c-1} < b_{bc}$.
	\end{enumerate}
\end{prop}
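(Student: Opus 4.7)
The overall plan is to deduce each of the four items from Lemma \ref{lemma:star_implies_monotone_p1}, invoking Corollary \ref{cor:monotone_1} only to handle the handful of new boundary cases created by the modification $M \mapsto \overline{M}$. Writing $m := M(a)$, I split the argument along the two branches in Definition \ref{def:M_bar}: Case A is $\overline{M}(a) = m$ (equivalently $M(m) \preceq a \prec m$) and Case B is $\overline{M}(a) = m - 1$ (equivalently $m \prec a \prec M(m)$).

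Items \ref{it:fix_1} and \ref{it:fix_3} will be essentially automatic. Since $\overline{M}(a) \preceq m$, the hypothesis of item \ref{it:fix_1} is stronger than the hypothesis of Lemma \ref{lemma:star_implies_monotone_p1}(\ref{it:sim_1}). Since $\overline{M}(a) \succeq m - 1$, the hypothesis $\overline{M}(a) \prec b$ of item \ref{it:fix_3} implies $m \preceq b$, which is exactly the hypothesis of Lemma \ref{lemma:star_implies_monotone_p1}(\ref{it:sim_3}).

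The substantive work is concentrated in items \ref{it:fix_2} and \ref{it:fix_4}. For item \ref{it:fix_2} I would dispatch Case B immediately, because $c \preceq \overline{M}(a) = m - 1$ gives the strict inequality $c \prec m$ required by Lemma \ref{lemma:star_implies_monotone_p1}(\ref{it:sim_2}). In Case A, the same lemma still handles $c \prec m$, leaving only the endpoint $c = m$; there the inequality $d_{b+1, m} \leq d_{b, m}$ is equivalent by Corollary \ref{cor:monotone_1} to $M(m) \prec b+1 \preceq m$, where the right-hand bound comes from $b \prec c = m$ and the left-hand bound from chaining the Case A condition $M(m) \preceq a$ with the hypothesis $a \preceq b$. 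Item \ref{it:fix_4} is symmetric: Case A matches Lemma \ref{lemma:star_implies_monotone_p1}(\ref{it:sim_4}) verbatim, and in Case B the only new endpoint is $b = m$, where $d_{m, c-1} < d_{mc}$ follows from $m \prec c \preceq a \prec M(m)$ via Corollary \ref{cor:monotone_1}.

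The main obstacle will be bookkeeping: tracking how the $\pm 1$ adjustment in $\overline{M}$ shifts the endpoints $c = m$ and $b = m$ between the strict and non-strict ranges of Lemma \ref{lemma:star_implies_monotone_p1}. Conceptually, Definition \ref{def:M_bar} was tailored precisely to remove $c = m$ from the hypothesis range in the bad Case B, where Example \ref{ex:star_implies_monotone_p1} shows $d_{a+1, m}$ can exceed $d_{a, m}$, so no new structural input beyond Property (\ref{eq:star}) and the earlier properties of $\sigma$ and $M$ is required.
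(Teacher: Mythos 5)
Your proposal is correct and takes essentially the same route as the paper: the same case split along the two branches of Definition \ref{def:M_bar}, reduction of almost everything to Lemma \ref{lemma:star_implies_monotone_p1}, and isolation of the only two genuinely new endpoint cases ($c = M(a)$ in item \ref{it:fix_2} under Case A, and $b = M(a)$ in item \ref{it:fix_4} under Case B). The only cosmetic difference is that you settle those endpoints by applying Corollary \ref{cor:monotone_1} at $M(a)$ directly, while the paper invokes the corresponding instances of Lemma \ref{lemma:star_implies_monotone_p1} items \ref{it:sim_3} and \ref{it:sim_1} with $a$ replaced by $M(a)$, which amounts to the same computation.
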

\begin{proof}
	Let $m = M(a)$. We have two cases depending on the value of $\overline{M}(a)$.\\
	\noindent \underline{Case 1}: $\overline{M}(a) = M(a)$. By Lemma \ref{lemma:star_implies_monotone_p1}, the Proposition holds except for the edge case $c = \overline{M}(a)$ in item \ref{it:fix_2}, so we assume $a \preceq b \prec c = \overline{M}(a)$. Since $\overline{M}(a) = m$, Definition \ref{def:M_bar} requires $M(m) \preceq a \prec m$ and, thus, $M(m) \preceq b \prec c = m$. Then Lemma \ref{lemma:star_implies_monotone_p1} item \ref{it:sim_3} implies $d_{b+1,m} \leq d_{bm}$.\\
	\noindent \underline{Case 2}: $\overline{M}(a) = m-1$. If $a \preceq b \prec c \preceq \overline{M}(a) \prec M(a)$, Lemma \ref{lemma:star_implies_monotone_p1} items \ref{it:sim_1} and \ref{it:sim_2} imply $d_{b,c-1} < d_{bc}$ and $d_{b+1,c} \leq d_{bc}$. At the same time, $\overline{M}(a) \prec b \prec c \preceq a$ implies $M(a) \preceq b \prec c \preceq a$, so Lemma \ref{lemma:star_implies_monotone_p1} items \ref{it:sim_3} and \ref{it:sim_4} imply $d_{b+1,c} \leq d_{bc}$ and, if $b \neq M(a)$, $d_{b,c-1} < d_{bc}$. Hence, we only have to verify $d_{b,c-1} < d_{bc}$ when $M(a) = b \prec c \preceq a$. Since $\overline{M}(a) = m-1$, we must have $m \prec a \prec M(m)$ and, thus, $m = b \prec c \prec M(m)$. Thus, by Lemma \ref{lemma:star_implies_monotone_p1} item \ref{it:sim_1}, $d_{b,c-1} \leq d_{bc}$.
\end{proof}

\begin{prop}
	\label{prop:M_non_ambiguous}
	Let $d_X$ be a circular decomposable metric such that $\sigma$ satisfies Property \ref{eq:star}. Let $\overline{M}$ be the function from Definition \ref{def:M_bar}. Then:
	\begin{enumerate}
		\item\label{it:M_1} $a \prec b \preceq \overline{M}(a) \Rightarrow \overline{M}(b) \preceq a \prec b$, and
		\item\label{it:M_2} $\overline{M}(b) \prec a \prec b \Rightarrow a \prec b \preceq \overline{M}(a)$.
	\end{enumerate}
\end{prop}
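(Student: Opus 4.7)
The plan is to prove both implications by reducing them to the corresponding statement for the unmodified function $M$ and then upgrading to $\overline{M}$ using the fact that $\overline{M}(x) = x$ is impossible. The key inputs are Lemma \ref{lemma:star_M}, Proposition \ref{prop:star_implies_monotone}, and Corollary \ref{cor:unimodal_alpha} combined with the symmetry of $d_X$.

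For item \ref{it:M_1}, the first goal is to show $M(b) \preceq a$. When $b = \overline{M}(a) = M(a)$, Definition \ref{def:M_bar} forces $M(M(a)) \preceq a$, which is $M(b) \preceq a$. When $a \prec b \prec M(a)$, I argue by contradiction: assume that $M(b) = m^{\ast}$ lies in the clockwise arc $(a, b)$. The definition of $M(b)$ together with Corollary \ref{cor:unimodal_alpha} gives $d_{b, m^{\ast}-1} < d_{b, m^{\ast}}$, while Proposition \ref{prop:star_implies_monotone} item \ref{it:fix_2} applied with $(b', c') = (m^{\ast}-1, b)$ -- which satisfies $a \preceq m^{\ast}-1 \prec b \preceq \overline{M}(a)$ -- gives $d_{m^{\ast}, b} \leq d_{m^{\ast}-1, b}$. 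The symmetry $d_{xy} = d_{yx}$ collapses these into a contradiction, and the edge case $m^{\ast} = a+1$ reduces to $d_{a+1, b} \leq d_{ab} < d_{a+1, b}$ by the same scheme.

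The upgrade from $M(b) \preceq a$ to $\overline{M}(b) \preceq a$ is a short check. The case $\overline{M}(b) = M(b)$ is immediate, and $\overline{M}(b) = M(b) - 1$ requires $M(b) \neq b+1$: otherwise $\overline{M}(b) = b$ would force the cyclic triple $b+1 \prec b \prec M(b+1)$, which is impossible because clockwise from $b+1$ we reach $b$ only after visiting every other vertex. Hence $M(b) - 1$ lies strictly clockwise-before $M(b) \preceq a$ from $b$'s perspective, giving $\overline{M}(b) \preceq a$.

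For item \ref{it:M_2}, the hypothesis $\overline{M}(b) \prec a \prec b$ forces $M(b) \preceq a$: $M(b)$ equals either $\overline{M}(b) \prec a$ or its clockwise successor, which is at most $a$. If $M(b) = a$, then the condition $\overline{M}(b) = M(b) - 1$ in Definition \ref{def:M_bar} becomes $a \prec b \prec M(a)$ directly. If $M(b) \prec a$ strictly, Lemma \ref{lemma:star_M} item (ii) applied with $(x, y) = (b, a)$ yields $M(a) \preceq M(b) \prec a$, and rearranging cyclically also gives $a \prec b \prec M(a)$. Either way $b \preceq M(a) - 1 \preceq \overline{M}(a)$; the problematic coincidence $b = M(a)$ together with $\overline{M}(a) = M(a) - 1$ would require $b = M(a) \prec a \prec M(b)$, contradicting $M(b) \preceq a$, and is therefore ruled out. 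The main obstacle of the whole proof is the contradiction in item \ref{it:M_1}, where Proposition \ref{prop:star_implies_monotone}, Corollary \ref{cor:unimodal_alpha}, and the symmetry of $d_X$ must be combined with careful tracking of the boundary cases.
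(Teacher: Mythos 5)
Your item \ref{it:M_1} is fine: proving $M(b)\preceq a$ first (via the defining property $\sigma(M(b))\prec b\prec M(b)$, Corollary \ref{cor:unimodal_alpha}, and Proposition \ref{prop:star_implies_monotone} item \ref{it:fix_2}) and then upgrading to $\overline{M}(b)$ by ruling out $M(b)=b+1$ is a correct, slightly more roundabout version of the paper's argument, which obtains the same pair of contradictory inequalities directly for $\overline{M}$.

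Item \ref{it:M_2}, however, has a genuine gap in the sub-case $M(b)\prec a$. Lemma \ref{lemma:star_M} gives you the two cyclic facts $M(a)\preceq M(b)\prec a$ and $M(b)\prec a\prec b$, and you assert that ``rearranging cyclically'' yields $a\prec b\prec M(a)$. That is not a valid cyclic-order inference: the two premises only say that, going clockwise from $a$, both $M(a)$ and $b$ occur before $M(b)$; they say nothing about the order of $b$ and $M(a)$ relative to each other. The arrangement $a\prec M(a)\preceq b\prec M(b)\prec a$ satisfies both premises and violates your conclusion, and excluding it is precisely the substantive content of item \ref{it:M_2} --- it cannot be obtained by rearrangement alone. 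The paper closes exactly this hole by a contradiction argument with three cases according to whether $\overline{M}(a)=M(a)$ and $\overline{M}(b)=M(b)$, playing Corollary \ref{cor:monotone_1} against Proposition \ref{prop:star_implies_monotone} item \ref{it:fix_4} in the first two cases, and using the constraints $M(x)\prec x\prec M(M(x))$ from Definition \ref{def:M_bar} together with Lemma \ref{lemma:star_implies_monotone_p1} items \ref{it:sim_1} and \ref{it:sim_2} in the third. A repair close to your setup is possible: if $M(a)\prec b\prec a$, the maximality in Definition \ref{def:M_from_sigma} gives $b\prec a\preceq\sigma(b)$, while $M(b)\prec a\prec b$ and the same maximality (for $M(b)$) give $a\prec b\preceq\sigma(a)$; Lemma \ref{lemma:star_2} then yields $a\prec b\preceq\sigma(a)\preceq\sigma(b)\prec a$, contradicting $b\prec a\preceq\sigma(b)$. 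Your final sentence already disposes of the remaining boundary possibility $b=M(a)$, but without an argument of this kind the key step fails.
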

\begin{proof}~\\
	\ref{it:M_1}. Assume, for contradiction, that $b \prec a \prec \overline{M}(b)$ for some $a \prec b \preceq \overline{M}(a)$. Since $a \neq \overline{M}(b)$, we also have $b \prec a+1 \preceq \overline{M}(b)$, so by Proposition \ref{prop:star_implies_monotone} item \ref{it:fix_3}, $d_{ba} = d_{b,(a+1)-1} < d_{b,a+1}$. At the same time, Proposition \ref{prop:star_implies_monotone} item \ref{it:fix_1} and $a \prec b \preceq \overline{M}(a)$ imply $d_{a+1,b} \leq d_{ab}$, a contradiction. Hence, we must have $\overline{M}(b) \preceq a \prec b$ for every $a \prec b \preceq \overline{M}(a)$.\\
	\ref{it:M_2}. Assume, for contradiction, that $\overline{M}(a) \prec b \prec a$ for some $\overline{M}(b) \prec a \prec b$. We have three cases depending on the values of $\overline{M}(a)$ and $\overline{M}(b)$.\\
	\underline{Case 1}: If $\overline{M}(a) = M(a)$, the assumption becomes $M(a) \prec b \prec a$, so by Corollary \ref{cor:monotone_1}, $d_{a,b-1} \geq d_{ab}$. However, $\overline{M}(b) \prec a \prec b$ and Proposition \ref{prop:star_implies_monotone} item \ref{it:fix_4} imply $d_{a,b-1} < d_{ab}$, a contradiction.\\
	\underline{Case 2}: Since the assumptions are symmetric on $a$ and $b$, the case of $\overline{M}(b) = M(b)$ is analogous.\\
	\underline{Case 3}: If $\overline{M}(a) = M(a)-1$ and $\overline{M}(b) = M(b)-1$, Definition \ref{def:M_bar} requires $M(a) \prec a \prec M^2(a)$ and $M(b) \prec b \prec M^2(b)$. With this constraint, the inequalities $\overline{M}(a) \prec b \prec a$ and $\overline{M}(b) \prec a \prec b$ imply $M(a) \preceq b \prec a \prec a+1 \preceq M^2(a)$ and $M(b) \preceq a \prec b \prec M^2(b)$. Then Lemma \ref{lemma:star_implies_monotone_p1} items \ref{it:sim_1} and \ref{it:sim_2} imply $d_{ab} = d_{b,(a+1)-1} < d_{b,a+1}$ and $d_{a+1,b} \leq d_{ba}$, a contradiction.\\
	This finishes the proof that $\overline{M}(b) \prec a \prec b$ implies $a \prec b \preceq \overline{M}(a)$.
\end{proof}

In fact, we can also prove a converse to Propositions \ref{prop:star_implies_monotone} and \ref{prop:M_non_ambiguous}. If $d_X$ is a monotone metric and $\overline{M}$ satisfies Definition \ref{def:monotone_metrics}, then $\sigma$ must satisfy Property (\ref{eq:star}). We prove this in two parts.
\begin{lemma}
	\label{lemma:monotone_implies_star_p1}
	If $d_X$ is a monotone circular decomposable metric such that $\overline{M}$ satisfies Definition \ref{def:monotone_metrics}, then $a \prec b \preceq \sigma(a)$ implies $\sigma(b) \prec a \prec b$.
\end{lemma}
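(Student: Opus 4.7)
The approach is to translate both the hypothesis $a \prec b \preceq \sigma(a)$ and the conclusion $\sigma(b) \prec a \prec b$ into distance inequalities via Corollary \ref{cor:unimodal_alpha}: the hypothesis is equivalent to $d_{b, a-1} \geq d_{ab}$, while the desired conclusion is equivalent to $d_{a, b-1} < d_{ab}$. The plan is then to locate $b$ in the cyclic order relative to $\overline{M}(a)$ and $a$ relative to $\overline{M}(b)$, and in each case to produce the needed strict inequality by picking the appropriate clause of Definition \ref{def:monotone_metrics} (with $M$ replaced by $\overline{M}$).

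First I would rule out $\overline{M}(a) \prec b \prec a$. If that held, then item 2 of Definition \ref{def:monotone_metrics} applied to the chain $\overline{M}(a) \prec b \prec a \preceq a$ would give $d_{b, a-1} < d_{ba}$, contradicting the hypothesis. So $a \prec b \preceq \overline{M}(a)$, and the first implication in item 3 of Definition \ref{def:monotone_metrics} then yields $\overline{M}(b) \preceq a \prec b$.

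The main obstacle is excluding the boundary case $\overline{M}(b) = a$: the backward clause (item 2 of Definition \ref{def:monotone_metrics}) requires $\overline{M}(b) \prec a$ strictly and so does not directly apply when $\overline{M}(b) = a$. However, if $\overline{M}(b) = a$ did hold, then item 1 of Definition \ref{def:monotone_metrics} applied to the chain $b \preceq b \prec a \preceq \overline{M}(b) = a$ would give $d_{b, a-1} < d_{ba}$, once again contradicting the hypothesis. Hence $\overline{M}(b) \prec a \prec b$ strictly, and a final application of item 2 of Definition \ref{def:monotone_metrics} to the chain $\overline{M}(b) \prec a \prec b \preceq b$ produces $d_{a, b-1} < d_{ab}$. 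Corollary \ref{cor:unimodal_alpha} then translates this back into the desired $\sigma(b) \prec a \prec b$.
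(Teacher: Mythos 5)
Your proof is correct, but it takes a somewhat different route than the paper's. Both arguments translate the hypothesis and the conclusion into distance inequalities via Corollary \ref{cor:unimodal_alpha} and then run on the axioms of Definition \ref{def:monotone_metrics}; the difference is in how the position of $b$ relative to $\overline{M}(a)$ is obtained. The paper first deduces $M(b) \prec a \prec b$ from Definition \ref{def:M_from_sigma}, passes to $\overline{M}(b) \prec a \prec b$, uses the second implication of item 3 to get $a \prec b \preceq \overline{M}(a)$, and finishes with one application of item 1 to the degenerate chain $a \preceq a \prec b \preceq \overline{M}(a)$, which gives $d_{a,b-1} < d_{ab}$ directly. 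You instead obtain $a \prec b \preceq \overline{M}(a)$ by ruling out $\overline{M}(a) \prec b \prec a$ with item 2 against the distance form $d_{b,a-1} \geq d_{ab}$ of the hypothesis; this is a valid alternative, and it has the merit of relying only on the monotonicity axioms, whereas the paper's appeal to Definition \ref{def:M_from_sigma} for the implication $a \prec b \preceq \sigma(a) \Rightarrow M(b) \prec a \prec b$ tacitly uses that every $m$ with $b \prec m \preceq M(b)$ satisfies $\sigma(m) \prec b \prec m$, a fact guaranteed only under Property (\ref{eq:star}) via Lemma \ref{lemma:star_implies_unimodal}, i.e.\ in the direction opposite to the one being proved here. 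On the other hand, your last three steps are redundant: once $a \prec b \preceq \overline{M}(a)$ is in hand, item 1 applied to $a \preceq a \prec b \preceq \overline{M}(a)$ already yields $d_{a,b-1} < d_{ab}$, so the detour through item 3, the exclusion of the boundary case $\overline{M}(b) = a$, and the final use of item 2 can be replaced by that single application, which is exactly how the paper concludes.
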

\begin{proof}
	By Definition \ref{def:M_from_sigma}, $a \prec b \preceq \sigma(a)$ implies $M(b) \prec a \prec b$. Since $M(b)-1 \preceq \overline{M}(b) \preceq M(b)$ by Definition \ref{def:M_bar}, we may write $\overline{M}(b) \preceq M(b) \prec a \prec b$. The last condition in Definition \ref{def:monotone_metrics} then gives $a \prec b \preceq \overline{M}(a)$. By Definition \ref{def:monotone_metrics}, $d_{a,b-1} < d_{ab}$, so Corollary \ref{cor:unimodal_alpha} yields $\sigma(b) \prec a \prec b$.
\end{proof}

\begin{lemma}
	\label{lemma:monotone_implies_star_p2}
	If $d_X$ is a monotone circular decomposable metric such that $\overline{M}$ satisfies Definition \ref{def:monotone_metrics}, then $\sigma$ satisfies Property (\ref{eq:star}).
\end{lemma}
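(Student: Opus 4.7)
The plan is to build on Lemma \ref{lemma:monotone_implies_star_p1}, which already gives $\sigma(b) \prec a \prec b$, and to upgrade this to Property (\ref{eq:star}) by showing the additional comparison $\sigma(a) \preceq \sigma(b)$. The case $b = \sigma(a)$ is immediate because $\sigma(b) \neq b$ by definition, so I will focus on $b \prec \sigma(a)$. By Corollary \ref{cor:unimodal_alpha}, the desired $b \prec \sigma(a) \preceq \sigma(b)$ is equivalent to the distance comparison $d_{\sigma(a), b-1} \geq d_{\sigma(a), b}$, and I will extract this from a chain of inequalities produced by Definition \ref{def:monotone_metrics}.

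The preparatory step is to establish the inclusion $\sigma(a) \preceq \overline{M}(a)$, i.e.\ that $\sigma(a)$ lies inside the forward arc controlled by $\overline{M}$. For each $b' \in \{\overline{M}(a)+1, \ldots, a-1\}$, item \ref{it:mon_2} of Definition \ref{def:monotone_metrics} (applied with outer parameter $a$ and $c = a$) yields $d_{b', a-1} < d_{b', a}$, which by Corollary \ref{cor:unimodal_alpha} forces $\sigma(a) \prec b'$. Taking $b' = \overline{M}(a) + 1$, valid whenever $\overline{M}(a) \neq a-1$, gives $\sigma(a) \preceq \overline{M}(a)$, and the degenerate case $\overline{M}(a) = a-1$ is automatic since $\sigma(a) \in \{a+1, \ldots, a-1\}$.

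With the inclusion $\sigma(a) \preceq \overline{M}(a)$ secured and $a \prec b \preceq \sigma(a)$ in hand, item \ref{it:mon_1} of Definition \ref{def:monotone_metrics} applies with $c = \sigma(a)$ as $b'$ ranges over $a, a+1, \ldots, b-1$. The second inequality in that item, $d_{b'+1, c} \leq d_{b', c}$, then yields the chain
\begin{equation*}
	d_{a, \sigma(a)} \geq d_{a+1, \sigma(a)} \geq \cdots \geq d_{b-1, \sigma(a)} \geq d_{b, \sigma(a)}.
\end{equation*}
In particular $d_{\sigma(a), b-1} \geq d_{\sigma(a), b}$, and Corollary \ref{cor:unimodal_alpha} translates this back to $b \prec \sigma(a) \preceq \sigma(b)$. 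Combined with Lemma \ref{lemma:monotone_implies_star_p1}, we conclude $\sigma(a) \preceq \sigma(b) \prec a$, which is exactly Property (\ref{eq:star}).

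I expect the main obstacle to be the bridging inclusion $\sigma(a) \preceq \overline{M}(a)$: without it, item \ref{it:mon_1} cannot be invoked at $c = \sigma(a)$, since that item requires $c$ to sit inside the forward arc of $\overline{M}$. Once the inclusion is available, the rest is a routine propagation of distance inequalities. Some light book-keeping is needed for the degenerate cases $b = a+1$, $b = \sigma(a)$, and $\overline{M}(a) = a-1$, but in each of them the argument either trivializes or reduces directly to the generic one.
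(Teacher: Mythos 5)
Your proof is correct, but it takes a genuinely different route from the paper's. The paper argues by contradiction: assuming $a \preceq \sigma(b) \prec \sigma(a)$, it uses Lemma \ref{lemma:monotone_implies_star_p1} to narrow this to $a \prec b \preceq \sigma(b) \prec \sigma(a)$ and then plays the combinatorial definition of $M$ (Definition \ref{def:M_from_sigma}) against an auxiliary point $c$ in the arc between $\sigma(b)$ and $\sigma(a)$, obtaining contradictory statements about $M(c)$; the monotonicity of $\overline{M}$ enters only through Lemma \ref{lemma:monotone_implies_star_p1}. You instead argue directly from the monotonicity axioms themselves: item \ref{it:mon_2} of Definition \ref{def:monotone_metrics} with $c=a$, combined with Corollary \ref{cor:unimodal_alpha}, gives the bridging fact $a \prec \sigma(a) \preceq \overline{M}(a)$; item \ref{it:mon_1} with $c=\sigma(a)$ and $b'=b-1$ then yields $d_{\sigma(a),b} \leq d_{\sigma(a),b-1}$, which Corollary \ref{cor:unimodal_alpha} converts into $b \prec \sigma(a) \preceq \sigma(b)$; together with $\sigma(b) \prec a \prec b$ from Lemma \ref{lemma:monotone_implies_star_p1} this assembles into $\sigma(a) \preceq \sigma(b) \prec a$, i.e.\ Property (\ref{eq:star}). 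I checked the instantiations (the $c=a$ use of item \ref{it:mon_2} is legitimate since the $\preceq$ endpoint allows equality, and the $\overline{M}(a)=a-1$ and $b=a+1$ degeneracies are harmless), so the argument goes through. What your route buys is a direct proof and a reusable intermediate fact, $\sigma(a) \preceq \overline{M}(a)$ under the monotonicity hypothesis, which the paper never states in this converse direction; the paper's contradiction argument is slightly shorter and stays entirely at the level of cyclic-order bookkeeping for $M$ and $\sigma$. One small presentational point: in the case $b=\sigma(a)$ the operative reason is not that $\sigma(b) \neq b$, but that the conclusion $\sigma(b) \prec a \prec b$ of Lemma \ref{lemma:monotone_implies_star_p1} is, for $b=\sigma(a)$, exactly a cyclic rotation of the desired $\sigma(a) \preceq \sigma(b) \prec a$.
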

\begin{proof}
	We want to show that $a \prec b \preceq \sigma(a)$ implies $\sigma(a) \preceq \sigma(b) \prec a$. Assume, for contradiction, that $a \preceq \sigma(b) \prec \sigma(a)$ instead. Depending on the position of $\sigma(b)$ relative to $b$, we may have either $a \prec b \preceq \sigma(b) \prec \sigma(a)$ or $a \preceq \sigma(b) \prec b \preceq \sigma(a)$. However, Lemma \ref{lemma:monotone_implies_star_p1} implies $\sigma(b) \prec a \prec b$, which disallows $a \preceq \sigma(b) \prec b$, so $a \prec b \preceq \sigma(b) \prec \sigma(a)$. Then for any $\sigma(b) \prec c \preceq \sigma(a)$, we have $\sigma(b) \prec c \prec b$ and $a \prec c \preceq \sigma(a)$. By Definition \ref{def:M_from_sigma}, $c \prec b \preceq M(c)$ and $M(c) \prec a \prec c$ but, since $a \prec b \prec c$, we also have $M(c) \prec b \prec c$. This is a contradiction.
\end{proof}

The results of this section can be summarized into the following characterization of monotone circular decomposable metrics.
\begin{theorem}
	\label{thm:monotone_equals_star}
	Let $d_X$ be a circular decomposable metric. Then $\overline{M}$ satisfies Definition \ref{def:monotone_metrics} (and hence, $d_X$  is monotone) if and only if $\sigma$ satisfies Property \ref{eq:star}.
\end{theorem}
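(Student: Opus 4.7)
The plan is to assemble the results established throughout this subsection, since the theorem is essentially a repackaging of them. The two directions are handled separately.

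For the forward direction, I would assume $\sigma$ satisfies Property (\ref{eq:star}) and verify each of the three conditions of Definition \ref{def:monotone_metrics} for $\overline{M}$. Conditions \ref{it:mon_1} and \ref{it:mon_2} (the inequalities $d_{b,c-1}<d_{bc}$ and $d_{b+1,c}\le d_{bc}$ under the appropriate hypotheses on $a,b,c$) are exactly the four statements in Proposition \ref{prop:star_implies_monotone}: items \ref{it:fix_1} and \ref{it:fix_2} give condition \ref{it:mon_1}, while items \ref{it:fix_3} and \ref{it:fix_4} give condition \ref{it:mon_2}. Condition \ref{it:mon_3} (the compatibility $a\prec b\preceq \overline{M}(a) \Rightarrow \overline{M}(b)\preceq a\prec b$ together with its converse-type companion) is exactly Proposition \ref{prop:M_non_ambiguous}. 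Together these cover every requirement in Definition \ref{def:monotone_metrics}, and moreover $\overline{M}(a)\ne a$ is immediate from $\overline{M}(a)\in\{M(a),M(a)-1\}$ with $M(a)\ne a$.

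For the reverse direction, I would assume $\overline{M}$ satisfies Definition \ref{def:monotone_metrics} and conclude Property (\ref{eq:star}) directly from Lemma \ref{lemma:monotone_implies_star_p2}, which establishes exactly the implication $a\prec b\preceq \sigma(a)\Rightarrow \sigma(a)\preceq \sigma(b)\prec a$ under this hypothesis. (Lemma \ref{lemma:monotone_implies_star_p1} feeds into that proof but need not be re-invoked here.)

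There is no real obstacle: the theorem is purely a synthesis step. The only thing to be careful about is matching the index conventions — in particular, keeping straight that Proposition \ref{prop:star_implies_monotone} has been stated with the corrected edge cases of $\overline{M}$ (rather than of $M$), which is precisely why it aligns with Definition \ref{def:monotone_metrics} where Lemma \ref{lemma:star_implies_monotone_p1} did not. Given this, the proof reduces to two short sentences, one per direction, citing the relevant propositions and lemma.
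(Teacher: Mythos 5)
Your proposal is correct and matches the paper's proof essentially verbatim: the forward direction is exactly the combination of Propositions \ref{prop:star_implies_monotone} and \ref{prop:M_non_ambiguous}, and the converse is Lemma \ref{lemma:monotone_implies_star_p2}. The only cosmetic caveat is your parenthetical that $\overline{M}(a)\neq a$ follows ``immediately'' from $\overline{M}(a)\in\{M(a),M(a)-1\}$ --- when $M(a)=a+1$ one must also observe that the case $\overline{M}(a)=M(a)-1$ cannot occur (since $M(a)\prec a\prec M(M(a))$ is impossible there), but this does not affect the validity of the argument.
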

\begin{proof}
	Propositions \ref{prop:star_implies_monotone} and \ref{prop:M_non_ambiguous} imply that $\overline{M}$ satisfies Definition \ref{def:monotone_metrics} whenever $\sigma$ satisfies Property (\ref{eq:star}), and the converse is given by Lemma \ref{lemma:monotone_implies_star_p2}.
\end{proof}

With the previous theorem, we can write a formula for the homotopy type of the VR complex of any circular decomposable metric that satisfies Property (\ref{eq:star}).
\begin{corollary}
	\label{cor:VR_circular_monotone}
	Let $d_X$ be a circular decomposable metric such that $\sigma$ satisfies Property (\ref{eq:star}). Fix $r>0$ and let $G_r$ be the 1-skeleton of $\vr_r(X)$. Then
	\begin{equation*}
		\vr_r(X) \simeq
		\begin{cases}
			\Sp^{2l+1} & \text{if } \frac{l}{2l+1} < \wf(G_r) < \frac{l+1}{2l+3} \text{ for some } l=0, 1, \dots\\
			\bigvee^{n-2k-1} \Sp^{2l} & \text{if } \wf(G_r) = \frac{l}{2l+1} \text{ and } G_r \text{ dismantles to } C_{n}^{k}.
		\end{cases}
	\end{equation*}
\end{corollary}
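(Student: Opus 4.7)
The plan is to reduce the statement directly to Theorem \ref{thm:aa17} by verifying that $\vr_r(X)$ has the structure of a cyclic clique complex. First, I invoke Theorem \ref{thm:monotone_equals_star}: the hypothesis that $\sigma$ satisfies Property (\ref{eq:star}) guarantees that $d_X$ is monotone in the sense of Definition \ref{def:monotone_metrics}, with the witness function $\overline{M}$ supplied by Definition \ref{def:M_bar}.

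Next, I apply Lemma \ref{lemma:monotone_implies_cyclic} to the monotone metric $d_X$ with witness $\overline{M}$: for the fixed $r > 0$, either $\vr_r(X)$ is contractible or its 1-skeleton $G_r$ is cyclic in the sense of Definition \ref{def:cyclic_graph}. In the cyclic case, Definition \ref{def:cyclic_complex} promotes $\vr_r(X) = \mathrm{Cl}(G_r)$ to a cyclic clique complex with well-defined winding fraction $\wf(G_r)$, so Theorem \ref{thm:aa17} applies verbatim and produces exactly the dichotomy in the statement.

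The only point that deserves care is the degenerate case where $\vr_r(X)$ is contractible. Tracing the proof of Lemma \ref{lemma:monotone_implies_cyclic}, this happens only when some vertex $a$ is adjacent to every other vertex, which forces $r \geq d_{a,\overline{M}(a)} \geq \rad(X)$; the complex is then a single top simplex and can be read off as the $l=0$ branch with maximal $k$ (a vacuous wedge of $0$-spheres). So no new case arises. The substantive work was done in Theorem \ref{thm:monotone_equals_star}, which translates the purely combinatorial Property (\ref{eq:star}) on the isolation indices into the metric cyclicity condition required by \cite{aa17}; once that bridge is built, the corollary is a formal consequence and the argument above is essentially bookkeeping.
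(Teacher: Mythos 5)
Your proposal is correct and follows essentially the same route as the paper: Theorem \ref{thm:monotone_equals_star} gives monotonicity via $\overline{M}$, Lemma \ref{lemma:monotone_implies_cyclic} makes $G_r$ cyclic, and Theorem \ref{thm:aa17} finishes. Your extra remark on the contractible/degenerate case is a harmless refinement of what the paper leaves implicit.
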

\begin{proof}
	By Theorem \ref{thm:monotone_equals_star}, $\overline{M}$ satisfies Definition \ref{def:monotone_metrics}. Hence, by Lemma \ref{lemma:monotone_implies_cyclic}, $G_r$ is cyclic for any $0 < r < \rad(X)$ and Theorem \ref{thm:aa17} applies.
\end{proof}
 	\section{Non-monotone circular decomposable metrics}
\label{sec:circular-metrics-non-monotone}
In this section, we describe how to compute the homology groups of some circular decomposable metrics that are not monotone. However, circular decomposable metrics are not too far from being monotone, so we can use the properties of monotone metrics and cyclic graphs to compute the homology of more general circular decomposable metrics. Concretely, our strategy to compute $H_*(\vr_r(X))$ will be to split $\vr_r(X)$ into the cyclic piece and the non-cyclic piece, compute the homology of the cyclic piece with \ref{thm:aa17}, and use the Mayer-Vietoris sequence to find the homology of $\vr_r(X)$.

\subsection{Cyclic component of $\vr_r(X)$ for a circular decomposable non-monotone $X$.}
\label{sec:cyclic_component}
\indent Fix $r>0$ and suppose $(X, d_X)$ is a circular decomposable metric. For ease of notation, we denote $\vr_r(X)$ with $V_X$ and its 1-skeleton with $G_X$. We use the following function to test if $d_X$ is monotone.

\begin{defn}
	\label{def:M_r}
	Let $(X, d_X)$ be a circular decomposable metric and fix $r>0$. Given $a \in X$, let $M_r(a)$ be the last $b$ in the sequence $a+1, a+2, \dots, a-1$ such that $d_X(c, d) \leq r$ for all $a \preceq c \prec d \preceq b$.
\end{defn}

Notice that for any $a \preceq c \prec d \preceq b \preceq M_r(a)$, both $d_{cd}$ and $d_{ab}$ are smaller than $r$, so $\{a,b\} \in V_X \Rightarrow \{c,d\} \in V_X$ holds. However, we don't know if the same implication holds when $M_r(a) \prec b \preceq c \prec d \preceq a$. The implication holds if $d_X$ is monotone and, in that case, we can use the results from the previous section to find $H_*(V_X)$. If $d_X$ is not monotone, there exists $M_r(a) \prec b \prec a$ and $b \preceq c \prec d \preceq a$ such that $d_{ab} \leq r < d_{cd}$. Intuitively, $\vr_r(X)$ is not cyclic because the edge $\{a,b\}$ appeared at a lower $r$ than it was supposed to. These are the edges we want to remove, so we set
\begin{equation*}
	E_X := \left\{ \{a,b\} \subset X : M_r(a) \prec b \prec a \text{ and } d_{ab} \leq r < d_{cd} \text{ for some } b \preceq c \prec d \preceq a \right\}.
\end{equation*}

\begin{defn}
	\label{def:cyclic_component}
	Let $r>0$ and suppose $(X, d_X)$ is a circular decomposable metric. We form a graph $G_X^c$ by removing the edges in $E_X$ from $G_X$ and define $V_X^c := \Cl(G_X^c)$. We call $V_X^c$ the \define{cyclic component} of $\vr_r(X)$.
\end{defn}

\indent By definition, $V_X^c$ is a cyclic complex because we removed the edges that made $V_X$ non-cyclic. Plus, if $V_X$ was cyclic to begin with, then $V_X^c = V_X$. On the other hand, let $Y$ be the vertex set of $\operatorname{St}(E_X)$, the closed star of $E_X$ in $V_X$. Notice that the subcomplex of $\vr_r(X)$ induced by $Y$ equals $V_Y = \vr_r(Y)$ and that $\operatorname{St}(E_X) \subset V_Y$.\\
\indent Intuitively, we have separated $V_X$ into its cyclic part $V_X^c$ and the subcomplex $V_Y$ induced by edges that prevent $V_X$ from being cyclic. We want to use these complexes and the Mayer-Vietoris sequence to compute the homology of $V_X$, and there are a couple of facts to verify. First, let $V_Y' := V_X^c \cap V_Y$.

\begin{lemma}
	\label{lemma:VY'_cyclic}
	$V_Y'$ is a cyclic clique complex.
\end{lemma}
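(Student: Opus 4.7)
The plan is to first observe that $V_Y'$ is a clique complex: it is the full subcomplex of $V_X^c$ on vertex set $Y$, and full subcomplexes of clique complexes are clique complexes. Writing $G_Y'$ for the 1-skeleton of $V_Y'$, it then remains to exhibit a function $M_Y : Y \to Y$ making $G_Y'$ cyclic in the sense of Definition \ref{def:cyclic_graph}, where $Y$ inherits the cyclic order from $X$ and is relabeled $1, 2, \dots, |Y|$ accordingly.

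The strategy is to transfer the cyclic structure of $V_X^c$ to $V_Y'$. Let $M^c : X \to X$ be the function witnessing cyclicity of $G_X^c$ (this function is built from $M_r$ in the construction of $V_X^c$). For each $y \in Y$, I would set $M_Y(y)$ to be the last element of $Y$, traversed clockwise from $y$, that lies in the $X$-arc from $y$ to $M^c(y)$; if no such element exists, $M_Y(y)$ defaults to the clockwise $Y$-neighbor of $y$. A short argument using that $y$ belongs to the closed star of some edge of $E_X$ should ensure $M_Y(y) \neq y$, making $M_Y$ well-defined.

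The edge condition of Definition \ref{def:cyclic_graph} follows quickly from the corresponding property of $G_X^c$. If $\{a, b\}$ is an edge of $G_Y'$ with $a \prec_Y b \preceq_Y M_Y(a)$, then $b \preceq_X M^c(a)$ by construction; iterating the edge condition of $G_X^c$ along the $X$-arc from $a$ to $b$ shows that the entire arc is a clique in $V_X^c$, and restricting to the $Y$-elements of this arc, the $Y$-neighbors $a', b'$ of $a, b$ yield the required edges $\{a', b\}, \{a, b'\}$ in $G_Y'$.

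The main obstacle is the antipode condition. For one direction, $a \prec_Y b \preceq_Y M_Y(a)$ implies $b \preceq_X M^c(a)$; cyclicity of $G_X^c$ then gives $M^c(b) \preceq_X a \prec_X b$, whence $M_Y(b) \preceq_X M^c(b) \preceq_X a$ and so $M_Y(b) \preceq_Y a \prec_Y b$. For the converse, suppose $M_Y(b) \prec_Y a \prec_Y b$; the maximality of $M_Y(b)$ among $Y$-elements in the $X$-arc from $b$ to $M^c(b)$, combined with $a \in Y$, forces $M^c(b) \prec_X a \prec_X b$. Then cyclicity of $G_X^c$ yields $a \prec_X b \preceq_X M^c(a)$, and since $b \in Y$, the definition of $M_Y(a)$ gives $b \preceq_Y M_Y(a)$. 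The trickiest bookkeeping will be translating between the cyclic orders on $X$ and $Y$ correctly, and handling the degenerate case where $M_Y$ defaults to the clockwise $Y$-neighbor of $y$; this may require a separate verification depending on whether the $X$-arc from $y$ to $M^c(y)$ contains any points of $Y$ at all.
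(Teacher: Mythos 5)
Your proposal is correct and follows essentially the same route as the paper: realize $V_Y'$ as an induced (clique) subcomplex, define $M_Y(y)$ as the last point of $Y$ in the clockwise arc from $y$ up to $M_r(y)$ (your $M^c$ is exactly the $M_r$ used in the construction of $V_X^c$), and transfer the edge and antipode conditions from the cyclicity of $G_X^c$. If anything, you are slightly more careful than the paper, which leaves the antipode condition as ``analogous'' and does not address the degenerate case where the arc $[y, M_r(y)]$ contains no other point of $Y$.
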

\begin{proof}
	First, note that $V_Y'$ is a clique complex because both $V_X^c$ and $V_Y$ are, and that $X$ induces a cyclic order $y_1 \prec \cdots \prec y_m$ on the points of $Y$.\\
	\indent To verify that $V_Y'$ is cyclic, define $M_Y:Y \to Y$ by setting $M_Y(y_i)$ to be the last $z$ from the sequence $y_{i+1}, y_{i+2}, \dots, y_{i-1}$ such that $y_i \preceq z \preceq M_r(y_i)$. Let $y_a, y_b, y_c \in Y$ such that $y_a \preceq y_b \prec y_c \preceq M_Y(y_a)$. To show that $V_Y'$ is cyclic, we have to prove that $\{y_b, y_c\} \in V_Y'$ implies $\{y_{b+1}, y_{c}\}, \{y_{b}, y_{c-1}\} \in V_Y'$. If $\{y_b, y_c\} \in V_Y' \subset V_X^c$, the fact that $V_X^c$ is cyclic and $y_a \preceq y_b \prec M_Y(y_a) \preceq M_r(y_a)$ mean that $\{y_{b+1}, y_{c}\}, \{y_{b}, y_{c-1}\} \in V_X^c \subset V_X$ by definition of $M_r$. Since $V_Y$ is the subcomplex of $V_X$ induced by $Y$, we must have $\{y_{b+1}, y_{c}\}, \{y_{b}, y_{c-1}\} \in V_Y$. Hence, $\{y_{b+1}, y_{c}\}$, $\{y_{b}, y_{c-1}\} \in V_X^c \cap V_Y = V_Y'$. The second condition in Definition \ref{def:cyclic_graph} follows analogously.
\end{proof}

Now we make sure that we didn't lose any simplices when splitting $V_X$ into $V_X^c$ and $V_Y$.
\begin{lemma}
	\label{lemma:MV_union}
	$V_X = V_X^c \cup V_Y$.
\end{lemma}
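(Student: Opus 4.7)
The inclusion $V_X^c \cup V_Y \subseteq V_X$ is immediate because both $V_X^c$ and $V_Y$ are obtained from $V_X$ only by discarding simplices (edges from $E_X$ in the first case, vertices outside $Y$ in the second). So the content of the lemma lies in the reverse inclusion $V_X \subseteq V_X^c \cup V_Y$, and I would prove it by a dichotomy on whether a given simplex $\sigma \in V_X$ meets $E_X$ or not.

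First suppose $\sigma \in V_X$ contains no edge belonging to $E_X$. Every pair of vertices of $\sigma$ spans a 1-face of $\sigma$ and hence an edge of $G_X$; by assumption none of these edges lies in $E_X$, so they all survive in $G_X^c = G_X \setminus E_X$. Therefore the vertex set of $\sigma$ is a clique in $G_X^c$, which puts $\sigma$ in $V_X^c = \Cl(G_X^c)$. (This includes the edge case where $\sigma$ is a vertex, since $G_X^c$ retains the full vertex set $X$.)

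Next suppose $\sigma \in V_X$ contains some edge $e = \{a,b\} \in E_X$. Then $\sigma$ is a simplex of $V_X$ that contains an edge of $E_X$, so $\sigma$ belongs to the closed star $\operatorname{St}(E_X)$ of $E_X$ in $V_X$; in particular every vertex of $\sigma$ lies in $Y$, the vertex set of $\operatorname{St}(E_X)$. Since $V_Y$ is defined as the subcomplex of $V_X$ induced by $Y$, and $\sigma \in V_X$ has all its vertices in $Y$, we conclude $\sigma \in V_Y$.

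The two cases together show $V_X \subseteq V_X^c \cup V_Y$, completing the proof. I do not anticipate any real obstacle here: the only subtlety worth stating explicitly is that $V_X^c$ being defined as the clique complex of $G_X^c$ (rather than as $V_X$ with the edges of $E_X$ removed) still captures exactly those simplices of $V_X$ that avoid $E_X$, because $V_X$ itself is a clique complex and removing edges commutes with passing to cliques for any such simplex.
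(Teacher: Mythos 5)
Your proof is correct and follows essentially the same argument as the paper: a dichotomy on whether a simplex of $V_X$ contains an edge of $E_X$, placing it in $V_X^c = \Cl(G_X^c)$ in the first case and in $\operatorname{St}(E_X) \subset V_Y$ in the second. The extra remarks (the trivial reverse inclusion and the clique-complex subtlety) are fine but not needed beyond what the paper records.
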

\begin{proof}
	Let $\sigma$ be a simplex of $V_X$. If $\sigma$ does not contain an edge from $E_X$, then $\sigma \in V_X^c$ by definition of $V_X^c$. If $\sigma$ does contain an edge from $E_X$, then $\sigma \in \operatorname{St}(E_X) \subset V_Y$.
\end{proof}

Lastly, we record the following for future use.
\begin{prop}
	\label{prop:MV}
	Let $\iota_X:V_Y' \hookrightarrow V_X^c$, $\iota_Y:V_Y' \hookrightarrow V_Y$, $\tau_X:V_X^c \hookrightarrow V_X$ and $\tau:V_Y \hookrightarrow V_X$ be the natural inclusions. The homology groups of $V_X$, $V_X^c$, $V_Y$, and $V_Y'$ satisfy the following Mayer-Vietoris sequence:
	\begin{equation}
		\label{eq:MV}
		\cdots \to H_{k}(V_Y') \xrightarrow{(\iota_X, \iota_Y)} H_{k}(V_X^c) \oplus H_{k}(V_Y) \xrightarrow{\tau_X - \tau_Y} H_{k}(V_X) \xrightarrow{\partial_*} H_{k-1}(V_Y') \to \cdots.
	\end{equation}
\end{prop}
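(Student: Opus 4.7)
The plan is to derive the sequence from the standard Mayer–Vietoris machinery for simplicial complexes. All of the complexes involved are subcomplexes of $V_X$: by construction $V_X^c \subset V_X$ (we only removed edges from $G_X$ and took the clique complex), and $V_Y$ is the full subcomplex of $V_X$ induced by the vertex set $Y$ of $\operatorname{St}(E_X)$. Hence $V_X^c$ and $V_Y$ form a pair of subcomplexes of $V_X$ whose union is $V_X$ by Lemma~\ref{lemma:MV_union}, and whose intersection is $V_Y'$ by definition of $V_Y'$.

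First, I would exhibit the standard short exact sequence of simplicial chain complexes
\begin{equation*}
0 \to C_*(V_Y') \xrightarrow{(\iota_{X,\sharp},\, \iota_{Y,\sharp})} C_*(V_X^c) \oplus C_*(V_Y) \xrightarrow{\tau_{X,\sharp} - \tau_{Y,\sharp}} C_*(V_X) \to 0,
\end{equation*}
where the chain-level maps are induced by the four inclusions in the statement. Injectivity of $(\iota_{X,\sharp}, \iota_{Y,\sharp})$ is immediate because a simplex of $V_Y'$ sitting in both factors maps to a nonzero pair as soon as it is nonzero. For exactness in the middle, if $(\sigma_1, \sigma_2)$ with $\sigma_1 \in C_*(V_X^c)$ and $\sigma_2 \in C_*(V_Y)$ has $\tau_{X,\sharp}(\sigma_1) = \tau_{Y,\sharp}(\sigma_2)$ as chains in $C_*(V_X)$, then, viewing both chains inside $V_X$, each simplex that appears must lie in $V_X^c \cap V_Y = V_Y'$; thus $\sigma_1 = \sigma_2$ is a chain in $V_Y'$, and it maps to $(\sigma_1, \sigma_2)$ under $(\iota_{X,\sharp}, \iota_{Y,\sharp})$. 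Finally, surjectivity of $\tau_{X,\sharp} - \tau_{Y,\sharp}$ follows from Lemma~\ref{lemma:MV_union}: every simplex of $V_X$ is a simplex of either $V_X^c$ or $V_Y$, so it is hit either as $\tau_{X,\sharp}(\sigma) - 0$ or as $0 - \tau_{Y,\sharp}(-\sigma)$.

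Applying the zig-zag lemma to this short exact sequence produces the long exact sequence
\begin{equation*}
\cdots \to H_k(V_Y') \xrightarrow{(\iota_{X,*}, \iota_{Y,*})} H_k(V_X^c) \oplus H_k(V_Y) \xrightarrow{\tau_{X,*} - \tau_{Y,*}} H_k(V_X) \xrightarrow{\partial_*} H_{k-1}(V_Y') \to \cdots,
\end{equation*}
which is exactly the stated sequence. The connecting homomorphism $\partial_*$ is the usual one: given a $k$-cycle $z$ in $V_X$, write $z = \tau_{X,\sharp}(z_1) - \tau_{Y,\sharp}(z_2)$ (using surjectivity above), then $\partial z_1$ and $\partial z_2$ define the same element of $C_{k-1}(V_Y')$, and $\partial_*[z]$ is its homology class.

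There is no genuine obstacle here: the only thing that requires care is the middle-exactness argument, which relies on the fact that $V_X^c$ and $V_Y$ are full subcomplexes of $V_X$ so that no simplex gets ``split'' when it lies in the intersection. This is ensured because $V_X^c$ is a clique complex on the vertex set $X$ and $V_Y$ is the induced subcomplex on $Y \subset X$, so $V_X^c \cap V_Y$ is a subcomplex whose chain groups inject into both $C_*(V_X^c)$ and $C_*(V_Y)$ on the nose. Everything else is a direct invocation of the simplicial Mayer–Vietoris theorem.
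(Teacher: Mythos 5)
Your proof is correct and is essentially the argument the paper intends: the proposition is just the standard simplicial Mayer--Vietoris sequence applied to the subcomplexes $V_X^c$ and $V_Y$ of $V_X$, using Lemma~\ref{lemma:MV_union} for the union and the definition $V_Y' = V_X^c \cap V_Y$ for the intersection. (One small remark: your appeal to ``fullness'' of the subcomplexes is unnecessary --- middle-exactness of the chain-level sequence holds for arbitrary subcomplexes, since the simplicial chain groups are free on the simplices and $C_*(A)\cap C_*(B)=C_*(A\cap B)$ inside $C_*(A\cup B)$ automatically.)
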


\subsection{Recursive computation of $H_*(\vr_r(X))$.}
\label{sec:recursive_computation}
There is an obstacle to using Proposition \ref{prop:MV}. If $X=Y$, $V_Y$ equals $V_X$ because we defined $V_Y$ as an induced subcomplex. This would imply $V_Y' = V_X^c$, which means that the Mayer-Vietoris sequence would give the trivial statements $H_*(V_X) = H_*(V_Y)$ and $H_*(V_X^c) = H_*(V_Y')$. Hence, we assume $X \neq Y$ in this section.\\
\indent First we focus on finding $H_*(V_X)$ when $V_Y'$ is in the non-critical regime, i.e. $\frac{l}{2l+1} < \wf(V_Y') < \frac{l+1}{2l+3}$ for some $l \in \Z$. We begin with some technical properties.

\begin{lemma}
	\label{lemma:ses}
	Let $f:A \to B$, $g:A \to C$, $h:B \to D$, $k:C \to D$ be group homomorphisms and suppose 
	\begin{equation*}
		A \xrightarrow{(f,g)} B \oplus C \xrightarrow{h-k} D \to 0
	\end{equation*}
	is exact. If $g$ is an injection/surjection/isomorphism, then so is $h$.
\end{lemma}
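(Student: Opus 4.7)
The plan is a straightforward diagram chase, exploiting two immediate consequences of exactness: first, $h-k$ is surjective (exactness at $D$), and second, $h \circ f = k \circ g$ (since $\operatorname{im}(f,g) \subseteq \ker(h-k)$). I would handle the three statements by treating surjectivity and injectivity separately, since the isomorphism case then follows by combining them.

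For the surjectivity claim, assume $g$ is surjective and pick $d \in D$. By surjectivity of $h-k$, there exist $b \in B$ and $c \in C$ with $h(b) - k(c) = d$. Using surjectivity of $g$, choose $a \in A$ with $g(a) = c$. The relation $h \circ f = k \circ g$ then gives $k(c) = k(g(a)) = h(f(a))$, so
\[
d = h(b) - k(c) = h(b) - h(f(a)) = h(b - f(a)),
\]
showing $d \in \operatorname{im}(h)$. Hence $h$ is surjective.

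For the injectivity claim, assume $g$ is injective and suppose $h(b) = 0$ for some $b \in B$. Then $(h-k)(b, 0) = h(b) - k(0) = 0$, so $(b, 0) \in \ker(h-k) = \operatorname{im}(f,g)$. Pick $a \in A$ with $f(a) = b$ and $g(a) = 0$. Injectivity of $g$ forces $a = 0$, whence $b = f(0) = 0$, and $h$ is injective. The isomorphism case follows by combining the two arguments.

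I do not expect any real obstacle here: the entire content is contained in unwinding the two exactness conditions, and the argument is the standard short-exact-sequence diagram chase. The only mild subtlety worth flagging is the use of the relation $h \circ f = k \circ g$ in the surjectivity step, which is a consequence of exactness at $B \oplus C$ rather than of the surjectivity of $h-k$ itself.
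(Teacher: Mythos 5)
Your proof is correct and follows essentially the same diagram chase as the paper: injectivity via $(b,0)\in\ker(h-k)=\operatorname{im}(f,g)$, and surjectivity via $h\circ f = k\circ g$ together with surjectivity of $h-k$. No differences worth noting.
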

\begin{proof}
	Suppose $g$ is injective. If $h(b)=0$, then $h(b)-k(0) = 0$. By exactness, there exists $a \in A$ such that $f(a) = b$ and $g(a) = 0$. Since $g$ is injective, $a=0$, so $b = f(a) = 0$.\\
	\indent Suppose $g$ is surjective. Given $d \in D$, there exist $b \in B$ and $c \in C$ such that $h(b)-k(c) = d$ by exactness. Also, there exists $a \in A$ such that $g(a)=c$ by assumption. Exactness implies that $(h \circ f - k \circ g)(a) = 0$. Thus, $d = h(b)-k(c) = h(b) - k(g(a)) = h(b - f(a))$.
\end{proof}

\begin{lemma}
	\label{lemma:inclusion_is_iso}
	Suppose $\frac{l}{2l+1} < \wf(V_Y') < \frac{l+1}{2l+3}$ for some $l \in \Z$. Then $\iota_X:H_{2l+1}(V_Y') \to H_{2l+1}(V_X^c)$ is an isomorphism or 0 depending on whether $\frac{l}{2l+1} < \wf(V_X^c) < \frac{l+1}{2l+3}$ or not.
\end{lemma}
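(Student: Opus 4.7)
The plan is to compare $V_Y'$ and $V_X^c$ using Theorem \ref{thm:aa17}, exploiting that both are cyclic clique complexes (by Lemma \ref{lemma:VY'_cyclic} and by construction of $V_X^c$) and that $V_Y' = V_X^c \cap V_Y \subset V_X^c$ inherits the cyclic order induced from $X$.

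The first step is to establish that the winding fraction is monotone under this inclusion: since the edge set of $V_X^c$ restricted to $Y$ contains the edge set of $V_Y'$, and since the cyclic orders agree, the function witnessing cyclicity for $V_X^c$ dominates the one for $V_Y'$, giving $\wf(V_Y') \leq \wf(V_X^c)$. In particular, the hypothesis $\wf(V_Y') > \tfrac{l}{2l+1}$ forces $\wf(V_X^c) > \tfrac{l}{2l+1}$. This rules out $\wf(V_X^c)$ lying in any interval to the left of $V_Y'$'s regime, and splits the remaining possibilities into two natural cases.

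Case A: $\wf(V_X^c) \geq \tfrac{l+1}{2l+3}$. Theorem \ref{thm:aa17} then shows that $V_X^c$ is homotopy equivalent either to $\Sp^{2l'+1}$ with $l' \geq l+1$, or to a wedge of copies of $\Sp^{2l'}$ with $l' \geq l+1$. In every one of these subcases, $H_{2l+1}(V_X^c) = 0$, so $\iota_X$ vanishes automatically. Case B: $\tfrac{l}{2l+1} < \wf(V_X^c) < \tfrac{l+1}{2l+3}$. Then Theorem \ref{thm:aa17} gives $V_Y' \simeq V_X^c \simeq \Sp^{2l+1}$, so both groups $H_{2l+1}$ are $\Z$ and $\iota_X$ is either an isomorphism or zero. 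It therefore suffices to exhibit a generator of $H_{2l+1}(V_Y')$ whose image in $H_{2l+1}(V_X^c)$ is nonzero.

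The main obstacle is ruling out $\iota_X=0$ in Case B. The approach I would take is to appeal to the explicit description in \cite{aa17} of the homotopy equivalence $\Cl(G)\simeq\Sp^{2l+1}$ for a cyclic graph $G$ with winding fraction in $(\tfrac{l}{2l+1},\tfrac{l+1}{2l+3})$: a generator of $H_{2l+1}$ is represented by the fundamental class of a cross-polytope-like subcomplex coming from the dismantling of $G$ to some $C_n^l$. Since the cyclic order and the witnessing function of $V_Y'$ and $V_X^c$ agree on $Y$, and both lie in the same winding-fraction regime, this cross-polytope representative in $V_Y'$ is still a cycle in $V_X^c$ representing its generator. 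Making this naturality precise—i.e.\ checking that the Adamaszek--Adams homotopy equivalence can be chosen functorially under inclusions of cyclic subcomplexes within the same regime—is the delicate part, but once it is in place the conclusion is immediate.
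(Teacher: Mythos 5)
Your outline follows the paper's proof up to the decisive step, but that step is left open. The monotonicity $\wf(V_Y') \leq \wf(V_X^c)$ is correct, though your justification (``the function witnessing cyclicity for $V_X^c$ dominates the one for $V_Y'$'') is a heuristic rather than a proof, particularly because the vertex sets differ ($Y \subsetneq X$), so this is not a restriction of one graph to the other; the paper instead notes that the inclusion of 1-skeleta is a cyclic homomorphism \cite[Lemma 3.6]{aa17} and that the winding fraction is monotone under cyclic homomorphisms \cite[Proposition 3.8(b)]{aa17}. Your Case A is fine: once $\wf(V_X^c)$ leaves the interval $\left(\tfrac{l}{2l+1}, \tfrac{l+1}{2l+3}\right)$, Theorem \ref{thm:aa17} forces $H_{2l+1}(V_X^c)=0$ and $\iota_X=0$.

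The genuine gap is Case B. First, the intermediate claim that a homomorphism $\Z \to \Z$ ``is either an isomorphism or zero'' is false (it could be multiplication by any integer), so exhibiting a generator of $H_{2l+1}(V_Y')$ with nonzero image would not by itself prove $\iota_X$ is an isomorphism. Second, and more importantly, the actual content of the lemma --- that the \emph{inclusion} $V_Y' \hookrightarrow V_X^c$ induces an isomorphism on $H_{2l+1}$ when both winding fractions lie in the same non-critical interval --- is precisely what you defer to ``checking that the Adamaszek--Adams homotopy equivalence can be chosen functorially under inclusions,'' which you acknowledge you do not carry out. That is where the entire proof lives, and no construction of explicit cross-polytope representatives is supplied. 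The paper closes this step in one line by citing \cite[Proposition 4.9]{aa17}, which says that an inclusion of cyclic graphs whose winding fractions both lie in $\left(\tfrac{l}{2l+1}, \tfrac{l+1}{2l+3}\right)$ induces a homotopy equivalence of clique complexes, so $\iota_X$ is an isomorphism directly. As written, your proposal reduces the lemma to an unproved naturality statement rather than proving it; invoking that existing result (or actually proving the naturality you sketch) is needed to complete the argument.
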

\begin{proof}
	Observe that the inclusion of cyclic complexes $V_Y' \hookrightarrow V_X^c$ is induced by the inclusion of their 1-skeleta, which is a cyclic homomorphism of cyclic graphs by \cite[Lemma 3.6]{aa17}. As a consequence, \cite[Proposition 3.8 (b)]{aa17} implies that $\wf(V_Y') \leq \wf(V_X^c)$. If in addition $\frac{l}{2l+1} < \wf(V_X^c) < \frac{l+1}{2l+3}$, then \cite[Proposition 4.9]{aa17}, implies $V_Y' \simeq V_X^c$. Otherwise, $H_{2l+1}(V_X^c)=0$ by Theorem \ref{thm:aa17}.
\end{proof}

\begin{theorem}
	\label{thm:MV_non_critical}
	Suppose $\frac{l}{2l+1} < \wf(V_Y') < \frac{l+1}{2l+3}$.
	\begin{enumerate}
		\item If $\iota_Y: H_{2l+1}(V_Y') \to H_{2l+1}(V_Y)$ is injective and $\operatorname{coker}(\iota_Y)$ is torsion-free, then
		\begin{align*}
			H_{2l+1}(V_X) &\cong H_{2l+1}(V_X^c) \oplus \left( H_{2l+1}(V_Y) / H_{2l+1}(V_Y') \right)\\
			\widetilde{H}_k(V_X) &\cong \widetilde{H}_k(V_X^c) \oplus \widetilde{H}_k(V_Y) \text{ for } k \neq 2l+1.
		\end{align*}
		\item If $\iota_Y: H_{2l+1}(V_Y') \to H_{2l+1}(V_Y)$ is the 0 map, then
		\begin{align*}
			H_{2l+1}(V_X) &\cong H_{2l+1}(V_Y) \\
			H_{2l+2}(V_X) &\cong H_{2l+2}(V_X^c) \oplus H_{2l+2}(V_Y) \oplus \ker(\iota_X) \\
			\widetilde{H}_k(V_X) &\cong \widetilde{H}_k(V_X^c) \oplus \widetilde{H}_k(V_Y) \text{ for } k \neq 2l+1, 2l+2.
		\end{align*}
	\end{enumerate}
\end{theorem}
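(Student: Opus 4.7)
The plan is to feed the Mayer--Vietoris sequence~\eqref{eq:MV} of Proposition~\ref{prop:MV} with the information that $V_Y'$ is a sphere. Lemma~\ref{lemma:VY'_cyclic} says $V_Y'$ is a cyclic clique complex, and the hypothesis $\tfrac{l}{2l+1} < \wf(V_Y') < \tfrac{l+1}{2l+3}$ together with Theorem~\ref{thm:aa17} gives $V_Y' \simeq \Sp^{2l+1}$. Thus $\widetilde{H}_k(V_Y')=0$ for $k\neq 2l+1$ and $H_{2l+1}(V_Y')\cong\Z$, which cleanly separates the problem into the easy degrees $k \notin \{2l+1,2l+2\}$ and the two critical ones.

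For the easy degrees, passing to reduced homology makes both $\widetilde{H}_k(V_Y')$ and $\widetilde{H}_{k-1}(V_Y')$ vanish, so the relevant segment of~\eqref{eq:MV} collapses to $0 \to \widetilde{H}_k(V_X^c)\oplus\widetilde{H}_k(V_Y) \to \widetilde{H}_k(V_X) \to 0$, yielding the direct-sum line in both cases with no splitting argument required.

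For Case~1, injectivity of $\iota_Y$ immediately gives injectivity of $(\iota_X,\iota_Y)$, so the connecting map out of $H_{2l+2}(V_X)$ vanishes; combined with $H_{2l+2}(V_Y')=0$, this yields $H_{2l+2}(V_X)\cong H_{2l+2}(V_X^c)\oplus H_{2l+2}(V_Y)$ (already captured by the easy line) and leaves the short exact sequence
\begin{equation*}
0 \to H_{2l+1}(V_Y') \xrightarrow{(\iota_X,\iota_Y)} H_{2l+1}(V_X^c)\oplus H_{2l+1}(V_Y) \to H_{2l+1}(V_X) \to 0.
\end{equation*}
The main obstacle is identifying the cokernel on the right, and this is exactly where the torsion-free cokernel hypothesis on $\iota_Y$ becomes essential: it forces $\iota_Y(H_{2l+1}(V_Y'))$ to be a pure, hence direct, summand of the finitely generated abelian group $H_{2l+1}(V_Y)$. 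Fixing a splitting $H_{2l+1}(V_Y)=\iota_Y(H_{2l+1}(V_Y'))\oplus C$, the homomorphism $(a,\iota_Y(\beta)+c)\mapsto(a-\iota_X(\beta),c)$ is surjective with kernel exactly the image of $(\iota_X,\iota_Y)$, producing the claimed $H_{2l+1}(V_X)\cong H_{2l+1}(V_X^c)\oplus C$. This goes through uniformly regardless of which alternative Lemma~\ref{lemma:inclusion_is_iso} assigns to $\iota_X$.

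For Case~2, the map reduces to $(\iota_X,0)$, and Lemma~\ref{lemma:inclusion_is_iso} controls both alternatives: if $\iota_X$ is an isomorphism then $\ker(\iota_X)=0$ and $\operatorname{coker}(\iota_X,0)=H_{2l+1}(V_Y)$, while if $\iota_X=0$ then Theorem~\ref{thm:aa17} forces $H_{2l+1}(V_X^c)=0$ and $\ker(\iota_X)\cong\Z$, so again the cokernel is $H_{2l+1}(V_Y)$. Either way, $H_{2l+1}(V_X)\cong H_{2l+1}(V_Y)$. For the remaining degree, the connecting map $H_{2l+2}(V_X)\to H_{2l+1}(V_Y')$ has image $\ker(\iota_X,0)=\ker(\iota_X)$, giving the short exact sequence $0 \to H_{2l+2}(V_X^c)\oplus H_{2l+2}(V_Y) \to H_{2l+2}(V_X) \to \ker(\iota_X) \to 0$, which splits because $\ker(\iota_X)$ is either $0$ or $\Z$ and hence free. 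This yields the desired formula involving $\ker(\iota_X)$.
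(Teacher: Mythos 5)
Your proposal is correct and takes essentially the same route as the paper: the Mayer--Vietoris sequence of Proposition \ref{prop:MV} combined with Theorem \ref{thm:aa17} applied to $V_Y'$, with the torsion-free cokernel hypothesis supplying the splitting in Case 1 and the freeness of $\ker(\iota_X)\subseteq H_{2l+1}(V_Y')\cong\Z$ in Case 2. The only notable (and harmless) difference is in Case 1, where you compute $\operatorname{coker}(\iota_X,\iota_Y)$ directly from a chosen splitting $H_{2l+1}(V_Y)=\operatorname{Im}(\iota_Y)\oplus C$, treating both alternatives of Lemma \ref{lemma:inclusion_is_iso} uniformly, whereas the paper case-splits on whether $H_{2l+1}(V_X^c)$ vanishes and additionally invokes Lemmas \ref{lemma:ses} and \ref{lemma:inclusion_is_iso}.
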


\begin{remark}
	\label{rmk:MV_non_critical}
	There is a simpler formula for $H_{2l+2}(V_X)$ in the second case of Theorem \ref{thm:MV_non_critical}:
	\begin{equation*}
		H_{2l+2}(V_X) =
		\begin{cases}
			H_{2l+2}(V_Y) & \text{if } \frac{l}{2l+1} < \wf(V_X^c) < \frac{l+1}{2l+3} \\
			H_{2l+2}(V_X^c) \oplus H_{2l+2}(V_Y) \oplus H_{2l+1}(V_Y') & \text{otherwise.}
		\end{cases}
	\end{equation*}
	Indeed, Lemma \ref{lemma:inclusion_is_iso} gives that $H_{2l+1}(V_X^c)$ is either isomorphic to $H_{2l+1}(H_Y')$ or zero. This gives $\ker(\iota_X)=0$ and $H_{2l+2}(V_X^c)=0$ in the first case and $\ker(\iota_X) = H_{2l+1}(V_Y')$ in the second.
\end{remark}

\begin{proof}
	Recall Proposition \ref{prop:MV}:
	\begin{equation*}
		\cdots \to H_{k}(V_Y') \xrightarrow{(\iota_X, \iota_Y)} H_{k}(V_X^c) \oplus H_{k}(V_Y) \xrightarrow{\tau_X - \tau_Y} H_{k}(V_X) \xrightarrow{\partial_*} H_{k-1}(V_Y') \to \cdots.
	\end{equation*}
	By Theorem \ref{thm:aa17}, $\widetilde{H}_{k}(V_Y')$ equals $\Z$ if $k=2l+1$ and 0 otherwise, so $\widetilde{H}_{k}(V_X) \cong \widetilde{H}_{k}(V_X^c) \oplus \widetilde{H}_{k}(V_Y)$ for $k \neq 2l+1, 2l+2$. The remaining part of the sequence is as follows:
\begin{equation}
		\label{eq:MV_non_critical}
		\begin{aligned}
		0 &\to H_{2l+2}(V_X^c) \oplus H_{2l+2}(V_Y) \to H_{2l+2}(V_X) \\
		&\xrightarrow{\partial_*} H_{2l+1}(V_Y') \xrightarrow{\iota} H_{2l+1}(V_X^c) \oplus H_{2l+1}(V_Y) \to H_{2l+1}(V_X) \to 0.
		\end{aligned}
	\end{equation}
	\indent If $\iota_Y: H_{2l+1}(V_Y') \to H_{2l+1}(V_Y)$ is injective, then so is $\iota$ and the bottom row becomes a short exact sequence. This forces $\partial_* = 0$, so $H_{2l+2}(V_X) \cong H_{2l+2}(V_X^c) \oplus H_{2l+2}(V_Y)$.\\
	\indent Finding $H_{2l+1}(V_X)$ requires two cases depending on whether $H_{2l+1}(V_X^c)$ is trivial or not. If $H_{2l+1}(V_X^c) = 0$, the fact that the bottom row of (\ref{eq:MV_non_critical}) is a short exact sequence yields $H_{2l+1}(V_X) \cong H_{2l+1}(V_Y) / H_{2l+1}(V_Y')$. On the other hand, a non-trivial $H_{2l+1}(V_X^c)$ only happens when $\frac{l}{2l+1} < \wf(V_X^c) < \frac{l+1}{2l+3}$ by Theorem \ref{thm:aa17}, and in that case, Lemma \ref{lemma:inclusion_is_iso} implies that the map $\iota_X:H_{2l+1}(V_Y') \to H_{2l+1}(V_X^c)$ is an isomorphism. Hence, $\tau_Y: H_{2l+1}(V_Y) \to H_{2l+1}(V_X)$ is an isomorphism by Lemma \ref{lemma:ses}. Moreover, since $\operatorname{coker}(\iota_Y)$ is torsion-free by hypothesis, the short exact sequence
	\begin{equation}
		\label{eq:MV_non_critical_case_1_ses}
		0 \to H_{2l+1}(V_Y') \xrightarrow{\iota_Y} H_{2l+1}(V_Y) \to \operatorname{coker}(\iota_Y) \to 0
	\end{equation}
	splits. Thus,
	\begin{align*}
		H_{2l+1}(V_X)
		&\cong H_{2l+1}(V_Y)
		\cong H_{2l+1}(V_Y') \oplus \operatorname{coker}(\iota_Y) \\
		&\cong H_{2l+1}(V_X^c) \oplus \left( H_{2l+1}(V_Y) / H_{2l+1}(V_Y') \right).
	\end{align*}
	
	\indent In the second claim, $\iota_Y:H_{2l+1}(V_Y') \to H_{2l+1}(V_Y)$ is the 0 map. We also have that $H_{2l+1}(V_X^c)$ is either 0 or isomorphic to $H_{2l+1}(V_Y')$ by Lemma \ref{lemma:inclusion_is_iso}, so the image of $\iota$ is $H_{2l+1}(V_X^c)$. Since $\tau_X-\tau_Y$ is surjective, the first isomorphism theorem and exactness of (\ref{eq:MV_non_critical}) give
	\begin{align*}
		H_{2l+1}(V_X)
		&\cong (H_{2l+1}(V_Y) \oplus H_{2l+1}(V_X^c)) / \ker(\tau_X - \tau_Y)\\
		&= (H_{2l+1}(V_Y) \oplus H_{2l+1}(V_X^c)) / \operatorname{Im}(\iota)\\
		&\cong H_{2l+1}(V_Y).
	\end{align*}
	Lastly, Equation (\ref{eq:MV_non_critical}) reduces to the short exact sequence
	\begin{equation*}
			0 \to H_{2l+2}(V_X^c) \oplus H_{2l+2}(V_Y) \to H_{2l+2}(V_X)
			\xrightarrow{\partial_*} \operatorname{Im}(\partial_*) \to 0.
	\end{equation*}
	By Lemma \ref{lemma:inclusion_is_iso}, $\operatorname{Im}(\partial_*)$ equals either 0 or $H_{2l+1}(V_Y')$. In the latter case, $\operatorname{Im}(\partial_*)$ is free, so the sequence above splits. In either case, $H_{2l+2}(V_X) \cong H_{2l+2}(V_X^c) \oplus H_{2l+2}(V_Y) \oplus \operatorname{Im}(\partial_*)$. This finishes the proof.
\end{proof}

\begin{corollary}
	\label{cor:MV_non_critical}
	Suppose $\frac{l}{2l+1} < \wf(V_Y') < \frac{l+1}{2l+3}$.
	\begin{enumerate}
		\item If $\iota_Y: H_{2l+1}(V_Y') \to H_{2l+1}(V_Y)$ is injective and $\operatorname{coker}(\iota_Y)$ is torsion-free, then $\tau_X:H_{2l+1}(V_X^c) \to H_{2l+1}(V_X)$ is injective and $\operatorname{coker}(\tau_X)$ is torsion-free.
		\item If $\iota_Y: H_{2l+1}(V_Y') \to H_{2l+1}(V_Y)$ is the 0 map, then $\tau_X:H_{2l+1}(V_X^c) \to H_{2l+1}(V_X)$ is the 0 map.
	\end{enumerate}
\end{corollary}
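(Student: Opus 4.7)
The plan is to chase the degree-$(2l+1)$ portion of the Mayer--Vietoris sequence of Proposition \ref{prop:MV}:
\begin{equation*}
H_{2l+1}(V_Y') \xrightarrow{(\iota_X,\iota_Y)} H_{2l+1}(V_X^c) \oplus H_{2l+1}(V_Y) \xrightarrow{\tau_X - \tau_Y} H_{2l+1}(V_X) \to 0,
\end{equation*}
exactly as in the proof of Theorem \ref{thm:MV_non_critical}, but now tracking the map $\tau_X$ rather than the whole group $H_{2l+1}(V_X)$. The inputs are the same two facts used there: Lemma \ref{lemma:ses} (an exact-sequence transfer of injectivity/surjectivity), and Lemma \ref{lemma:inclusion_is_iso}, which forces $\iota_X$ to be either an isomorphism (when $\frac{l}{2l+1} < \wf(V_X^c) < \frac{l+1}{2l+3}$) or the zero map, with $H_{2l+1}(V_X^c) = 0$ in the latter case by Theorem \ref{thm:aa17}.

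For case (1), the injectivity hypothesis on $\iota_Y$ makes $(\iota_X,\iota_Y)$ injective, so the displayed four-term sequence becomes a short exact sequence. Injectivity of $\tau_X$ then drops out of Lemma \ref{lemma:ses} applied with $(f,g,h,k) = (\iota_X,\iota_Y,\tau_X,\tau_Y)$. To identify $\operatorname{coker}(\tau_X)$, I would introduce the composition
\begin{equation*}
\pi : H_{2l+1}(V_Y) \xrightarrow{\tau_Y} H_{2l+1}(V_X) \twoheadrightarrow \operatorname{coker}(\tau_X),
\end{equation*}
show $\pi$ is surjective using surjectivity of $\tau_X - \tau_Y$ (any class in $H_{2l+1}(V_X)$ can be written as $\tau_X(b) - \tau_Y(c)$, and modding out by $\tau_X(H_{2l+1}(V_X^c))$ leaves $-\tau_Y(c)$), and then check that $\ker(\pi) = \iota_Y(H_{2l+1}(V_Y'))$ by unpacking what $(b,c) \in \ker(\tau_X - \tau_Y) = \operatorname{Im}(\iota_X,\iota_Y)$ forces on the $V_Y$-coordinate. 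This yields $\operatorname{coker}(\tau_X) \cong \operatorname{coker}(\iota_Y)$, which is torsion-free by hypothesis.

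For case (2), if $\iota_X$ is the zero map, Lemma \ref{lemma:inclusion_is_iso} together with Theorem \ref{thm:aa17} gives $H_{2l+1}(V_X^c) = 0$, so $\tau_X = 0$ trivially. Otherwise $\iota_X$ is an isomorphism, and for any $b \in H_{2l+1}(V_X^c)$ I can pick $a \in H_{2l+1}(V_Y')$ with $\iota_X(a) = b$; since $\iota_Y = 0$, we have $(\iota_X,\iota_Y)(a) = (b,0)$, so exactness forces $(\tau_X - \tau_Y)(b,0) = \tau_X(b) = 0$.

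I do not anticipate a real obstacle: the argument is a routine diagram chase supported by the two lemmas already established. The mildly delicate step is the kernel computation for $\pi$ in case (1), which depends on the explicit form of $(\iota_X,\iota_Y)$, but it is a short verification once the surjection is in place.
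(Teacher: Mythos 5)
Your proposal is correct, and it works on the same degree-$(2l+1)$ Mayer--Vietoris fragment from Proposition \ref{prop:MV} with the same two supporting facts (Lemma \ref{lemma:ses} and Lemma \ref{lemma:inclusion_is_iso}), but your execution of case (1) is a genuinely different, and leaner, route than the paper's. The paper gets the cokernel statement by importing from the proof of Theorem \ref{thm:MV_non_critical} that $\tau_Y$ is an isomorphism, invoking Lemma \ref{lemma:inclusion_is_iso} to make $\iota_X$ an isomorphism, splitting the short exact sequence (\ref{eq:MV_non_critical_case_1_ses}), and identifying $\tau_X = \tau_Y \circ \iota_Y \circ \iota_X^{-1}$ as the inclusion of a direct summand, with a separate subcase when $H_{2l+1}(V_X^c)=0$. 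Your map $\pi$ replaces all of that with a direct chase, and the ``delicate'' kernel step does close: $\pi(c)=0$ means $\tau_Y(c)=\tau_X(b)$ for some $b$, so $(b,c)\in\ker(\tau_X-\tau_Y)=\operatorname{Im}(\iota_X,\iota_Y)$ and hence $c\in\operatorname{Im}(\iota_Y)$, while conversely $c=\iota_Y(a)$ gives $\tau_Y(c)=\tau_X(\iota_X(a))\in\operatorname{Im}(\tau_X)$; together with surjectivity of $\tau_X-\tau_Y$ this yields $\operatorname{coker}(\tau_X)\cong\operatorname{coker}(\iota_Y)$. What this buys is uniformity and fewer dependencies: the cokernel identification needs no case distinction on $H_{2l+1}(V_X^c)$, no Lemma \ref{lemma:inclusion_is_iso}, and in fact no hypotheses on $\iota_Y$ at all --- the hypotheses enter only to transfer injectivity (Lemma \ref{lemma:ses} with $g=\iota_Y$, $h=\tau_X$) and torsion-freeness. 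Your case (2) is essentially the paper's argument (Lemma \ref{lemma:inclusion_is_iso} plus Theorem \ref{thm:aa17} force $\iota_X$ surjective or $H_{2l+1}(V_X^c)=0$, and exactness then kills $\tau_X$), though your chase avoids even needing the injectivity of $\tau_Y$ that the paper cites.
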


\begin{proof}
	Suppose that $\iota_Y: H_{2l+1}(V_Y') \to H_{2l+1}(V_Y)$ is injective and $\operatorname{coker}(\iota_Y)$ torsion-free. If $H_{2l+1}(V_X^c) \cong 0$, then $\tau_X$ is immediately injective. Otherwise, consider the commutative diagram induced by inclusions:
	\begin{equation*}
		\begin{tikzcd}[row sep=tiny]
			& H_{2l+1}(V_Y) \arrow[hook]{dr}{\tau_Y}[swap]{\cong}\\
			H_{2l+1}(V_Y')
			\arrow[hook]{ur}{\iota_Y}
			\arrow[hook]{dr}[swap]{\iota_X}{\cong}
			&& H_{2l+1}(V_X). \\
			& H_{2l+1}(V_X^c) \arrow[hook]{ur}[swap]{\tau_X}
		\end{tikzcd}
	\end{equation*}
	The maps $\iota_X$ and $\tau_Y$ are isomorphisms by Lemma \ref{lemma:inclusion_is_iso} and the proof of Theorem \ref{thm:MV_non_critical}, respectively. Since Equation (\ref{eq:MV_non_critical_case_1_ses}) is short exact, $\iota_Y$ is the inclusion of the $H_{2l+1}(V_Y')$ summand into $H_{2l+1}(V_Y) \cong H_{2l+1}(V_Y') \oplus \operatorname{coker}(\iota_Y)$. Hence, $\tau_X = \tau_Y \circ \iota_Y \circ \iota_X\inv$ is the inclusion of the $H_{2l+1}(V_X^c)$ summand into $H_{2l+1}(V_X) \cong H_{2l+1}(V_X^c) \oplus \left( H_{2l+1}(V_Y) / H_{2l+1}(V_Y') \right)$. Thus, $\tau_X$ is injective and Theorem \ref{thm:MV_non_critical} implies that
	\begin{equation*}
		\operatorname{coker}(\tau_X) = H_{2l+1}(V_X) / \operatorname{Im}(\tau_X) \cong H_{2l+1}(V_Y) / H_{2l+1}(V_Y') \cong \operatorname{coker}(\iota_Y)
	\end{equation*}
	is free.\\
	\indent In the second case of Theorem \ref{thm:MV_non_critical}, we proved that $\tau_Y:H_{2l+1}(V_Y) \to H_{2l+1}(V_X)$ is an isomorphism and that $\operatorname{Im}(\iota) = H_{2l+1}(V_X^c)$. In particular, $\ker(\tau_Y) = 0$, so exactness in Equation (\ref{eq:MV_non_critical}) gives
	\begin{equation*}
		\ker(\tau_X) \cong \ker(\tau_X - \tau_Y) = \operatorname{Im}(\iota) \cong H_{2l+1}(V_X^c).
	\end{equation*}
	Hence, $\tau_X$ is the 0 map.
\end{proof}

Now assume $\wf(V_Y') = \frac{l}{2l+1}$ for some $l \in Z$. Unlike the non-critical regime, $H_{2l}(V_Y')$ usually has more than one generator.

\begin{theorem}
	\label{thm:MV_critical}
	Suppose $\wf(V_Y') = \frac{l}{2l+1}$ for some $l \in \Z$. Let $K$ and $R$ be the kernel and image, respectively, of the map $H_{2l}(V_Y') \to H_{2l}(V_X^c) \oplus H_{2l}(V_Y)$. If both $K$ and $R$ are free, then
	\begin{align*}
		H_{2l}(V_X) &\cong \left(H_{2l}(V_X^c) \oplus H_{2l}(V_Y) \right)/R; \\
		H_{2l+1}(V_X) &\cong H_{2l+1}(V_X^c) \oplus H_{2l+1}(V_Y) \oplus K; \\
		\widetilde{H}_k(V_X) &\cong \widetilde{H}_k(V_X^c) \oplus \widetilde{H}_k(V_Y) \text{ for } k \neq 2l, 2l+1.
	\end{align*}
\end{theorem}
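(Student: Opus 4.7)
The plan is to insert the Theorem \ref{thm:aa17} computation for $V_Y'$ into the Mayer-Vietoris sequence from Proposition \ref{prop:MV} and read off each $H_k(V_X)$ in turn. Since $V_Y'$ is cyclic by Lemma \ref{lemma:VY'_cyclic} and $\wf(V_Y') = l/(2l+1)$, Theorem \ref{thm:aa17} gives $V_Y' \simeq \bigvee^{m} \Sp^{2l}$ for some $m \geq 0$, so $\widetilde{H}_{2l}(V_Y')$ is free abelian of rank $m$ and $\widetilde{H}_k(V_Y')$ vanishes otherwise.

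For every $k \notin \{2l, 2l+1\}$, both neighbors of $\widetilde{H}_k(V_X^c) \oplus \widetilde{H}_k(V_Y) \to \widetilde{H}_k(V_X)$ in the reduced Mayer-Vietoris sequence vanish, so this map is an isomorphism and yields the asserted formula $\widetilde{H}_k(V_X) \cong \widetilde{H}_k(V_X^c) \oplus \widetilde{H}_k(V_Y)$.

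The remaining segment of the sequence is
\begin{equation*}
0 \to H_{2l+1}(V_X^c) \oplus H_{2l+1}(V_Y) \xrightarrow{\tau_X-\tau_Y} H_{2l+1}(V_X) \xrightarrow{\partial_*} H_{2l}(V_Y') \xrightarrow{\iota} H_{2l}(V_X^c) \oplus H_{2l}(V_Y) \xrightarrow{\tau_X-\tau_Y} H_{2l}(V_X) \to 0,
\end{equation*}
which I break into two short exact sequences at $H_{2l}(V_Y')$ by factoring through $R = \operatorname{Im}(\iota)$ and $K = \ker(\iota)$. The right piece
\begin{equation*}
0 \to R \to H_{2l}(V_X^c) \oplus H_{2l}(V_Y) \to H_{2l}(V_X) \to 0
\end{equation*}
gives $H_{2l}(V_X) \cong (H_{2l}(V_X^c) \oplus H_{2l}(V_Y))/R$ by the first isomorphism theorem, with no splitting required. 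The left piece
\begin{equation*}
0 \to H_{2l+1}(V_X^c) \oplus H_{2l+1}(V_Y) \to H_{2l+1}(V_X) \to K \to 0
\end{equation*}
splits because $K$ is free by hypothesis, producing $H_{2l+1}(V_X) \cong H_{2l+1}(V_X^c) \oplus H_{2l+1}(V_Y) \oplus K$.

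There is no genuine obstacle: the argument is essentially a reading of the Mayer-Vietoris sequence. Freeness of $K$ enters only to split the short exact sequence for $H_{2l+1}(V_X)$, and is in fact automatic because $K$ is a subgroup of the free abelian group $H_{2l}(V_Y')$. Freeness of $R$ does not appear to be strictly needed for the stated conclusions, but ensures that the companion sequence for $H_{2l}$ also splits if a more symmetric direct-sum decomposition $H_{2l}(V_X^c) \oplus H_{2l}(V_Y) \cong R \oplus H_{2l}(V_X)$ is desired, and it guards against hidden extensions in contexts where the reader may want to promote the quotient to a direct sum.
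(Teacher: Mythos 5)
Your proof is correct and follows essentially the same route as the paper: feed the Theorem \ref{thm:aa17} computation of $H_*(V_Y')$ into the Mayer-Vietoris sequence of Proposition \ref{prop:MV} and break the resulting five-term exact sequence at $H_{2l}(V_Y')$. Your only deviation is a mild streamlining — you split the sequence directly along $K=\ker(\iota)$ and $R=\operatorname{Im}(\iota)$ instead of first using freeness of $R$ to write $H_{2l}(V_Y')\cong K\oplus R$ as the paper does — and your side remarks that freeness of $R$ is not needed for the stated conclusions and that freeness of $K$ is automatic over $\Z$ (being a subgroup of the finitely generated free group $H_{2l}(V_Y')$) are accurate.
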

\begin{proof}
	By Theorem \ref{thm:aa17}, $\widetilde{H}_{k}(V_Y')$ is non-zero only when $k=2l$. Then Proposition \ref{prop:MV} gives $\widetilde{H}_{k}(V_X) \cong \widetilde{H}_{k}(V_X^c) \oplus \widetilde{H}_{k}(V_Y)$ for $k \neq 2l, 2l+1$ and
	\begin{align*}
		0 &\to H_{2l+1}(V_X^c) \oplus H_{2l+1}(V_Y) \to H_{2l+1}(V_X) \\
		&\xrightarrow{\partial_*} H_{2l}(V_Y') \xrightarrow{\iota} H_{2l}(V_X^c) \oplus H_{2l}(V_Y) \to H_{2l}(V_X) \to 0.
	\end{align*}
	Since $R$ is free, the short exact sequence $0 \to K \to H_{2l}(V_Y') \xrightarrow{\iota} R \to 0$ splits, so $H_{2l}(V_Y') \cong K \oplus R$. This allows us to separate the sequence above into
	\begin{align*}
		0 &\to H_{2l+1}(V_X^c) \oplus H_{2l+1}(V_Y) \to H_{2l+1}(V_X) \to K \to 0, \text{ and}\\
		0 &\to R \hookrightarrow H_{2l}(V_X^c) \oplus H_{2l}(V_Y) \to H_{2l}(V_X) \to 0.
	\end{align*}
	The formulas for $H_{2l}(V_X)$ and $H_{2l+1}(V_X)$ follow from the first isomorphism theorem and the fact that $K$ is free, respectively.
\end{proof}

If we use homology with field coefficients, Theorems \ref{thm:MV_non_critical} and \ref{thm:MV_critical} can be used to recursively compute $H_*(V_X)$. The kernel, cokernel, and image of a linear map between finite dimensional vector spaces are themselves vector spaces and, with field coefficients, homology groups are vector spaces. If $Y_1$ is the support of $\operatorname{St}(E_X)$, we can use the results of this section to compute $H_*(V_{X})$ in terms of $H_*(V_Y)$. If $Y_1$ has a monotone metric, we can use Corollary \ref{cor:VR_circular_monotone} to find $H_*(V_{Y_1})$. Otherwise, we find $Y_2 \subset Y_1$ as above and iterate the procedure. This procedure works well when $(X, d_X)$ is circular decomposable but not monotone, and $E_X$ is small.

\begin{example}
	\label{ex:recursive_space}
	Let $X = \{1, \dots, 7\}$ and consider the circular decomposable metric $d_X$ whose isolation indices and distance matrix are shown in Table \ref{tab:recursive_space}. Given $12 < r < 13$, the 1-skeleton of $V_X = \vr_r(X)$ is shown in Figure \ref{fig:recursive_space}. We now compute the complexes $V_X^c$, $V_Y$ and $V_Y'$ defined in Section \ref{sec:cyclic_component}. Note that $M_r(1) = 2$, $d_{14} < r < d_{15}$, and that $E_X = \big\{ \{1,4\} \big\}$. Since $1$ and $4$ have no common neighbors in $\vr_r(X)$, $\operatorname{St}(E_X) = E_X$ and $Y = \{1,4\}$. Hence, $V_Y$ consists of the edge between $1$ and $4$. Additionally, $G_X^c$ equals the 1-skeleton of $V_X$ minus the edge $\{1,4\}$, so $V_X^c = \Cl(G_X^c) = \vr_r(X) \setminus \big\{ \{1,4\} \big\}$. Then $V_Y'$ consists of the isolated points $1$ and $4$.\\
	\indent According to \cite[Definition 3.7]{aa17}, $\wf(V_Y') = 0$. Now we use Theorem \ref{thm:MV_critical} with $l = 0$ to compute $H_k(V_X)$. Note that $H_0(V_Y') = \Z^2$, $H_0(V_X^c) = \Z$ and $H_0(V_Y) = \Z$. Then the map $H_0(V_Y') \to H_0(V_X^c) \oplus H_0(V_Y)$ induced by the inclusions $V_Y' \subset V_X^c$ and $V_Y' \subset V_Y$ is given by
	\begin{align*}
		\Z^2 &\to \Z \oplus \Z\\
		(x,y) &\mapsto (x+y, x+y).
	\end{align*}
	The kernel $K$ of this map is the set generated by $(1,-1)$ and its cokernel is the quotient $\Z \oplus \Z / \langle (1,1) \rangle \cong \Z \oplus 0$. Additionally, we can show that $V_X^c$ is homotopy equivalent to $\Sp^1$ by contracting the 2-simplex $\{5,6,7\}$ onto the 1-skeleton of $V_X^c$. Hence, $H_1(V_X^c) = \Z$, $H_k(V_X^c) = 0$ for $k \geq 2$ and, since $V_Y$ is a single edge, $H_k(V_Y) = 0$ for $k \geq 1$. Then by Theorem \ref{thm:MV_critical},
	\begin{itemize}
		\item $H_{0}(V_X) \cong \left(H_{0}(V_X^c) \oplus H_{0}(V_Y) \right)/R \cong \Z \oplus \Z / (\Z \oplus 0) \cong \Z$,
		\item $H_{1}(V_X) \cong H_{1}(V_X^c) \oplus H_{1}(V_Y) \oplus K \cong \Z \oplus 0 \oplus \Z = \Z^2$,
		\item $H_k(V_X) \cong H_k(V_X^c) \oplus H_k(V_Y) \cong 0$ for $k > 1$.
	\end{itemize}
	
	\begin{figure}[h]
		\centering
		\includegraphics[scale=0.50]{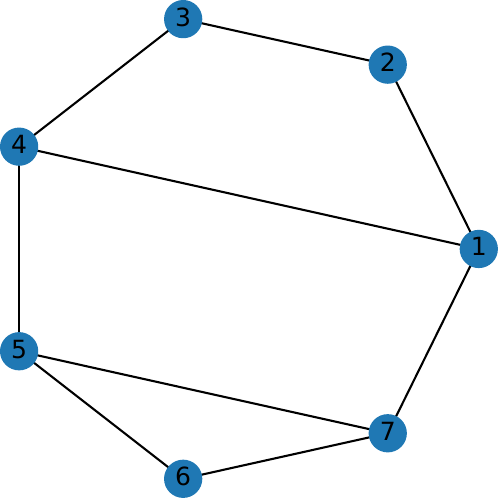}
		\caption{Given $12 < r <13$, the graph shown is the 1-skeleton of $\vr_r(X)$ where $X$ is the metric space whose distance matrix is shown in the right panel of Table \ref{tab:recursive_space}. $\vr_r(X)$ is not cyclic because it contains the edge $\{1,4\}$ despite not having all edges between the points $\{1,2,3,4\}$ or all edges between $\{4,5,6,7,1\}$. In particular, $E_X = \big\{ \{1,4\} \big\}$.}
		\label{fig:recursive_space}
	\end{figure}
	
	\begin{table}[h]
		\begin{minipage}{0.48\linewidth}
			\centering
			\begin{tabular}{l|rrrrrrr}
				& 1 & 2 & 3 & 4 & 5 & 6 & 7 \\
				\hline
				1 & 0 & 1 & 1 & 1 & 2 & 2 & 1 \\
				2 & 1 & 0 & 2 & 3 & 1 & 1 & 1 \\
				3 & 1 & 2 & 0 & 2 & 1 & 1 & 1 \\
				4 & 1 & 3 & 2 & 0 & 1 & 1 & 1 \\
				5 & 2 & 1 & 1 & 1 & 0 & 1 & 1 \\
				6 & 2 & 1 & 1 & 1 & 1 & 0 & 2 \\
				7 & 1 & 1 & 1 & 1 & 1 & 2 & 0 \\
			\end{tabular}
		\end{minipage}
		\hfill
		\begin{minipage}{0.48\linewidth}
			\centering
			\begin{tabular}{l|rrrrrrr}
				& 1 & 2 & 3 & 4 & 5 & 6 & 7 \\
				\hline
				1 & 0 & 9 & \textbf{13} & 12 & \textbf{13} & \textbf{13} & 8 \\ \cline{5-5} \cline{7-7}
				2 & 9 & 0 & 8 & \textbf{13} & \textbf{16} & \textbf{18} & \textbf{15} \\
				3 & \textbf{13} & 8 & 0 & 9 & \textbf{14} & \textbf{18} & \textbf{17} \\ \cline{3-3} \cline{8-8}
				4 & 12 & \textbf{13} & 9 & 0 & 7 & \textbf{13} & \textbf{14} \\ \cline{2-2}
				5 & \textbf{13} & \textbf{16} & \textbf{14} & 7 & 0 & 8 & 11 \\
				6 & \textbf{13} & \textbf{18} & \textbf{18} & \textbf{13} & 8 & 0 & 7 \\ \cline{4-4}
				7 & 8 & \textbf{15} & \textbf{17} & \textbf{14} & 11 & 7 & 0 \\ \cline{6-6}
			\end{tabular}
		\end{minipage}
		
		\caption{\textbf{Left:} Isolation indices of a circular decomposable metric on $\{1, \dots, 7\}$ arranged in a matrix $M$ so that $M_{ij} = \alpha_{ij}$. \textbf{Right:} Distance matrix of the circular decomposable metric on $\{1, \dots, 7\}$ produced by the isolation indices on the left. Entries larger than $12$ are in boldface and the entries $(\sigma(a), a)$ are underlined for every $a \in X$.}
		\label{tab:recursive_space}
	\end{table}
\end{example}

As mentioned at the start of Section \ref{sec:recursive_computation}, Theorems \ref{thm:MV_non_critical} and \ref{thm:MV_critical} yield non-trivial results only when $Y \subsetneq X$. Unfortunately, there are circular decomposable metrics where $Y = X$, as the next example shows.
\begin{example}
	\label{ex:non_recursive_metrics}
	\indent Let $X := \{1, \dots, 12\}$. Table \ref{tab:isolation_indices} has the matrix of isolation indices of a circular decomposable metric $d_X$ on $X$, and Table \ref{tab:non_recursive_metric} shows the distance matrix of $d_X$. Consider $\vr_r(X)$ for $r = 45$. Note that $M_r(a) = a+3$ for odd $a$, $M_r(a) = a+4$ for even $a$, and that $E_X$ consists of the edges $\{a, a+6\}$ for even $a$ (all additions are done mod 12). If $a$ is even, note that $d_{a+3,a} = d_{a+3,a+6} = 39$, so the set $\{a, a+3, a+6\}$ has diameter $44$, and thus, is a simplex of $\vr_r(X)$. In particular, $a+3$ belongs to the closed star of $\{a,a+6\}$. As $a$ ranges over all even numbers in $X$, $a+3$ ranges over all odds, so every element of $X$ belongs to $\operatorname{St}(E_X)$. Hence, $Y = X$.

	\begin{table}[h]
		\centering
		\begin{tabular}{l|rrrrrrrrrrrr}
			& 1 & 2 & 3 & 4 & 5 & 6 & 7 & 8 & 9 & 10 & 11 & 12\\
			\hline
			1 & 0 & 1 & 1 & 5 & 1 & 1 & 1 & 1 & 1 & 1 & 1 & 1 \\
			2 & 1 & 0 & 1 & 1 & 1 & 1 & 1 & 1 & 1 & 1 & 5 & 1 \\
			3 & 1 & 1 & 0 & 1 & 1 & 5 & 1 & 1 & 1 & 1 & 1 & 1 \\
			4 & 5 & 1 & 1 & 0 & 1 & 1 & 1 & 1 & 1 & 1 & 1 & 1 \\
			5 & 1 & 1 & 1 & 1 & 0 & 1 & 1 & 5 & 1 & 1 & 1 & 1 \\
			6 & 1 & 1 & 5 & 1 & 1 & 0 & 1 & 1 & 1 & 1 & 1 & 1 \\
			7 & 1 & 1 & 1 & 1 & 1 & 1 & 0 & 1 & 1 & 5 & 1 & 1 \\
			8 & 1 & 1 & 1 & 1 & 5 & 1 & 1 & 0 & 1 & 1 & 1 & 1 \\
			9 & 1 & 1 & 1 & 1 & 1 & 1 & 1 & 1 & 0 & 1 & 1 & 5 \\
			10 & 1 & 1 & 1 & 1 & 1 & 1 & 5 & 1 & 1 & 0 & 1 & 1 \\
			11 & 1 & 5 & 1 & 1 & 1 & 1 & 1 & 1 & 1 & 1 & 0 & 1 \\
			12 & 1 & 1 & 1 & 1 & 1 & 1 & 1 & 1 & 5 & 1 & 1 & 0 \\
		\end{tabular}
		\caption{Isolation indices of a circular decomposable metric on $\{1, \dots, 12\}$ arranged in a matrix $M$ so that $M_{ij} = \alpha_{ij}$.}
		\label{tab:isolation_indices}
	\end{table}
	
	\begin{table}[h]
		\centering
		\begin{tabular}{l|rrrrrrrrrrrr}
			& 1 & 2 & 3 & 4 & 5 & 6 & 7 & 8 & 9 & 10 & 11 & 12\\
			\hline
			1 & 0 & 15 & 28 & 39 & \textbf{48} & \textbf{47} & \textbf{52} & \textbf{47} & \textbf{48} & 39 & 28 & 15 \\ \cline{7-7}
			2 & 15 & 0 & 15 & 28 & 39 & 40 & \textbf{47} & 44 & \textbf{47} & 40 & 39 & 28 \\ \cline{12-12}
			3 & 28 & 15 & 0 & 15 & 28 & 39 & \textbf{48} & \textbf{47} & \textbf{52} & \textbf{47} & \textbf{48} & 39 \\ \cline{9-9}
			4 & 39 & 28 & 15 & 0 & 15 & 28 & 39 & 40 & \textbf{47} & 44 & \textbf{47} & 40 \\ \cline{2-2}
			5 & \textbf{48} & 39 & 28 & 15 & 0 & 15 & 28 & 39 & \textbf{48} & \textbf{47} & \textbf{52} & \textbf{47} \\ \cline{11-11}
			6 & \textbf{47} & 40 & 39 & 28 & 15 & 0 & 15 & 28 & 39 & 40 & \textbf{47} & 44 \\ \cline{4-4}
			7 & \textbf{52} & \textbf{47} & \textbf{48} & 39 & 28 & 15 & 0 & 15 & 28 & 39 & \textbf{48} & \textbf{47} \\ \cline{13-13}
			8 & \textbf{47} & 44 & \textbf{47} & 40 & 39 & 28 & 15 & 0 & 15 & 28 & 39 & 40 \\ \cline{6-6}
			9 & \textbf{48} & \textbf{47} & \textbf{52} & \textbf{47} & \textbf{48} & 39 & 28 & 15 & 0 & 15 & 28 & 39 \\ \cline{3-3}
			10 & 39 & 40 & \textbf{47} & 44 & \textbf{47} & 40 & 39 & 28 & 15 & 0 & 15 & 28 \\ \cline{8-8}
			11 & 28 & 39 & \textbf{48} & \textbf{47} & \textbf{52} & \textbf{47} & \textbf{48} & 39 & 28 & 15 & 0 & 15 \\ \cline{5-5}
			12 & 15 & 28 & 39 & 40 & \textbf{47} & 44 & \textbf{47} & 40 & 39 & 28 & 15 & 0 \\ \cline{10-10}
		\end{tabular}
		\caption{Distance matrix of the circular decomposable metric on $\{1, \dots, 12\}$ whose isolation indices are given in Table \ref{tab:isolation_indices}. Entries larger than $45$ are in boldface and the entries $(\sigma(a), a)$ are underlined for every $a \in X$.}
		\label{tab:non_recursive_metric}
	\end{table}
\end{example}

\begin{remark}
	\label{rmk:non_recursive_metrics}
	In light of Example \ref{ex:non_recursive_metrics}, we leave it as open questions to find conditions on the isolation indices $\alpha_{ij}$ so that $Y \subsetneq X$, and to compute the homology of $\vr_r(X)$ when these conditions fail. We also leave it to future research to find conditions on the isolation indices $\alpha_{ij}$ that ensure that the recursion described before Example \ref{ex:recursive_space} eventually reaches some $Y_m$ whose metric is monotone.
\end{remark} 	\section{Block decomposition of $\vr_r(X)$.}
\label{sec:blocks_of_X}
\indent The next section is motivated by the block decomposition of $T(X, d_{\cS, \alpha})$ for a totally decomposable metric $d_{\cS, \alpha}$ on $X$. As it turns out, totally decomposable spaces are not the only spaces whose tight span has a block decomposition. For example, the tight span of a metric wedge of two split-prime spaces $(X, d_X)$ and $(Y, d_Y)$ has at least two blocks because $T(X \vee Y, d_X \vee d_Y) = T(X, d_X) \vee T(Y, d_Y)$ by Lemma \ref{lemma:wedge_tight_span}. Hence, we will use the block decomposition of $T(X, d_X)$ to induce a decomposition of $\vr_r(X)$ into VR complexes of subsets of $X$.\\
\indent Our strategy is based on Theorem \ref{thm:VR-tight-span}. Since $\vr_{2r}(X)$ is homotopy equivalent to $B_{r}(X, T(X, d_X))$, $B_{r}(X, T(X, d_X))$ decomposes as the union of the intersections of $B_{r}(X, T(X, d_X))$ with each block of $T(X, d_X)$. The main work in this section lies in showing that each intersection is homotopy equivalent to the VR complex of a certain subset of $X$. These subsets do not form a partition of $X$ though, so we also have to figure out how these complexes paste together to form $\vr_r(X)$.

\begin{note}
	Unless stated otherwise, $d_\infty$ denotes the $L^\infty$ metric on $T(X, d_X)$ for the rest of this section.
\end{note}

\subsection{Subsets of $X$ induced by blocks of $E$}
\label{sec:}
\indent Let $h:(X, d_X) \hookrightarrow (E, d_E)$ be an isometric embedding of a finite metric space into an injective polytopal complex. Let $\cK$ be a finite cover of $E$ by connected subcomplexes such that any pair $K, K' \in \cK$ is disjoint or intersects on a cut-vertex (hence, each $K \in \cK$ is a connected union of blocks of $E$). We call any such $\cK$ a \define{block cover} of $E$. Let $\cut(\cK)$ be the set of all intersection points $c \in K \cap K'$ for $K, K' \in \cK$.

\begin{remark}
	For the rest of the section, we fix $h:(X, d_X) \hookrightarrow (E, d_E)$ as an isometric embedding of a finite metric space into an injective polytopal complex and a block cover $\cK$.
\end{remark}

\begin{defn}
	\label{def:block_metric}
	For any $K \in \cK$, define
	\begin{align*}
		X_K &:= \{x \in X : h(x) \in K\} \\
		CX_K &:= \{c \in \cut(\cK) \cap K : c \text{ separates } X_K \text { from some } x \in X \setminus X_K\} \\
		X_K' &:= X_K \cup CX_K.
	\end{align*}
	Given $c \in CX_K$, set $x_c := x$ if $c = h(x)$ for some $x \in X$. Otherwise, let $x_c$ be any point that is separated from $K$ by $c$ and minimizes $d_E(h(x_c), c)$. Define
	\begin{equation*}
		\overline{X}_K := X_K \cup \{x_c : c \in CX_K \}.
	\end{equation*}
\end{defn}

\indent Note that a point $x \in X$ may appear in several $\overline{X}_K$ if $x = x_c$ for some $c \in K$ such that $h(x) \notin K$. Even though the choice of $x_c$ is not unique, the isometry type of $\overline{X}_K$ is.
\begin{lemma}
	\label{lemma:VR_block_well_defined}
	For all $K \in \cK$, the isometry type of $\overline{X}_{K}$ is independent of the choice of $x_c$.
\end{lemma}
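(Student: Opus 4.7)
The plan is to fix two choices of minimizers $\{x_c\}_{c \in CX_K}$ and $\{\widetilde{x}_c\}_{c \in CX_K}$, producing two instantiations of $\overline{X}_K$, and exhibit an isometry between them. The natural candidate is the map $\phi$ that fixes $X_K$ pointwise and sends $x_c \mapsto \widetilde{x}_c$ for each $c \in CX_K$ with $c \notin h(X)$. In the degenerate case $c = h(x)$ for some $x \in X$, Definition \ref{def:block_metric} forces $x_c = \widetilde{x}_c = x \in X_K$, so this coordinate adds nothing new to $\overline{X}_K$ and $\phi$ acts as the identity there. The two tasks are to verify that $\phi$ is a well-defined bijection and that it preserves all pairwise distances inherited from $(E,d_E)$ via $h$.

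The structural fact driving everything is that $E$ is injective, hence geodesic by Proposition \ref{prop:TS_properties}, so whenever a vertex $c$ separates two points $p,q \in E$ every path from $p$ to $q$ traverses $c$, giving the decomposition
\begin{equation*}
	d_E(p, q) = d_E(p, c) + d_E(c, q).
\end{equation*}
To show $\phi$ is well-defined, i.e.\ that $x_c \neq x_{c'}$ whenever $c \neq c'$ in $CX_K$ with $c, c' \notin h(X)$, I would argue that $c$ separates $h(x_c)$ from $h(x_{c'})$: the point $h(x_{c'})$ is joined to $c'$ by a path lying in the component of $E \setminus \{c'\}$ that houses it, a component disjoint from $K \setminus \{c'\}$ and in particular not containing $c$; hence $h(x_{c'})$ lies in the same component of $E \setminus \{c\}$ as $c' \in K \setminus \{c\}$, whereas $h(x_c)$ lies in the complementary component. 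The decomposition formula then yields $d_E(h(x_c), h(x_{c'})) = d_E(h(x_c), c) + d_E(c, h(x_{c'})) > 0$, so $x_c \neq x_{c'}$. The same argument applied to $\widetilde{x}_c, \widetilde{x}_{c'}$ shows $\phi$ has an inverse built analogously, giving a bijection.

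For distance preservation I split into three cases. When $y,y' \in X_K$ there is nothing to show. When $y \in X_K$ and $c \in CX_K$ with $x_c \notin X_K$, the decomposition yields $d_E(h(y), h(x_c)) = d_E(h(y), c) + d_E(c, h(x_c))$; since both $h(x_c)$ and $h(\widetilde{x}_c)$ attain the minimum of $d_E(\cdot, c)$ over points separated from $K$ by $c$, the term $d_E(c, h(x_c))$ is an invariant of $c$ and equals $d_E(c, h(\widetilde{x}_c))$. When $c \neq c'$ in $CX_K$ with $x_c, x_{c'} \notin X_K$, iterating the decomposition through $c$ (using $c' \in K$ to expand $d_E(h(x_c), c') = d_E(h(x_c), c) + d_E(c, c')$) produces
\begin{equation*}
	d_E(h(x_c), h(x_{c'})) = d_E(h(x_c), c) + d_E(c, c') + d_E(c', h(x_{c'})),
\end{equation*}
and every summand depends only on $c, c'$ and the invariant minimum distances, so it equals the same expression computed with $\widetilde{x}_c$ and $\widetilde{x}_{c'}$. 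The main obstacle throughout is the block-cut bookkeeping needed to verify that the relevant cut-vertex genuinely separates the relevant pair of points inside $E$; once that is secured, every distance factors through quantities that are manifestly independent of the particular choice of minimizer, and $\phi$ is an isometry.
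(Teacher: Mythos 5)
Your proposal is correct and follows essentially the same route as the paper: both arguments use that $E$ is geodesic so that a separating cut-vertex $c$ (and then $c'$) splits every relevant distance as $d_E(h(x_c),c)+d_E(c,\cdot)$, with the minimality of $d_E(h(x_c),c)$ making each summand depend only on $c$ and $c'$. Your extra bookkeeping (the explicit bijection $\phi$ and the distinctness of the $x_c$) is a harmless elaboration of what the paper leaves implicit.
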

\begin{proof}
	We only have to verify that the distances from $x_c$ to $X_{K}$ and to any other point $x_{c'}$ depend only on $c$ and $c'$ and not on the choice of $x_c$ and $x_{c'}$. Let $c \in CX_K$ and choose any $\phi \in K$. If $c = h(x_c)$, then there is only one choice for $x_c$, so there is nothing to check. Otherwise, $c$ separates $h(x_c)$ and $\phi$, so any path between $h(x_c)$ and $\phi$ must contain $c$. In particular, this holds for the shortest path, and since $E$ is geodesic, we obtain
	\begin{equation*}
		d_E(h(x_c), \phi) = d_E(h(x_c), c) + d_E(c, \phi).
	\end{equation*}
	In particular, when $\phi = h(x')$ for $x' \in X_K$ we obtain
	\begin{equation*}
		d_X(x_c, x') = d_E(h(x_c), h(x')) = d_E(h(x), c) + d_E(c, h(x')).
	\end{equation*}
	Since $x_c$ was chosen as any point that minimizes $d_E(h(x_c), c)$, the quantity $d_{X}(x_c, x')$ only depends on $c$ and not on $x_c$.\\
	\indent If we have $c' \in CX_K$, $c' \neq c$, then the shortest path between $c$ and $c'$ in $E$ must be contained in $K$. Otherwise, a path that connects two cut-vertices in $K$ by going outside of $K$ would induce a cycle in the tree $\bctree(E)$ (recall Lemma \ref{lemma:block_cut_graph_is_tree}). Applying the argument above to $h(x_c)$ and $\phi = c'$, and to $h(x_{c'})$ and $\phi = c$ shows that the shortest path between $h(x_c)$ and $h(x_{c'})$ must contain $c$ and $c'$, so that
	\begin{equation*}
		d_{X}(x_c, x_{c'}) = d_E(h(x_c), h(x_{c'})) = d_E(h(x_c), c) + d_E(c, c') + d_E(c', h(x_{c'})).
	\end{equation*}
	Once again, this quantity does not depend on the choice of $x_c$ and $x_{c'}$.
\end{proof}

\indent As alluded to at the start of the section, the intersection of $B_r(X; T(X, d_X))$ with a block of $T(X, d_X)$ is determined by a specific subset of $X$. We prove that $\overline{X}_K$ is that set for a general block cover $\cK$.
\begin{lemma}
	\label{lemma:union_inside_block}
	For all $K \in \cK$, $B_{r}(X; E) \cap K = B_r(\overline{X}_K; E) \cap K$. As a consequence,
	\begin{equation*}
		B_r(X; E) \simeq \bigcup_{K \in \cK} \left( B_{r}(\overline{X}_{K}; E) \cap K \right).
	\end{equation*}
\end{lemma}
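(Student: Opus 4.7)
The plan is to establish the set equality $B_r(X;E) \cap K = B_r(\overline{X}_K; E) \cap K$ for each $K \in \cK$ by two inclusions, then deduce the displayed decomposition from the fact that $\cK$ covers $E$. The inclusion $B_r(\overline{X}_K; E) \cap K \subseteq B_r(X;E) \cap K$ is immediate because $\overline{X}_K \subseteq X$: by construction each $x_c$ is itself an element of $X$, so any witness in $\overline{X}_K$ is a fortiori a witness in $X$.

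For the reverse inclusion I would fix $\phi \in B_r(X;E) \cap K$ together with a witness $x \in X$ satisfying $d_E(h(x), \phi) < r$. If $h(x) \in K$ then $x \in X_K \subseteq \overline{X}_K$ and there is nothing to do. Otherwise $h(x) \notin K$, and I will use the block-cut structure of $E$, together with the fact that the elements of $\cK$ are unions of blocks glued at cut-vertices, to identify a cut-vertex $c$ such that every path from $h(x)$ to $\phi$ in $E$ passes through $c$. Concretely, the block-cut tree of $\cK$ (obtained from Lemma \ref{lemma:block_cut_graph_is_tree} by contracting each $K \in \cK$ to a vertex, again a tree) has a unique path from the element $K' \ni h(x)$ to $K$, and the cut-vertex adjacent to $K$ on this path is the desired $c$. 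Since $c$ separates $h(x)$ from $K \setminus \{c\}$ and hence from $X_K \setminus \{c\}$, it lies in $CX_K$.

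Now I would invoke that $E$ is geodesic (Proposition \ref{prop:TS_properties}) and that any continuous path from $h(x)$ to $\phi$ must contain the separating point $c$, yielding the additive identity
\begin{equation*}
d_E(h(x), \phi) = d_E(h(x), c) + d_E(c, \phi),
\end{equation*}
and identically $d_E(h(x_c), \phi) = d_E(h(x_c), c) + d_E(c, \phi)$. The defining property of $x_c$, namely $d_E(h(x_c), c) \leq d_E(h(x'), c)$ for every $x' \in X$ separated from $K$ by $c$ (with $d_E(h(x_c), c)=0$ in the degenerate case $c = h(x_c)$), then gives $d_E(h(x_c), c) \leq d_E(h(x), c)$, and adding $d_E(c, \phi)$ on both sides produces $d_E(h(x_c), \phi) \leq d_E(h(x), \phi) < r$. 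Hence $\phi \in B_r(\overline{X}_K; E) \cap K$, completing the equality. The displayed consequence then follows from
\begin{equation*}
B_r(X;E) \;=\; \bigcup_{K \in \cK}\bigl(B_r(X;E) \cap K\bigr) \;=\; \bigcup_{K \in \cK}\bigl(B_r(\overline{X}_K; E) \cap K\bigr),
\end{equation*}
which is in fact equality and therefore implies the claimed homotopy equivalence.

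The step I expect to be the main obstacle is justifying the additive geodesic identity through $c$. While the block-cut tree picture makes it intuitively clear that $c$ lies on every path, I would need to argue carefully that in the polytopal setting removing the single cut-vertex $c$ topologically disconnects $h(x)$ from $\phi$ in $E$, so that any geodesic (which is a continuous path) passes through $c$ and the distance splits additively. Minor edge cases, such as $\phi = c$ or a point $x' \in X_K$ with $h(x') = c$, are absorbed by the same additive formula with one of the summands vanishing, and the choice of $x_c$ is unambiguous in isometry type by Lemma \ref{lemma:VR_block_well_defined}.
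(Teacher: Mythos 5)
Your proposal is correct and follows essentially the same route as the paper: both arguments rest on the fact that a cut-vertex $c \in CX_K$ forces the additive identity $d_E(h(x),p) = d_E(h(x),c) + d_E(c,p)$ for $p \in K$ (since $E$ is geodesic), combined with the minimality of $d_E(h(x_c),c)$, so that any witness $x$ outside $K$ can be replaced by $x_c$. The paper merely packages this as the computation $B_r(X_c;E)\cap K = B_{r-\ell_c}(c;K) = B_r(h(x_c);E)\cap K$ rather than as your pointwise two-inclusion argument, and it takes the separation/geodesic-through-$c$ fact for granted at the same level of detail that you flag.
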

\begin{proof}
	Let $c \in CX_K$ and let $X_c$ be the set of $x \in X$ such that that $c$ separates $h(x)$ from $K$. Define $\ell_c := d_E(h(x_c), c)$. For every $x \in X_c$, any path in $E$ from $h(x)$ to $p \in K$ has to contain $c$, so $d_E(h(x), p) = d_E(h(x), c) + d_E(c, p)$. Then
	\begin{equation*}
		B_r(h(x); E) \cap K =
		\begin{cases}
			B_{r - d_E(h(x), c)}(c; K) & \text{if } r > \ell_c \\
			\emptyset & \text{if } \leq \ell_c.
		\end{cases}
	\end{equation*}
	Since $x_c$ minimizes $d_E(h(x), c)$ among $x \in X_c$, the set $B_r(h(x); E) \cap K$ is largest when $x = x_c$. Hence, $B_r(X_c; E) \cap K$ equals $B_{r - \ell_c}(c; K)$ if $r > \ell_c$ and is empty if $r \leq \ell_c$. Putting everything together,
	\begin{align*}
		B_{r}(X; E) \cap K
		&= \left[ B_{r}(X_K; E) \cap K \right] \cup \bigcup_{c \in CX_K} \left[ B_{r}(X_c; E) \cap K \right] \\
		&= \left[ B_{r}(X_K; E) \cap K \right] \cup \bigcup_{c \in CX_K} \left[ B_{r}(h(x_c); E) \cap K \right] \\
		&= B_r(\overline{X}_K; E) \cap K,
	\end{align*}
	and thus,
	\begin{equation*}
		B_r(X; E)
		= \bigcup_{K \in \cK} \left( B_{r}(X; E) \cap K \right)
		= \bigcup_{K \in \cK} \left( B_{r}(\overline{X}_{K}; E) \cap K \right).
	\end{equation*}
\end{proof}

\subsection{Properties of blocks of $T(X, d_X)$}
In this section, we study the elements of $\cK$ as metric spaces in their own right. In order to use Theorem \ref{thm:VR-tight-span}, we show that every $B \in \blocks(E)$, and thus every $K \in \cK$, is an injective space. In the case that $\cK = \blocks(E)$, we show that each block $B$ is the tight span of $X_B'$, and construct the tight span of $\overline{X}_B$ by attaching a line segment to $B$ for every $c \in CX_B$.
\begin{lemma}
	\label{lemma:block_is_injective}
	Let $(E, d_E)$ be an injective polytopal complex. Every $B \in \blocks(E)$ with the restriction of $d_E$ is a hyperconvex space (equiv. injective).
\end{lemma}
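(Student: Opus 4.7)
The plan is to verify hyperconvexity directly and invoke the Aronszajn--Panitchpakdi theorem to obtain injectivity. So fix a family $\{x_i\}_{i \in I} \subset B$ with radii $\{r_i\}_{i \in I}$ satisfying $d_E(x_i, x_j) \leq r_i + r_j$ for all $i, j$. I need to produce a point of $B$ in $\bigcap_{i} \overline{B}_E(x_i, r_i)$.

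First, since $E$ is itself injective (given), it is hyperconvex, so the intersection $\bigcap_i \overline{B}_E(x_i, r_i)$ is nonempty in $E$. Pick any $p$ in this intersection. If $p \in B$, we are done. Otherwise, the key step is to ``retract'' $p$ into $B$ via a cut-vertex. Since $p \notin B$, $p$ lies in some other block $B' \in \blocks(E)$, and $B$ and $B'$ are distinct vertices of the block-cut tree $\bctree(E)$ (Lemma \ref{lemma:block_cut_graph_is_tree}). The unique path in $\bctree(E)$ from $B'$ to $B$ ends with a cut-vertex $c \in B$ that separates $p$ from every point of $B \setminus \{c\}$.

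Now I use that $E$ is geodesic (Proposition \ref{prop:TS_properties}). For each $x_i \in B$, any path in $E$ from $p$ to $x_i$ must pass through $c$ (either $x_i = c$, in which case $d_E(c, x_i) = 0$, or $x_i \in B \setminus \{c\}$, and then $c$ separates $p$ from $x_i$). Applying this to a geodesic from $p$ to $x_i$ yields
\begin{equation*}
    d_E(p, x_i) = d_E(p, c) + d_E(c, x_i),
\end{equation*}
so $d_E(c, x_i) \leq d_E(p, x_i) \leq r_i$ for every $i$. Hence $c \in B \cap \bigcap_i \overline{B}_E(x_i, r_i)$, proving that $B$ is hyperconvex. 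The Aronszajn--Panitchpakdi theorem then gives that $(B, d_E|_B)$ is injective.

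The main delicate point is the separation argument: I must ensure that a \emph{single} cut-vertex $c \in B$ separates $p$ from all of $B \setminus \{c\}$ simultaneously (rather than a different cut-vertex for each $x_i$). This is precisely where the tree structure of $\bctree(E)$ is essential: because $\bctree(E)$ is a tree, the unique path from $B'$ to $B$ enters $B$ at exactly one cut-vertex, and this cut-vertex lies on every path in $E$ from $p$ to any point of $B$. Once that separation is in place, geodesicity of $E$ upgrades the separation to the additive distance identity $d_E(p, x_i) = d_E(p, c) + d_E(c, x_i)$, and the rest is immediate.
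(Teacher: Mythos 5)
Your proof is correct and follows essentially the same route as the paper's: use hyperconvexity of $E$ to find a point satisfying all the ball constraints, and if it falls outside $B$, replace it with the unique cut-vertex of $B$ separating it from $B$, using geodesicity of $E$ to get $d_E(c,x_i) \leq d_E(p,x_i) \leq r_i$. Your explicit appeal to the tree structure of $\bctree(E)$ is just a slightly different justification of the same separation step that the paper derives from the maximality of blocks.
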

\begin{proof}
	Let $B$ be a block of $E$. Choose any number of points $p_i \in B$ and lengths $r_i > 0$ such that $d_E(p_i, p_j) \leq r_i + r_j$. To prove that $B$ is hyperconvex, we need to find $q \in B$ such that $d_E(p_i, q) \leq r_i$ for all $i$. Since $E$ is hyperconvex, there exists a point $q \in E$ that satisfies $d_E(p_i, q) \leq r_i$. If $q \in B$, we are done. If not, we claim that there exists a unique cut-vertex $c \in B$ such that $d_E(p_i, c) \leq r_i$ for all $i$. We know that $E$ has at least one cut-vertex $c \in B$ that separates $q$ and $B$ because a block is a maximal subcomplex of $E$ with no intrinsic cut-vertices. As a consequence, any path between $q$ and a point of $B$ must contain $c$, and, in fact, $c$ is the unique cut-vertex that separates $q$ and $B$ (removing a different cut-vertex from $B$ would not disconnect the shortest path between $q$ and $c \in B$). Thus, since $E$ is geodesic, there exist geodesics $\gamma_i$ from $p_i$ to $q$ of length no greater than $r_i$. Since these geodesics contain $c$, $d_E(p_i, c) < d_E(p_i, q) \leq r_i$ for all $i$. Thus, $B$ is hyperconvex.
\end{proof}

The above Lemma generalizes to the union of any number of blocks and, in particular, to the elements of a block cover.
\begin{corollary}
	\label{cor:block_is_injective}
	Let $(E, d_E)$ be an injective polytopal complex. Whenever a union of blocks of $E$ is connected, it is also injective. In particular, every element of a block cover of $E$ is injective.
\end{corollary}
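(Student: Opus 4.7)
The plan is to induct on the number of blocks composing the connected union, using Lemma \ref{lemma:block_is_injective} for the base case and Lemma \ref{lemma:wedge_tight_span} for the inductive step. The ``In particular'' clause then follows immediately, since by the parenthetical in the definition of a block cover each $K \in \cK$ is a connected union of blocks of $E$.

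Let $K = B_1 \cup \dots \cup B_n$ be a connected union of blocks of $E$. By Lemma \ref{lemma:block_cut_graph_is_tree}, the vertices of $\bctree(E)$ corresponding to $B_1, \dots, B_n$, together with the cut-vertices of $E$ that they share, span a subtree $T_K$ of $\bctree(E)$. The case $n=1$ is exactly Lemma \ref{lemma:block_is_injective}, so suppose $n \geq 2$. First I would pick a leaf block $B$ of $T_K$; then $B$ meets the connected union $K' := \bigcup_{i \neq n} B_i$ in a single cut-vertex $c$ of $E$, and $K = K' \cup B$ with $K' \cap B = \{c\}$.

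The central step is to show that $(K, d_E|_K)$ is isometric to the metric gluing of $(K', d_E|_{K'})$ and $(B, d_E|_B)$ at $c$. Given $x \in K' \setminus \{c\}$ and $y \in B \setminus \{c\}$, the block-cut tree structure forces $x$ and $y$ to lie in different connected components of $E \setminus \{c\}$, so every path in $E$ from $x$ to $y$ must pass through $c$. Since $E$ is geodesic by Proposition \ref{prop:TS_properties}, infimizing path-length yields
\begin{equation*}
    d_E(x,y) = d_E(x,c) + d_E(c,y),
\end{equation*}
which is exactly the formula defining the metric gluing at $c$. By the inductive hypothesis $K'$ is injective, by Lemma \ref{lemma:block_is_injective} $B$ is injective, and therefore by Lemma \ref{lemma:wedge_tight_span} their gluing---hence $K$ itself---is injective.

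The main obstacle is the isometry step just described: one must be sure that the cut-vertex $c$ really separates $K' \setminus \{c\}$ from $B \setminus \{c\}$ inside $E$, not merely inside $K$. This is where the fact that $\bctree(E)$ is a tree (Lemma \ref{lemma:block_cut_graph_is_tree}) is essential: were there an alternative route from $B$ to $K'$ avoiding $c$, it would create a cycle in the block-cut tree. Once this separation is in hand, the geodesic property of $E$ closes the gap.
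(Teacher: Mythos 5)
Your proposal is correct and matches the paper's proof essentially step for step: induction on the number of blocks, Lemma \ref{lemma:block_is_injective} for the base case, choosing a leaf block of the block-cut tree so that it meets the rest in a single cut-vertex $c$, observing that geodesics in $E$ between the two pieces must pass through $c$ so the union is a metric wedge, and concluding with Lemma \ref{lemma:wedge_tight_span}. Your extra care in verifying that $c$ separates the pieces inside $E$ (via the tree structure of $\bctree(E)$) is a fine elaboration of a point the paper states more tersely, not a different argument.
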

\begin{proof}
	We proceed by induction on the number of blocks. A single block is injective by Lemma \ref{lemma:block_is_injective}. If we have a connected union of $n$ blocks $B_1, \dots, B_n$, label them so that $B_n$ is a leaf of $\bctree(B_1 \cup \cdots \cup B_n)$. Then $B_1 \cup \cdots \cup B_{n-1}$ is connected, hence injective by induction hypothesis. By assumption, $B_n$ has degree 1 on $\bctree(B_1 \cup \cdots \cup B_n)$, so there is a unique cut vertex $c \in B_n \cap (B_1 \cup \cdots \cup B_{n-1})$. Since $E$ is injective, any geodesic in $E$ between points in $B_n$ and $B_1 \cup \cdots \cup B_{n-1}$ must pass through $c$. Hence, $(B_1 \cup \cdots \cup B_{n-1}) \cup B_n$ is isometric to the metric wedge $(B_1 \cup \cdots \cup B_{n-1}) \vee_{c} B_n$, and Lemma \ref{lemma:wedge_tight_span} implies that $(B_1 \cup \cdots \cup B_{n-1}) \cup B_n$ is injective.
\end{proof}

The previous Lemma is all we need for the proofs in Section \ref{sec:block-VR-proof}. In the rest of the section, we strengthen the result in the case of $E = T(X, d_X)$. The first of these results characterizes any $K \in \cK$ as the tight span of a metric space, but not of a subset of $X$ or even of $X_K$. We have to include the cut-vertices in $CX_K$.
\begin{lemma}
	\label{lemma:block_is_tight_span}
	Let $(X, d_X)$ be a finite metric space. Suppose that $E = T(X, d_X)$ and fix $K \in \cK$. Then $K$ is isometric to $T(X_K', d_E|_{X_K' \times X_K'})$.
\end{lemma}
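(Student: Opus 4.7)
The plan is to construct an explicit isometric embedding $\Psi \colon K \hookrightarrow T(X_K', d_E|_{X_K' \times X_K'})$ by evaluation against $X_K'$, and then invoke the minimality of the tight span (the discussion following Definition \ref{def:tight_span}) to conclude that $\Psi$ is surjective. By Corollary \ref{cor:block_is_injective}, $K$ is an injective metric space, and both $X_K$ and $CX_K$ lie in $K$ by construction, so $X_K' \subset K$ isometrically with the restricted metric. Define $\Psi(\phi)(y) := d_E(\phi, y)$ for $\phi \in K$ and $y \in X_K'$.

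To check $\Psi(\phi) \in T(X_K', d_E|_{X_K' \times X_K'})$, I appeal to Lemma \ref{lemma:eval_tight_span}, which reduces the problem to the identity
\[
	\Psi(\phi)(y) \;=\; \sup_{y' \in X_K'} \bigl[\, d_E(y, y') - \Psi(\phi)(y') \,\bigr]
\]
for all $y \in X_K'$. The $\geq$ direction is the triangle inequality in $E$. For $\leq$, iterating the tight span property of $\phi, y \in E = T(X, d_X)$ yields the general identity
\[
	d_E(\phi, y) \;=\; \sup_{x \in X} \bigl[\, d_E(y, h(x)) - d_E(\phi, h(x)) \,\bigr],
\]
and the key step is to reduce this supremum over $X$ to a supremum over $X_K'$ via a cut-vertex splitting: terms with $x \in X_K$ correspond directly to $y' = h(x) \in X_K'$, whereas for $x \in X \setminus X_K$ there is a cut-vertex $c_x \in CX_K$ that separates $h(x)$ from $K$, forcing geodesics in $E$ from both $\phi$ and $y$ to $h(x)$ to pass through $c_x$, so that the difference $d_E(y, h(x)) - d_E(\phi, h(x))$ collapses to $d_E(y, c_x) - d_E(\phi, c_x)$ after cancellation of the common addend $d_E(c_x, h(x))$; this corresponds to $y' = c_x \in X_K'$. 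The same splitting applied to $\phi, \phi' \in K$ shows $\Psi$ is an isometric embedding: the triangle inequality gives $d_\infty(\Psi(\phi), \Psi(\phi')) \leq d_E(\phi, \phi')$, and the analogous reduction gives the reverse bound. A direct check shows $\Psi(y_0)(y) = d_E(y_0, y)$ for $y_0 \in X_K'$, so $\Psi$ restricts to the canonical tight-span inclusion of $X_K'$.

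Since $\Psi(K)$ is isometric to the compact injective space $K$, it is a closed injective subspace of $T(X_K', d_E|_{X_K' \times X_K'})$ that contains the canonical image of $X_K'$; the minimality of the tight span (no proper closed injective subset of $T(X_K')$ contains $X_K'$) then forces $\Psi(K) = T(X_K', d_E|_{X_K' \times X_K'})$. The main obstacle is the cut-vertex splitting in the second paragraph: it relies on the tree structure of the block-cut graph of $E$ (Lemma \ref{lemma:block_cut_graph_is_tree}) to guarantee that for every $x \in X \setminus X_K$ there is a unique cut-vertex $c_x \in CX_K$ separating $h(x)$ from $K$, and it tacitly assumes $X_K$ is nonempty so that $CX_K$ indeed captures every such separating cut-vertex.
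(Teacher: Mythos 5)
Your proof is correct, but it takes a genuinely different route from the paper's. The paper never constructs a map from $K$ into $T(X_K', d_E|_{X_K' \times X_K'})$: it observes that $K$ is injective and contains $X_K'$, and verifies minimality directly inside $E$ --- given a closed injective $Z \subseteq K$ containing $X_K'$, it glues the complementary unions of blocks $T_c$ back onto $Z$ at the cut-vertices (Lemma \ref{lemma:wedge_tight_span}), obtaining an injective space containing all of $X$, so minimality of $E = T(X,d_X)$ forces $Z = K$; this implicitly invokes the characterization of the tight span as the minimal injective extension. You instead build the explicit evaluation embedding $\Psi(\phi) = d_E(\phi,\cdot)|_{X_K'}$, verify membership and isometry via the identity $d_\infty(f,g) = \sup_{x \in X}\bigl[f(x)-g(x)\bigr]$ on $T(X,d_X)$ together with the geodesic splitting at a separating cut-vertex (the same splitting the paper uses in Lemma \ref{lemma:VR_block_well_defined}), and then apply minimality of $T(X_K')$ itself for surjectivity. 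Your route is more hands-on and produces the isometry explicitly, using only the minimality of $T(X_K')$; its price is the sup-identity, which you should justify in a line or two (it follows by combining conditions (1) and (2) of Definition \ref{def:tight_span}; note also that your displayed identity is condition (2) over the ground set $X_K'$, not Lemma \ref{lemma:eval_tight_span}). The paper's route avoids all $L^\infty$ computations but leans on the wedge lemma and on the uniqueness of minimal injective extensions. The caveats you flag --- existence of a cut-vertex in $CX_K$ separating $h(x)$ from $K$ for each $x \in X \setminus X_K$, and the reading of Definition \ref{def:block_metric} when $X_K = \emptyset$ --- are needed equally by the paper's own proof (it requires $X \hookrightarrow \overline{Z}$), so they are not gaps specific to your argument.
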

\begin{proof}
	By Lemma \ref{lemma:block_is_injective}, $K$ is an injective space and $X_K' \hookrightarrow K$ by definition, so we only need to verify the minimality of $K$, i.e. if $Z \subset K$ is a closed, injective subspace of $K$ that contains $X_K'$, then $Z = K$. If $|\cK|=1$, then $K = T(X, d_X)$ and $X \subset X_K'$. In particular, $Z$ is now a closed, injective subspace of $T(X, d_X)$ that contains $X$, so by minimality of $T(X, d_X)$,
	\begin{equation*}
		T(X, d_X)  = Z \subset K \subset T(X, d_X).
	\end{equation*}
	Thus, $Z = K$.\\
	\indent If $|\cK| > 1$, let $T_c$ be the union of all blocks of $T(X, d_X)$ that are separated from $K$ by $c \in CX_K$. By Lemma \ref{cor:block_is_injective}, $T_c$ is injective. Let $\overline{Z}$ be the metric wedge obtained by pasting $T_c$ to $Z$ at $c$ for all $c \in K \cap \cut(\cK)$. Since $Z$ is injective by assumption, $\overline{Z}$ is injective by Lemma \ref{lemma:wedge_tight_span}. Moreover, $X \hookrightarrow \overline{Z}$, so by minimality of the tight span, $T(X, d_X) \hookrightarrow \overline{Z} \hookrightarrow K \cup \bigcup_{i=1}^n T_i = T(X, d_X)$. This can only happen if $Z=K$.
\end{proof}

The main technical step comes next.
\begin{lemma}
	\label{lemma:tight_span_edges}
	Fix $\ell \geq 0$. Let $(X, d_X)$ and $(Y, d_Y)$ be metric spaces and fix $x_0 \in X$. Let $f:X \to Y$ be a bijection such that
	\begin{equation*}
		d_Y(f(x), f(x')) =
		\begin{cases}
			d_X(x,x'),				 &  x, x' \neq x_0,\\
			d_X(x,x_0) + \ell,	& x \neq x_0, x' = x_0,\\
			0,								& x=x'=x_0.
		\end{cases}
	\end{equation*}
	Then $T(Y, d_Y)$ is isometric to the metric gluing of $T(X, d_X)$ and a line segment of length $\ell$ at $x_0$ (or to $T(X, d_X)$ if $\ell = 0$).
\end{lemma}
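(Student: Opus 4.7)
My plan is to construct a concrete candidate for $T(Y,d_Y)$, namely the metric wedge $E$ obtained by gluing $T(X,d_X)$ and a closed segment $I = [0,\ell]$ of length $\ell$ along the identification $h_{x_0} \sim 0$, and show that $E$ has all the defining properties of the tight span of $Y$. Write $y_0$ for the image of $\ell$ in $E$, so $d_E(h_{x_0}, y_0) = \ell$. If $\ell = 0$, the map $f$ is an isometry by hypothesis, so $T(Y, d_Y) \cong T(X, d_X)$ and the claim is trivial; from now on assume $\ell > 0$.

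Step 1 (injectivity of $E$). A line segment is hyperconvex, hence injective; $T(X,d_X)$ is injective by definition; so Lemma \ref{lemma:wedge_tight_span} gives that $E$ is injective.

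Step 2 (isometric embedding of $Y$). Define $\iota: Y \to E$ by $\iota(f(x)) = h_x$ for $x \neq x_0$ and $\iota(f(x_0)) = y_0$. For $x, x' \neq x_0$ one has $d_E(\iota(f(x)), \iota(f(x'))) = d_\infty(h_x, h_{x'}) = d_X(x,x') = d_Y(f(x), f(x'))$. For $x \neq x_0$, since $h_{x_0}$ is a cut-point of $E$ separating $h_x$ from $y_0$ and $E$ is geodesic (Proposition \ref{prop:TS_properties}), every path from $h_x$ to $y_0$ passes through $h_{x_0}$, so $d_E(\iota(f(x)), \iota(f(x_0))) = d_\infty(h_x, h_{x_0}) + \ell = d_X(x, x_0) + \ell = d_Y(f(x), f(x_0))$. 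Hence $\iota$ is an isometric embedding.

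Step 3 (minimality). Let $Z \subset E$ be a closed injective subspace containing $\iota(Y)$; I must show $Z = E$. First I argue that the whole segment lies in $Z$. Pick any $x \neq x_0$ (the case $X = \{x_0\}$ is trivial since then $Y$ is a single point and $E$ is just the segment, which is its own tight span). Since $Z$ is injective, it is geodesic, so there is a path $\gamma$ in $Z$ from $h_x$ to $y_0$ of length $d_Z(h_x, y_0) = d_X(x, x_0) + \ell$, and this path is also a geodesic in $E$. Any geodesic of $E$ from $h_x$ to $y_0$ must leave $T(X,d_X)$ through the unique cut-point $h_{x_0}$ and then traverse the segment; in particular $\gamma$ contains the entire segment $I$, so $I \subset Z$. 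Next I verify that $Z \cap T(X,d_X)$ is an injective subspace of $T(X,d_X)$ containing $h(X)$: it is closed and contains $h(X)$ by construction (using $h_{x_0} \in I \subset Z$). Given points $p_1, \dots, p_n \in Z \cap T(X,d_X)$ and radii $r_i$ with $d(p_i, p_j) \leq r_i + r_j$, injectivity of $Z$ provides $q \in Z$ with $d(p_i, q) \leq r_i$ for all $i$; if $q$ lies on the segment at distance $t > 0$ from $h_{x_0}$, then every $p_i$ satisfies $d(p_i, q) = d(p_i, h_{x_0}) + t$, whence $d(p_i, h_{x_0}) \leq r_i - t \leq r_i$, and $h_{x_0}$ is an admissible replacement for $q$ inside $Z \cap T(X,d_X)$. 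By minimality of the tight span, $Z \cap T(X,d_X) = T(X,d_X)$, and combined with $I \subset Z$ this forces $Z = E$. Thus $E$ is the smallest closed injective space containing $\iota(Y)$, i.e.\ $E$ is isometric to $T(Y,d_Y)$.

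I expect the main technical care to be needed in Step 3: the uniqueness-of-geodesic argument on the segment (needed because the two halves of a wedge may admit many geodesics individually, yet any geodesic crossing the wedge point must traverse the cut segment completely) and the verification that intersecting an injective space with one side of a wedge preserves injectivity. Everything else is a direct application of Lemmas \ref{lemma:wedge_tight_span} and Proposition \ref{prop:TS_properties} together with the minimality characterization of the tight span.
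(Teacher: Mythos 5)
Your proof is correct in substance, but it runs in the opposite direction from the paper's. The paper works intrinsically inside $T(Y,d_Y)$: it writes down the explicit extremal functions $\gamma_t(y_0)=t$, $\gamma_t(y)=d_Y(y,y_0)-t$, checks pointwise that they lie in $T(Y,d_Y)$ (giving a geodesic segment of length $\ell$ starting at $h_{y_0}$), shows $d_\infty(\gamma_\ell,h_{f(x)})=d_X(x,x_0)$ so that $X$ embeds with $x_0\mapsto\gamma_\ell$ and hence $T(X,d_X)$ embeds into $T(Y,d_Y)$, and then invokes Theorem 1 of \cite{cut-points-metric-spaces} to see that $\gamma_\ell$ is a cut point separating the segment from the embedded copy of $T(X,d_X)$; minimality of $T(Y,d_Y)$ finishes the proof. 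You instead build the candidate wedge $E=T(X,d_X)\vee_{h_{x_0}}[0,\ell]$ externally, get injectivity from Lemma \ref{lemma:wedge_tight_span}, embed $Y$, and verify minimality directly: the cut-point/separation argument forces the whole segment into any closed injective $Z\supset\iota(Y)$, and your hyperconvexity replacement trick (pushing the witness $q$ back to $h_{x_0}$) shows $Z\cap T(X,d_X)$ is injective, so the stated minimality of $T(X,d_X)$ gives $Z=E$. What your route buys is the avoidance of both the explicit $L^\infty$ computations and the virtual-cut-point theorem of \cite{cut-points-metric-spaces}; what it costs is the final step ``$E$ is a minimal injective extension of $Y$, hence $E\cong T(Y,d_Y)$,'' which is the uniqueness of minimal injective extensions (Isbell \cite{isbell-six-theorems}) rather than the one direction of minimality the paper states explicitly. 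That fact is standard, and the paper itself argues Lemma \ref{lemma:block_is_tight_span} in exactly this way, so the step is acceptable at the paper's level of rigor, but it deserves an explicit citation since the paper's own proof of the present lemma needs only the stated direction.

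Two small points. Your parenthetical about $X=\{x_0\}$ is not right: there $Y$ is a single point, so $T(Y,d_Y)$ is a point while $E$ is a segment of length $\ell$; the lemma itself degenerates in that case (both your argument and the paper's implicitly assume $|X|\geq 2$ when $\ell>0$, which is what holds in the intended application), so you should simply exclude it rather than call it trivial. Also, hyperconvexity must be verified for arbitrary families of balls, not just finite ones; your replacement argument goes through verbatim, but state it that way.
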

\begin{proof}
	Let $y_0 = f(x_0)$. Throughout this proof, $d_{\infty,Z}$ denotes the $L^\infty$ metric in $T(Z, d_Z)$ for $Z = X, Y$. Our strategy will be to find a geodesic $\gamma_t \in T(Y, d_Y)$ with $t \in [0,\ell]$ such that $\gamma_0 = h_{y_0}$ and $d_{\infty,Y}(\gamma_\ell, h_{f(x)}) = d_X(x, x_0)$ for every $x \in X$. Assuming such a $\gamma$, the function $(X, d_X) \to T(Y, d_Y)$ defined by sending $x \neq x_0$ to $h_{f(x)}$ and $x_0$ to $\gamma_\ell$ is an isometric embedding thanks to the fact that $d_X$ and $d_Y \circ (f, f)$ coincide on $X \setminus \{x_0\}$. The minimality of $T(X, d_X)$ then implies $(X, d_X) \hookrightarrow T(X, d_X) \hookrightarrow T(Y, d_Y)$. We will then show that $\gamma_\ell$ is a cut-point of $T(Y, d_Y)$ so that $T(Y, d_Y) = T(X, d_X) \cup \gamma$ (viewing $T(X, d_X)$ as a subset of $T(Y, d_Y)$). This implies that $T(X, d_X) \cup \gamma \hookrightarrow T(Y, d_Y)$ is isometric to the metric wedge $T(X, d_X) \vee_{\gamma_\ell} \gamma$, so $T(Y, d_Y) \cong T(X, d_X) \vee_{\gamma_0} \gamma$.\\
	\indent We start by constructing $\gamma$. Let $\gamma_t(y_0) := t$ and $\gamma_t(y) := d_Y(y,y_0) - t$ for $y \neq y_0$. Assuming that $\gamma_t \in T(Y, d_Y)$, we have $d_{\infty,Y}(\gamma_t, \gamma_s) = |t-s|$, i.e. $\gamma$ is a geodesic. We now verify that $\gamma_t$ satisfies the two conditions of Definition \ref{def:tight_span}. Note that:
	\begin{itemize}
		\item $\gamma_t(y_0) + \gamma_t(y_0) = 2t \geq 0 = d_Y(y_0, y_0)$.
		\item If $y \neq y_0$, $\gamma_t(y) + \gamma_t(y_0) = (d_Y(y, y_0) - t) + t = d_Y(y, y_0)$.
		\item If $y, y' \neq y_0$,
		\begin{align*}
			\gamma_t(y) + \gamma_t(y')
			&= d_Y(y, y_0) + d_Y(y', y_0) - 2t
			\geq d_Y(y, y_0) + d_Y(y', y_0) - 2\ell \\
			& = d_X(f\inv(y), x_0) + d_X(f\inv(y'), x_0)
			\geq d_X(f\inv(y), f\inv(y'))\\
			&= d_Y(y,y').
		\end{align*}
	\end{itemize}
	In short, $\gamma_t(y) + \gamma_t(y') \geq d_Y(y, y')$ for all $y, y' \in Y$. Note that equality holds when $y \neq y_0$ and $y' = y_0$, so the supremum $\gamma_t(y) = \sup_{y' \in Y} \left( d_Y(y', y) - \gamma_t(y') \right)$ is realized with $y' = y_0$ when $y \neq y_0$ and with $y' \neq y_0$ when $y = y_0$.\\
	\indent We now construct the embedding $T(X, d_X) \hookrightarrow T(Y, d_Y)$. By Lemma \ref{lemma:eval_tight_span}, $d_{\infty,Y}(\gamma_\ell, h_{f(x)}) = \gamma_t(f(x)) = d_Y(f(x), y_0) - \ell = d_X(x, x_0)$. Since $f$ restricted to $X \setminus \{x_0\}$ is an isometry, the map $\overline{f}:X \to T(Y, d_Y)$ given by $\overline{f}(x) = h_{f(x)}$ and $\overline{f}(x_0) = \gamma_\ell$ is an isometric embedding $X \hookrightarrow T(Y, d_Y)$. By minimality of $T(X, d_X)$, $\overline{f}$ must factor through another isometric embedding $H:T(X, d_X) \to T(Y, d_Y)$ that satisfies $H(h_x) = \overline{f}(x)$ for every $x \in X$.\\
	\indent Next, we show that $\gamma_\ell$ is a cut point that separates $\gamma$ and $H(T(X, d_X))$. The key observation is that $\gamma_\ell(y) + \gamma_\ell(y_0) = d_Y(y, y_0)$ for all $y \in Y \setminus \{y_0\}$, so that, in the notation of \cite{cut-points-metric-spaces}, $\gamma_\ell$ is a \textit{virtual} cut point. Then Theorem 1 of \cite{cut-points-metric-spaces} implies that $\gamma_\ell$ is a \textit{topological} cut point, i.e. that $T(Y, d_Y) \setminus \{\gamma_\ell\}$ is disconnected. The authors also define the set
	\begin{equation*}
		O_{\gamma_\ell}(A) := \{g \in T(Y, d_Y) \setminus \{\gamma_\ell\} : \gamma_\ell(y) < g(y) \text{ for all } y \in Y \setminus A\}
	\end{equation*}
	for any $A \subset Y$ and prove that the connected components of $T(Y, d_Y) \setminus \{\gamma_\ell\}$ are the open sets $O_1 := O_{\gamma_\ell}(\{y_0\})$ and $O_2 := O_{\gamma_\ell}(Y \setminus \{y_0\})$. We claim that the sets $O_i^+ := O_i \cup \{\gamma_\ell\}$ satisfy $\gamma \subset O_1^+$ and $H(T(X, d_X)) \subset O_2^+$. First, $O_i^+$ is a sub-polytopal complex of $T(Y, d_Y)$ that, since $\gamma_\ell$ is a cut-vertex, equals a union of blocks of $T(Y, d_Y)$. By Corollary \ref{cor:block_is_injective}, $O_i^+$ is injective. Next, we have $\gamma_t \in O_1$ for all $0 \leq t < \ell$ because $\gamma_\ell(y) < \gamma_t(y)$ for any $y \in Y \setminus \{y_0\}$. Furthermore, for any $x \in X \setminus \{x_0\}$, $h_{f(x)}(y_0) = d_Y(f(x), y_0) = d_X(x, x_0) + \ell > \gamma_\ell(y_0)$, so $\overline{f}(x) \in O_2$ for all $x \neq x_0$. Since $\gamma_\ell = \overline{f}(x_0)$, we have $\gamma \subset O_1^+$ and $\overline{f}(X) \subset O_2^+$, so $H(T(X, d_X)) \subset O_2^+$ because $O_2^+$ is injective. Thus, $\gamma_\ell$ separates $\gamma$ and $H(T(X, d_X))$.\\
	\indent We are ready to finish the proof. Note that $H(T(X, d_X))$ contains $\overline{f}(X \setminus \{x_0\}) = \{h_{y} : y \in Y \setminus \{y_0\}\}$ and $\gamma$ contains $\gamma_0 = h_{y_0}$, so $H(T(X, d_X)) \cup \gamma$ contains an isometric copy of $(Y, d_Y)$. By the previous paragraph, $\gamma_\ell$ is a cut vertex so $H(T(X, d_X))$ and $\gamma$ intersect at $\gamma_\ell$. Since any path between points of $\gamma$ and $H(T(X, d_X))$ must pass through $\gamma_\ell$, $H(T(X, d_X)) \cup \gamma$ is isometric to $H(T(X, d_X)) \vee_{\gamma_\ell} \gamma$. Then by Lemma \ref{lemma:wedge_tight_span}, $H(T(X, d_X)) \cup \gamma$ is a closed, injective subset of $T(Y, d_Y)$, so minimality of the tight span yields $T(Y, d_Y) = H(T(X, d_X)) \cup \gamma \cong T(X, d_X) \vee \gamma$.
\end{proof}

We conclude the section by constructing a injective space that contains $\overline{X}_K$ by attaching line segments to $K$. Under the right conditions, this space will also be the tight span of $\overline{X}_K$.
\begin{lemma}
	\label{lemma:VR_block_tight_span}
	Fix $K \in \cK$. Let $T_K$ be the space formed by attaching a line segment of length $d_\infty(h(x_c), c)$ to $K$ at every $c \in CX_K$. Then $T_K$ is an injective space that contains an isometric copy of $\overline{X}_K$. If $E = T(X,d_X)$ in addition, then $T_K$ is isometric to the tight span of $\overline{X}_K$ equipped with the restriction of $d_X$.
\end{lemma}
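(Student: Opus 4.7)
The plan is to prove the statement in three stages: injectivity of $T_K$, an isometric embedding $\overline{X}_K \hookrightarrow T_K$, and identification of $T_K$ with the tight span when $E = T(X, d_X)$.

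For injectivity, I would apply Corollary \ref{cor:block_is_injective} to get that $K$ itself is injective as a connected union of blocks of $E$. Each line segment is trivially injective, and each is attached at a single point $c \in CX_K$. The resulting $T_K$ is therefore an iterated metric wedge of $K$ with finitely many segments, one at each $c \in CX_K$, so Lemma \ref{lemma:wedge_tight_span} applied inductively yields that $T_K$ is injective.

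For the isometric embedding, I would send $x \in X_K$ to $h(x) \in K \subset T_K$, and for $c \in CX_K$ with $x_c \notin X_K$ send $x_c$ to the far endpoint of the segment attached at $c$ (if $c = h(x_c)$, the attached segment has length zero and $x_c$ is sent to $c$ itself). Verifying the isometry uses exactly the geodesic argument behind Lemma \ref{lemma:VR_block_well_defined}: since $c \in CX_K$ separates $h(x_c)$ from $K$, any geodesic in $E$ from $h(x_c)$ to $h(y)$ with $y \in X_K$ factors through $c$, giving
\begin{equation*}
 d_X(x_c, y) = d_\infty(h(x_c), c) + d_\infty(c, h(y));
\end{equation*}
and for distinct $c, c' \in CX_K$ the geodesic between them stays inside $K$ (otherwise it would induce a cycle in $\bctree(E)$), so
\begin{equation*}
 d_X(x_c, x_{c'}) = d_\infty(h(x_c), c) + d_\infty(c, c') + d_\infty(c', h(x_{c'})).
\end{equation*}
Both of these distances match the corresponding path lengths in $T_K$.

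For the final claim, assume $E = T(X, d_X)$. Lemma \ref{lemma:block_is_tight_span} gives $K \cong T(X_K', d_\infty|_{X_K' \times X_K'})$. I would then enumerate $CX_K = \{c_1, \dots, c_m\}$ and apply Lemma \ref{lemma:tight_span_edges} once for each index: at step $i$, replace $c_i$ in the underlying point set by $x_{c_i}$ and increase every distance from $c_i$ by $\ell_i := d_\infty(h(x_{c_i}), c_i)$. Each application of Lemma \ref{lemma:tight_span_edges} wedges a segment of length $\ell_i$ onto the previous tight span at $c_i$, so after all $m$ steps the space is exactly $T_K$, while the underlying point set becomes $X_K \cup \{x_{c_1}, \dots, x_{c_m}\} = \overline{X}_K$ carrying the metric $d_X|_{\overline{X}_K \times \overline{X}_K}$ (by the isometry verified in the previous paragraph). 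The main obstacle will be the bookkeeping inside this iteration: one must check that the current metric does satisfy the hypotheses of Lemma \ref{lemma:tight_span_edges} with respect to the cut-vertex being replaced, and that each yet-unreplaced $c_j$ for $j > i$ remains a cut-vertex of the intermediate tight span. This follows from the observation that wedging a pendant segment at $c_i$ does not alter the block-cut-tree structure at the other $c_j$, so the iteration can be carried out in any order.
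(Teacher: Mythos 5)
Your proposal is correct and follows essentially the same route as the paper: injectivity of $T_K$ via Corollary \ref{cor:block_is_injective} and iterated use of Lemma \ref{lemma:wedge_tight_span}, the isometric embedding of $\overline{X}_K$ via the separation/geodesic argument of Lemma \ref{lemma:VR_block_well_defined}, and the tight-span identification by combining Lemma \ref{lemma:block_is_tight_span} with repeated applications of Lemma \ref{lemma:tight_span_edges}. Your extra care about the bookkeeping in the iteration (checking the hypotheses of Lemma \ref{lemma:tight_span_edges} at each step) is a point the paper passes over silently, but it resolves the same way you indicate.
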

\begin{proof}
	Let $\ell_c := d_E(c, h(x_c))$. By Lemma \ref{lemma:block_is_injective}, $K$ is injective and by Lemma \ref{lemma:wedge_tight_span}, pasting a line segment to $K$ at a point results in an injective space. Hence, $T_K$ is injective. Since $X_K' \hookrightarrow K$, $T_K$ contains an isometric copy of $X_K'$. Furthermore, $c \in CX_K$ separates $x_c$ from $K$, so
	\begin{equation*}
		d_X(x_c, x) = d_E(h(x_c), h(x)) = d_E(h(x_c), c) + d_E(c, h(x)) = \ell_c + d_E(c, h(x))
	\end{equation*}
	for all $x \in \overline{X}_K \setminus \{x_c\}$. Hence, the function $h_K:\overline{X}_K \to T_K$ that sends $x \in X_K$ to $h(x) \in K$ and $x_c$ to the endpoint of the edge of length $\ell_c$ pasted at $c$ is an isometric embedding.\\
	\indent Suppose now $E = T(X,d_X)$. Define a function $f:X_K' \to \overline{X}_K$ by $f(x) := x$ if $x \in X_K$ and $f(c) := x_c$ for $c \in X_K' \setminus X_K$. Since $K$ is the tight span of $X_K'$ by Lemma \ref{lemma:block_is_tight_span}, the fact that $T_K$ is the tight span of $\overline{X}_K$ now follows by repeatedly applying Lemma \ref{lemma:tight_span_edges} for every $c \in X_K' \setminus X_K$.
\end{proof}

\subsection{Decomposition of $\vr_r(X)$ induced by blocks of $T(X, d_X)$}
\label{sec:block-VR-proof}
In the previous sections, we formed a good understanding of $B_r(X; E) \cap K$ for every $K \in \cK$ and of $\overline{X}_K$ and its tight span. We now use those results to understand the relation between $\vr_{2r}(\overline{X}_K)$ with $K \subset E$. After that, we describe how to paste all $\vr_{2r}(\overline{X}_K)$ to form $\vr_{2r}(X)$.
\begin{lemma}
	\label{lemma:VR_inside_block}
	For every $K \in \cK$,
	\begin{equation*}
		\vr_{2r}(\overline{X}_K) \simeq \left[ B_r(X; E) \cap K \right] \sqcup \{x_c \in \overline{X}_K \setminus X_K: d_E(h(x_c), c) > r\}.
	\end{equation*}
\end{lemma}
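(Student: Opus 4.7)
The plan is to transport the problem through Theorem \ref{thm:VR-tight-span} to an explicit metric thickening, and then deformation retract every attached segment $\gamma_c$ onto either its basepoint $c$ or its far endpoint $x_c$. By Lemma \ref{lemma:VR_block_tight_span}, the space $T_K$ obtained from $K$ by attaching a segment $\gamma_c$ of length $\ell_c := d_E(h(x_c), c)$ at each $c \in CX_K$ is injective and contains an isometric copy of $\overline{X}_K$, with $x_c$ identified to the far endpoint of $\gamma_c$. Theorem \ref{thm:VR-tight-span} then reduces the claim to showing
\begin{equation*}
	B_r(\overline{X}_K; T_K) \simeq \bigl[ B_r(X; E) \cap K \bigr] \sqcup \bigl\{ x_c \in \overline{X}_K \setminus X_K : \ell_c > r \bigr\}.
\end{equation*}

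I would split the thickening as $B_r(\overline{X}_K; T_K) = [B_r(\overline{X}_K; T_K) \cap K] \cup \bigcup_{c \in CX_K} [B_r(\overline{X}_K; T_K) \cap \gamma_c]$ and handle the two types of pieces separately. For the piece inside $K$, note that geodesics in $T_K$ between two points of $K$ stay in $K$, while geodesics from $x_c$ into $K$ must traverse all of $\gamma_c$ and enter $K$ at $c$ with residual radius $r - \ell_c$; these two facts express $B_r(\overline{X}_K; T_K) \cap K$ as exactly the same union of open balls in $K$ as one gets from $B_r(\overline{X}_K; E) \cap K$, and Lemma \ref{lemma:union_inside_block} then identifies this with $B_r(X; E) \cap K$.

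For each $c \in CX_K$, parameterize $\gamma_c$ as $[0, \ell_c]$ with $0 \leftrightarrow c$ and $\ell_c \leftrightarrow x_c$, so that $B_r(x_c; T_K) \cap \gamma_c = \{ t > \ell_c - r \}$ and $B_r(y; T_K) \cap \gamma_c = \{ t < r - d_{T_K}(y, c) \}$ for $y \in \overline{X}_K \setminus \{x_c\}$. If $\ell_c \leq r$, the ball around $x_c$ already sweeps all of $\gamma_c$ and meets $K$ around $c$, so the entire segment deformation retracts onto $c \in K$ and contributes nothing outside $K$. If $\ell_c > r$, the contribution of the $x_c$-ball to $\gamma_c$ is the half-open interval $(\ell_c - r, \ell_c]$, disjoint from $K$, while the contributions of the other vertices form a subinterval $[0, R_c)$ starting at $c$; the latter deformation retracts onto $c \in K$, and the half-open near-$x_c$ piece deformation retracts onto the single point $x_c$. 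Carrying out these retractions in parallel over every $c$ yields the claimed disjoint union.

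The hard part will be verifying that when $\ell_c > r$ the two subintervals just described are genuinely disjoint in $\gamma_c$, so that the retraction really disconnects $x_c$ from $K$ rather than quietly reconnecting them. This boils down to a bound of the form $R_c \leq \ell_c - r$ on the reach into $\gamma_c$ from vertices other than $x_c$, which has to be extracted from the minimality of $\ell_c$ in the definition of $x_c$ together with the block-cut structure of $\cK$; once this disjointness is in hand, the gluing of the local retractions into a single global deformation retract is routine.
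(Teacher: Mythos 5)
Your reduction to $B_r(\overline{X}_K;T_K)$ via Lemma \ref{lemma:VR_block_tight_span} and Theorem \ref{thm:VR-tight-span}, and your treatment of the piece inside $K$ (geodesics from $x_c$ enter $K$ through $c$, then Lemma \ref{lemma:union_inside_block}), are fine and match the paper. The genuine gap is exactly the step you deferred: the disjointness bound $R_c \leq \ell_c - r$ does not follow from the minimality in Definition \ref{def:block_metric}, and in fact it can fail. Minimality of $\ell_c$ is taken only over points separated from $K$ by $c$; it gives no lower bound on $d_E(h(x),c)$ for $x \in X_K$, and it is these distances that determine the reach $R_c$ into $\gamma_c$ from the $c$ end. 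Concretely, take $E=[0,10]$ (vertices at $0,5,10$), $X=\{4,9\}$, $\cK=\{[0,5],[5,10]\}$, $K=[0,5]$, so $c=5$, $x_c=9$, $\ell_c=4$; with $r=3$ one gets $R_c = r - d_E(4,c) = 2 > \ell_c - r = 1$, so the two subintervals of $\gamma_c$ overlap. Correspondingly $d_X(4,x_c)=5<2r$, so $x_c$ is \emph{not} an isolated vertex of $\vr_{2r}(\overline{X}_K)$: the left-hand side is a single edge, hence contractible, while $[B_r(X;E)\cap K]\sqcup\{x_c\}$ has two contractible components. So no argument can supply the bound you need: whenever $r<\ell_c<2r$ and some $x\in X_K$ has $d_E(h(x),c)<2r-\ell_c$, the claimed formula itself breaks; the condition $\ell_c>r$ is not the right criterion for $x_c$ to contribute an isolated point (that criterion is $d_X(x_c,y)\geq 2r$ for all $y\in\overline{X}_K\setminus\{x_c\}$).

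For comparison, the paper's proof takes your route but silently discards exactly the contribution you are worried about: it writes $B_r(\overline{X}_K;T_K)$ as $\left[B_r(\overline{X}_K;T_K)\cap K\right]\cup\bigcup_{c}\left[B_r(h(x_c);T_K)\cap\gamma_c\right]$, i.e.\ it replaces the full intersection of the thickening with $\gamma_c$ by the ball around $h(x_c)$ alone, omitting the interval of length $R_c$ reached from the $K$ side; in the example above this omission is what produces the disconnected answer. So your more careful bookkeeping has uncovered a real defect in the lemma as stated rather than a routine verification you failed to complete. Note that the proof of Theorem \ref{thm:VR_wedge} does not invoke this lemma directly; it reworks the thickening with the Property 1--3 case analysis, where an overlap of the two intervals falls under Property 3 and the whole segment is retracted to $c$ rather than split. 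To make the present statement true one needs an additional hypothesis ruling out the overlap, e.g.\ $d_E(h(x),c)\geq 2r-\ell_c$ for all $x\in X_K$ whenever $\ell_c>r$, or one should restate the extra summand using genuine isolation of $x_c$ in $\vr_{2r}(\overline{X}_K)$.
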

\begin{proof}
	\indent Let $T_K = K \cup \bigcup_{c \in CX_K} \gamma_c$ be the injective space from Lemma \ref{lemma:VR_block_tight_span}, where $\gamma_c$ is the line segment joining the cut-vertex $c \in CX_K$ and $h(x_c)$. By \cite[Proposition 2.3]{osman-memoli},
	\begin{align*}
		\vr_{2r}(\overline{X}_K)
		\simeq B_r(\overline{X}_K; T_K)
		& = \left[ B_r(\overline{X}_K; T_K) \cap K \right] \cup \left[ \bigcup_{c \in CX_K} B_r(h(x_c); T_K) \cap \gamma_c \right] \\
		& = \left[ B_r(\overline{X}_K; T_K) \cap K \right] \cup \left[ \bigcup_{c \in CX_K, \ r > \ell_c} \gamma_c \right] \cup \left[ \bigcup_{c \in CX_K, \ r \leq \ell_c} B_r(h(x_c); \gamma_c) \right].
	\end{align*}
	Let's rewrite each term separately. In the first term, $B_r(\overline{X}_K; T_K) \cap K = B_r(\overline{X}_K; E) \cap K$. In the middle term, $\gamma_c \cap K = \{c\}$ whenever $r > \ell_c$, so we can deformation retract $\gamma_c$ onto $c$ without changing the homotopy type of $\vr_{2r}(\overline{X}_K)$. In the last term, $r \leq \ell_c$ implies $B_r(h(x_c); \gamma_c) \cap K = \emptyset$, and we deformation retract $B_r(h(x_c); \gamma_c)$ onto the point $h(x_c)$. Then by Lemma \ref{lemma:union_inside_block},
	\begin{align*}
		\vr_{2r}(\overline{X}_K)
		& \simeq \left[ B_r(\overline{X}_K; E) \cap K \right] \sqcup \{x_c \in \overline{X}_K \setminus X_K: r \leq \ell_c\} \\
		& = \left[ B_r(X; E) \cap K \right] \sqcup \{x_c \in \overline{X}_K \setminus X_K: r \leq \ell_c\}.
	\end{align*}
\end{proof}

Below, we need to be more specific with the points $x_c$, so we will denote $x_c \in \overline{X}_K$ as $x_{K,c}$ instead.
\begin{defn}
	\label{def:block_cut_graph_r}
	Let $\bctree_r(\cK)$ be the graph with vertex set $\cK \cup \cut(\cK)$ and edges $(c,K) \in \cut(\cK) \times \cK$ where $d_E(x_{K,c}, c) < r$.
\end{defn}

\begin{remark}[$\bctree_r(\cK)$ is a forest]
	\label{rmk:block_cut_graph_is_tree}
	Note that $\bctree_\infty(E; \cK)$ can be obtained from $\bctree(E)$ as follows. Select a labeling $\cK = \{K_1, \dots, K_n\}$. For every $c \in K_n \cap \cut(\cK)$, add a vertex $v_c$ to the interior of every edge $(c, B)$ with $B \subset K_i$ and $i \neq n$. Then, contract all vertices $B \in K_n$ and $c \in K_n \cap \cut(\cK)$ into a single vertex; label it as $K_n$. Repeat this procedure for all remaining $K_i$ by splitting any new edges of the form $(v_c, K_j)$ in analogous fashion. In the end, relabel $v_c$ as $c$. This graph is $\bctree_\infty(E; \cK)$. We then obtain $\bctree_r(\cK)$ by erasing any edge $(c,K) \in \cut(\cK) \times \cK$ where $x_{K,c}$ is isolated in $\vr_{2r}(X)$. None of the operations we used introduce cycles, so $\bctree_\infty(E; \cK)$ is a tree and $\bctree_r(\cK)$ is a forest.
\end{remark}

\indent Think of $\bctree_r(\cK)$ as an indexing category where an edge $(c,K)$ becomes an arrow $c \to K$. Define a functor $F_{r}: \bctree_r(\cK) \to \mathrm{Top}$ by
\begin{align*}
	c &\mapsto \{c\} \subset E\\
	K &\mapsto B_r(\overline{X}_K; T_K)
\end{align*}
where $T_K$ is the space from Lemma \ref{lemma:VR_block_tight_span}. Also, send each arrow $(c, K)$ to the inclusion map $c \mapsto x_{K,c} \in \overline{X}_K$. This functor encodes how the complexes $\vr_{2r}(\overline{X}_K)$ paste together to form $\vr_{2r}(X)$. For this reason, we view the next theorem as a decomposition of $\vr_{2r}(X)$ induced by the blocks of $E$. We don't call it a \textit{block} decomposition of $\vr_{2r}(X)$ because this VR complex might have cut-vertices and blocks of its own.\\
\indent We are ready to prove the main theorem of this section.
\begin{theorem}
	\label{thm:VR_wedge}
	Fix $r > 0$. Let $h:(X, d_X) \hookrightarrow (E, d_E)$ be an isometric embedding of a finite metric space into an injective polytopal complex, and let $\cK$ be a block cover of $E$. Then
	\begin{equation*}
		\vr_{2r}(X) \sqcup C_r \simeq \left( \bigsqcup_{K \in \cK} \vr_{2r}(\overline{X}_{K,r}) \right) / \sim
	\end{equation*}
	where $\sim$ identifies $x_{K,c} \in \vr_{2r}(\overline{X}_{K,r})$ with $x_{K',c} \in \vr_{2r}(\overline{X}_{K',r})$ if $K \cap K' = \{c\}$ and $C_r$ is the set of isolated vertices of $\bctree_r(\cK)$.
\end{theorem}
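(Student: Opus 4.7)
The plan is to transfer the statement to the injective polytopal complex $E$ using Theorem \ref{thm:VR-tight-span}, decompose the metric thickening $B_r(X;E)$ along the block cover via Lemma \ref{lemma:union_inside_block}, and match each piece with $\vr_{2r}(\overline{X}_K)$ using Lemma \ref{lemma:VR_inside_block}. The forest structure of $\bctree_r(\cK)$ (Remark \ref{rmk:block_cut_graph_is_tree}) will organize the gluing data cleanly.

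First I would use Theorem \ref{thm:VR-tight-span} to replace $\vr_{2r}(X) \simeq B_r(X;E)$ and then invoke Lemma \ref{lemma:union_inside_block} to write $B_r(X;E) = \bigcup_{K \in \cK}(B_r(\overline{X}_K; E) \cap K)$. Since $\cK$ is a block cover, any two distinct $K, K' \in \cK$ intersect in at most a single cut-vertex $c$, so the pairwise intersection $(B_r(\overline{X}_K; E) \cap K) \cap (B_r(\overline{X}_{K'}; E) \cap K')$ is either empty or equal to $\{c\}$. Unpacking the metric-thickening description from the proof of Lemma \ref{lemma:union_inside_block}, it equals $\{c\}$ precisely when $d_E(x_{K,c},c) < r$ and $d_E(x_{K',c},c) < r$, i.e., precisely when both $(c,K)$ and $(c,K')$ are edges of $\bctree_r(\cK)$. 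Because $\bctree_r(\cK)$ is a forest, no higher-order intersections arise, and $B_r(X;E)$ is thereby expressed as an iterated pushout of the pieces $B_r(\overline{X}_K;E) \cap K$ glued at those cut-vertices.

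Next I would apply Lemma \ref{lemma:VR_inside_block} to obtain $\vr_{2r}(\overline{X}_K) \simeq (B_r(X;E) \cap K) \sqcup I_K$, where $I_K = \{x_{K,c} : d_E(h(x_{K,c}),c) \geq r\}$ collects the isolated vertices. The crucial compatibility I will extract from that proof is that a non-isolated vertex $x_{K,c} \notin I_K$ is sent, under the homotopy equivalence, to the cut-vertex $c \in K$: the attached segment $\gamma_c \subset T_K$ is fully covered by $B_r(\overline{X}_K;T_K)$ when $d_E(h(x_{K,c}),c) < r$ and retracts onto its endpoint $c$. Given this, the identification $\sim$ in the theorem reproduces the pushout of the previous paragraph: for each cut-vertex $c$ that has at least one incident edge in $\bctree_r(\cK)$, the non-isolated $x_{K,c}$'s glue the main pieces at $c$, and any isolated $x_{K',c}$ sharing that $c$ is absorbed into the gluing point and contributes no extra component. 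For each cut-vertex $c$ isolated in $\bctree_r(\cK)$, all $x_{K,c}$ sharing $c$ are in $I_K$ and collapse under $\sim$ to a single point disconnected from every main piece, contributing exactly one extra component. A bookkeeping of these contributions identifies the extra components on the right-hand side with the isolated vertices of $\bctree_r(\cK)$, which is the $C_r$ summand added on the left.

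The main obstacle will be verifying the compatibility claim in the previous paragraph: that the homotopy equivalences from Lemma \ref{lemma:VR_inside_block} can be chosen consistently across the forest $\bctree_r(\cK)$ so as to induce a well-defined homotopy equivalence of the iterated pushouts. I would argue this locally near each cut-vertex $c$, writing the relevant portions of the $T_K$'s as metric wedges at $c$ (via Lemma \ref{lemma:wedge_tight_span} and Lemma \ref{lemma:VR_block_tight_span}) and checking that the simultaneous deformation retracts of the line segments $\gamma_c$ inside each $T_K$ assemble into a deformation retract of the quotient space onto $B_r(X;E) \sqcup C_r$. The tree structure of $\bctree_r(\cK)$ prevents conflicting identifications, but the mixed case where some $x_{K,c}$ at a given cut-vertex are non-isolated while others are isolated requires careful tracking.
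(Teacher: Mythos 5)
Your overall route (thickening via Theorem \ref{thm:VR-tight-span}, the cover decomposition of Lemma \ref{lemma:union_inside_block}, matching pieces via Lemma \ref{lemma:VR_inside_block}, bookkeeping over $\bctree_r(\cK)$) coincides with the first half of the paper's proof, but two of your key claims fail, and they fail exactly in the case the paper's proof is built to handle. First, the intersection criterion: since $B_r(X;E)\cap K = B_r(\overline{X}_K;E)\cap K$, the intersection $(B_r(\overline{X}_K;E)\cap K)\cap(B_r(\overline{X}_{K'};E)\cap K')$ equals $\{c\}$ precisely when $c\in B_r(X;E)$, \emph{not} precisely when both $(c,K)$ and $(c,K')$ are edges of $\bctree_r(\cK)$. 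For instance, if some $x\in X_K$ has $d_E(h(x),c)<r$ while every point separated from $K$ by $c$ lies at distance at least $r$ from $c$, then $(c,K)$ is not an edge of $\bctree_r(\cK)$, yet both pieces contain $c$ (indeed $x_{K',c}$ may be chosen within distance $<r$ of $c$). Second, and more seriously, your dichotomy ``near $x_{K,c}$ glues at $c$, far $x_{K,c}$ is an isolated point that is absorbed harmlessly'' is false: if $\ell_{K,c}:=d_E(h(x_{K,c}),c)\geq r$ but some $x\in X_K$ satisfies $d_E(h(x),c)<2r-\ell_{K,c}$, then $d_X(x_{K,c},x)<2r$, so $x_{K,c}$ is \emph{not} an isolated vertex of $\vr_{2r}(\overline{X}_K)$ and cannot be treated as a disjoint summand; the dichotomy $\ell_{K,c}\lessgtr r$ does not match the actual behavior of the thickening along the attached segment $\gamma_{K,c}$.

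This is exactly the trichotomy the paper introduces for each pair $(c,K)$: either $c\notin B_r(\overline{X}_K;T_K)$, or $c$ is covered but $\gamma_{K,c}$ has a gap, or $\gamma_{K,c}\subset B_r(\overline{X}_K;T_K)$ (which can occur even with $\ell_{K,c}\geq r$). The paper proves that at each cut vertex at most one incident pair can have a gap, treats the no-gap situation by contracting the wedges $L_c$ of the covered portions of the segments (this is where the identifications $\sim$ and the isolated vertices $C_r$ arise), and disposes of the gap case not by any gluing or retraction argument but by showing that $\vr_{2r}(X)$ then has no edges between the two sides of $c$, so the theorem follows by induction on the number of gap pairs. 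Your closing admission that the ``mixed case \dots requires careful tracking'' is precisely this missing step: without the trichotomy and the splitting-plus-induction mechanism (or a substitute for it), the proposed simultaneous deformation retractions do not assemble into a homotopy equivalence with the quotient in the statement, so the proposal has a genuine gap at the heart of the argument.
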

\begin{proof}
	Fix $c \in \cut(\cK)$. For any $K \in \cK$ with $c \in K$, let $\ell_{K,c} := d_E(h(x_{K,c}), c)$ and let $T_{K}$ be the space obtained by applying Lemma \ref{lemma:VR_block_tight_span} to $\overline{X}_{K}$. Recall that $T_{K}$ is obtained from $K \subset E$ by pasting a geodesic $\gamma_{K,c}$ of length $\ell_{K,c}$ that joins $h(x_{K,c})$ and $c$ for every $c \in K \cap \cut(\cK)$. Note that each $B_r(\overline{X}_K; T_K)$ satisfies one of the following properties:
	\begin{enumerate}
		\item\label{prop_1} $c \notin B_r(\overline{X}_K; T_K)$,
		\item\label{prop_2} $c \in B_r(\overline{X}_K; T_K)$ and $\gamma_{K,c} \not\subset B_r(\overline{X}_K; T_K)$,
		\item\label{prop_3} $c \in B_r(\overline{X}_K; T_K)$ and $\gamma_{K,c} \subset B_r(\overline{X}_K; T_K)$.
	\end{enumerate}
	Regarding the $K \in \cK$ such that $c \in K$, we claim that either they all satisfy Property \ref{prop_1}, exactly one satisfies Property \ref{prop_2} and the rest satisfy \ref{prop_3}, or all of them satisfy Property \ref{prop_3}. If $c \notin B_r(X; E)$, then $c \notin B_r(\overline{X}_K; T_K)$ for all $K \ni c$, i.e. they all satisfy Property \ref{prop_1}. If $c \in B_r(X; E)$, there exists $K_0 \ni c$ and $x \in \overline{X}_{K_0}$ such that $d_E(h(x), c) < r$. In particular, $K_0$ cannot satisfy Property \ref{prop_1}. For any $K \neq K_0$ with $c \in K$, we have $\ell_{K,c} = d_E(h(x_{K,c}), c) \leq d_E(h(x), c) < r$ because $x_{K,c}$ minimizes the distance to $c$ (recall Definition \ref{def:block_metric}). As a consequence, $\gamma_{K,c} \subset B_r(\overline{X}_K; T_K)$ for all $K \in \cK$ with $K \neq K_0$ and $c \in K$, so they all satisfy Property \ref{prop_3} while $K_0$ satisfies either Property \ref{prop_2} or \ref{prop_3}. This finishes the claim.\\
	\indent Suppose there are no pairs $c \in K_0$ (with $c \in \cut(\cK)$ and $K_0 \in \cK$) that satisfy Property \ref{prop_2}. For a fixed $c \in \cut(\cK)$, let $L_c$ be the space obtained by pasting the sets $B_r(\overline{X}_{K}; T_K) \cap \gamma_{K,c}$ along their respective endpoints $h(x_{K,c})$ for every $K \in \cK$ with $c \in K$. By the previous paragraph, $L_c$ completely contains none of the paths $\gamma_{K,c}$ with fixed $c$ or all of them. At the same time, \cite[Proposition 2.3]{osman-memoli} and Lemma \ref{lemma:VR_block_tight_span} yield $\vr_{2r}(\overline{X}_{K}) \simeq B_{r}(\overline{X}_{K}; T_{K})$. By abuse of notation, let's assume that, for each $c \in \cut(\cK)$, the relation $\sim$ in the Theorem's statement glues the points $h(x_{K,c}) \in B_r(\overline{X}_{K}; T_{K})$. Then it follows that
	\begin{align*}
		\left( \bigsqcup_{K \in \cK} \vr_{2r}(\overline{X}_{K}) \right) / \sim
		&\simeq \left( \bigsqcup_{K \in \cK} B_{r}(\overline{X}_{K}; T_{K}) \right) / \sim \\
		&= \left( \bigsqcup_{K \in \cK} \left( B_{r}(\overline{X}_{K}; E) \cap K \right) \cup \bigcup_{c \in K \cap \cut(\cK)} \gamma_{K,c} \right) / \sim \\
		&\cong \left( \bigsqcup_{K \in \cK} B_{r}(\overline{X}_{K}; E) \cap K \right) \cup \bigcup_{c \in \cut(\cK)} L_c \\
		&\simeq \left( \bigsqcup_{K \in \cK} B_{r}(\overline{X}_{K}; E) \cap K \right) / \sim_0 \sqcup \ C_r.
	\end{align*}
	The last homotopy equivalence is obtained by contracting the sets $L_c$ to their wedge point. If $L_c$ contains all paths $\gamma_{K,c}$, then it intersects every set $B_r(\overline{X}_{K}; E) \cap K$ with $c \in K$ at $c$, so the contraction ends up identifying all the copies of $c$ spread across the sets $B_{r}(\overline{X}_{K}; E) \cap K$. We denote these identifications with $\sim_0$. If, on the other hand, $L_c$ contains none of the paths $\gamma_{K,c}$, then it is disjoint from every $B_r(\overline{X}_K; E) \cap K$. Rather than inducing identifications, contracting $L_c$ results in an isolated point. This happens precisely when $c$ is an isolated vertex of $\bctree_r(\cK)$, so the resulting set of isolated points is in bijective correspondence with $C_r$. Moreover, two sets $B_r(\overline{X}_K; E) \cap K$ and $B_r(\overline{X}_{K'}; E) \cap K'$ have non-empty intersection if and only if $c \in K \cap K'$ and $c \in B_r(X; E)$. The latter happens if and only if the contraction of $L_c$ induces identifications, so
	\begin{equation*}
		\left( \bigsqcup_{K \in \cK} B_{r}(\overline{X}_{K}; E) \cap K \right) / \sim_0 \
		\cong \bigcup_{K \in \cK} B_{r}(\overline{X}_{K}; E) \cap K
		=  B_r(X; E).
	\end{equation*}
	Then \cite[Proposition 2.3]{osman-memoli} yields
	\begin{align*}
		\vr_{2r}(X) \sqcup C_r
		\simeq B_r(X; E) \sqcup C_r 
		&\cong \left( \bigsqcup_{K \in \cK} B_{r}(\overline{X}_{K}; E) \cap K \right) / \sim_0 \sqcup \ C_r \\
		&\simeq \left( \bigsqcup_{K \in \cK} \vr_{2r}(\overline{X}_{K}) \right) / \sim.
	\end{align*}
	
	\indent Lastly, suppose $c \in \cut(\cK)$ and $K \in \cK$ satisfy Property \ref{prop_2}. The fact that $\gamma_{K,c}$ is not completely contained in $B_r(\overline{X}_K; T_K)$ means there is a point $p \in \gamma_{K,c}$ such that $d_E(h(x), p), d_E(p, h(x_{K,c})) \geq r$ for all $x \in \overline{X}_K \setminus \{x_{K,c}\}$. Since $T_K$ is geodesic,
	\begin{equation*}
		d_X(x, x_{K,c}) = d_E(h(x), h(x_{K,c})) = d_E(h(x), p) + d_E(p, h(x_{K,c})) \geq 2r.
	\end{equation*}
	Let $X_2$ be the set of $x \in X$ such that $c$ separates $h(x)$ and $K$, and let $X_1 = X \setminus X_2$. Then for any $x \in X_1$ and $x' \in X_2$, the definition of $x_{K,c}$ (Definition \ref{def:block_metric}) yields $d_X(x, x') \geq d_X(x, x_{K,c}) \geq 2r$. As a consequence, $\vr_{2r}(X)$ contains no edge between $X_1$ and $X_2$. We conclude by applying the Theorem to $X_1$ and $X_2$ and using induction on the number of pairs $c \in K$ that satisfy Property \ref{prop_2}.
\end{proof}

The work of this section pays off below. We show that the block decomposition of $E$ induces a direct sum decomposition of the homology of $\vr_r(X)$ in terms of the homology of $\vr_r(\overline{X}_K)$ for every $K \in \cK$.
\begin{theorem}
	\label{thm:VR_wedge_homology}
	For any $k \geq 1$,
	\begin{equation*}
		H_k(\vr_{r}(X)) \cong \bigoplus_{K \in \cK} H_k(\vr_{r}( \overline{X}_{K})).
	\end{equation*}
	For $k=0$, the equation holds after a quotient by $\sim$ on the right side, where $[x_{K,c}] \sim [x_{K',c}]$ for a fixed $c \in \cut(\cK)$.
\end{theorem}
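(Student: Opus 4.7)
The plan is to pass from the homotopy-level decomposition of Theorem~\ref{thm:VR_wedge} to the homological one. After rescaling, Theorem~\ref{thm:VR_wedge} provides a homotopy equivalence $\vr_{r}(X) \sqcup C \simeq W$, where $W := \bigl(\bigsqcup_{K \in \cK} \vr_{r}(\overline{X}_K)\bigr)/{\sim}$, the relation $\sim$ identifies $x_{K,c}$ with $x_{K',c}$ whenever $K \cap K' = \{c\}$, and $C$ is the corresponding set of isolated vertices of the associated forest. Because the identifications in $\sim$ are indexed precisely by the edges of this forest (Remark~\ref{rmk:block_cut_graph_is_tree}), $W$ is built by iteratively wedging the complexes $\vr_r(\overline{X}_K)$ at single vertices according to a tree structure. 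I will compute $H_*(W)$ by induction on the number of edges of this forest and then strip off the contribution of $C$.

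The base case is the empty forest, where $W = \bigsqcup_K \vr_r(\overline{X}_K)$ and the result reduces to the splitting of homology over a disjoint union. For the inductive step, select a leaf edge $(c, K)$ and write $W = A \cup B$, with $A = \vr_r(\overline{X}_K)$ and $B$ the union of all remaining pieces; by construction $A \cap B = \{x_{K,c}\}$ is a single point that is a vertex in both $A$ and $B$. Mayer--Vietoris yields
\[
\cdots \to H_k(\{\mathrm{pt}\}) \to H_k(A) \oplus H_k(B) \to H_k(W) \to H_{k-1}(\{\mathrm{pt}\}) \to \cdots .
\]
For $k \geq 2$ the flanking terms vanish and $H_k(W) \cong H_k(A) \oplus H_k(B)$ is immediate. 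For $k = 1$, the map $H_0(\{\mathrm{pt}\}) \to H_0(A) \oplus H_0(B)$ sends the generator to $([x_{K,c}], -[x_{K,c}])$, which is nonzero in each summand because $x_{K,c}$ is a vertex; hence this map is injective, the connecting homomorphism vanishes, and $H_1(W) \cong H_1(A) \oplus H_1(B)$. For $k = 0$ the sequence terminates in $H_0(W) \cong (H_0(A) \oplus H_0(B))/\bigl\langle ([x_{K,c}]_A, -[x_{K,c}]_B)\bigr\rangle$, which is exactly the quotient imposed by $[x_{K,c}] \sim [x_{K',c}]$. Iterating yields $H_k(W) \cong \bigoplus_K H_k(\vr_r(\overline{X}_K))$ for $k \geq 1$ and $H_0(W) \cong \bigl(\bigoplus_K H_0(\vr_r(\overline{X}_K))\bigr)/{\sim}$.

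Finally, from Theorem~\ref{thm:VR_wedge} we read off $H_k(\vr_r(X)) \oplus H_k(C) \cong H_k(W)$. For $k \geq 1$ the term $H_k(C)$ vanishes and the desired direct-sum decomposition drops out. The main obstacle lies in the $k = 0$ case: one must verify that each isolated cut-vertex $c \in C$ is matched, under $\sim$, by a single free generator of $\bigl(\bigoplus_K H_0(\vr_r(\overline{X}_K))\bigr)/{\sim}$ so that the $|C|$ extra generators contributed on the left precisely cancel against the merged isolated classes on the right, leaving $\bigl(\bigoplus_K H_0(\vr_r(\overline{X}_K))\bigr)/{\sim}$ as the $H_0$ of $\vr_r(X)$. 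This amounts to checking that if $c$ is isolated in the forest then each $x_{K,c}$ is an isolated vertex of $\vr_r(\overline{X}_K)$, so that merging them under $\sim$ produces exactly one isolated $H_0$-class in $W$ matching one element of $C$.
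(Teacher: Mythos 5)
Your argument for $k \geq 1$ is correct and is essentially the paper's own: both feed the gluing description of Theorem~\ref{thm:VR_wedge} into a Mayer--Vietoris induction over the tree structure of the gluing. The paper attaches one complex $\vr_r(\overline{X}_{K_\ell})$ at a time (so the intersection is a finite discrete set, and injectivity of $H_0$ of the intersection into the direct sum is deduced from the tree $\bctree_\infty(\cK)$), whereas you peel off a leaf edge so that the intersection is a single point; this is a cosmetic difference. (One small correction: the identifications in $\sim$ are governed by the full tree $\bctree_\infty(\cK)$, not by the $r$-dependent forest $\bctree_r(\cK)$, since $x_{K,c}\sim x_{K',c}$ is imposed regardless of $d_E(x_{K,c},c)$; this does not affect your induction.)

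The genuine problem is the degree-zero step, which you leave as a check and which, as you formulate it, cannot close. Theorem~\ref{thm:VR_wedge} gives $H_0(\vr_r(X)) \oplus \Z^{|C|} \cong \bigl(\bigoplus_{K} H_0(\vr_r(\overline{X}_K))\bigr)/\sim$, where $\sim$ merges only the avatar classes $[x_{K,c}]$, $[x_{K',c}]$. Even granting your check that for an isolated $c$ each $x_{K,c}$ is an isolated vertex of $\vr_r(\overline{X}_K)$, the merged class at such a $c$ is an \emph{extra} free generator on the right that is matched by the extra point of $C$ on the left; deleting both leaves a further quotient of the right-hand side, not the right-hand side itself, so one cannot conclude $H_0(\vr_r(X)) \cong \bigl(\bigoplus_K H_0(\vr_r(\overline{X}_K))\bigr)/\sim$. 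Concretely, let $X$ be the three leaves of a tripod with legs of length $5$ and take $r=1$: then $H_0(\vr_r(X)) \cong \Z^3$, while each $\overline{X}_K$ consists of two points at distance $10$, so the avatar-only quotient is $\Z^6$ with three classes merged, i.e.\ $\Z^4$. The paper's proof resolves degree zero differently: it additionally identifies the avatar class $[x_{K,c}]$ with the class $[x]$ of the same point $x \in X$ in the complexes where $x$ occurs as a genuine member of $X_{K'}$ (i.e.\ $[x]\sim[x_{K,c}]$ whenever $x = x_{K,c}$), and it is exactly this identification that absorbs the $|C|$ surplus classes. Your proof needs this extra identification (or, equivalently, must state the degree-zero isomorphism only up to the $\Z^{|C|}$ summand) to be complete.
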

\begin{proof}
	Let $K_1, \dots, K_L$ be the elements of $\cK$, and let $V_\ell := \vr_{r}(\overline{X}_{K_\ell})$ for $\ell=1, \dots, L$. Let $\sim$ be the equivalence relation from Theorem \ref{thm:VR_wedge}, and define $\overline{V}_{1} := V_1$ and $\overline{V}_\ell := \overline{V}_{\ell-1} \cup_\sim V_\ell$. We claim that
	\begin{equation}
		\label{eq:VR_wedge_homology}
		H_k(\overline{V}_\ell) \cong \bigoplus_{i=1}^\ell H_k(V_i).
	\end{equation}
	This is immediate for $\ell=1$. For $\ell>1$, we use the Mayer-Vietoris sequence for the subsets $\overline{V}_{\ell-1}$ and $V_\ell$ of $\overline{V}_{\ell}$:
	\begin{equation*}
		\cdots \to H_k(\overline{V}_{\ell-1} \cap V_\ell) \to H_k(\overline{V}_{\ell-1}) \oplus H_k(V_\ell) \to H_k(\overline{V}_\ell) \to H_{k-1}(\overline{V}_{\ell-1} \cap V_\ell) \to \cdots.
	\end{equation*}
	Note that, when viewing $\overline{V}_{\ell-1}$ and $V_\ell$ as subsets of $\overline{V}_\ell$, the intersection $\overline{V}_{\ell-1} \cap V_\ell$ consists of one equivalence class $[x_c] := \{x_{K_i,c} : c \in K_i, 1 \leq i \leq \ell\}$ for each $c \in \cut(\cK)$. In other words, $\overline{V}_{\ell-1} \cap V_\ell$ is a set of discrete points, so the Mayer-Vietoris sequence yields an isomorphism $H_k(\overline{V}_{\ell}) \cong H_k(\overline{V}_{\ell-1}) \oplus H_k(V_{\ell})$ for $k \geq 2$. To get the same isomorphism when $k=1$, we need the boundary map $H_{1}(\overline{V}_\ell) \to H_{0}(\overline{V}_{\ell-1} \cap V_\ell)$ to be 0. This happens if and only if the map
	\begin{equation*}
		H_0(\overline{V}_{\ell-1} \cap V_\ell) \to H_0(\overline{V}_{\ell-1}) \oplus H_0(V_\ell)
	\end{equation*}
	is injective. Indeed, if $[x_c], [x_c'] \in \overline{V}_{\ell-1} \cap V_\ell$ lie in different connected components of $\overline{V}_{\ell-1} \cap V_\ell$, then they cannot come from the same connected component of $\overline{V}_{\ell-1}$ or the path that starts at $c$, goes to $c'$ through the $K$ in $\overline{V}_{\ell-1}$, and ends in $K_\ell$ would induce a cycle in $\bctree_\infty(\cK)$ (which is a tree by Remark \ref{rmk:block_cut_graph_is_tree}). By induction, (\ref{eq:VR_wedge_homology}) holds for $k \geq 1$.\\
	\indent Lastly, let $k=0$. A point $x \in X$ appears only once as a vertex of $\vr_r(X)$, but it may appear multiple times in $\bigsqcup_{K \in \cK} \vr_{r}(\overline{X}_{K})$, either as $x \in X_K \subset \overline{X}_K$ or as $x_{K,c} \in \overline{X}_K \setminus X_K$ for some $K$ and $c \in \cut(\cK)$. The equivalence relation in Theorem \ref{thm:VR_wedge} only identifies the latter type, so if we identify the classes $[x]$ and $[x_{K,c}]$ of $\bigoplus_{K \in \cK} H_0(\vr_{r}( \overline{X}_{K}))$ whenever $x = x_{K,c}$, we will obtain $H_0(\vr_{r}(X))$.
\end{proof}

We state two corollaries of the previous theorem. First, the original motivation for our work: a decomposition of $\vr_r(X)$ induced by the block decomposition of $T(X, d_X)$. Then, we state the special case in which $X$ is a subset of a metric wedge.
\begin{corollary}
	\label{cor:VR_wedge}
	Fix $r > 0$. Let $(X, d_X)$ be a finite metric space. Let $C_r$ be the set of isolated vertices of $\bctree_r\left[\blocks(T(X, d_X))\right]$. Then
	\begin{equation*}
		\vr_{2r}(X) \sqcup C_{r} \simeq \left( \bigsqcup_{B \in \blocks(T(X, d_X))} \vr_{2r}(\overline{X}_{B}) \right) / \sim
	\end{equation*}
	where we identify $x_{B,c} \in \vr_{2r}(\overline{X}_B)$ with $x_{B',c} \in \vr_{2r}(\overline{X}_{B'})$ if $B \cap B' = \{c\}$. Furthermore,
	\begin{equation*}
		H_k(\vr_{r}(X)) \cong \bigoplus_{B \in \blocks(T(X, d_X))} H_k(\vr_{r}( \overline{X}_{B}))
	\end{equation*}
	holds for $k \geq 1$ and for $k=0$ after identifying the classes $[x_{B,c}]$ for a fixed $c \in \cut(T(X, d_X))$.
\end{corollary}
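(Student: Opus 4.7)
The plan is to derive this corollary as a direct specialization of Theorem \ref{thm:VR_wedge} and Theorem \ref{thm:VR_wedge_homology}, choosing the ambient injective space and block cover appropriately. The key observation is that the tight span $T(X, d_X)$ is itself an injective polytopal complex containing an isometric copy of $X$ via the embedding $h_\bullet:X \hookrightarrow T(X,d_X)$, so it qualifies as the space $E$ in the hypothesis of Theorem \ref{thm:VR_wedge}.

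First, I would set $E := T(X, d_X)$ and $\cK := \blocks(T(X, d_X))$. I need to verify that $\cK$ is a valid block cover in the sense of Section \ref{sec:blocks_of_X}, namely that $\cK$ is a finite cover of $E$ by connected subcomplexes such that any pair $K, K' \in \cK$ is disjoint or intersects at a cut-vertex. Finiteness follows because $X$ is finite and $T(X,d_X)$ is a finite polytopal complex, so it has finitely many blocks. Connectedness is built into the definition of a block (Definition \ref{def:blocks_cuts}). The fact that two distinct blocks either are disjoint or meet at a single cut-vertex is a standard consequence of the block-cut tree structure (Lemma \ref{lemma:block_cut_graph_is_tree}): if two blocks shared two points or a non-cut-vertex, maximality of blocks would be violated or a cycle would appear in $\bctree(T(X,d_X))$.

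Second, with these choices in place, the first displayed equation of the corollary is nothing but the statement of Theorem \ref{thm:VR_wedge}, since $\cut(\cK)$ specializes to $\cut(T(X,d_X))$, the sets $\overline{X}_B$ are exactly those defined in Definition \ref{def:block_metric} with $K = B$, and $C_r$ is unpacked verbatim as the set of isolated vertices of $\bctree_r[\blocks(T(X,d_X))]$. Similarly, the homology statement for $k \geq 1$ and the identification for $k = 0$ are exactly Theorem \ref{thm:VR_wedge_homology} applied to this cover.

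Since the corollary is a pure specialization, there is no substantive obstacle; the only point requiring even minimal care is the verification that $\blocks(T(X,d_X))$ satisfies the block cover axioms, and this is immediate from Definition \ref{def:blocks_cuts} together with Lemma \ref{lemma:block_cut_graph_is_tree}. The proof will therefore be a short paragraph confirming the setup and invoking the two theorems in sequence.
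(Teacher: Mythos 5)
Your proposal is correct and is exactly the paper's argument: the paper's proof consists of taking $E = T(X, d_X)$ and $\cK = \blocks(T(X, d_X))$ in Theorems \ref{thm:VR_wedge} and \ref{thm:VR_wedge_homology}. Your additional verification that $\blocks(T(X,d_X))$ satisfies the block cover axioms is a sound (if routine) elaboration that the paper leaves implicit.
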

\begin{proof}
	Set $E = T(X, d_X)$ and $\cK = \blocks(T(X, d_X))$ in Theorems \ref{thm:VR_wedge} and \ref{thm:VR_wedge_homology}.
\end{proof}

\begin{corollary}
	\label{cor:VR_wedge_subset}
	Let $X, Y$ be finite subsets of a metric space $Z = Z_1 \vee Z_2$ such that $Z_1 \cap Z_2 = \{z_0\}$, $X \subset Z_1$, $Y \subset Z_2$ and $X \cap Y = \emptyset$. Let $d_{Z}(X,Y) := \inf_{x \in X, y \in Y} d_Z(x,y)$. Choose any $x_0 \in X$ and $y_0 \in Y$ that satisfy $d_Z(x_0, y_0) = d_Z(X, Y)$, and let $\overline{X} = X \cup \{y_0\}$ and $\overline{Y} = Y \cup \{x_0\}$. Then
	\begin{equation*}
		\vr_r(X \cup Y) \sqcup C_{r/2} \simeq \vr_r(\overline{X}) \vee \vr_r(\overline{Y})
	\end{equation*}
	where $C_{r/2}$ is a singleton when $r \leq d_Z(X,Y)$ and empty otherwise. Additionally,
	\begin{equation*}
		H_k(\vr_r(X \cup Y)) \cong H_k(\vr_r(\overline{X})) \oplus H_k(\vr_r(\overline{Y})).
	\end{equation*}
	for all $k \geq 1$. If $k=0$, the equation holds as is when $r > d_{Z}(X,Y)$, and for all $r \leq d_Z(X,Y)$ after identifying the classes of $x_0$ in $H_0(\vr_r(X))$ and $y_0$ in $H_0(\vr_r(Y))$.
\end{corollary}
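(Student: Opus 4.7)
The plan is to derive the corollary from Theorems \ref{thm:VR_wedge} and \ref{thm:VR_wedge_homology} applied to a judicious injective embedding. Set $E_1 := T(X \cup \{z_0\})$, $E_2 := T(Y \cup \{z_0\})$, and $E := E_1 \vee_{z_0} E_2$; $E$ is injective by Lemma \ref{lemma:wedge_tight_span}. The inclusion $X \cup Y \hookrightarrow E$ is an isometric embedding because the respective tight-span embeddings preserve distances within $X$ and within $Y$, and for $x \in X$, $y \in Y$ the wedge metric of $E$ gives $d_E(x,y) = d_{E_1}(x,z_0) + d_{E_2}(z_0,y) = d_Z(x,z_0) + d_Z(z_0,y) = d_Z(x,y)$. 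I would then equip $E$ with the block cover $\cK := \{E_1, E_2\}$, whose unique cut-vertex is $z_0$.

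Next I would compute the subsets from Definition \ref{def:block_metric}. Since $(X \cup Y) \cap E_1 = X$, $X_{E_1} = X$, and $z_0$ separates $X$ from every $y \in Y$, so $CX_{E_1} = \{z_0\}$. A valid $x_{z_0}$ for $E_1$ is any point of $Y$ minimizing $d_Z(y, z_0)$. The wedge identity $d_Z(x,y) = d_Z(x,z_0) + d_Z(z_0,y)$ for $x \in X$, $y \in Y$ shows
\[
    d_Z(X,Y) = \min_{x \in X} d_Z(x,z_0) + \min_{y \in Y} d_Z(z_0,y),
\]
so these two minima must be attained simultaneously by $x_0$ and $y_0$; in particular $y_0$ is a valid choice and $\overline{X}_{E_1} = X \cup \{y_0\} = \overline{X}$. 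Symmetrically $\overline{X}_{E_2} = \overline{Y}$. Theorem \ref{thm:VR_wedge} then yields $\vr_r(X \cup Y) \sqcup C_{r/2} \simeq \bigl(\vr_r(\overline{X}) \sqcup \vr_r(\overline{Y})\bigr)/\sim$, where the identification $y_0 \sim x_0$ converts the quotient into the wedge sum $\vr_r(\overline{X}) \vee \vr_r(\overline{Y})$.

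To pin down $C_{r/2}$ I would argue directly, avoiding the need to unpack Definition \ref{def:block_cut_graph_r}. When $r \leq d_Z(X,Y)$, every $x \in X$ satisfies $d_Z(x,y_0) = d_Z(x,z_0) + d_Z(z_0,y_0) \geq d_Z(X,Y) \geq r$, so $y_0$ is an isolated vertex of $\vr_r(\overline{X})$ and, symmetrically, $x_0$ is isolated in $\vr_r(\overline{Y})$. The wedge point is then an isolated vertex of $\vr_r(\overline{X}) \vee \vr_r(\overline{Y})$; meanwhile, the same inequality forces $\vr_r(X \cup Y)$ to contain no edge between $X$ and $Y$, so a direct comparison of component counts gives $|C_{r/2}| = 1$. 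When $r > d_Z(X,Y)$ the edge $\{x_0,y_0\}$ appears in both $\vr_r(\overline{X})$ and $\vr_r(\overline{Y})$ and in $\vr_r(X \cup Y)$, so the wedge point is no longer isolated and $C_{r/2} = \emptyset$.

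The homology statement follows from Theorem \ref{thm:VR_wedge_homology}: for $k \geq 1$ the theorem gives $H_k(\vr_r(X \cup Y)) \cong H_k(\vr_r(\overline{X})) \oplus H_k(\vr_r(\overline{Y}))$ directly, as the $k=0$ identification does not affect higher homology. For $k=0$, the identification of $[y_0]_{\overline{X}}$ with $[x_0]_{\overline{Y}}$ in the direct sum is precisely the one prescribed by Theorem \ref{thm:VR_wedge_homology} for the unique cut-vertex $z_0$, and a quick component count confirms it recovers the correct $H_0(\vr_r(X \cup Y))$ in both regimes. The main obstacle is the bookkeeping for $C_{r/2}$; I would handle it via the direct isolation argument above rather than chasing the definition of $\bctree_{r/2}(\cK)$, since that route requires care when $y_0$ and $x_0$ are very asymmetrically placed relative to $z_0$.
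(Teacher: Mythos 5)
Your proposal is correct and follows the same overall strategy as the paper: build a two-element block cover of an ambient injective space whose unique cut-vertex separates the $X$-side from the $Y$-side, check via Definition \ref{def:block_metric} that the associated sets are $\overline{X}=X\cup\{y_0\}$ and $\overline{Y}=Y\cup\{x_0\}$, and then invoke Theorems \ref{thm:VR_wedge} and \ref{thm:VR_wedge_homology}, with the single identification $y_0\sim x_0$ turning the quotient into a wedge. The genuine difference is your choice of ambient space: you take $E=T(X\cup\{z_0\})\vee_{z_0}T(Y\cup\{z_0\})$, injective by Lemma \ref{lemma:wedge_tight_span}, and use the wedge identity $d_Z(x,y)=d_Z(x,z_0)+d_Z(z_0,y)$ to verify the isometric embedding, whereas the paper works inside $T(X\cup Y,d_{X\cup Y})$, invokes the cut-point theorem of \cite{cut-points-metric-spaces} to produce the cut-vertex $c(p)=d_Z(p,z_0)$, and has to discuss the bridge case before assembling the cover $\{K_X,K_Y\}$. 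Your route exploits the fact that Theorem \ref{thm:VR_wedge} does not require $E$ to be the tight span, and it buys a cleaner setup; you also justify, via simultaneous attainment of the two minima, why $y_0$ and $x_0$ are legitimate choices of $x_{K,c}$, a point the paper only asserts. Your instinct to determine $C_{r/2}$ directly rather than by unwinding Definition \ref{def:block_cut_graph_r} is also sound (and the asymmetric placement you worry about is exactly where a literal reading of that definition is delicate); the paper instead pins $C_{r/2}$ down by noting that $y_0$ and $x_0$ are isolated in $\vr_r(\overline{X})$ and $\vr_r(\overline{Y})$ if and only if $r\le d_Z(X,Y)$, which is essentially your observation for the small-$r$ regime.

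One step of your $C_{r/2}$ argument is compressed: for $r>d_Z(X,Y)$, the fact that the wedge point is no longer isolated does not by itself give $C_{r/2}=\emptyset$. To complete the component count you should check that $\pi_0$ of the two sides agree, e.g.\ by noting that any edge $\{x,y\}$ of $\vr_r(X\cup Y)$ with $x\in X$, $y\in Y$ satisfies $d_Z(x,y_0)\le d_Z(x,y)<r$ and $d_Z(x_0,y)\le d_Z(x,y)<r$ (since $y_0$ and $x_0$ minimize the distance to $z_0$ on their sides), so the edge lifts to a path through the wedge point in $\vr_r(\overline{X})\vee\vr_r(\overline{Y})$; together with the $r\le d_Z(X,Y)$ count this forces $|C_{r/2}|$ to be $1$ or $0$ exactly as claimed. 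This is a small, easily filled omission rather than a flaw in the approach.
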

\begin{proof}
	To simplify notation, fix $r>0$ and let $d_{W} := d_Z|_{W \times W}$ for $W = X, Y, X \cup Y$. Let $c:X \cup Y \to \R_{\geq 0}$ be the function $c(p) = d_Z(p,z_0)$ and note that $c(x) + c(y) = d_Z(x,z_0) + d_Z(z_0,y) = d_{X \cup Y}(x, y)$ for any $x \in X$ and $y \in Y$. By Theorem 1 of \cite{cut-points-metric-spaces}, not only is $c$ an element of $T(X \cup Y, d_{X \cup Y})$, it is also a cut-vertex of $T(X \cup Y, d_{X \cup Y})$. For any cut-vertex $c_0$ of a polytopal complex, either $c_0$ is the intersection of two blocks or the maximal cell that contains $c_0$ is a bridge $L$ (i.e. a line segment). To simplify notation, assume there is a bridge $L$ whose length is possibly 0. Note that $X$ and $Y$ embed as vertices of $T(X \cup Y, d_{X \cup Y})$, so they are not in the interior of $L$.\\
	\indent  Since $X \cup Y$ is finite, $T(X \cup Y, d_{X \cup Y})$ is a polytopal complex. Let $\cK_X$ be the set of blocks of $T(X \cup Y, d_{X \cup Y})$ distinct from $L$ that intersect $X$; define $\cK_Y$ analogously. Let $K_X := \left( \bigcup_{B \in \cK_X} B \right) \cup L$ and $K_Y := \bigcup_{B \in \cK_X} B$ (we can also include $L$ in $K_Y$ instead of $K_X$ -- it makes no difference). Then $\cK = \{K_X, K_Y\}$ is a block cover of $T(X \cup Y, d_{X \cup Y})$. Since $d_{Z}(x_0, y_0) = d_Z(X, Y)$, we choose $x_{K_Y, c} = y_0$ and $y_{K_X, c} = x_0$ so that $\overline{X \cup Y}_{K_X} = X \cup \{y_0\}$ and $\overline{X \cup Y}_{K_Y} = Y \cup \{x_0\}$. Then by Theorem \ref{thm:VR_wedge},
	\begin{equation*}
		\vr_{r}(X \cup Y) \sqcup C_{r/2} \simeq \big[ \vr_{r}(X \cup \{y_0\}) \sqcup \vr_{r}(Y \cup \{x_0\}) \big] / (y_0 \sim x_0) \simeq \vr_r(\overline{X}) \vee \vr_r(\overline{Y}).
	\end{equation*}
	Note that $x_0$ and $y_0$ are discrete points in $\vr_r(Y \cup \{x_0\})$ and $\vr_r(X \cup \{y_0\})$, respectively, if and only if $r \leq d_Z(X,Y) = d_Z(x_0, y_0)$. Then $C_{r/2}$ equals the cut vertex in $K_X \cap K_Y$ when $r \leq d_Z(X,Y)$ and is empty otherwise. The equation on the homology follows from Theorem \ref{thm:VR_wedge_homology}.
\end{proof}

\subsection{Applications to totally decomposable spaces}
\label{sec:blocks_of_TDS}
Recall that the map $\kappa$ between the Buneman complex and the Tight span is not an isometry in general. However, it does preserve the distance from an element in the Buneman complex to every point in the embedding of $X$.
\begin{prop}
	\label{prop:homeo_TS_Buneman_balls}
	Let $(\cS, \alpha)$ be a weighted weakly compatible split system on a finite set $X$. For any $\phi \in B(\cS,\alpha)$ and $x \in X$,
	\begin{equation*}
		d_1(\phi, \phi_x) = d_\infty(\kappa(\phi), h_x).
	\end{equation*}
	As a consequence, the restriction of $\kappa:B(\cS,\alpha) \to T(d_{\cS,\alpha})$ maps $B_r(X; B(\cS,\alpha))$ onto $B_r(X, T(d_{\cS,\alpha}))$.
\end{prop}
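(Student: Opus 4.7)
\medskip

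The plan is to deduce the equality $d_1(\phi,\phi_x)=d_\infty(\kappa(\phi),h_x)$ from the combination of three facts already on the table: the defining formula $\kappa(\phi)(x) = d_1(\phi,\phi_x)$; the fact that $\kappa(\phi)$ actually lives in the tight span $T(X,d_{\cS,\alpha})$, which is Theorem~\ref{thm:kappa-buneman-tight-span} (this is where weak compatibility of $\cS$ enters); and Lemma~\ref{lemma:eval_tight_span}, which says that for any $f \in T(X,d_X)$ one has $f(x) = d_\infty(f,h_x)$. Chaining these gives
\begin{equation*}
	d_1(\phi,\phi_x) \;=\; \kappa(\phi)(x) \;=\; d_\infty(\kappa(\phi),h_x),
\end{equation*}
which is exactly the stated identity. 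There is no real obstacle here; the content is just that the definition of $\kappa$ is compatible with the characterization of points of $T(X,d_X)$ by their distances to the $h_x$.

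For the consequence, I would unpack the definition $B_r(X;E) = \{e \in E : \exists\, x \in X,\ d_E(x,e) < r\}$ on both sides and use the identity just proved. A point $\phi \in B(\cS,\alpha)$ lies in $B_r(X;B(\cS,\alpha))$ iff $d_1(\phi,\phi_x) < r$ for some $x \in X$, which by the identity is equivalent to $d_\infty(\kappa(\phi),h_x) < r$, i.e.\ $\kappa(\phi) \in B_r(X;T(X,d_{\cS,\alpha}))$. This immediately gives the inclusion $\kappa(B_r(X;B(\cS,\alpha))) \subseteq B_r(X;T(X,d_{\cS,\alpha}))$. For surjectivity, given $f \in B_r(X;T(X,d_{\cS,\alpha}))$, Theorem~\ref{thm:kappa-buneman-tight-span} provides some $\phi \in B(\cS,\alpha)$ with $\kappa(\phi)=f$; the identity then transports the bound $d_\infty(f,h_x)<r$ back to $d_1(\phi,\phi_x)<r$, so $\phi \in B_r(X;B(\cS,\alpha))$.

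The main thing to be careful about is simply invoking Theorem~\ref{thm:kappa-buneman-tight-span} correctly: it is needed both to ensure $\kappa(\phi) \in T(X,d_{\cS,\alpha})$ (so that Lemma~\ref{lemma:eval_tight_span} applies) and to obtain the surjectivity of $\kappa$ onto the tight span used in the second half. Once those are in place, the proof is essentially a one-line calculation plus a direct translation between the two ball conditions.
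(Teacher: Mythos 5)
Your proposal is correct, but it proves the key identity by a different route than the paper. The paper computes directly: writing $d_\infty(\kappa(\phi),h_x)=\sup_{y\in X}|d_1(\phi,\phi_y)-d_{\cS,\alpha}(x,y)|$, using that $x\mapsto\phi_x$ is an isometric embedding (so $d_{\cS,\alpha}(x,y)=d_1(\phi_x,\phi_y)$), bounding each term by the reverse triangle inequality $|d_1(\phi,\phi_y)-d_1(\phi_x,\phi_y)|\leq d_1(\phi,\phi_x)$, and noting that the bound is attained at $y=x$. That argument never invokes Theorem~\ref{thm:kappa-buneman-tight-span} or Lemma~\ref{lemma:eval_tight_span} for the identity, so it holds for an arbitrary weighted split system. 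You instead chain $d_1(\phi,\phi_x)=\kappa(\phi)(x)=d_\infty(\kappa(\phi),h_x)$, where the second equality comes from Lemma~\ref{lemma:eval_tight_span} once Theorem~\ref{thm:kappa-buneman-tight-span} guarantees $\kappa(\phi)\in T(X,d_{\cS,\alpha})$; this is shorter but makes the identity depend on weak compatibility, which is harmless here since it is hypothesized. For the statement about balls, both arguments translate the two ball conditions through the identity and use surjectivity of $\kappa$ onto the tight span; you make that surjectivity step explicit (pulling $f\in B_r(X;T(X,d_{\cS,\alpha}))$ back to some $\phi$ with $\kappa(\phi)=f$), whereas the paper leaves it implicit in its chain of equivalences, so your treatment of that half is, if anything, slightly more complete.
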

\begin{proof}
	Let $\phi \in B(\cS,\alpha)$. Observe that
	\begin{equation*}
		d_\infty(\kappa(\phi), h_x)
		= \sup_{y \in X} |\kappa(\phi)(y) - h_x(y)|
		= \sup_{y \in X} |d_1(\phi, \phi_y) - d_{\cS, \alpha}(x,y)|.
	\end{equation*}
	Since $d_{\cS, \alpha}(x,y) = d_1(\phi_x, \phi_y)$, the reverse triangle inequality yields $|d_1(\phi, \phi_y) - d_1(x,y)| \leq d_1(\phi, \phi_x)$. This upper bound is actually realized when we set $y=x$, so $d_\infty(\kappa(\phi), h_x) = d_1(\phi, \phi_x)$. Then $\kappa$ maps $B_r(X; B(\cS,\alpha))$ onto $B_r(X; T(d_{\cS,\alpha}))$ because
	\begin{align*}
		\phi \in B_r(X; B(\cS,\alpha))
		&\Leftrightarrow \exists x \in X \text{ such that } d_1(\phi_x, \phi) < r \\
		&\Leftrightarrow \exists x \in X \text{ such that } d_\infty(h_x, \kappa(\phi)) < r \\
		&\Leftrightarrow \kappa(\phi) \in B_r(X; T(d_{\cS,\alpha})).
	\end{align*}
\end{proof}

Now we go back to circular decomposable spaces to prove one more property. Recall from Remark \ref{rmk:unimodal_sum} that $\sigma(c)$ is not defined when $\osum_{i=c+2}^c \alpha_{ic} < \alpha_{c+1,c}$. If this is the case, we use our understanding of their tight spans to show that $\vr_r(X)$ doesn't change much if we remove $c$.
\begin{prop}
	\label{prop:point_with_long_edge}
	Let $d_X$ be a circular decomposable metric on $X = \{1, \dots, n\}$. Suppose there exists a point $c \in X$ such that $\alpha_{c+1,c} > \osum_{i=c+2}^c \alpha_{ic}$ (i.e. a point for which $\sigma(c)$ in Definition \ref{def:unimodal_sum} is not defined). Let $R_c := \min_{x \in X'} d_X(x, c)$. Then
	\begin{equation*}
		\vr_r(X) \simeq
		\begin{cases}
			\vr_r(X \setminus \{c\}) & r > R_c\\
			\vr_r(X \setminus \{c\}) \sqcup \{c\} & r \leq R_c.
		\end{cases}
	\end{equation*}
\end{prop}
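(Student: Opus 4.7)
The plan is to show that the hypothesis forces $c$ to be either isolated in $\vr_r(X)$ or strongly dominated by its nearest neighbor in $Y := X \setminus \{c\}$, so that $\vr_r(X)$ collapses to $\vr_r(Y)$ up to an isolated vertex. The first step is to rewrite the hypothesis metrically: Lemma \ref{lemma:circular-distance} gives
\[
d_X(c, c-1) = \osum_{i=c+1}^{c-1} \alpha_{ic} = \alpha_{c+1,c} + \osum_{i=c+2}^{c} \alpha_{ic},
\]
so the hypothesis is equivalent to $d_X(c, c-1) < 2\alpha_{c+1,c}$, and therefore $R_c \leq d_X(c, c-1) < 2\alpha_{c+1,c}$. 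This strict bound on $R_c$ is the only consequence of the hypothesis I will use.

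If $r \leq R_c$, no point of $Y$ lies within $r$ of $c$, so $c$ is an isolated vertex of $\vr_r(X)$ and $\vr_r(X) = \vr_r(Y) \sqcup \{c\}$ on the nose. Suppose instead $r > R_c$, and pick $y_0 \in Y$ with $d_X(c, y_0) = R_c$. I would then expand the Gromov product $(y|y_0)_c := \tfrac12 (d_X(c,y) + d_X(c,y_0) - d_X(y_0,y))$ using Theorem \ref{thm:split-decomposition}: a case analysis over which side of each split $S = A \mid \overline{A}$ contains each of $c, y, y_0$ shows that $(y|y_0)_c$ equals $\sum_S \alpha_S$, summed over the splits $S \in \cS(X, d_X)$ that place $c$ on the opposite side from both $y_0$ and $y$. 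The singleton split $S_{c+1,c} = \{c\} \mid X \setminus \{c\}$ is always of this form, so it contributes $\alpha_{c+1,c}$, giving $(y|y_0)_c \geq \alpha_{c+1,c}$. Rearranging and using $R_c < 2\alpha_{c+1,c}$ yields the strict bound $d_X(y_0, y) \leq R_c + d_X(c,y) - 2\alpha_{c+1,c} < d_X(c, y)$ for every $y \in Y$. Consequently, for any simplex $\sigma \in \vr_r(X)$ containing $c$, each $y \in \sigma \setminus \{c\}$ satisfies $d_X(y_0, y) < d_X(c, y) < r$ and $d_X(y_0, c) = R_c < r$, so $\sigma \cup \{y_0\} \in \vr_r(X)$. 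This is the standard strong-domination criterion for a flag complex (the link of $c$ is a cone with apex $y_0$), and the associated strong collapse yields $\vr_r(X) \simeq \vr_r(X) \setminus \operatorname{st}(c) = \vr_r(Y)$.

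The most delicate step is the Gromov-product identity: one has to enumerate the configurations of $c, y, y_0$ across a split to confirm the formula and, in particular, confirm that the singleton split $S_{c+1,c}$ always contributes $\alpha_{c+1,c}$ irrespective of $y, y_0 \in Y$. Once this is established, the argument collapses into a chain of inequalities plus a standard domination lemma, and no tight-span or metric-thickening machinery is needed. As a sanity check, the same bound $d_X(y_0, y) \leq R_c + d_X(c, y) - 2\alpha_{c+1,c}$ also follows from the observation that $\{S_{c+1,c}\}$ is a singleton connected component of the incompatibility graph (since $\{c\}$ is a subset of one side of every split), whence Theorems \ref{thm:Buneman_block} and \ref{thm:kappa-block-bijection} place $h_c$ at the tip of a pendant edge of length $\alpha_{c+1,c}$ in $T(X, d_X)$, and the bound becomes the triangle inequality applied at the opposite cut-vertex.
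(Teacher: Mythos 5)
Your argument is correct, and it takes a genuinely different route from the paper. The paper proves this proposition with the tight-span machinery: it exhibits the singleton split $\{c\}\,|\,X\setminus\{c\}$ as a maximal cell (an edge) of the Buneman complex, pushes it through $\kappa$ to get a pendant edge of length $\alpha_{c,c+1}$ in $T(X,d_X)$ with $h_c$ at its tip, and then compares metric thickenings, showing $B_r(h_c)\cap\kappa(E)^c\subset B_r(h_{c-1})$ and invoking Theorem \ref{thm:VR-tight-span}; your ``sanity check'' paragraph is essentially a sketch of that proof. Your main argument instead stays entirely simplicial: the hypothesis is equivalent to $R_c<2\alpha_{c+1,c}$, the split-metric expansion of distances shows the Gromov product $(y|y_0)_c$ is the total weight of splits separating $c$ from $\{y,y_0\}$, which always includes the singleton split, whence $d_X(y_0,y)\le d_X(c,y)+R_c-2\alpha_{c+1,c}<d_X(c,y)$ for all $y\in X\setminus\{c\}$; this makes $c$ dominated by $y_0$ in the flag complex as soon as $R_c<r$, and the standard dismantling/strong-collapse lemma (the same tool underlying \cite{aa17}) removes $c$ without changing the homotopy type, while for $r\le R_c$ the vertex $c$ is literally isolated. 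The computation of $(y|y_0)_c$ is right (splits with $y$ or $y_0$ alone contribute $0$, splits isolating $c$ contribute $2\alpha_S$), and the hypothesis guarantees $\alpha_{c+1,c}>0$, so the singleton split really is a $d_X$-split. Two small presentational points: the expansion of $d_X$ as a sum of split metrics uses total decomposability, so cite Equation (\ref{eq:circular-distance-original}) or Lemma \ref{lemma:circular-distance} rather than Theorem \ref{thm:split-decomposition} alone (which still carries the split-prime residue $d_0$); and the domination step deserves an explicit reference to the dominated-vertex/dismantling lemma. What each approach buys: yours is more elementary, avoids the Buneman complex, $\kappa$, and metric thickenings altogether, and even gives the $r\le R_c$ case as an equality of complexes and the $r>R_c$ case as a strong collapse; the paper's proof, by contrast, exhibits the pendant-edge geometry of the tight span, which is the picture the surrounding Section \ref{sec:blocks_of_TDS} is meant to illustrate.
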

\begin{proof}
	Recall that for any $x \in X$, $\phi_x \in B(\cS, \alpha)$ is defined by $\phi_x(A) = \frac{1}{2} \alpha_{A|\overline{A}}$ if $x \notin A$ and $0$ if $x \in A$. Given $0 \leq t \leq \frac{1}{2}\alpha_{c,c+1}$, define $\phi_{c,t} \in B(\cS, \alpha)$ by
	\begin{equation*}
		\phi_{c,t}(A) =
		\begin{cases}
			\phi_c(A) & A \neq A_{c,c+1}, \overline{A}_{c,c+1} \\
			t & A = A_{c,c+1}, \\
			\frac{1}{2} \alpha_{c,c+1} - t & A = \overline{A}_{c,c+1}.
		\end{cases}
	\end{equation*}
	For any $0 < t < \frac{1}{2}\alpha_{c,c+1}$, $\cS(\phi_{c,t}) = \{A_{c,c+1}|\overline{A}_{c,c+1}\}$. Note that $A_{c,c+1}|\overline{A}_{c,c+1}$ is a maximal incompatible subset of $\cS$ because $A_{c,c+1} = \{c\}$ and any $S \in \cS$ has an element $B \in S$ such that $c \notin B$. Let $E$ be the minimal cell of $B(\cS, \alpha)$ that contains $\phi_{c,t}$ or, in the notation of \cite{TDM-polytopal-structure-1}, $E = [\phi_{c,t}] \subset B(\cS, \alpha)$. Since $A_{c,c+1}|\overline{A}_{c,c+1}$ is maximal incompatible, $E$ is a maximal cell of $B(\cS, \alpha)$ by (B3) of \cite{TDM-polytopal-structure-1} and by Property (B5) of \cite{TDM-polytopal-structure-1}, $\dim(E) = |\cS(\phi_{c,t})| = 1$. Hence, the maximal cell that contains $\phi_c = \phi_{c,0}$ is the edge $E$.\\
	\indent Let $\ell := \alpha_{c,c+1}/2$ and $\gamma_c := \phi_{c, \ell}$. For any $x \neq c$, $x \in \overline{A}_{c,c+1}$ and $c \in A_{c,c+1}$ imply $\phi_x(A) = \alpha_{c,c+1}/2 - \phi_c(A)$ for $A = A_{c,c+1}, \overline{A}_{c,c+1}$. By definition of $\gamma_c$, we also have $\gamma_c(A) = \alpha_{c,c+1}/2 - \phi_c(A)$ for $A = A_{c,c+1}, \overline{A}_{c,c+1}$. Then
	\begin{align*}
		d_1(\phi_c, \gamma_c)
		&= \sum_{A \in U(\cS)} |\phi_c(A) - \gamma_c(A)| \\
		&= |\phi_c(A_{c,c+1}) - \gamma_c(A_{c,c+1})| + |\phi_c(\overline{A}_{c,c+1}) - \gamma_c(\overline{A}_{c,c+1})| \\
		&= |0 - \alpha_{c,c+1}/2| + |\alpha_{c,c+1}/2 - 0| = \alpha_{c,c+1},
	\end{align*}
	and for any $x \neq c$,
	\begin{align*}
		d_1(\phi_x, \gamma_c)
		&= \sum_{A \in U(\cS)} |\phi_c(A) - \gamma_c(A)| \\
		&= \sum_{A \neq A_{c,c+1}, \overline{A}_{c,c+1}} |\phi_x(A) - \phi_c(A)| + \sum_{A = A_{c,c+1}, \overline{A}_{c,c+1}} |\phi_x(A) - \gamma_c(A)| \\
		&= \sum_{A \in U(\cS)} |\phi_x(A) - \phi_c(A)| - \sum_{A = A_{c,c+1}, \overline{A}_{c,c+1}} |\phi_x(A) - \phi_c(A)| \\
		&= d_1(\phi_x, \phi_c) - \alpha_{c,c+1}.
	\end{align*}
	In particular, $d_1(\phi_{c}, \phi_{c-1}) = d_{c,c-1} = \osum_{i=c+1}^{c-1} \alpha_{ic} = \osum_{i=c+1}^{c} \alpha_{ic}$ by Lemma \ref{lemma:circular-distance} (recall $\alpha_{cc} = 0$). Then $d_1(\phi_{c-1}, \gamma_c) = \osum_{i=c+2}^{c} \alpha_{ic}$, so by the Proposition's hypothesis, $d_1(\phi_{c-1}, \gamma_c) < d_1(\phi_c, \gamma_c)$.\\
	\indent By Proposition \ref{prop:homeo_TS_Buneman_balls}, $d_\infty(h_{c-1}, \kappa(\gamma_c)) = \osum_{i=c+2}^{c} \alpha_{ic} < \alpha_{c, c+1} = d_\infty(h_c, \kappa(\gamma_c))$. By Theorem \ref{thm:kappa-block-bijection}, $\kappa(E)$ is an edge in $T(X, d_X)$ that connects $h_c$ and $\kappa(\gamma_c)$ and has length $d_\infty(h_c, \kappa(\gamma_c)) = \alpha_{c,c+1} = 2\ell$. Then $B_{r}(h_{c}; T(X, d_X)) \cap \kappa(E)^c = B_{r - 2\ell}(\kappa(\gamma_c); T(X, d_X)) \cap \kappa(E)^c$ for any $r > 2\ell$. However, since $r > d_\infty(h_c, \kappa(\gamma_c)) > d_\infty(h_{c-1}, \kappa(\gamma_c))$, we also have $\gamma_c \in B_{r}(h_{c-1}; T(X, d_X))$. Hence, for any $f \in B_{r}(h_{c}; T(X, d_X)) \cap \kappa(E)^c$,
	\begin{align*}
		d_\infty(h_{c-1}, f)
		&\leq d_\infty(h_{c-1}, \kappa(\gamma_c)) + d_\infty(\kappa(\gamma_c), f) \\
		&\leq d_\infty(h_{c-1}, \kappa(\gamma_c)) + (r - 2\ell)\\
		&= r +d_\infty(h_{c-1}, \kappa(\gamma_c)) - d_\infty(\kappa(\gamma_c), h_c)
		< r.
	\end{align*}
	In other words, $B_{r}(h_{c}; T(X, d_X)) \cap \kappa(E)^c \subset B_{r}(h_{c-1}; T(X, d_X))$ and, thus, $B_{r}(X; T(X, d_X))$ equals $B_{r}(X \setminus \{c\}; T(X, d_X))$ union with $E_r := B_{r}(h_{c}; T(X, d_X)) \cap \kappa(E)$, a subinterval of $\kappa(E)$. If $E_r$ intersects $B_{r}(X \setminus \{c\}; T(X, d_X))$, we can contract the former onto the latter, so that $B_{r}(X; T(X, d_X)) \simeq B_{r}(X \setminus \{c\}; T(X, d_X))$. Otherwise, $E_r$ is its own connected component, so we can contract it onto $h_c$ and write $B_{r}(X; T(X, d_X)) \simeq B_{r}(X \setminus \{c\}; T(X, d_X)) \sqcup \{h_c\}$. Then, by Theorem \ref{thm:VR-tight-span}, $B_{r}(X; T(X, d_X)) \simeq \vr_{2r}(X)$ and $B_{r}(X \setminus \{c\}; T(X, d_X)) \simeq \vr_{2r}(X \setminus \{c\})$. The conclusion follows by noticing that $E_r$ intersects $B_{r}(X \setminus \{c\}; T(X, d_X))$ when $r > \min_{x \neq c} d_{xc}/2 = R_c/2$.
	
\end{proof}

\section{Algorithmic considerations}
\label{sec:discussion}
We believe the results of this paper have the potential to speed up the computation of persistent homology thanks to polynomial time algorithms in the literature that deal with split decompositions. To fix notation, suppose we are computing persistent homology in dimension $k$ of a metric space with $n$ points. In the worst case, the original algorithm of Bandelt and Dress to compute the split decomposition of a finite metric space runs in $O(n^6)$ time (see the note between Corolaries 4 and 5 of \cite{metric-decomposition}), but faster, more specialized algorithms exist.

\paragraph{Circular decomposable spaces.}
Circular decomposable spaces are very efficient to work with. Identifying a circular decomposable metric and determining the cyclic ordering can both be done in $O(n^2)$ time; see \cite{CDM-structure, CDM-recognition} for the algorithms. In fact, \cite{CDM-structure, CDM-note} show the equivalence between circular decomposable and Kalmanson metrics, a class of metrics where the Traveling Salesman Problem has a very efficient solution. See also the note after \cite[Theorem 5]{metric-decomposition}.\\
\indent To compute persistent homology using the results of Section \ref{sec:circular-metrics}, we need to compute the function $\sigma$ from Definition \ref{def:unimodal_sum} and verify that condition (\ref{eq:star}) holds. Together, these operations take at most $O(n^2)$ time. In fact, for each $a \in X$, finding $\sigma(a)$ requires adding $n-1$ isolation indices and at most $n$ subtractions and $n$ comparisons, which add up to a total $O(n)$ cost. Hence, computing $\sigma:X \to X$ takes $O(n^2)$ time, and verifying condition (\ref{eq:star}) requires checking $n$ cyclic inequalities. From here, we can use the results of \cite{ph-circle} to compute the $k$-dimensional persistent homology in $O(n^2(k + \log(n)))$ time. All in all, recognizing a circular decomposable metric that satisfies the assumptions of Section \ref{sec:circular-metrics} and computing its persistent homology is a nearly quadratic operation of cost $O(n^2(k + \log(n)))$.

\paragraph{Block decompositions.}
The results of Section \ref{sec:blocks_of_X} use a block cover of an ambient injective space $E$ to break up the computation of the persistent homology of $X$ into smaller, potentially parallelizable, operations. For example, Corollary \ref{cor:VR_wedge} requires knowledge of the block decomposition of $T(X, d_X)$, which can be computed in $O(n^3)$ time thanks to \cite{cut-points-algorithm}. The Java implementation of this algorithm is called BloDec and is available at \cite{cut-points-software}. BloDec also computes the cut vertices of $T(X, d_X)$ as functions $f:X \to \R$ and the sets $X'_B$ for every $B \in \blocks(T(X, d_X))$. This information is enough to build the tree $\bctree(T(X, d_X))$ as well. To use Corollary \ref{cor:VR_wedge}, we need to form $\overline{X}_B$ by replacing every cut vertex $c \in X_B'$ with the point $x_{B,c} \in X$. This means finding the point in $X$ that is separated from $X_B'$ by $c$ that minimizes the distance to $c$, a problem that involves no more than $O(n)$ comparisons. Since there are $4n-5 = O(n)$ cut-vertices \cite[Lemma 3.2]{cut-points-algorithm}, forming the sets $\overline{X}_B$ takes $O(n^2)$ operations. Hence, finding the block decomposition of $T(X, d_X)$ is likely to save time whenever computing persistent homology takes more than $O(n^3)$ time and $T(X, d_X)$ has multiple blocks.\\
\indent We ran computational experiments to understand the performance of Corollary \ref{cor:VR_wedge}. Choose a number of blocks $b$ and a block size $m$. We sampled sets $Y_{1}, \dots, Y_{b}$ from $\Sp^3$ uniformly at random with sizes $|Y_1| = m$ and $|Y_i| = m+2$ for $2 \leq i \leq b$. We equipped each $Y_i$ with the Euclidean metric multiplied by $i^2$. We then constructed a metric wedge $Y_1 \vee \cdots \vee Y_{b}$ by identifying the last point of $Y_{i-1}$ with the first point of $Y_{i}$ for $2 \leq i \leq b$. Note that the resulting space has $m + (b-1)(m+2) - (b-1) = bm + (b - 1)$ points, out of which $b-1$ are wedge points. Then the tight span of $Y_1 \vee \cdots \vee Y_b$ is isometric to $T(Y_1) \vee \cdots \vee T(Y_b)$, so it has at least $b$ blocks and $b-1$ cut-vertices. Lastly, we discarded the wedge points from $Y_1 \vee \cdots \vee Y_b$ to obtain a metric space $X_b$ with $b`$ points that is not a metric wedge, but whose tight span has a non-trivial block structure.\\
\indent In our experiments, we used $m = 20$ and varied $b$ from 1 to 15 (so the size of $X_b$ varied from 20 to 300 points). We computed persistent homology in dimensions 1, 2 and 3 using a Python implementation of Ripser \cite{ripser-python}. We also implemented Corollary \ref{cor:VR_wedge} via a Python interface with BloDec that saves the distance matrix of $X_b$ to disk, calls BloDec from the terminal, and reads the output back into Python. We then constructed the sets $\overline{X}_B$ and computed their persistence diagrams with the same Python implementation of Ripser. Figure \ref{fig:blodec_vs_ripser} compares the running time of Ripser and Corollary \ref{cor:VR_wedge}.\\
\indent In theory, the standard algorithm for persistent homology has a worst-case running time\footnote{This assumes that matrix multiplication of $n$-by-$n$ matrices runs in $O(n^3)$.} of $O(n^{3(k+2)})$ \cite{matrixtime}, although in many practical scenarios the performance is closer to $O(n^{k+2})$ \cite{ph-complexity-revised, av-complexity-clique-filtrations}. See \cite[Section 3.1.2]{curvature-sets-pds-dcg} for a more in-depth comparison. Assuming the stricter bound of $O(n^{k+2})$, we expected Corollary \ref{cor:VR_wedge} to have a better running time than Ripser starting from $k=2$, but Figure \ref{fig:blodec_vs_ripser} shows that is not the case. Still, if the original space has multiple blocks, Corollary \ref{cor:VR_wedge} improves the running time of persistent homology starting in dimension 3.\\
\indent It's worth noting that Corollary \ref{cor:VR_wedge} has similar running times in all dimensions, so most of the time is spent on the block decomposition rather than on the persistence diagram. Hence, we can achieve even more significant speedup if we have a priori knowledge of the block decomposition of $T(X, d_X)$ (or of an ambient injective space $E$). For example, if we have a pair of spaces $X$ and $Y$ that satisfy the hypotheses of Corollary \ref{cor:VR_wedge_subset}, we don't need to use BloDec to find the block decomposition of $T(X \cup Y)$. Instead, we only need to find a pair of points $x_0 \in X$ and $y_0 \in Y$ that achieve the minimum $\min_{x \in X, y \in Y} d_Z(x, y)$. Then, no matter how many cut-vertices $T(X \cup Y)$ may have, the persistent homology of $\overline{X}$ and $\overline{Y}$ can already be computed faster than that of $X \cup Y$.

\begin{figure}[h]
	\centering
	\includegraphics[width=\linewidth]{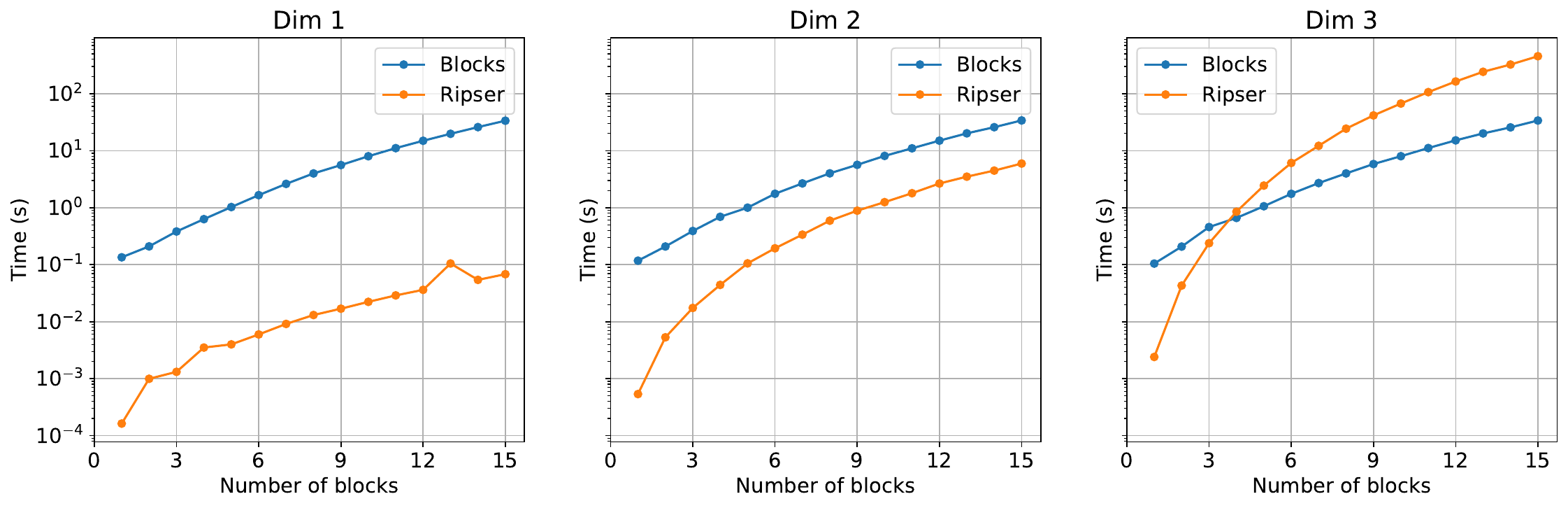}
	\caption{Runtime comparison of persistent homology computed using block decomposition (Corollary \ref{cor:VR_wedge} and BloDec) against Ripser. The input metrics have a varying number of blocks ($x$-axis), with each block being a uniformly sampled set of 20 points from 3-spheres of varying radii.}
	\label{fig:blodec_vs_ripser}
\end{figure}

\subsection{Future work}
While we have an analysis of the computational cost of the Theorems of Sections \ref{sec:circular-metrics} and \ref{sec:blocks_of_X}, we can't yet do the same for the recursive algorithms of Section \ref{sec:circular-metrics-non-monotone}. Recall from Example \ref{ex:non_recursive_metrics} that there exists a circular decomposable space $X$ where the non-cyclic subcomplex $V_Y \subset V_X = \vr_r(X)$ has vertex set $Y = X$ (see Definition \ref{def:cyclic_component}). In that case, the Mayer-Vietoris argument would give the trivial statement $H_*(V_X) = H_*(V_Y)$ and $H_*(V_X^c) = H_*(V_Y')$. We leave it to future research to find conditions on the isolation indices of a circular decomposable metric so that $Y \subsetneq X$ and, more generally, conditions so that the recursion described before Example \ref{ex:recursive_space} eventually reaches a monotone circular decomposable space. With that result, we could count the number of recursion steps involved and get a good estimate on the total computational cost of the persistent homology of a general circular decomposable space. 	

\newcommand{\etalchar}[1]{$^{#1}$}

\end{document}